\newcounter{pr}
\newcounter{pf}
\newcounter{sc}
\newcounter{Ylist}
\newtheorem{theorem}{Theorem}
\newtheorem{corollary}[theorem]{Corollary}
\newtheorem{proposition}[theorem]{Proposition}
\newtheorem{lemma}[theorem]{Lemma}
\newtheorem{conjecture}[theorem]{Conjecture}
\newcommand{\ASM}{\mathrm{ASM}}
\newcommand{\VSASM}{\mathrm{VSASM}}
\newcommand{\VHSASM}{\mathrm{VHSASM}}
\newcommand{\HTSASM}{\mathrm{HTSASM}}
\newcommand{\QTSASM}{\mathrm{QTSASM}}
\newcommand{\DSASM}{\mathrm{DSASM}}
\newcommand{\ASASM}{\mathrm{ASASM}}
\newcommand{\DASASM}{\mathrm{DASASM}}
\newcommand{\TSASM}{\mathrm{TSASM}}
\newcommand{\OSASM}{\mathrm{OSASM}}
\newcommand{\HSASM}{\mathrm{HSASM}}
\newcommand{\DSPM}{\mathrm{DSPM}}
\newcommand{\SVC}{\mathrm{6V}}
\newcommand{\odd}{{\chi_{\mathrm{odd}}}}
\newcommand{\even}{{\chi_{\mathrm{even}}}}
\renewcommand{\u}{\bar{u}}
\newcommand{\q}{\bar{q}}
\newcommand{\w}{\bar{w}}
\newcommand{\z}{\bar{z}}
\newcommand{\G}{\mathcal{T}}
\newcommand{\Z}{\widetilde{Z}}
\newcommand{\X}{\,\overline{\!X}}
\newcommand{\XO}{X^\mathrm{O}}
\newcommand{\XPM}{X^\mathrm{PM}}
\newcommand{\sigmah}{\widehat{\sigma}}
\newcommand{\Pf}{\mathop{\mathrm{Pf}}}
\newcommand{\ui}{{\color{blue}u_1}}
\newcommand{\uii}{{\color{green}u_2}}
\newcommand{\uiii}{{\color{red}u_3}}
\newcommand{\uiv}{{\color{brwn}u_4}}
\newcommand{\ubii}{\bar{\color{green}u}{\color{green}\mbox{}_2}}
\renewcommand{\ss}{\scriptstyle}
\newcommand{\fracs}[2]{\frac{\rule[-0.9ex]{0ex}{0ex}#1}{\rule{0ex}{1.45ex}#2}}
\newcommand{\fracsden}[2]{\frac{#1}{\rule{0ex}{1.45ex}#2}}
\definecolor{brwn}{rgb}{0.59,0.29,0.0}
\definecolor{orng}{cmyk}{0,0.2,0.4,0}
\definecolor{grn}{cmyk}{0.4,0,0.3,0}
\newcommand{\Wi}{\raisebox{-2mm}{\psset{unit=5.8mm}\pspicture(0.1,0)(1,1)
\rput(0.5,0.5){\psline[linewidth=0.5pt](0,-0.5)(0,0.5)\psline[linewidth=0.5pt](-0.5,0)(0.5,0)}
\rput(0.5,0.5){$\ss\bullet$}
\psdots[dotstyle=triangle*,dotscale=0.9](0.5,0.8)\psdots[dotstyle=triangle*,dotscale=0.9](0.5,0.2)
\psdots[dotstyle=triangle*,dotscale=0.9,dotangle=-90](0.8,0.5)\psdots[dotstyle=triangle*,dotscale=0.9,dotangle=-90](0.2,0.5)
\endpspicture}}
\newcommand{\Wii}{\raisebox{-2mm}{\psset{unit=5.8mm}\pspicture(0.1,0)(1,1)
\rput(0.5,0.5){\psline[linewidth=0.5pt](0,-0.5)(0,0.5)\psline[linewidth=0.5pt](-0.5,0)(0.5,0)}
\rput(0.5,0.5){$\ss\bullet$}
\psdots[dotstyle=triangle*,dotscale=0.9,dotangle=180](0.5,0.8)\psdots[dotstyle=triangle*,dotscale=0.9,dotangle=180](0.5,0.2)
\psdots[dotstyle=triangle*,dotscale=0.9,dotangle=90](0.8,0.5)\psdots[dotstyle=triangle*,dotscale=0.9,dotangle=90](0.2,0.5)
\endpspicture}}
\newcommand{\Wiii}{\raisebox{-2mm}{\psset{unit=5.8mm}\pspicture(0.1,0)(1,1)
\rput(0.5,0.5){\psline[linewidth=0.5pt](0,-0.5)(0,0.5)\psline[linewidth=0.5pt](-0.5,0)(0.5,0)}
\rput(0.5,0.5){$\ss\bullet$}
\psdots[dotstyle=triangle*,dotscale=0.9](0.5,0.8)\psdots[dotstyle=triangle*,dotscale=0.9](0.5,0.2)
\psdots[dotstyle=triangle*,dotscale=0.9,dotangle=90](0.8,0.5)\psdots[dotstyle=triangle*,dotscale=0.9,dotangle=90](0.2,0.5)
\endpspicture}}
\newcommand{\Wiv}{\raisebox{-2mm}{\psset{unit=5.8mm}\pspicture(0.1,0)(1,1)
\rput(0.5,0.5){\psline[linewidth=0.5pt](0,-0.5)(0,0.5)\psline[linewidth=0.5pt](-0.5,0)(0.5,0)}
\rput(0.5,0.5){$\ss\bullet$}
\psdots[dotstyle=triangle*,dotscale=0.9,dotangle=180](0.5,0.8)\psdots[dotstyle=triangle*,dotscale=0.9,dotangle=180](0.5,0.2)
\psdots[dotstyle=triangle*,dotscale=0.9,dotangle=-90](0.8,0.5)\psdots[dotstyle=triangle*,dotscale=0.9,dotangle=-90](0.2,0.5)
\endpspicture}}
\newcommand{\Wv}{\raisebox{-2mm}{\psset{unit=5.8mm}\pspicture(0.1,0)(1,1)
\rput(0.5,0.5){\psline[linewidth=0.5pt](0,-0.5)(0,0.5)\psline[linewidth=0.5pt](-0.5,0)(0.5,0)}
\rput(0.5,0.5){$\ss\bullet$}
\psdots[dotstyle=triangle*,dotscale=0.9](0.5,0.8)\psdots[dotstyle=triangle*,dotscale=0.9,dotangle=180](0.5,0.2)
\psdots[dotstyle=triangle*,dotscale=0.9,dotangle=-90](0.2,0.5)\psdots[dotstyle=triangle*,dotscale=0.9,dotangle=90](0.8,0.5)
\endpspicture}}
\newcommand{\Wvi}{\raisebox{-2mm}{\psset{unit=5.8mm}\pspicture(0.1,0)(1,1)
\rput(0.5,0.5){\psline[linewidth=0.5pt](0,-0.5)(0,0.5)\psline[linewidth=0.5pt](-0.5,0)(0.5,0)}
\rput(0.5,0.5){$\ss\bullet$}
\psdots[dotstyle=triangle*,dotscale=0.9,dotangle=180](0.5,0.8)\psdots[dotstyle=triangle*,dotscale=0.9](0.5,0.2)
\psdots[dotstyle=triangle*,dotscale=0.9,dotangle=90](0.2,0.5)\psdots[dotstyle=triangle*,dotscale=0.9,dotangle=-90](0.8,0.5)
\endpspicture}}
\newcommand{\WLout}{\raisebox{-0.5mm}{\psset{unit=5.8mm}\pspicture(0.4,0.4)(1.02,1.1)
\rput(0.5,0.5){\psline[linewidth=0.5pt](0.5,0)(0,0)(0,0.5)}
\rput(0.5,0.5){$\ss\bullet$}\psdots[dotstyle=triangle*,dotscale=0.9](0.5,0.8)
\psdots[dotstyle=triangle*,dotscale=0.9,dotangle=-90](0.8,0.5)
\endpspicture}}
\newcommand{\WLin}{\raisebox{-0.5mm}{\psset{unit=5.8mm}\pspicture(0.4,0.4)(1.02,1.1)
\rput(0.5,0.5){\psline[linewidth=0.5pt](0.5,0)(0,0)(0,0.5)}
\rput(0.5,0.5){$\ss\bullet$}\psdots[dotstyle=triangle*,dotscale=0.9,dotangle=180](0.5,0.8)
\psdots[dotstyle=triangle*,dotscale=0.9,dotangle=90](0.8,0.5)
\endpspicture}}
\newcommand{\WLup}{\raisebox{-0.5mm}{\psset{unit=5.8mm}\pspicture(0.4,0.4)(1.02,1.1)
\rput(0.5,0.5){\psline[linewidth=0.5pt](0.5,0)(0,0)(0,0.5)}
\rput(0.5,0.5){$\ss\bullet$}\psdots[dotstyle=triangle*,dotscale=0.9](0.5,0.8)
\psdots[dotstyle=triangle*,dotscale=0.9,dotangle=90](0.8,0.5)
\endpspicture}}
\newcommand{\WLdown}{\raisebox{-0.5mm}{\psset{unit=5.8mm}\pspicture(0.4,0.4)(1.02,1.1)
\rput(0.5,0.5){\psline[linewidth=0.5pt](0.5,0)(0,0)(0,0.5)}
\rput(0.5,0.5){$\ss\bullet$}\psdots[dotstyle=triangle*,dotscale=0.9,dotangle=180](0.5,0.8)
\psdots[dotstyle=triangle*,dotscale=0.9,dotangle=-90](0.8,0.5)
\endpspicture}}
\newcommand{\Vi}{\raisebox{-3.2mm}{\pspicture(-0.1,-0.1)(1.1,1.1)
\rput(0.5,0.5){\psline[linewidth=0.5pt](0,-0.5)(0,0.5)\psline[linewidth=0.5pt](-0.5,0)(0.5,0)}
\rput(0.5,0.5){$\ss\bullet$}
\psdots[dotstyle=triangle*,dotscale=1](0.5,0.8)\psdots[dotstyle=triangle*,dotscale=1](0.5,0.2)
\psdots[dotstyle=triangle*,dotscale=1,dotangle=-90](0.8,0.5)\psdots[dotstyle=triangle*,dotscale=1,dotangle=-90](0.2,0.5)\endpspicture}}
\newcommand{\Vii}{\raisebox{-3.2mm}{\pspicture(-0.1,-0.1)(1.1,1.1)
\rput(0.5,0.5){\psline[linewidth=0.5pt](0,-0.5)(0,0.5)\psline[linewidth=0.5pt](-0.5,0)(0.5,0)}
\rput(0.5,0.5){$\ss\bullet$}
\psdots[dotstyle=triangle*,dotscale=1,dotangle=180](0.5,0.8)\psdots[dotstyle=triangle*,dotscale=1,dotangle=180](0.5,0.2)
\psdots[dotstyle=triangle*,dotscale=1,dotangle=90](0.8,0.5)\psdots[dotstyle=triangle*,dotscale=1,dotangle=90](0.2,0.5)\endpspicture}}
\newcommand{\Viii}{\raisebox{-3.2mm}{\pspicture(-0.1,-0.1)(1.1,1.1)
\rput(0.5,0.5){\psline[linewidth=0.5pt](0,-0.5)(0,0.5)\psline[linewidth=0.5pt](-0.5,0)(0.5,0)}
\rput(0.5,0.5){$\ss\bullet$}
\psdots[dotstyle=triangle*,dotscale=1](0.5,0.8)\psdots[dotstyle=triangle*,dotscale=1](0.5,0.2)
\psdots[dotstyle=triangle*,dotscale=1,dotangle=90](0.8,0.5)\psdots[dotstyle=triangle*,dotscale=1,dotangle=90](0.2,0.5)\endpspicture}}
\newcommand{\Viv}{\raisebox{-3.2mm}{\pspicture(-0.1,-0.1)(1.1,1.1)
\rput(0.5,0.5){\psline[linewidth=0.5pt](0,-0.5)(0,0.5)\psline[linewidth=0.5pt](-0.5,0)(0.5,0)}
\rput(0.5,0.5){$\ss\bullet$}
\psdots[dotstyle=triangle*,dotscale=1,dotangle=180](0.5,0.8)\psdots[dotstyle=triangle*,dotscale=1,dotangle=180](0.5,0.2)
\psdots[dotstyle=triangle*,dotscale=1,dotangle=-90](0.8,0.5)\psdots[dotstyle=triangle*,dotscale=1,dotangle=-90](0.2,0.5)\endpspicture}}
\newcommand{\Vv}{\raisebox{-3.2mm}{\pspicture(-0.1,-0.1)(1.1,1.1)
\rput(0.5,0.5){\psline[linewidth=0.5pt](0,-0.5)(0,0.5)\psline[linewidth=0.5pt](-0.5,0)(0.5,0)}
\rput(0.5,0.5){$\ss\bullet$}
\psdots[dotstyle=triangle*,dotscale=1](0.5,0.8)\psdots[dotstyle=triangle*,dotscale=1,dotangle=180](0.5,0.2)
\psdots[dotstyle=triangle*,dotscale=1,dotangle=-90](0.2,0.5)\psdots[dotstyle=triangle*,dotscale=1,dotangle=90](0.8,0.5)\endpspicture}}
\newcommand{\Vvi}{\raisebox{-3.2mm}{\pspicture(-0.1,-0.1)(1.1,1.1)
\rput(0.5,0.5){\psline[linewidth=0.5pt](0,-0.5)(0,0.5)\psline[linewidth=0.5pt](-0.5,0)(0.5,0)}
\rput(0.5,0.5){$\ss\bullet$}
\psdots[dotstyle=triangle*,dotscale=1,dotangle=180](0.5,0.8)\psdots[dotstyle=triangle*,dotscale=1](0.5,0.2)
\psdots[dotstyle=triangle*,dotscale=1,dotangle=90](0.2,0.5)\psdots[dotstyle=triangle*,dotscale=1,dotangle=-90](0.8,0.5)\endpspicture}}
\newcommand{\Lout}{\raisebox{0.3mm}{\pspicture(0.4,0.4)(1.1,1.1)
\rput(0.5,0.5){\psline[linewidth=0.5pt](0,0)(0,0.5)\psline[linewidth=0.5pt](0,0)(0.5,0)}
\rput(0.5,0.5){$\ss\bullet$}\psdots[dotstyle=triangle*,dotscale=1](0.5,0.8)
\psdots[dotstyle=triangle*,dotscale=1,dotangle=-90](0.8,0.5)\endpspicture}}
\newcommand{\Lin}{\raisebox{0.3mm}{\pspicture(0.4,0.4)(1.1,1.1)
\rput(0.5,0.5){\psline[linewidth=0.5pt](0,0)(0,0.5)\psline[linewidth=0.5pt](0,0)(0.5,0)}
\rput(0.5,0.5){$\ss\bullet$}\psdots[dotstyle=triangle*,dotscale=1,dotangle=180](0.5,0.8)
\psdots[dotstyle=triangle*,dotscale=1,dotangle=90](0.8,0.5)\endpspicture}}
\newcommand{\Lup}{\raisebox{0.3mm}{\pspicture(0.4,0.4)(1.1,1.1)
\rput(0.5,0.5){\psline[linewidth=0.5pt](0.5,0)(0,0)(0,0.5)}
\rput(0.5,0.5){$\ss\bullet$}\psdots[dotstyle=triangle*,dotscale=1](0.5,0.8)
\psdots[dotstyle=triangle*,dotscale=1,dotangle=90](0.8,0.5)\endpspicture}}
\newcommand{\Ldown}{\raisebox{0.3mm}{\pspicture(0.4,0.4)(1.1,1.1)
\rput(0.5,0.5){\psline[linewidth=0.5pt](0,0)(0,0.5)\psline[linewidth=0.5pt](0,0)(0.5,0)}
\rput(0.5,0.5){$\ss\bullet$}\psdots[dotstyle=triangle*,dotscale=1,dotangle=180](0.5,0.8)
\psdots[dotstyle=triangle*,dotscale=1,dotangle=-90](0.8,0.5)\endpspicture}}
\newcommand{\R}{\raisebox{0.3mm}{\pspicture(-0.1,0.4)(0.6,1.1)
\psline[linewidth=0.5pt](0,0.5)(0.5,0.5)\rput(0.5,0.5){$\ss\bullet$}
\psdots[dotstyle=triangle*,dotscale=1,dotangle=90](0.2,0.5)\endpspicture}}
\newcommand{\T}{\raisebox{-0.5mm}{\pspicture(0.4,-0.1)(0.6,0.6)
\psline[linewidth=0.5pt](0.5,0.5)(0.5,0)\rput(0.5,0.5){$\ss\bullet$}
\psdots[dotstyle=triangle*,dotscale=1](0.5,0.22)\endpspicture}}
\newcommand{\NEtri}{\raisebox{-0.5mm}{\psset{unit=5.8mm}\pspicture(0.45,0.45)(1.12,1.05)
\rput(0.5,0.5){\pspolygon[linewidth=0.3pt](0.5,0)(0.5,0.5)(0,0.5)}\endpspicture}}
\newcommand{\SWtri}{\raisebox{-0.5mm}{\psset{unit=5.8mm}\pspicture(0.45,0.45)(1.05,1.05)
\rput(0.5,0.5){\pspolygon[linewidth=0.3pt](0.5,0)(0,0)(0,0.5)}\endpspicture}}
\newcommand{\NWtri}{\raisebox{-0.5mm}{\psset{unit=5.8mm}\pspicture(0.45,0.45)(1.05,1.05)
\rput(0.5,0.5){\pspolygon[linewidth=0.3pt](0.5,0.5)(0,0.5)(0,0)}\endpspicture}}
\newcommand{\SEtri}{\raisebox{-0.5mm}{\psset{unit=5.8mm}\pspicture(0.45,0.45)(1.12,1.05)
\rput(0.5,0.5){\pspolygon[linewidth=0.3pt](0.5,0)(0.5,0.5)(0,0)}\endpspicture}}
\newcommand{\Sq}{\raisebox{-0.5mm}{\psset{unit=5.8mm}\pspicture(0.4,0.45)(1.12,1.05)
\rput(0.5,0.5){\pspolygon[linewidth=0.3pt](0,0)(0,0.5)(0.5,0.5)(0.5,0)}
\rput(0.5,0.5){\psline[linewidth=0.3pt](0.25,0)(0.25,0.5)}\endpspicture}}
\title[DSASMs]{Diagonally symmetric alternating sign matrices}
\author[R.~E.~Behrend]{Roger E.~Behrend}
\address{R.~E.~Behrend, School of Mathematics, Cardiff University, Cardiff,
CF24 4AG, UK}
\email{behrendr@cardiff.ac.uk}
\author[I.~Fischer]{Ilse Fischer}
\address{I.~Fischer, Faculty of Mathematics, University of Vienna,
Oskar-Morgenstern-Platz 1, 1090 Vienna, Austria}
\email{ilse.fischer@univie.ac.at}
\author[C.~Koutschan]{Christoph Koutschan}
\address{C.~Koutschan, Johann Radon Institute for Computational and Applied
Mathematics (RICAM),
Altenberger Strasse 69, 4040 Linz, Austria}
\email{christoph.koutschan@ricam.oeaw.ac.at}
\keywords{Alternating sign matrices, six-vertex model}
\subjclass[2020]{05A05, 05A15, 05A16, 05A19, 05B20, 15B35, 82B20, 82B23}
\begin{document}
\begin{abstract}
The enumeration of diagonally symmetric alternating sign matrices (DSASMs) is studied,
and a Pfaffian formula is obtained for the number of DSASMs of any fixed size,
where the entries for the Pfaffian are positive integers
given by simple binomial coefficient expressions.
This result provides the first known case of an exact enumeration formula for an alternating sign matrix symmetry class in which a simple product formula does not seem to exist.
Pfaffian formulae are also obtained for DSASM generating functions associated with several natural statistics,
including the number of nonzero strictly upper triangular entries in a DSASM,
the number of nonzero diagonal entries in a DSASM, and the column of the unique~1 in the first row of a DSASM.
The proofs of these results involve introducing a version of the six-vertex model whose configurations are in
bijection with DSASMs of fixed size, and obtaining a Pfaffian expression for its partition function.
Various further results and conjectures are also obtained, including some related to the exact enumeration of off-diagonally symmetric alternating sign matrices,
and some related to the asymptotic enumeration of DSASMs and other classes of alternating sign matrices.
\end{abstract}
\maketitle

\section{Introduction}\label{intro}
An alternating sign matrix (ASM) is a square matrix in which each entry is~$0$, $1$ or~$-1$,
and along each row and column the nonzero entries alternate in sign and have a sum of~$1$.
Some simple observations are that the first and last row and column of any ASM each contain a single~$1$, with all of their other entries being $0$'s,
that any permutation matrix is an ASM,
and that the symmetry group of the square has a natural action on the set of ASMs.

ASMs were first defined in the early 1980s by Mills, Robbins and Rumsey~\cite{MilRobRum82},
who also conjectured~\cite[Conj.~1]{MilRobRum82,MilRobRum83} that,
for any positive integer~$n$, the number of $n\times n$ ASMs is
\begin{equation}\label{numASM}
\prod_{i=0}^{n-1}\frac{(3i+1)!}{(n+i)!}.\end{equation}
This conjecture was first proved by Zeilberger~\cite{Zei96a}, and further proofs involving other
methods were subsequently obtained by Kuperberg~\cite{Kup96}, Fischer~\cite{Fis06,Fis07,Fis16}, and Fischer and Konvalinka~\cite{FisKon20a,FisKon20b,FisKon21}.

A diagonally symmetric alternating sign matrix (DSASM) is simply an ASM which
is invariant under reflection in its main diagonal, i.e., under matrix transposition.  An example is
\begin{equation}\label{DSASMexample}
\begin{pmatrix}0&0&0&1&0&0&0\\
0&1&0&-1&1&0&0\\
0&0&1&0&-1&0&1\\
1&-1&0&0&1&0&0\\
0&1&-1&1&-1&1&0\\
0&0&0&0&1&0&0\\
0&0&1&0&0&0&0\end{pmatrix}.\end{equation}

One of the main results of this paper (see Corollary~\ref{numDSASMcoroll})
is that, for any positive integer $n$, the number of $n\times n$ DSASMs is
\begin{equation}\label{numDSASM1}
\Pf_{\odd(n)\le i<j\le n-1}\Biggl((j-i)\,\sum_{k=0}^i\frac{3-\delta_{k,0}}{i+j-2k}\,\binom{i+j-2k}{i-k}\Biggr),\end{equation}
where $\Pf$ denotes the Pfaffian of a triangular array
(see~\eqref{Pfdef2}), $\odd(n)$ is 1 or 0 according to whether $n$ is odd or even, and $\delta$ is the Kronecker delta.
It can be checked that the entries for the Pfaffian in~\eqref{numDSASM1} are positive integers.

It seems to be difficult to convert this Pfaffian to a simpler expression.  Furthermore, by
computing values of~\eqref{numDSASM1} for specific values of $n$, and examining their prime factorizations,
it can be conjectured that the prime factors are not bounded above by a polynomial in~$n$,
and hence that a simple product formula similar to that of~\eqref{numASM} does not exist.
(By contrast, it can be seen that the prime factors of~\eqref{numASM} are bounded above by $3n-2$.)
Nevertheless,~\eqref{numDSASM1} provides the first known enumeration formula for DSASMs,
and with the aid of a computer it can be used to calculate the number of $n\times n$
DSASMs for relatively large~$n$.  This has been done for all~$n$ up to~$1000$,
and the values are available at a webpage accompanying this paper~\cite{BehFisKou23}.  The values for~$n$ up to~$131$ are also
available at The On-Line Encyclopedia of Integer Sequences~\cite{OEIS-A005163}.  To provide an idea of the magnitude
of these values, the number of $1000\times1000$ DSASMs has 56930 decimal digits.

Other primary results in this paper include Pfaffian formulae for DSASM generating functions associated with various natural statistics.
See Theorems~\ref{Xrstheorem} and~\ref{Xrsttheorem} for formulae for a three-statistic generating function, where
for any DSASM, these statistics are the number of nonzero strictly upper triangular entries,
the number of nonzero diagonal entries, and the column of the~$1$ in the first row.
See Theorems~\ref{Xprstheorem} and~\ref{Xprsttheorem} for generalizations of these formulae
in which a further statistic (specifically, the number of strictly upper triangular inversions in a DSASM,
as defined in Section~\ref{FurthStatSect}) is included,
and in which the number of~$1$'s and number of~$-1$'s on the main diagonal of a DSASM are taken to be two separate statistics.
Note that determinant formulae with a form similar to that of the Pfaffian formulae of
Theorems~\ref{Xrstheorem},~\ref{Xrsttheorem},~\ref{Xprstheorem} and~\ref{Xprsttheorem} are
known for certain ASM generating functions.  See~\eqref{ASMX} for an example.

The method of ASM enumeration introduced and used by Kuperberg~\cite{Kup96,Kup02} involves the statistical mechanical six-vertex model.
(For important subsequent variations of this method, see, for example, Colomo and Pronko~\cite{ColPro05a,ColPro06},
Okada~\cite{Oka06}, Razumov and Stroganov~\cite{RazStr04b,RazStr09},
and Stroganov~\cite{Str06}.)
A similar method is applied to DSASM enumeration in this paper, and an outline of the main steps which are used is as follows.
\begin{list}{$\bullet$}{\setlength{\labelwidth}{4mm}\setlength{\leftmargin}{8mm}\setlength{\labelsep}{3mm}\setlength{\topsep}{0.9mm}}
\item The three-statistic generating function for DSASMs is defined in Section~\ref{Xsect}.
\item A version of the six-vertex model is introduced in Sections~\ref{sixvertexmodelconfig}--\ref{partitionfun}.
The configurations of this version are in bijection with DSASMs of fixed size,
and consist of certain orientations of the edges of a grid graph on a triangle.
The associated partition function consists of a sum, over all such configurations, of products of
parameterized bulk and boundary weights, as given in Table~\ref{weights}, where these weights satisfy the Yang--Baxter and reflection equations.
\item An expression for the partition function as a Pfaffian multiplied by a prefactor is obtained in Theorem~\ref{ZPftheorem}, which can
be regarded as a further primary result of this paper.
This expression is an analogue, for the version of the six-vertex model on a triangle studied in this paper,
of the Izergin--Korepin determinant formula (see Izergin~\cite[Eq.~(5)]{Ize87},
Izergin, Coker and Korepin~\cite[Eqs.~(5.1) \& (5.2)]{IzeCokKor92},
Kirillov and Smirnov~\cite{KirSmi89}, and Korepin~\cite{Kor82})
for the partition function of the six-vertex model on a square with domain-wall boundary conditions.
Accordingly, Theorem~\ref{ZPftheorem} plays a role in the proof of the DSASM enumeration formula~\eqref{numDSASM1} which is
analogous to that played by the Izergin--Korepin formula in Kuperberg's proof~\cite{Kup96} of the ASM enumeration formula~\eqref{numASM},
since the configurations of the six-vertex model on a square with domain-wall boundary conditions are in bijection with ASMs of fixed size.
\item Theorem~\ref{ZPftheorem} is proved by identifying, in Section~\ref{GenProp}, particular properties which
uniquely characterize the partition function, and then, in Section~\ref{Pfaffianexpsect}, showing that these properties are also satisfied by the Pfaffian expression.
The properties include reduction to a lower order partition function at certain values of the main parameters,
and symmetry with respect to these parameters, where the proof of this symmetry uses the Yang--Baxter and reflection equations.
\item The exact relationship between the DSASM generating function
and the six-vertex model partition function is identified in Lemma~\ref{ZXlemma}.
\item The application of Lemma~\ref{ZXlemma} requires some care, since in the expression provided
by Theorem~\ref{ZPftheorem}, both the Pfaffian and the denominator of its prefactor become zero when
the values of the parameters required in Lemma~\ref{ZXlemma} are used.  Hence, in Theorem~\ref{Pfaffianidtheorem}, a result
is obtained for evaluating (without causing division of zero by zero) general Pfaffian expressions which have the same overall form as that of the expression given
by applying Lemma~\ref{ZXlemma} to Theorem~\ref{ZPftheorem}.
\item In Sections~\ref{proofXrs}--\ref{proofXrsttheorem}, Pfaffian expressions for the DSASM generating function are obtained
by using Theorem~\ref{ZPftheorem}, Lemma~\ref{ZXlemma} and Theorem~\ref{Pfaffianidtheorem}.  The entries for these Pfaffians
initially appear as coefficients of power series, and are then evaluated as explicit binomial coefficient expressions.\end{list}

In addition to the results which have already been described, and are obtained using mostly algebraic methods,
this paper also provides some further results which are obtained using elementary combinatorial methods.  These results include
Proposition~\ref{refprop}, in which relations are given for the numbers of $n\times n$ DSASMs with the~$1$ in the first row in column~$1$,~$2$,~$3$ or~$n$,
Proposition~\ref{diagrefprop}, in which the number of $n\times n$ DSASMs is expressed as a weighted sum over all $(n-1)\times(n-1)$ DSASMs,
and Propositions~\ref{invprop} and~\ref{invinvariantprop}, in which properties are identified for certain natural involutions on DSASMs.
Some other aspects of DSASMs have been studied using combinatorial methods by Brualdi and Kim~\cite{BruKim14}, and Rubey~\cite{Rub21}.

This paper also includes applications of some of the results to two special cases of DSASMs, for which
connections to previously-known results are identified.
In Section~\ref{DSPMsect}, DSASMs with no~$-1$'s, i.e., diagonally symmetric permutation matrices, are considered,
and in Section~\ref{OSASMsect},
off-diagonally symmetric ASMs (OSASMs) are considered.  An even-order
OSASM, as defined by Kuperberg~\cite[p.~839]{Kup02}, is an
even-order DSASM in which each diagonal entry is zero.
Note that it is impossible for each diagonal entry of an odd-order DSASM to be zero,
since if each diagonal entry of an $n\times n$ DSASM is zero, then the sum of all entries
is twice the sum of all strictly upper triangular entries, and this equals~$n$ (as the sum of entries in each row of any ASM is~1),
so that~$n$ is even.  Hence, an odd-order OSASM is defined to be an odd-order DSASM in which exactly one diagonal entry is nonzero.
It seems that odd-order OSASMs have not previously been defined or studied in the literature.
With this definition, $n\times n$ OSASMs exist for any~$n$, an example being the $n\times n$ matrix with~$1$'s on the main
antidiagonal and $0$'s elsewhere. Accordingly, OSASMs can be regarded as DSASMs with maximally-many~$0$'s on the main diagonal.
A product formula for the number of even-order OSASMs was obtained by Kuperberg~\cite[Thm.~2, second eq.~\& Thm.~5, first eq.]{Kup02},
while in this paper a Pfaffian formula for the number of odd-order OSASMs is obtained, and a
specific product formula for this number is conjectured (see Conjecture~\ref{numOSASModdconj}).

Finally, this paper provides, in Section~\ref{AsymptSect}, several results and conjectures for the large~$n$ asymptotics of the number of $n\times n$ DSASMs
and the number of $n\times n$ ASMs in certain other natural classes.  For example,
an asymptotic expansion of the number of DSASMs is conjectured in Conjecture~\ref{DSASMAsymptConj},
a general result for the leading term in the asymptotics of the number of ASMs invariant under the action of any subgroup of the symmetry group of the square is
obtained in Theorem~\ref{ASMLeadAsymptTh},
and the leading terms in the asymptotics of the number of OSASMs, and the number of DSASMs whose~$1$
in the first row is in a fixed position, are obtained in Propositions~\ref{OSASMLeadAsymptTh} and~\ref{DSASMRefLeadAsymptTh}.
Conjecture~\ref{DSASMAsymptConj} is based on data obtained used the Pfaffian formula~\eqref{numDSASM1},
but most of the results of Section~\ref{AsymptSect} are obtained independently of results obtained in previous sections of the paper.

An outline of the remaining sections of this paper is as follows.
\begin{list}{$\bullet$}{\setlength{\labelwidth}{3mm}\setlength{\leftmargin}{5mm}\setlength{\labelsep}{2mm}\setlength{\topsep}{0.9mm}}
\item In Section~\ref{ASMSymmClassSect}, a review is provided of the programme of
obtaining exact enumeration formulae for ASMs invariant under the action of subgroups of the symmetry group of the square.
\item In Section~\ref{PrelimSect}, the main notation and conventions of the paper are outlined,
the three-statistic DSASM generating function which will be studied in much of the paper is introduced,
and some enumerative results for DSASMs are obtained using simple combinatorial methods in Propositions~\ref{refprop} and~\ref{diagrefprop}.
\item In Section~\ref{mainresults}, the main results for the three-statistic
DSASM generating function and the number of DSASMs are stated, in Theorem~\ref{Xrstheorem}, Corollary~\ref{numDSASMcoroll} and Theorem~\ref{Xrsttheorem},
and alternative versions of some of these results are stated in~\eqref{XrsReform}--\eqref{numDSASMReform} and~\eqref{XrstDiv}.
\item In Section~\ref{sixvertexmodelDSASMs}, the case of the six-vertex model which is related to DSASMs is introduced,
the bijection between its configurations and DSASMs is identified in~\eqref{bij},
some important properties of its partition function are identified in Propositions~\ref{ZLaurent}--\ref{Zsymm},
and a Pfaffian expression for this partition function is obtained in Theorem~\ref{ZPftheorem}.
\item In Section~\ref{proofs}, the results of Section~\ref{mainresults} are proved,
where this involves obtaining a result for the connection between the three-statistic DSASM generating function and the six-vertex model
partition function in Lemma~\ref{ZXlemma}, obtaining general identities for certain Pfaffian expressions in Theorem~\ref{Pfaffianidtheorem} and Corollary~\ref{Pfaffianidcoroll},
applying results (in particular, the bijection~\eqref{bij} and Theorem~\ref{ZPftheorem}) from Section~\ref{sixvertexmodelDSASMs},
and using various standard algebraic methods.
\item In Section~\ref{DSPMsect}, diagonally symmetric permutation matrices are studied,
and a Pfaffian formula for an associated two-statistic generating function is obtained in~\eqref{DSPMX3}.
\item In Section~\ref{OSASMsect}, OSASMs are studied, which leads to Pfaffian formulae for a two-statistic OSASM generating function
in~\eqref{OSASMX2} and~\eqref{OSASMX2t}, a conjectured product formula for the number of odd-order OSASMs in Conjecture~\ref{numOSASModdconj},
the identification of a conjectured symmetry property satisfied by even-order OSASMs in Conjecture~\ref{evenOSASMsymmconj},
and the identification of a conjectured connection between the partition function for odd-order OSASMs and symplectic characters in Conjecture~\ref{symplconj}.
\item In Section~\ref{invol}, certain natural involutions on DSASMs are studied,
and results involving some of these involutions are obtained using elementary combinatorial methods in Propositions~\ref{invprop} and~\ref{invinvariantprop}.
\item In Section~\ref{GeneralGenFuncSect}, a five-statistic generalization of the previous three-statistic generating function is introduced and studied,
and results for certain cases of this generating function are stated in Theorems~\ref{Xprstheorem} and~\ref{Xprsttheorem}.
\item In Section~\ref{AsymptSect}, the asymptotic enumeration of DSASMs and other classes of ASMs is studied, which leads to
an asymptotic expansion of the number of even-order OSASMs and the conjectured number of odd-order OSASMs in~\eqref{OSASMAsympt},
a conjectured expression for the asymptotic expansion of the number of DSASMs in Conjecture~\ref{DSASMAsymptConj},
a general result for the leading term in the asymptotics of the number of ASMs  invariant under the action of any subgroup of the symmetry group of the square in Theorem~\ref{ASMLeadAsymptTh},
a result for the leading term in the asymptotics of the number of OSASMs in Proposition~\ref{OSASMLeadAsymptTh},
a result for the leading term in the asymptotics of DSASMs whose~$1$ in the first row is in a fixed position in Proposition~\ref{DSASMRefLeadAsymptTh},
and the identification of certain generalizations of the results for leading asymptotic terms in~\eqref{GenLeadAsympt1}--\eqref{GenLeadAsympt3}.
\end{list}

\section{ASM symmetry class enumeration}\label{ASMSymmClassSect}
The results of this paper for the straight and refined enumeration of DSASMs contribute to the general programme of
obtaining exact enumeration formulae for ASMs invariant under the action of subgroups of the symmetry group of the square,
since DSASMs are ASMs which are invariant under the action of the subgroup consisting of the identity
and reflection in a diagonal axis. Hence, in order to provide some further context, as well as some terminology and known results which will be used in certain
parts of the paper, a review of this programme is given in this section.

The symmetry group of the square is the dihedral group $D_4\!=\!
\{\mathcal{I},\mathcal{V},\mathcal{H},\mathcal{D},\mathcal{A},\mathcal{R}_{\pi/2},\mathcal{R}_{\pi},\mathcal{R}_{-\pi/2}\}$,
where $\mathcal{I}$ denotes the identity, $\mathcal{V}$, $\mathcal{H}$, $\mathcal{D}$ and $\mathcal{A}$ denote
reflections in vertical, horizontal, diagonal and antidiagonal axes, respectively, and $\mathcal{R}_\theta$
denotes counterclockwise rotation by~$\theta$.  This group has a natural action on $\ASM(n)$, in which, for any $A\in\ASM(n)$,
$(\mathcal{I}A)_{ij}=A_{ij}$, $(\mathcal{V}A)_{ij}=A_{i,n+1-j}$, $(\mathcal{H}A)_{ij}=A_{n+1-i,j}$, $(\mathcal{D}A)_{ij}=A_{ji}$,
$(\mathcal{A}A)_{ij}=A_{n+1-j,n+1-i}$, $(\mathcal{R}_{\pi/2}A)_{ij}=A_{j,n+1-i}$, $(\mathcal{R}_{\pi}A)_{ij}=A_{n+1-i,n+1-j}$
and $(\mathcal{R}_{-\pi/2}A)_{ij}=A_{n+1-j,i}$.
The group has ten subgroups, $\{\mathcal{I}\}$, $\{\mathcal{I},\mathcal{V}\}\approx\{\mathcal{I},\mathcal{H}\}$,
$\{\mathcal{I},\mathcal{V},\mathcal{H},\mathcal{R}_{\pi}\}$,
$\{\mathcal{I},\mathcal{R}_{\pi}\}$, $\{\mathcal{I},\mathcal{R}_{\pi/2},\mathcal{R}_{\pi},\mathcal{R}_{-\pi/2}\}$,
$\{\mathcal{I},\mathcal{D}\}\approx\{\mathcal{I},\mathcal{A}\}$,
$\{\mathcal{I},\mathcal{D},\mathcal{A},\mathcal{R}_{\pi}\}$
and~$D_4$, where $\approx$ denotes conjugacy of subgroups.

The programme of studying numbers of ASMs of fixed size which are invariant under the action of a subgroup of~$D_4$ was suggested by Richard Stanley (see Robbins~\cite[p.~18]{Rob91a},~\cite[p.~2]{Rob00}).
Specifically, for a subgroup~$G$ of~$D_4$, let $\ASM^G(n)$ denote the set of $n\times n$ ASMs which are invariant under the action of all elements of~$G$.
Then a primary objective of the programme is to obtain an expression for $|\ASM^G(n)|$, for each positive integer~$n$ and each subgroup~$G$ of~$D_4$.

The number of invariant ASMs is the same for conjugate subgroups (i.e., $|\ASM^{G}(n)|=|\ASM^{G'}(n)|$ if $G\approx G'$), which leads to
eight inequivalent ASM symmetry classes.  The names and acronyms which will be used for these are as follows.
\begin{list}{$\bullet$}{\setlength{\topsep}{0.8mm}\setlength{\labelwidth}{8mm}\setlength{\labelsep}{2.5mm}\setlength{\leftmargin}{8.2mm}}
\item Unrestricted ASMs (ASMs): $\ASM^{\{\mathcal{I}\}}(n)$.
\item Vertically symmetric ASMs (VSASMs): $\ASM^{\{\mathcal{I},\mathcal{V}\}}(n)$.\\
Or, by conjugacy, horizontally symmetric ASMs (HSASMs): $\ASM^{\{\mathcal{I},\mathcal{H}\}}(n)$.
\item Vertically and horizontally symmetric (VHSASMs): $\ASM^{\{\mathcal{I},\mathcal{V},\mathcal{H},\mathcal{R}_{\pi}\}}(n)$.
\item Half-turn symmetric ASMs (HTSASMs): $\ASM^{\{\mathcal{I},\mathcal{R}_{\pi}\}}(n)$.
\item Quarter-turn symmetric ASMs (QTSASMs): $\ASM^{\{\mathcal{I},\mathcal{R}_{\pi/2},\mathcal{R}_{\pi},\mathcal{R}_{-\pi/2}\}}(n)$.
\item Diagonally symmetric ASMs (DSASMs): $\ASM^{\{\mathcal{I},\mathcal{D}\}}(n)$.\\
Or, by conjugacy, antidiagonally symmetric ASMs (ASASMs): $\ASM^{\{\mathcal{I},\mathcal{A}\}}(n)$.
\item Diagonally and antidiagonally symmetric ASMs (DASASMs): $\ASM^{\{\mathcal{I},\mathcal{D},\mathcal{A},\mathcal{R}_{\pi}\}}(n)$.
\item Totally symmetric ASMs (TSASMs): $\ASM^{D_4}(n)$.
\end{list}
For a symmetry class XASM, the set $\ASM^{G}(n)$ will also be denoted as $\mathrm{XASM}(n)$.

The current state of knowledge of the number of $n\times n$ ASMs in each of the symmetry classes can be categorized as follows.
\begin{list}{(\roman{sc})}{\usecounter{sc}\setlength{\labelwidth}{8.2mm}\setlength{\leftmargin}{8.2mm}\setlength{\labelsep}{1.5mm}\setlength{\topsep}{0.9mm}}
\item \textit{The set is empty.}  This occurs for VSASMs with~$n$ even, VHSASMs with~$n$ even, TSASMs with~$n$ even,
and QTSASMs with $n\equiv2\pmod{4}$.  This can be verified using simple combinatorial arguments, as follows.
If $\VSASM(n)$ is nonempty, then the first row of any $A\in\VSASM(n)$ is invariant under reversal of the order of the entries,
and (as is the case for the first row of any $n\times n$ ASM) the entries consist of a single~$1$ and $n-1$ $0$'s, which implies that $n$ is odd.
Therefore, $\VSASM(n)$ and the subsets $\VHSASM(n)$ and $\TSASM(n)$ of $\VSASM(n)$ are empty for~$n$ even.
If $n$ is even and $\QTSASM(n)$ is nonempty, then the sum of all entries in any $A\in\QTSASM(n)$ is four times the sum of entries in the top-left quarter of $A$,
and this equals~$n$ (as the sum of entries in each row of any ASM is~1), so that $n\equiv0\pmod{4}$. Therefore, $\QTSASM(n)$ is empty for $n\equiv2\pmod{4}$.
\item \textit{A product formula of a type similar to~\eqref{numASM} is known.}
In addition to unrestricted ASMs with~$n$ arbitrary (for which~\eqref{numASM} applies),
this occurs for VSASMs with~$n$ odd, VHSASMs with~$n$ odd, HTSASMs with~$n$ arbitrary,
QTSASMs with $n\not\equiv2\pmod{4}$, and DASASMs with~$n$ odd.  All of these
product formulae (or recurrence relations which lead to these formulae)
appeared as conjectures in a preprint by Robbins from the mid 1980s~\cite{Rob00},
as well as in a review article by Stanley~\cite{Sta86b} from 1986, and
a review paper by Robbins~\cite{Rob91a} from 1991,
with the conjectures for ASMs and HTSASMs being obtained
 by Mills, Robbins and Rumsey~\cite[Conj.~1]{MilRobRum82,MilRobRum83},~\cite[p.~285]{MilRobRum86}.
Since then, all these formulae have been proved.  The first proofs were obtained by Zeilberger~\cite[p.~5]{Zei96a} (unrestricted ASMs),
Kuperberg~\cite[Thm.~2]{Kup02} (VSASMs with~$n$ odd, HTSASMs with~$n$ even, and QTSASMs with $n\equiv0\pmod{4}$),
Okada~\cite[Thm.~1.2 (A5) \&~(A6)]{Oka06} (VHSASMs with~$n$ odd),
Razumov and Stroganov~\cite[p.~1197]{RazStr06a},~\cite[p.~1649]{RazStr06b} (HTSASMs with~$n$ odd, and QTSASMs with~$n$ odd),
and Behrend, Fischer and Konvalinka~\cite[Cor.~5]{BehFisKon17} (DASASMs with~$n$ odd).
All these proofs, except Zeilberger's~\cite{Zei96a} for unrestricted ASMs, involve the six-vertex model,
which was first used in an alternative proof of Kuperberg~\cite{Kup96} for unrestricted ASMs.
(By contrast, the approach taken by Zeilberger~\cite{Zei96a} involved using various constant term identities to show that
a certain refined enumeration of $n\times n$ ASMs coincides with a certain refined enumeration of totally symmetric self-complementary plane partitions (TSSCPPs)
in a $2n\times2n\times2n$ box, as had been conjectured by Mills, Robbins and Rumsey~\cite[Conj.~7]{MilRobRum86}.
A special case of this result is that the total numbers of such ASMs and TSSCPPs are equal, and the fact that~\eqref{numASM} gives the number of ASMs then
follows from a result of Andrews~\cite{And94} that~\eqref{numASM} gives the number of TSSCPPs.)

The specific product formulae for these cases (including that given by~\eqref{numASM}) are
\begin{align}
\label{numASM2}|\ASM(n)|&=\prod_{i=0}^{n-1}\frac{(3i+1)!}{(n+i)!},\\
\label{numVSASM}|\VSASM(2n+1)|&=\prod_{i=1}^n\frac{(6i-2)!}{(2n+2i)!},\\
\label{numVHSASM}|\VHSASM(2n+1)|&=\frac{(\lceil3n/2\rceil-1)!}{3^{\lceil n/2\rceil-1}\,n!\:(\lceil n/2\rceil-1)!}\,\prod_{i=1}^{n-1}\frac{(3i)!}{(n+i)!},\\
\label{numHTSASM}|\HTSASM(n)|&=\prod_{i=0}^{\lceil n/2\rceil-1}\!\frac{(3i)!}{(\lfloor n/2\rfloor+i)!}\;\prod_{i=0}^{\lfloor n/2\rfloor-1}\!\frac{(3i+2)!}{(\lceil n/2\rceil+i)!},\\
\label{numQTSASM}|\QTSASM(4n+k)|&=|\ASM(n)|^2\,|\HTSASM(2n+k)|,\ k\in\{-1,0,1\},\\
\label{numDASASM}|\DASASM(2n+1)|&=\prod_{i=0}^n\frac{(3i)!}{(n+i)!}.\end{align}
Note that these formulae appear in the literature in various different forms, some examples being as follows.
The formulae are sometimes written such that the terms in the product are independent of $n$ (such as $|\ASM(n)|=\prod_{i=0}^{n-1}\frac{i!\,(3i+1)!}{(2i)!\,(2i+1)!}$).
The formulae given by
Kuperberg~\cite{Kup02} are expressed as double products (such as $|\ASM(n)|=(-3)^{n(n-1)/2}\prod_{i,j=1}^n\frac{3i-3j+1}{i-j+n}$).
The formulae given by Okada~\cite{Oka06} are expressed in terms of dimensions of irreducible representations of classical groups
(such as $|\ASM(n)|=3^{n(n-1)/2}\times$(dimension of the irreducible polynomial representation of the general linear group $\mathrm{GL}(2n,\mathbb{C})$ indexed by the partition $(n-1,n-1,n-2,n-2,\ldots,1,1,0,0)$)),
where these can also be expressed as numbers of certain tableaux
(such as $|\ASM(n)|=3^{n(n-1)/2}\times$(number of semistandard Young tableaux of shape $(n-1,n-1,n-2,n-2,\ldots,1,1,0,0)$ with entries from $\{1,\ldots,2n\}$)).
In such cases, product formulae follow using Weyl's dimension formula for classical groups or standard formulae for numbers of certain tableaux.
\item \textit{A product formula is not known.}  This occurs for DSASMs with~$n$ arbitrary,
DASASMs with~$n$ even, and TSASMs with~$n$ odd.  In each of these cases, data for
relatively small values of~$n$ is given by Robbins~\cite[Sec.~4.1]{Rob00},
and data for somewhat larger values of~$n$ is given by Bousquet-M{\'e}lou and Habsieger~\cite[Tab.~1]{BouHab93},
and (for TSASMs with $n$ odd) Li\'{e}nardy~\cite[Tab.~A.1]{Lie20}.
By examining the prime factorizations of the numbers which appear in these cases,
it seems that the existence of product formulae similar to~\eqref{numASM2}--\eqref{numDASASM} can be ruled out.
The Pfaffian expression in~\eqref{numDSASM1}, for the case of DSASMs with $n$ arbitrary,
provides the first known enumerative formula for any of these three cases.
\end{list}

As well as results involving straight enumeration of the eight standard ASM symmetry classes,
results are also known for refined enumeration (associated with various statistics) of the standard symmetry classes,
and for straight and refined enumeration of certain symmetry subclasses (for example, OSASMs form a subclass of DSASMs)
and quasi-symmetry classes.
For surveys and comprehensive references regarding the enumeration of such cases, see, for example, Behrend~\cite[Sec.~3]{Beh13a}, Behrend, Fischer and Konvalinka~\cite[Sec~1.2]{BehFisKon17}, and
Fischer and Saikia~\cite[Secs.~1~\&~10]{FisSai21}.  Several ASM subclasses are also considered, in the context of asymptotics, in Section~\ref{ASMLeadAsymptFurthSect2}.

\section{Preliminaries}\label{PrelimSect}
Within this section, the notation and conventions of the paper are outlined in Section~\ref{notation},
some elementary aspects of ASMs and DSASMs of fixed size are illustrated in Section~\ref{ASMDSASMsect},
the main statistics which will be used for ASMs and DSASMs are defined in Section~\ref{DSASMstats},
the main DSASM generating function which will be studied is defined in Section~\ref{Xsect},
and some enumerative results for DSASMs are obtained using simple combinatorial methods in Section~\ref{elempropsect}.

\subsection{Notation and conventions}\label{notation}
The following general notation and conventions will be used throughout this paper.

The Kronecker delta function is denoted, as usual, by $\delta_{i,j}$.
For an integer $n$, let
\begin{equation}\label{oddeven}
\odd(n)=\begin{cases}1,&n\text{ odd,}\\0,&n\text{ even,}\end{cases}\qquad\text{and}\qquad\even(n)=\begin{cases}0,&n\text{ odd,}\\
1,&n\text{ even.}\end{cases}\end{equation}

For a positive integer $n$, the Pfaffian $\Pf A$ of a $2n\times 2n$ skew-symmetric matrix~$A$,
and the Pfaffian $\Pf_{1\le i<j\le2n}(B_{ij})$ of a triangular array $(B_{ij})_{1\le i<j\le 2n}$
are defined in~\eqref{Pfdef1} and~\eqref{Pfdef2}. The Pfaffian of an empty matrix or empty triangular array is defined to be~1.

For integers $n$ and $k$, the binomial coefficient is taken to be
\begin{equation}\label{binomial}\binom{n}{k}=\begin{cases}\displaystyle\frac{n!}{k!\,(n-k)!},&0\le k\le n,\\0,&\text{otherwise.}\end{cases}\end{equation}
This differs from some other standard conventions, in which $\binom{n}{k}$ is nonzero for $n<0$ and $k\ge0$, or for $k\le n<0$.

For a power series $f(x_1,\ldots,x_n)$, the coefficient of $x_1^{i_1}\ldots x_n^{i_n}$
in the expansion of $f(x_1,\ldots,x_n)$ is denoted $[x_1^{i_1}\ldots x_n^{i_n}]\,f(x_1,\ldots,x_n)$.
For an indeterminate~$x$, let
\begin{equation}\label{not}\bar{x}=x^{-1}\qquad\text{and}\qquad\sigma(x)=x-\bar{x}.\end{equation}
Also, for a fixed indeterminate~$q$, which will be introduced in Section~\ref{vertweights}, let
\begin{equation}\label{sigmah}\sigmah(x)=\frac{\sigma(x)}{\sigma(q^4)}.\end{equation}

\subsection{ASMs and DSASMs}\label{ASMDSASMsect}
As in Section~\ref{ASMSymmClassSect}, for each positive integer $n$, let $\ASM(n)$ and $\DSASM(n)$ denote the sets of $n\times n$ ASMs and DSASMs, respectively.
For example, the sets $\DSASM(n)$ for $n=1$, $2$ and $3$ are
\begin{gather}\notag\DSASM(1)=\{(1)\},\quad\DSASM(2)=\left\{\begin{pmatrix}1&0\\0&1\end{pmatrix},\,\begin{pmatrix}0&1\\1&0\end{pmatrix}\right\},\\
\label{DSASM123}\DSASM(3)=\left\{\begin{pmatrix}1&0&0\\0&1&0\\0&0&1\end{pmatrix},\,
\begin{pmatrix}1&0&0\\0&0&1\\0&1&0\end{pmatrix},\,\begin{pmatrix}0&1&0\\1&0&0\\0&0&1\end{pmatrix},\,
\begin{pmatrix}0&0&1\\0&1&0\\1&0&0\end{pmatrix},\,\begin{pmatrix}0&1&0\\1&-1&1\\0&1&0\end{pmatrix}\right\},\end{gather}
and a specific element of $\DSASM(7)$ is given in~\eqref{DSASMexample}.  The sets $\ASM(n)$ for $n=1$, $2$ and $3$ are
\begin{gather}\notag\ASM(1)=\DSASM(1),\qquad\ASM(2)=\DSASM(2),\\
\ASM(3)=\DSASM(3)\cup\left\{\begin{pmatrix}0&1&0\\0&0&1\\1&0&0\end{pmatrix},\,\begin{pmatrix}0&0&1\\1&0&0\\0&1&0\end{pmatrix}\right\}.\end{gather}
The cardinalities of $\ASM(n)$ and $\DSASM(n)$ for $n=1,\ldots,10$ are given in Table~\ref{card}.

\begin{table}[h]\centering
$\begin{array}{|@{\;\;}c@{\;\;}|@{\;\;\;}c@{\;\;\;}c@{\;\;\;}c@{\;\;\;}c@{\;\;\;}c@{\;\;\;}c@{\;\;\;}
c@{\;\;\;}c@{\;\;\;}c@{\;\;\;}c@{\;\;}|}\hline\rule{0ex}{4.7mm}
n&1&2&3&4&5&6&7&8&9&10\\[1mm]\hline
\rule{0ex}{5.9mm}|\ASM(n)|&1&2&7&42&429&7436&218348&10850216&911835460&129534272700\\[1.8mm]\hline
\rule{0ex}{5.9mm}|\DSASM(n)|&1&2&5&16&67&368&2630&24376&293770&4610624\\[1.8mm]\hline
\end{array}$\\[2.4mm]\caption{Numbers of ASMs and DSASMs.}\label{card}
\end{table}

\subsection{ASM and DSASM statistics}\label{DSASMstats}
The three main statistics, denoted $R$, $S$ and $T$, which will be considered for any $A\in\ASM(n)$ are
\begin{align}
\notag R(A)&=\text{number of nonzero strictly upper triangular entries in }A\\
\label{RA}&=\sum_{1\le i<j\le n}|A_{ij}|,\\
\notag S(A)&=\text{number of nonzero diagonal entries in }A\\
\label{SA}&=\sum_{i=1}^n|A_{ii}|,\\
\label{TA}T(A)&=\text{column of the 1 in the first row of }A.\end{align}

These statistics will mostly be applied to a DSASM~$A$. In this case,
due to the diagonal symmetry of $A$, $R(A)$ is also the number of nonzero strictly lower triangular entries in~$A$,
and~$T(A)$ is also the row of the~1 in the first column of $A$. It will be seen in~\eqref{RC}--\eqref{TC} that~$R(A)$,~$S(A)$ and~$T(A)$
have natural interpretations in terms of six-vertex model configurations.

As an example, for the DSASM $A$ in~\eqref{DSASMexample}, $R(A)=7$, $S(A)=3$ and $T(A)=4$.

Statistics analogous to~$R$ and~$S$ have recently been defined for TSASMs by Hagendorf and Li\'{e}nardy~\cite[Eq.~(199)]{HagLie21},~\cite[Eq.~(9.58)]{Lie20}.
The statistic~$T$, as applied to ASMs in several classes, has been widely studied in various contexts.
Early work includes a proof by Zeilberger~\cite{Zei96b} of
a conjecture of Mills, Robbins and Rumsey~\cite[Conj.~2]{MilRobRum82,MilRobRum83}
for the number of ASMs of fixed size with prescribed value of~$T$.
Recent work includes that of Fischer and Saikia~\cite{FisSai21},
involving results for the number of ASMs of fixed size in several symmetry and quasi-symmetry classes with prescribed value of~$T$.

In Sections~\ref{elempropsect} and~\ref{ASMLeadAsymptFurthSect1}, the notation $\DSASM(n,k)$ will be used for the set of $n\times n$ DSASMs whose $1$ in the first row is in column $k$, i.e.,
\begin{equation}\DSASM(n,k)=\{A\in\DSASM(n)\mid T(A)=k\}.\end{equation}

Another natural statistic for any ASM $A$ is
\begin{equation}\label{MA}M(A)=\text{number of $-1$'s in }A.\end{equation}
For any $A\in\ASM(n)$, the number of nonzero entries in~$A$ is $2M(A)+n$
(since in each row of~$A$, the number of~$1$'s is one more than the number of~$-1$'s),
and for $A\in\DSASM(n)$, this is also equal to $2R(A)+S(A)$, so that
\begin{equation}\label{MRS}M(A)=R(A)+(S(A)-n)/2.\end{equation}
It follows that, for any $A\in\DSASM(n)$, $S(A)\equiv n\pmod{2}$
(since $S(A)=n+2(M(A)-R(A))$), and that the number of zero diagonal entries in~$A$ is even (since $n-S(A)=2(R(A)-M(A))$).

The statistic~$M$, as applied to ASMs in several classes, has also been widely studied. Early work includes
a result of Mills, Robbins and Rumsey~\cite[Corollary]{MilRobRum83},
and results of Kuperberg~\cite[Thm.~3]{Kup96},~\cite[Thms.~3--5]{Kup02},
some of which were conjectured by Mills, Robbins and Rumsey~\cite[Conjs.~4 \&~5]{MilRobRum83}.
More recent work includes that of Hagendorf and Morin--Duchesne~\cite[Eqs.~(2.23), (2.29), (2.33) \& (2.36)]{HagMor16},
and Aigner~\cite[Thm.~1.1]{Aig21}.

Several further ASM and DSASM statistics will be introduced in Sections~\ref{invol} and~\ref{FurthStatSect}.

\subsection{The DSASM generating function}\label{Xsect}
The DSASM generating function associated with the statistics in~\eqref{RA}--\eqref{TA} is defined as
\begin{equation}\label{X}X_n(r,s,t)=\sum_{A\in\DSASM(n)}r^{R(A)}\,s^{S(A)}\,t^{T(A)},\end{equation}
for indeterminates $r$, $s$ and $t$.

For example, the $n=1$, $2$ and $3$ cases of the DSASM generating function~\eqref{X} are
\begin{equation}\label{X123}X_1(r,s,t)=st,\quad X_2(r,s,t)=s^2t+rt^2,\quad
X_3(r,s,t)=s^3t+rst+rst^2+rst^3+r^2st^2,\end{equation}
where the terms on each RHS are written in an order corresponding to that used for the DSASMs in~\eqref{DSASM123}.

\subsection{Elementary DSASM properties}\label{elempropsect}
Some elementary enumerative results for DSASMs will now be obtained, using simple combinatorial arguments, in the following two propositions.

\begin{proposition}\label{refprop}
The numbers of $n\times n$ DSASMs whose $1$ in the first row is in column~$1$,~$2$,~$3$ or~$n$ satisfy
\begin{enumerate}
\item $|\DSASM(n,1)|=|\DSASM(n-1)|$,
\item $|\DSASM(n,2)|=|\DSASM(n-1)|$,
\item $|\DSASM(n,n)|=|\DSASM(n-2)|$,
\item $|\DSASM(n,3)|=2\,|\DSASM(n-1)|-3\,|\DSASM(n-2)|$,
\end{enumerate}
for $n\ge2$ in~(i) and~(ii), and $n\ge3$ in~(iii) and~(iv).

Furthermore,~(i)--(iii) can be generalized to the DSASM generating function identities
\begin{itemize}
\item[(i$'$)] $[t^1]\,X_n(r,s,t)=s\,X_{n-1}(r,s,1)$,
\item[(ii$'$)] $[t^2]\,X_n(r,s,t)=rs\,X_{n-1}(r,s,1)+r\,(1-s^2)\,X_{n-2}(r,s,1)$,
\item[(iii$'$)] $[t^n]\,X_n(r,s,t)=r\,X_{n-2}(r,s,1)$,
\end{itemize}
for $n\ge2$ in~(i\,$'\!$), and $n\ge3$ in~(ii\,$'\!$) and~(iii\,$'\!$).
\end{proposition}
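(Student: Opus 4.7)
The plan is to prove each part by an explicit combinatorial argument driven by the structural constraints that the DSASM condition imposes on the first few rows and columns of $A$ once $T(A)$ is fixed, with the refinements (i$'$)--(iii$'$) obtained by tracking the statistics $R$ and $S$ through the bijections. For (i$'$): if $T(A)=1$ then $A_{11}=1$, so the rest of row~$1$ vanishes (the first row of any ASM contains a single $1$), and diagonal symmetry then kills the rest of column~$1$. Hence $A=\mathrm{diag}(1,A')$ with $A'\in\DSASM(n-1)$, the assignment $A\mapsto A'$ is a bijection, and $R(A)=R(A')$, $S(A)=S(A')+1$, yielding $[t^1]X_n(r,s,t)=sX_{n-1}(r,s,1)$; part (i) follows by setting $r=s=1$.

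For (iii$'$), if $T(A)=n$ then $A_{1n}=A_{n1}=1$ and the rest of row/column~$1$ vanishes. A short induction on $i$ using the row-sum of row~$i$, together with $A_{i,1}=0$ for $2\le i\le n-1$, forces $A_{i,n}=0$ for those $i$; diagonal symmetry then gives $A_{n,j}=0$ for $2\le j\le n-1$, and the row-$n$ sum forces $A_{nn}=0$. The inner $(n-2)\times(n-2)$ submatrix on rows and columns $2,\ldots,n-1$ is an arbitrary $A''\in\DSASM(n-2)$, and removing the trivial outer frame is a bijection with $R(A)=R(A'')+1$ (from the extra upper-triangular $1$ at $(1,n)$) and $S(A)=S(A'')$, yielding $[t^n]X_n(r,s,t)=rX_{n-2}(r,s,1)$ and hence (iii).

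For (ii$'$), suppose $T(A)=2$, so $A_{12}=A_{21}=1$ and alternation of row~$2$ forces $A_{22}\in\{-1,0\}$. Define $B$ by $B_{ij}=A_{i+1,j+1}$ for $(i,j)\ne(1,1)$ and $B_{11}=A_{22}+1\in\{0,1\}$; one checks $B\in\DSASM(n-1)$, the map $A\mapsto B$ is a bijection, and $B_{11}=1\iff A_{22}=0\iff T(B)=1$. Tracking the statistics gives $R(A)=R(B)+1$ in every case, while $S(A)=S(B)+1$ if $A_{22}=-1$ and $S(A)=S(B)-1$ if $A_{22}=0$. Splitting the sum defining $[t^2]X_n$ over $\DSASM(n,2)$ by the value of $B_{11}$, and using (i$'$) at size $n-1$ to identify the $T(B)=1$ part with $sX_{n-2}(r,s,1)$, yields (ii$'$) after a short calculation.

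For (iv), combining (i), (ii), (iii) reduces the claim to the identity $|\DSASM(n,3)|+3|\DSASM(n-2)|=2|\DSASM(n-1)|$, the right-hand side being $|\DSASM(n,1)|+|\DSASM(n,2)|$ by (i) and (ii). The plan is to partition $\DSASM(n,3)$ according to the allowed values of the triple $(A_{22},A_{23},A_{33})$, which alternation on rows and columns~$2$ and~$3$ (with $A_{23}=A_{32}$) restricts to the six cases $(0,0,0)$, $(0,0,-1)$, $(1,0,0)$, $(1,0,-1)$, $(1,-1,0)$, $(1,-1,1)$. For each subclass, a reduction analogous to those used in (i)--(iii)---stripping rows and columns $1$ and possibly $3$, and, in the cases with $A_{22}=1$, combining with a (ii)-style shift of the top-left block---matches it with a subset of $\DSASM(n-1)$ or a copy of $\DSASM(n-2)$ with appropriate multiplicity. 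The main obstacle is the case $(1,-1,1)$, where rows and columns~$2$ and~$3$ each have several nonzero entries clustered near the corner: the reduction then requires a two-step bijection with careful bookkeeping of $T$ on the reduced DSASM, and verifying that summing the six contributions gives the required identity is the technical heart of the argument.
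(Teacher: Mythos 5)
Your arguments for (i)--(iii) and (i$'$)--(iii$'$) are correct and are essentially the same as the paper's: the same three deletion/shift bijections, with the same bookkeeping of $R$ and $S$ (in particular your case split on $B_{11}$ in (ii$'$), using (i$'$) at size $n-1$ to evaluate the $T(B)=1$ part, is exactly the computation the paper records).

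Part (iv), however, is not a proof but a plan, and the plan defers precisely the step that carries all the content. You correctly identify the six admissible triples $(A_{22},A_{23},A_{33})$, but you do not exhibit the reductions for any of them, and you explicitly flag the case $(1,-1,1)$ as requiring ``a two-step bijection with careful bookkeeping'' that you have not carried out. Since the claimed identity $|\DSASM(n,3)|=2\,|\DSASM(n-1)|-3\,|\DSASM(n-2)|$ is exactly what those six contributions must sum to, nothing has been established. Moreover, the six-way split is finer than necessary and creates the difficulty you ran into: the paper splits $\DSASM(n,3)$ only on $A_{23}\in\{-1,0\}$. When $A_{23}=-1$ (which forces $A_{22}=1$, $A_{32}=-1$), zeroing out $A_{22},A_{23},A_{32}$ and deleting row and column $1$ is a bijection onto $\{B\in\DSASM(n-1)\mid B_{11}=B_{12}=0\}$, a set of size $|\DSASM(n-1)|-|\DSASM(n-1,1)|-|\DSASM(n-1,2)|=|\DSASM(n-1)|-2\,|\DSASM(n-2)|$ by (i) and (ii) at size $n-1$. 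When $A_{23}=0$, replacing $A_{33}$ (which is $-1$ or $0$) by $A_{33}+1$ and deleting row and column $1$ is a bijection onto $\{B\in\DSASM(n-1)\mid B_{12}=0\}$, of size $|\DSASM(n-1)|-|\DSASM(n-2)|$. Adding the two counts gives (iv) with no further case analysis; in particular the troublesome configuration with $A_{33}=1$ is absorbed into the first of these two bijections rather than treated separately. You should either complete your six reductions explicitly or replace them with this coarser two-case argument.
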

\begin{proof}
The identities~(i)--(iv) can be obtained using the following bijections (where the fact that each of the mappings described is bijective can be checked easily).
\begin{list}{(\roman{pr})}{\usecounter{pr}\setlength{\labelwidth}{7mm}\setlength{\leftmargin}{11mm}\setlength{\labelsep}{3mm}}
\item A bijection from $\DSASM(n,1)$ to $\DSASM(n-1)$ is given by the mapping  which
deletes the first row and first column of each matrix in $\DSASM(n,1)$.
\item A bijection from $\DSASM(n,2)$ to $\DSASM(n-1)$ is given by the mapping
which, for each $A\in\DSASM(n,2)$,
replaces the entry $A_{22}$ (which is $-1$ or $0$) by $A_{22}+1$, and deletes
the first row and first column of the matrix.
\item A bijection from $\DSASM(n,n)$ to $\DSASM(n-2)$ is given by the mapping which
deletes the first and last row and column of each matrix in $\DSASM(n,n)$.
\item In any $A\in\DSASM(n,3)$, $A_{23}$ is either $-1$ or $0$, so $\DSASM(n,3)$ can be partitioned into the subsets
$\DSASM(n,3)_{-1}=\{A\in\DSASM(n,3)\mid A_{23}=-1\}$ and
$\DSASM(n,3)_0=\{A\in\DSASM(n,3)\mid A_{23}=0\}$.   Bijections from
$\DSASM(n,3)_{-1}$ and $\DSASM(n,3)_0$ to certain subsets of $\DSASM(n-1)$ are as follows.
A bijection from $\DSASM(n,3)_{-1}$ to $\{A\in\DSASM(n-1)\mid A_{11}=A_{12}=0\}$
is given by the mapping which replaces the entries $A_{22}$ ($=1$), $A_{23}$
($=-1$) and $A_{32}$ ($=-1$) by~0's, and deletes the
first row and first column of the matrix.
A bijection from $\DSASM(n,3)_0$ to $\{A\in\DSASM(n\nolinebreak-\nolinebreak1)\mid A_{12}=0\}$
is given by the mapping which replaces the entry $A_{33}$ (which is~$-1$
or~$0$) by $A_{33}+1$, and deletes the first row and column of the matrix.
The first bijection, the equivalence of the condition $A_{11}=A_{12}=0$ to the condition $A_{11}\ne1$ and $A_{12}\ne1$, and the identities from~(i) and~(ii), give
$|\DSASM(n,3)_{-1}|=|\DSASM(n\nolinebreak-\nolinebreak1)|-|\DSASM(n\nolinebreak-\nolinebreak1,1)|-|\DSASM(n\nolinebreak-\nolinebreak1,2)|=
|\DSASM(n-1)|-2|\DSASM(n-2)|$.
The second bijection, the equivalence of the condition $A_{12}=0$ to the
condition $A_{12}\ne1$, and the identity from~(ii), give
$|\DSASM(n,3)_0|=|\DSASM(n\nolinebreak-\nolinebreak1)|-|\DSASM(n\nolinebreak-\nolinebreak1,2)|=
|\DSASM(n\nolinebreak-\nolinebreak1)|-|\DSASM(n\nolinebreak-\nolinebreak2)|$.
Adding the previous two equations now gives the required identity for $|\DSASM(n,3)|$.
\end{list}
The identities (i$'$)--(iii$'$) follow straightforwardly from the bijections given
for (i)--(iii) by including the statistics~$R$ and~$S$, and noting that, for the case of (ii$'$),
$\sum_{A\in\DSASM(n,2),\:A_{22}=0}r^{R(A)}\,s^{S(A)}=r\,X_{n-2}(r,s,1)$ and
$\sum_{A\in\DSASM(n,2),\:A_{22}=-1}r^{R(A)}\,s^{S(A)}=rs\bigl(X_{n-1}(r,s,1)-s\,X_{n-2}(r,s,1)\bigr)$.
\end{proof}

\begin{proposition}\label{diagrefprop}
For $n\ge 2$, the number of $n\times n$ DSASMs can be expressed as a weighted sum over all $(n-\nolinebreak1)\times(n-1)$ DSASMs as
\begin{equation}\label{diagref1}
|\DSASM(n)|=\sum_{A\in\DSASM(n-1)}2^{S(A)}.\end{equation}
Furthermore, this can be generalized to the DSASM generating function identity
\begin{equation}\label{diagref2}X_n(r,1,t)=t\,X_{n-1}(r,r+1,t)+t(1-t)\,X_{n-1}(r,1,1),\end{equation}
or equivalently
\begin{equation}\label{diagref3}X_n(r,1,t)=t\,X_{n-1}(r,r+1,t)+t(1-t)\,X_{n-2}(r,r+1,1),\end{equation}
for $n\ge3$,
where the equivalence between~\eqref{diagref2} and~\eqref{diagref3} follows
by setting $t=1$ and replacing $n$ by $n-1$ in either equation to give $X_{n-1}(r,1,1)=X_{n-2}(r,r+1,1)$.
\end{proposition}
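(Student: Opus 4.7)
The plan is to prove~\eqref{diagref2} by induction on $n$; the two companion identities follow cheaply. Setting $r=t=1$ in~\eqref{diagref2} collapses the correction term and yields~\eqref{diagref1}, and~\eqref{diagref3} is obtained, as noted in the statement itself, by using the $t=1$ specialization of~\eqref{diagref2} with $n$ replaced by $n-1$ (namely $X_{n-1}(r,1,1)=X_{n-2}(r,r+1,1)$) to rewrite the second summand on the right.

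For~\eqref{diagref2}, the base cases $n=2,3$ are checked directly from~\eqref{X123}. In the inductive step I would assume~\eqref{diagref2} for $n-1$ (so in particular $X_{n-1}(r,1,1)=X_{n-2}(r,r+1,1)$) and match the coefficient of $t^k$ on both sides for each $k\in\{1,\ldots,n\}$. For $k\in\{1,2,n\}$, Proposition~\ref{refprop}(i$'$), (ii$'$), (iii$'$) evaluated at $s=1$ handle the LHS directly; on the RHS the same three parts of Proposition~\ref{refprop}, this time applied to $X_{n-1}(r,r+1,t)$ at $s=r+1$, produce the matching expression after using the inductive hypothesis to collapse $X_{n-2}(r,r+1,1)$ to $X_{n-1}(r,1,1)$ and $X_{n-3}(r,r+1,1)$ to $X_{n-2}(r,1,1)$. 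Note that the $(1-s^2)$ factor in~(ii$'$) vanishes at $s=1$, which is exactly why the correction term $t(1-t)X_{n-1}(r,1,1)$ is necessary at $t^2$.

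The real content lies in the intermediate range $3\le k\le n-1$, where Proposition~\ref{refprop} provides no formula and matching coefficients reduces to the refined identity
\[
\sum_{\substack{A\in\DSASM(n)\\T(A)=k}}r^{R(A)}\;=\;\sum_{\substack{B\in\DSASM(n-1)\\T(B)=k-1}}r^{R(B)}(r+1)^{S(B)}.
\]
My plan here is to establish this by an explicit weight-preserving bijection from the left-hand set to pairs $(B,D)$ with $B\in\DSASM(n-1)$, $T(B)=k-1$, $D\subseteq\{i:B_{ii}\ne 0\}$, and $R(A)=R(B)+|D|$. The natural approach is to delete the first row of $A$ and repair the resulting array into an $(n-1)\times(n-1)$ DSASM by modifications to row and column $k-1$, with the subset $D$ recording the positions of the compensating adjustments made on the diagonal. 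Designing this bijection so that $T$ shifts by $-1$ and $R$ transforms correctly is the main obstacle: the most obvious candidates — symmetric deletion of the first row and first column of $A$ (which leaves row $k-1$ with sum $0$ and, possibly, with no nonzero entries at all) or asymmetric deletion of the first row together with the $k$-th column (which destroys diagonal symmetry) — both fail in ways that depend sensitively on the sign pattern of row $k$ of $A$, in particular on the value of $A_{k,k}$ and on whether row $k$ contains nonzero entries beyond $A_{k,1}$. A careful case analysis on this pattern, with the diagonal adjustments encoded by $D$, will yield the bijection; constructing it in the inverse direction (building $A$ from $(B,D)$ by a prescribed insertion procedure) may be cleaner, since the extended-DSASM constraints can be read off directly from $D$.
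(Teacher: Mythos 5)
There is a genuine gap: the entire combinatorial content of the proposition is the bijection you describe in your final paragraph, and you do not construct it — you reduce the statement to it, observe that the obvious deletion-based candidates fail, and assert that "a careful case analysis will yield the bijection." Your reduction to the refined identity $\sum_{T(A)=k}r^{R(A)}=\sum_{T(B)=k-1}r^{R(B)}(r+1)^{S(B)}$ for $3\le k\le n-1$ is correct, and your instinct that the insertion direction (building $A$ from $(B,D)$) is cleaner is exactly right, but the missing idea is specific and is not a routine case analysis on the sign pattern of a row of $A$. The paper's construction is: given $B\in\DSASM(n-1)$ and a subset $D$ of its nonzero diagonal positions, replace the diagonal entries of $B$ outside $D$ by $0$ to form $\bar{B}$, declare the strictly upper triangular part of the $n\times n$ matrix $A$ to be the \emph{weakly} upper triangular part of $\bar{B}$ (so that $\bar{B}_{ij}=A_{i,j+1}$ for $i\le j$ — in particular the diagonal of $B$ is shifted onto the superdiagonal of $A$), symmetrize, and determine the main diagonal of $A$ from the row-sum conditions. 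This gives $R(A)=R(B)+|D|$ and $T(A)=T(B)+1$ for $T(B)\ge2$ at once, and inverting it amounts to reading off $\bar{B}_{ij}=A_{i,j+1}$ and repairing the diagonal. Your sketched candidate — delete the first row of $A$ and repair "row and column $k-1$" — localizes the repair near column $k-1$, whereas the actual adjustments occur along the entire diagonal; a case analysis organized around the sign pattern of row $k$ of $A$ is unlikely to converge to this.

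Two smaller remarks. First, once the construction is in hand, your induction scaffolding is unnecessary: the paper sums the effect of the construction on $(R,S,T)$ over all $B\in\DSASM(n-1)$ in one step (see~\eqref{diagref4}), uses Proposition~\ref{refprop}(i$'$) only to rewrite the boundary contribution from $T(B)=1$, and obtains~\eqref{diagref3} directly; \eqref{diagref1} is the $r=t=1$ specialization rather than a separate consequence of~\eqref{diagref2}. Second, if you do keep the induction, note that matching the coefficient of $t^n$ requires $X_{n-3}(r,r+1,1)=X_{n-2}(r,1,1)$, i.e.\ the hypothesis at $n-2$ as well as at $n-1$, so you need strong induction (or two base cases carried along).
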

\begin{proof}
The proof of~\eqref{diagref1} (which is the $r=t=1$ case of~\eqref{diagref2} and~\eqref{diagref3}) will be given first.
Consider any $A\in\DSASM(n-1)$.
For each nonzero diagonal entry of~$A$, either leave it unchanged or replace it by zero,
and leave all other entries of~$A$ unchanged.  Then, for each $(n-1)\times(n-1)$ matrix~$\bar{A}$ which can be obtained from this process (of which there
are $2^{S(A)}$ cases), construct an $n\times n$ matrix $\widehat{A}$, where the strictly upper and lower triangular parts
of~$\widehat{A}$ are given by the upper and lower triangular parts, respectively, of~$\bar{A}$,
and the main diagonal of~$\widehat{A}$ is obtained by requiring that the sum of entries in each row (or column) is~$1$.
It can easily be checked that each such~$\widehat{A}$ is an element of $\DSASM(n)$, and that each element of $\DSASM(n)$ is obtained exactly once by taking all
$A\in\DSASM(n-1)$, thereby proving~\eqref{diagref1}.

As an example, let $n=3$.  Taking
$A=\bigl(\begin{smallmatrix}1&0\\0&1\end{smallmatrix}\bigr)$
gives
$\widehat{A}=\Bigl(\begin{smallmatrix}0&1&0\\1&-1&1\\0&1&0\end{smallmatrix}\Bigr)$
(if $A_{11}$ and $A_{22}$ are both left unchanged),
$\widehat{A}=\Bigl(\begin{smallmatrix}1&0&0\\0&0&1\\0&1&0\end{smallmatrix}\Bigr)$ (if
$A_{11}$ is replaced by~0 and $A_{22}$ is left unchanged),
$\widehat{A}=\Bigl(\begin{smallmatrix}0&1&0\\1&0&0\\0&0&1\end{smallmatrix}\Bigr)$ (if
$A_{11}$ is left unchanged and $A_{22}$ is replaced by~0),
and $\widehat{A}=\Bigl(\begin{smallmatrix}1&0&0\\0&1&0\\0&0&1\end{smallmatrix}\Bigr)$
(if~$A_{11}$ and~$A_{22}$ are both replaced by~0),
while taking $A=\bigl(\begin{smallmatrix}0&1\\1&0\end{smallmatrix}\bigr)$
gives
$\widehat{A}=\Bigl(\begin{smallmatrix}0&0&1\\0&1&0\\1&0&0\end{smallmatrix}\Bigr)$.

Proceeding to the proof of~\eqref{diagref2} and~\eqref{diagref3}, for each $A\in\DSASM(n-1)$, let
$\mathcal{E}(A)$ be the set of all $\widehat{A}\in\DSASM(n)$ which are constructed
from $A$ using the process outlined above. Considering the behaviour of the statistics~$R$, $S$ and~$T$ under this process gives
\begin{equation}\label{diagref4}\sum_{\widehat{A}\in\mathcal{E}(A)}r^{R(\widehat{A})}\,t^{T(\widehat{A})}=
\begin{cases}r^{R(A)}\,(r+1)^{S(A)-1}\,(rt+1)\,t,&T(A)=1,\\
r^{R(A)}\,(r+1)^{S(A)}\,t^{T(A)+1},&T(A)\ge2.\end{cases}\end{equation}
Rewriting the RHS of~\eqref{diagref4} as $r^{R(A)}\,(r+1)^{S(A)}\,t^{T(A)+1}+\delta_{T(A),1}\,r^{R(A)}
(r+1)^{S(A)-1}\,t(1-t)$,
and summing over all $A\in\DSASM(n-1)$, gives
\begin{equation}\label{diagref5}X_n(r,1,t)=t\,X_{n-1}(r,r+1,t)+t(1-t)\sum_{\substack{A\in\DSASM(n-1)\\A_{11}=1}}r^{R(A)}\,(r+1)^{S(A)-1},\end{equation}
and applying (i$'$) of Proposition~\ref{refprop} to the sum in the second term on the RHS of~\eqref{diagref5} then gives~\eqref{diagref3}.
\end{proof}

\psset{unit=7mm}
Note that the process outlined in the proof of Proposition~\ref{diagrefprop}
can also be described naturally in terms of the six-vertex model
configurations which will be defined in Section~\ref{sixvertexmodelconfig}. In this
description, each configuration in $\SVC(n-1)$ is used to construct elements of $\SVC(n)$ by extending the
local configurations on the left boundary as follows:
\Lup\ gives either \Vv\ or \Viii, \Ldown\ gives either \Vvi\ or \Viv, \Lout\
gives \Vi, and \Lin\ gives \Vii.

Note also that the $t=1$ case of~\eqref{diagref2} or~\eqref{diagref3} for TSASMs is
observed by Hagendorf and Li\'{e}nardy~\cite[Eq.~(204)]{HagLie21},~\cite[Eq.~(9.63)]{Lie20}.

\section{Main results for the DSASM generating function and number of DSASMs}\label{mainresults}
In this section, the main results for the DSASM generating function $X_n(r,s,t)$ and the number of $n\times n$ DSASMs are stated.
The primary statements of the results are given in Section~\ref{mainresults1}, and certain alternative statements of the results are given in Sections~\ref{mainresults2} and~\ref{mainresults3}.
The notation and conventions used in the results are defined in Section~\ref{notation}.
The proofs of the results are given in Section~\ref{proofs}, and depend on six-vertex model results
which are obtained in Section~\ref{sixvertexmodelDSASMs}.

\subsection{Primary statement of results for \texorpdfstring{$X_n(r,s,1)$}{Xn(r,s,1)}, $|\DSASM(n)|$ and \texorpdfstring{$X_n(r,s,t)$}{Xn(r,s,t)}}\label{mainresults1}
The main results for $X_n(r,s,1)$, $X_n(r,s,t)$ and $|\DSASM(n)|$ are as follows.
\begin{theorem}\label{Xrstheorem}
The DSASM generating function $X_n(r,s,t)$ at $t=1$ is given by
\begin{equation}\label{Xrs}X_n(r,s,1)=s^{\odd(n)}
\Pf_{\odd(n)\le i<j\le n-1}\biggl([u^iv^j]\,\frac{v-u}{1-uv}\biggl(s^2+\frac{r(1+u)(1+v)}{(1-ru)(1-rv)-uv}\biggr)\biggr),\end{equation}
where the coefficients which comprise the entries for the Pfaffian can be evaluated explicitly as
\begin{multline}\label{Xrsentries}
[u^iv^j]\,\frac{v-u}{1-uv}\biggl(s^2+\frac{r(1+u)(1+v)}{(1-ru)(1-rv)-uv}\biggr)\\
\shoveleft{\;=s^2\,\delta_{i+1,j}+\sum_{0\le l\le k\le i}\biggl[\biggl(\binom{k}{l}\binom{j-i+k-1}{l}-\binom{k-1}{l}\binom{j-i+k}{l}\biggr)r^{j-i+2k-2l}}\\
\:\;\;+\biggl(\binom{k}{l}\binom{j-i+k-2}{l}-\binom{k-2}{l}\binom{j-i+k}{l}\biggr)r^{j-i+2k-2l-1}\\
+\biggl(\binom{k-1}{l}\binom{j-i+k-2}{l}-\binom{k-2}{l}\binom{j-i+k-1}{l}\biggr)r^{j-i+2k-2l-2}\biggr],\end{multline}
for $0\le i<j$.
\end{theorem}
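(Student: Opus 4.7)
The plan is to establish the two claims of Theorem~\ref{Xrstheorem} separately: the Pfaffian formula~\eqref{Xrs} and the closed form~\eqref{Xrsentries} for its entries.

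For~\eqref{Xrs}, the approach is to deduce it from the six-vertex model machinery developed later in the paper. By Lemma~\ref{ZXlemma}, $X_n(r,s,1)$ coincides, up to an easily identified prefactor, with a specialization of the partition function $Z$ of Section~\ref{sixvertexmodelDSASMs} at a particular choice of its spectral parameters. Theorem~\ref{ZPftheorem} gives a Pfaffian expression for $Z$ at generic parameter values, in the form (rational prefactor) times $\Pf(\ldots)$. At the specialization relevant to $X_n(r,s,1)$, however, both the denominator of the prefactor and the entries of the Pfaffian vanish simultaneously, so direct substitution is indeterminate. Theorem~\ref{Pfaffianidtheorem} is designed precisely to resolve such a $0/0$, expressing the limit as a Pfaffian of coefficients of a bivariate power series constructed from the original rational function. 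Applying it here and identifying the resulting series as $\frac{v-u}{1-uv}\bigl(s^2+\frac{r(1+u)(1+v)}{(1-ru)(1-rv)-uv}\bigr)$ yields~\eqref{Xrs}; the factor $s^{\odd(n)}$ absorbs the contribution of the forced central entry when $n$ is odd, which also shifts the index range of the Pfaffian by one.

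For the closed form~\eqref{Xrsentries}, the argument is direct coefficient extraction. Split the generating function as $G_1+G_2$ with $G_1=\frac{s^2(v-u)}{1-uv}$ and $G_2=\frac{r(v-u)(1+u)(1+v)}{(1-uv)((1-ru)(1-rv)-uv)}$. The expansion $\frac{v-u}{1-uv}=\sum_{k\geq 0}(u^kv^{k+1}-u^{k+1}v^k)$ gives $[u^iv^j]G_1=s^2\delta_{i+1,j}$ for $0\le i<j$, which accounts for the first term on the RHS of~\eqref{Xrsentries}. For $G_2$, the geometric expansion
\[\frac{1}{(1-ru)(1-rv)-uv}=\sum_{k\geq 0}\frac{(uv)^k}{(1-ru)^{k+1}(1-rv)^{k+1}},\]
combined with $\frac{1}{(1-ru)^{k+1}}=\sum_{m\geq 0}\binom{m+k}{k}r^mu^m$, gives the closed form $c_{a,b}:=[u^av^b]\frac{1}{(1-ru)(1-rv)-uv}=\sum_{l=0}^{\min(a,b)}\binom{a}{l}\binom{b}{l}r^{a+b-2l}$. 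Expanding $(1+u)(1+v)=1+u+v+uv$ then expresses $[u^av^b]\frac{(1+u)(1+v)}{(1-ru)(1-rv)-uv}$ as $c_{a,b}+c_{a-1,b}+c_{a,b-1}+c_{a-1,b-1}$, and the factor $\frac{v-u}{1-uv}$ converts $[u^iv^j]G_2$ into the difference of two sums over $k$ of four such shifted $c$'s each, one sum of length $i+1$ (from the $v$ part) and the other of length $i$ (from the $-u$ part), with $v$-indices differing by two. Shifting the shorter sum by one to bring both sums onto a common range $k=1,\ldots,i$ produces cancellations (of the $c_{k-1,k+d-1}$ terms) that collapse the resulting eight-term expression into the three surviving differences of products of binomial coefficients in~\eqref{Xrsentries}, one per power $r^{d+2k-2l-\epsilon}$ with $\epsilon\in\{0,1,2\}$ and $d=j-i$.

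The main obstacle is the Pfaffian part: navigating the $0/0$ degeneration at the DSASM specialization and verifying that all prefactors combine into the compact form $s^{\odd(n)}\Pf(\ldots)$. Almost all of the conceptual and technical weight there is already packaged into Theorems~\ref{ZPftheorem} and~\ref{Pfaffianidtheorem}, whose own proofs rest on the Yang--Baxter and reflection equations of the six-vertex model. By contrast, the coefficient-extraction step for~\eqref{Xrsentries}, while requiring some care with index ranges and reindexing, is essentially mechanical once the geometric expansion is in hand.
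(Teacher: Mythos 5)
Your proposal follows essentially the same route as the paper: equation~\eqref{Xrs} is obtained there exactly via Lemma~\ref{ZXlemma} at $z=1$, the Pfaffian expression of Theorem~\ref{ZPftheorem}, and the $0/0$ resolution of Theorem~\ref{Pfaffianidtheorem} (with the substitution $u_i^2=x_i+1$ and the transformation $h(x)=1/(q^2x+\sigma(q^2))$ converting the partition-function entries into the explicit function of $r=q^2+\bar{q}^2$), while~\eqref{Xrsentries} is likewise proved by mechanical coefficient extraction from geometric expansions followed by a reindexing of the summation variable. The only difference is the order in which you combine the factors $1/(1-uv)$ and $(1+u)(1+v)$ during the coefficient extraction, which is immaterial.
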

The proof of Theorem~\ref{Xrstheorem} is given in Sections~\ref{proofXrs} and~\ref{Xrsentriesproof}.  In Section~\ref{Xrsentriesproof},
recurrence relations (see~\eqref{DSASMentryrecurr1}--\eqref{DSASMentryrecurr3}) which can be used for the efficient computation of
the numbers in~\eqref{Xrsentries} are also derived.

\begin{corollary}\label{numDSASMcoroll}
The number of $n\times n$ DSASMs is
\begin{equation}\label{numDSASM2}
|\DSASM(n)|=\Pf_{\odd(n)\le i<j\le n-1}\biggl([u^iv^j]\,\frac{(v-u)(2+uv)}{(1-uv)(1-u-v)}\biggr),\end{equation}
where the coefficients which comprise the entries for the Pfaffian can be evaluated explicitly as
\begin{align}
\notag[u^iv^j]\,\frac{(v-u)(2+uv)}{(1-uv)(1-u-v)}&=\sum_{k=0}^i(3-\delta_{k,0})\,\biggl(\binom{i+j-2k-1}{i-k}-\binom{i+j-2k-1}{j-k}\biggr)\\
\label{numDSASMentries}&=(j-i)\,\sum_{k=0}^i\frac{3-\delta_{k,0}}{i+j-2k}\,\binom{i+j-2k}{i-k},\end{align}
for $0\le i<j$.
\end{corollary}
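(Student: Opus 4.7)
The plan is to deduce Corollary~\ref{numDSASMcoroll} from Theorem~\ref{Xrstheorem} by the specialization $r=s=1$, which corresponds to straight enumeration, $X_n(1,1,1)=|\DSASM(n)|$. Under this substitution the prefactor becomes $s^{\odd(n)}=1$, and the rational kernel inside the coefficient extraction in \eqref{Xrs} simplifies through the identity
\begin{equation*}
1+\frac{(1+u)(1+v)}{(1-u)(1-v)-uv}=1+\frac{(1+u)(1+v)}{1-u-v}=\frac{(1-u-v)+(1+u+v+uv)}{1-u-v}=\frac{2+uv}{1-u-v},
\end{equation*}
since $(1-u)(1-v)-uv=1-u-v$. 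Multiplying by the factor $\frac{v-u}{1-uv}$ gives exactly the kernel $\frac{(v-u)(2+uv)}{(1-uv)(1-u-v)}$ appearing in \eqref{numDSASM2}, and the Pfaffian formula follows.

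For the explicit binomial form \eqref{numDSASMentries}, I would expand the kernel in stages. First, write $\frac{1}{1-uv}=\sum_{k\ge 0}(uv)^k$ and $\frac{1}{1-u-v}=\sum_{a,b\ge 0}\binom{a+b}{a}u^av^b$, so that
\begin{equation*}
[u^iv^j]\,\frac{1}{(1-uv)(1-u-v)}=\sum_{k=0}^{\min(i,j)}\binom{i+j-2k}{i-k}.
\end{equation*}
Multiplying by $2+uv$ shifts and combines terms so that
\begin{equation*}
[u^iv^j]\,\frac{2+uv}{(1-uv)(1-u-v)}=\sum_{k=0}^{\min(i,j)}(3-\delta_{k,0})\binom{i+j-2k}{i-k},
\end{equation*}
the $k=0$ contribution keeping coefficient $2$ and each $k\ge 1$ receiving an additional $+1$ from the $uv$ summand. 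Next, multiplying by $v-u$ replaces $[u^iv^j]$ with $[u^iv^{j-1}]-[u^{i-1}v^j]$. Using the symmetry $\binom{i+j-2k-1}{i-k-1}=\binom{i+j-2k-1}{j-k}$ and extending the second sum harmlessly to $k=i$ (the added term vanishes for $j>i$), the two pieces combine into
\begin{equation*}
\sum_{k=0}^{i}(3-\delta_{k,0})\biggl(\binom{i+j-2k-1}{i-k}-\binom{i+j-2k-1}{j-k}\biggr),
\end{equation*}
which is the first equality of \eqref{numDSASMentries}.

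The second equality follows from the elementary identity $\binom{n}{a}-\binom{n}{a-1}=\frac{n-2a+1}{n+1}\binom{n+1}{a}$, applied with $n=i+j-2k-1$ and $a=i-k$ (after rewriting $\binom{i+j-2k-1}{j-k}=\binom{i+j-2k-1}{i-k-1}$), which yields the prefactor $\frac{j-i}{i+j-2k}$. There is no real obstacle here: once Theorem~\ref{Xrstheorem} is in hand the argument is a routine but careful bookkeeping exercise in formal power series. The only point that merits attention is verifying that the Pfaffian index range $\odd(n)\le i<j\le n-1$ matches in both statements, which is automatic since the $s^{\odd(n)}$ prefactor and the index range are unchanged under the specialization $s=1$.
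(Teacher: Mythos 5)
Your proposal is correct and follows essentially the same route as the paper: specialize $r=s=1$ in Theorem~\ref{Xrstheorem}, simplify the kernel via $(1-u)(1-v)-uv=1-u-v$, and evaluate the coefficients by expanding $\frac{1}{(1-uv)(1-u-v)}$ and tracking the effect of multiplying by $2+uv$ and $v-u$. The only cosmetic difference is that the paper obtains $[u^iv^j]\,\frac{1}{(1-uv)(1-u-v)}=\sum_{k}\binom{i+j-2k}{i-k}$ by setting $r=1$ in its earlier formula~\eqref{coeff3} and invoking Chu--Vandermonde, whereas you rederive the same sum by direct convolution, and you make explicit the ``simple binomial coefficient identity'' the paper leaves unstated.
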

The proof of Corollary~\ref{numDSASMcoroll} is given in Section~\ref{numDSASMentriesproof}.

Note that it follows from the first equality in~\eqref{numDSASMentries} that all entries for the Pfaffian in~\eqref{numDSASM2}
are integers, and it follows from the second equality in~\eqref{numDSASMentries} that these integers are positive.
More generally, it can be shown straightforwardly using~\eqref{Xrsentries} that the entries for the Pfaffian in~\eqref{Xrs} are polynomials in~$r$ and~$s$
with nonnegative integer coefficients.
This suggests that it may be possible to show,
using Pfaffian analogues of the Lindstr\"{o}m--Gessel--Viennot theorem due to Okada~\cite{Oka89} and Stembridge~\cite{Ste90},
that $X_n(r,s,1)$ is also a generating function for certain families of nonintersecting lattice paths.

\begin{theorem}\label{Xrsttheorem}
For $n\ge2$, the DSASM generating function $X_n(r,s,t)$ satisfies
\begin{multline}\label{Xrst}
(t-1+rt)\,(t-1)^{n-1}\,X_n(r,s,t)=st\bigl((t-1)^n+\odd(n)\,r^n\,t^n\bigr)\,X_{n-1}(r,s,1)+r^n\,s^{\odd(n)}\,t^{n+1}\\[1mm]
\times\!\Pf_{\odd(n)\le i<j\le n-1}\left(\begin{cases}\displaystyle[u^iv^j]\,\frac{v-u}{1-uv}\biggl(s^2+\frac{r(1+u)(1+v)}{(1-ru)(1-rv)-uv}\biggr),&j\le n-2\\[4mm]
\displaystyle[u^i]\,\frac{t-1-rtu}{rt-(t-1)u}\biggl(s^2+\frac{r(t-1+rt)(1+u)}{r-(t-1+r^2)u}\biggr),&j=n-1\end{cases}\right),\end{multline}
where the coefficients which comprise the entries for the
Pfaffian can be evaluated explicitly using~\eqref{Xrsentries} for $0\le i<j\le n-2$, and as
\begin{align}\label{Xrstentries}
\notag&\!\![u^i]\,\frac{t-1-rtu}{rt-(t-1)u}\biggl(s^2+\frac{r(t-1+rt)(1+u)}{r-(t-1+r^2)u}\biggr)\\[1mm]
\notag&=\begin{cases}\displaystyle\frac{(t-1)(s^2+t-1+rt)}{rt},&i=0,\\[4mm]
\displaystyle\frac{t-1+rt}{(rt)^{i+1}}\,\Bigl(s^2\,(t-1-rt)\,(t-1)^{i-1}\\
\hspace*{11mm}\mbox{}+(t-1+rt)^2\,(t-1-rt)\sum_{k=0}^{i-2}(t-1)^k\,t^{i-k-2}\,(t-1+r^2)^{i-k-2}\\
\hspace*{34mm}\mbox{}+\bigl((t-1)^2(t+1)+rt(t-1-r)\bigr)\,t^{i-1}\,(t-1+r^2)^{i-1}\Bigr),&i\ge1\end{cases}\\[2mm]
&=\begin{cases}\displaystyle\frac{(t-1)(s^2+t-1+rt)}{rt},&i=0,\\[4mm]
\displaystyle\frac{t-1+rt}{(rt)^{i+1}}\,\Bigl(s^2\,(t-1-rt)\,(t-1)^{i-1}+\Bigl((t-1+rt)^2\,(t-1-rt)\,(t-1)^{i-1}\\
\hspace*{22mm}\mbox{}-(t-1+r+r^2)\,\bigl((t-1)^2-r^2\bigr)\,t^{i+1}\,(t-1+r^2)^{i-1}\Bigr)\Big/\\
\hspace*{90mm}\mbox{}\bigl(t-1-t(t-1+r^2)\bigr)\Bigr),&i\ge1,\end{cases}
\end{align}
for $0\le i<j=n-1$.
\end{theorem}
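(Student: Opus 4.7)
The plan is to follow the same strategy as for Theorem~\ref{Xrstheorem}, while retaining one spectral parameter as a free variable in order to encode the statistic $T$. More precisely, Lemma~\ref{ZXlemma} recovers the three-statistic generating function $X_n(r,s,t)$ from the six-vertex model partition function by a suitable specialization of the spectral parameters $u_1,\ldots,u_n$, with $r$ and $s$ produced from a common value and $t$ produced from a single distinguished value. For the $t=1$ case all $u_i$ collapse to a single value; for general $t$, all but one (say $u_n$) are still specialized to a common value.

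The first step is to substitute the Pfaffian expression of Theorem~\ref{ZPftheorem} for the partition function into the specialization dictated by Lemma~\ref{ZXlemma}. Both the Pfaffian and the denominator of its prefactor vanish at that specialization, so one applies Theorem~\ref{Pfaffianidtheorem} (and its Corollary~\ref{Pfaffianidcoroll}) to evaluate the resulting indeterminate form. In the $t=1$ case this yields a Pfaffian of symmetric bivariate coefficient extractions $[u^iv^j]$, reproducing~\eqref{Xrs}. When $u_n$ is kept free, the same identity is applied only to the coalescing block indexed by $i,j\le n-2$; the entries of that block are unchanged from Theorem~\ref{Xrstheorem} since they do not depend on $u_n$. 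The remaining row and column, indexed by $j=n-1$, still depend on $u_n$, and a change of variable converting $u_n$ to a new indeterminate~$u$ (with $t$ entering through the substitution itself) turns their generating function into the univariate coefficient extractions $[u^i]$ displayed in~\eqref{Xrst}. The correction term involving $X_{n-1}(r,s,1)$ and the overall prefactor $(t-1+rt)(t-1)^{n-1}$ arise from the normalization in Lemma~\ref{ZXlemma} together with a boundary/reduction term produced by the asymmetric application of Theorem~\ref{Pfaffianidtheorem}.

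The second step is the explicit evaluation of the univariate coefficients in~\eqref{Xrstentries}. The rational function
\[
\frac{t-1-rtu}{rt-(t-1)u}\biggl(s^2+\frac{r(t-1+rt)(1+u)}{r-(t-1+r^2)u}\biggr)
\]
has simple poles in $u$ at $u=rt/(t-1)$ and $u=r/(t-1+r^2)$, together with a polynomial part. Partial fraction decomposition gives an $s^2$ contribution proportional to a geometric series in $(t-1)u/(rt)$, plus a second contribution that splits as a sum of a geometric series in $(t-1)u/(rt)$ and one in $(t-1+r^2)u/r$. Extracting the coefficient of $u^i$ then yields the first displayed form of~\eqref{Xrstentries}; summing the finite geometric series in $k$ in closed form gives the second, more compact form.

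The main obstacle is the careful bookkeeping needed to obtain the exact prefactor $(t-1+rt)(t-1)^{n-1}$ and the exact coefficient of $X_{n-1}(r,s,1)$ on the right-hand side of~\eqref{Xrst}, as these depend on the precise normalization in Lemma~\ref{ZXlemma} and on how Theorem~\ref{Pfaffianidtheorem} distributes denominator factors between the Pfaffian entries and the leftover reduction term. The parity split must also be handled separately, since the Pfaffian runs over $\odd(n)\le i<j\le n-1$, and in the odd-$n$ case the explicit $s^{\odd(n)}$ factor and the extra $\odd(n)\,r^n t^n$ term need to be tracked through the specialization.
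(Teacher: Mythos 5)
Your proposal follows essentially the same route as the paper: keep one spectral parameter $z$ free in Lemma~\ref{ZXlemma}, substitute the Pfaffian of Theorem~\ref{ZPftheorem}, apply Theorem~\ref{Pfaffianidtheorem} with only the $n-1$ coalescing variables (so $m=n-1$), track the $\odd(n)$ correction via the column expansion, and evaluate the $j=n-1$ entries by coefficient extraction from the rational function (the paper uses~\eqref{Xrstentriesfun} and then sums the convolution, which is equivalent to your partial-fraction/geometric-series computation). The only slight imprecision is that the $[u^i]$ extraction variable is one of the coalescing power-series variables, while $u_n=z$ enters as the fixed second argument $v=(t-1)/(rt)$ of $F$, rather than being itself converted into the extraction variable; this does not affect the validity of the approach.
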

The proof of Theorem~\ref{Xrsttheorem} is given in Section~\ref{proofXrsttheorem}.

Note that in the Pfaffian of~\eqref{Xrst}, the function for the entries with $j=n-1$ is related simply to the function for
the entries with $j\le n-2$ by
\begin{multline}\label{Xrstentriesfun}\frac{t-1-rtu}{rt-(t-1)u}\biggl(s^2+\frac{r(t-1+rt)(1+u)}{r-(t-1+r^2)u}\biggr)\\
=\frac{v-u}{1-uv}\biggl(s^2+\frac{r(1+u)(1+v)}{(1-ru)(1-rv)-uv}\biggr)\bigg|_{v=(t-1)/(rt)}.\end{multline}

Note also that if~\eqref{Xrst} is used to compute $X_n(r,s,t)$, then $X_{n-1}(r,s,1)$, which appears on the RHS, needs to be obtained first, and this can be done either
directly using~\eqref{Xrs} or recursively using~\eqref{Xrst} together with $X_1(r,s,1)=s$ from~\eqref{X123}.

\subsection{Alternative statement of results for \texorpdfstring{$X_n(r,s,1)$}{Xn(r,s,1)} and $|\DSASM(n)|$}\label{mainresults2}
An alternative statement of Theorem~\ref{Xrstheorem} is that
\begin{multline}\label{XrsReform}X_n(r,s,1)\\
=s^{\odd(n)}\Pf_{0\le i<j\le n-\even(n)}\left(\begin{cases}\displaystyle[u^iv^j]\,\frac{v-u}{1-uv}\biggl(\frac{s^2}{(1+u)(1+v)}\\
\displaystyle\hspace{31mm}\mbox{}+\frac{r}{(1-ru)(1-rv)-uv}\biggr),&j\le n-1\\[3mm]
(-1)^i,&j=n\end{cases}\right),\end{multline}
where the coefficients which comprise the entries for the Pfaffian with $j\le n-1$ are explicitly
\begin{multline}\label{XrsentriesReform}
[u^iv^j]\,\frac{v-u}{1-uv}\biggl(\frac{s^2}{(1+u)(1+v)}+\frac{r}{(1-ru)(1-rv)-uv}\biggr)\\
\shoveleft{\;=s^2\,(-1)^{i+j+1}+\sum_{0\le l\le k\le i}\biggl(\binom{k}{l}\binom{j-i+k-1}{l}-\binom{k-1}{l}\binom{j-i+k}{l}\biggr)r^{j-i+2k-2l}},\end{multline}
for $0\le i<j$.

Some differences between the Pfaffian expression for~$X_n(r,s,1)$ provided by~\eqref{Xrs} and~\eqref{Xrsentries},
and that provided by~\eqref{XrsReform} and~\eqref{XrsentriesReform}, are as follows. The explicit expression~\eqref{XrsentriesReform} for the entries in~\eqref{XrsReform}
is considerably shorter than the explicit expression~\eqref{Xrsentries} for the entries in~\eqref{Xrs}.
For $n$ even, the range for the entries
is $0\le i<j\le n-1$ in both~\eqref{Xrs} and~\eqref{XrsReform}.  However, for~$n$ odd, the range for the entries is
$1\le i<j\le n-1$ in~\eqref{Xrs}, but $0\le i<j\le n$ in~\eqref{XrsReform}, and in~\eqref{XrsReform} the entries have a separate form for $j=n$.

A formula for $|\DSASM(n)|$ which follows from~\eqref{XrsReform} and~\eqref{XrsentriesReform},
and which provides an alternative to the formulae of Corollary~\ref{numDSASMcoroll}, is
\begin{multline}\label{numDSASMReform}
|\DSASM(n)|\\
=\Pf_{0\le i<j\le n-\even(n)}\left(\begin{cases}
\displaystyle(-1)^{i+j+1}+\sum_{k=0}^i\frac{j-i}{i+j-2k}\,\binom{i+j-2k}{i-k},&j\le n-1\\[3mm]
(-1)^i,&j=n\end{cases}\right).\end{multline}

The proofs of~\eqref{XrsReform}--\eqref{numDSASMReform} are given in Section~\ref{proofXrsReform}.

It would also be possible to obtain an alternative statement of Theorem~\ref{Xrsttheorem} with a similar form to the alternative
statement of Theorem~\ref{Xrstheorem} provided by~\eqref{XrsReform} and~\eqref{XrsentriesReform}, but this will not be done here. Instead, an alternative statement
of~\eqref{Xrst} from Theorem~\ref{Xrsttheorem} with a somewhat different form will be given in Section~\ref{mainresults3}.

Note that~\eqref{XrsReform} leads to a compact determinant formula for $X_n(r,s,1)\,X_{n+1}(r,s,1)$
(and, similarly,~\eqref{numDSASMReform} leads to a compact determinant formula for $|\DSASM(n)|\,|\DSASM(n+1)|$).
Specifically, using~\eqref{XrsReform} together with a general determinant--Pfaffian identity obtained by Okada and Krattenthaler~\cite[Lem.~8]{OkaKra98} gives
\begin{multline}\label{Xrsdet}X_n(r,s,1)\,X_{n+1}(r,s,1)\\=
s\det_{0\le i,j\le n}\left(\begin{cases}\displaystyle[u^iv^j]\,\frac{v-u}{1-uv}\biggl(\frac{s^2}{(1+u)(1+v)}+\frac{r}{(1-ru)(1-rv)-uv}\biggr),&j\le n-1\\[3mm]
(-1)^i,&j=n\end{cases}\right).\end{multline}
The identity in~\cite[Lem.~8]{OkaKra98} can itself be obtained easily (up to overall sign factors, which can be determined by considering special cases)
by applying the Desnanot--Jacobi determinant identity to the matrix
$\left(\begin{smallmatrix}A&b&c\\-{}^tb&0&-d\\-{}^tc&d&0\end{smallmatrix}\right)$ in~\cite[Lem.~8]{OkaKra98},
and using standard determinant and Pfaffian properties.

\subsection{Alternative statement of result for \texorpdfstring{$X_n(r,s,t)$}{Xn(r,s,t)}}\label{mainresults3}
An alternative statement of the result~\eqref{Xrst} for $X_n(r,s,t)$
will be given in terms of a function $f_n(u,v)$, defined as follows.
For any nonnegative integer~$n$, let $f_n(u,v)$ denote the $n$th divided difference of $\frac{v-u}{1-uv}\bigl(s^2+\frac{r(1+u)(1+v)}{(1-ru)(1-rv)-uv}\bigr)$ with respect to $v$,
evaluated at $\underbrace{0,\ldots,0}_{n},v$.
Accordingly, $f_n(u,v)$ can be defined recursively using
\begin{equation}f_n(u,v)=\begin{cases}\displaystyle\frac{v-u}{1-uv}\biggl(s^2+\frac{r(1+u)(1+v)}{(1-ru)(1-rv)-uv}\biggr),&n=0,\\[3.8mm]
\displaystyle\frac{f_{n-1}(u,v)-f_{n-1}(u,0)}{v},&n\ge1,\end{cases}\end{equation}
or defined as a power series in~$v$ with coefficients
\begin{equation}\label{fddcoeff}[v^i]f_n(u,v)=[v^{i+n}]\,\frac{v-u}{1-uv}\biggl(s^2+\frac{r(1+u)(1+v)}{(1-ru)(1-rv)-uv}\biggr),\end{equation}
for each nonnegative integer~$i$. It follows from~\eqref{fddcoeff} that an explicit expression for $f_n(u,v)$ is
\begin{multline}f_n(u,v)=\frac{v-u}{v^n\,(1-uv)}\biggl(s^2+\frac{r(1+u)(1+v)}{(1-ru)(1-rv)-uv}\biggr)\\
-\sum_{i=0}^{n-1}\,[w^i]\,\frac{w-u}{1-uw}\biggl(s^2+\frac{r(1+u)(1+w)}{(1-ru)(1-rw)-uw}\biggr)\,v^{i-n}.\end{multline}
The alternative statement of~\eqref{Xrst} is now that, for $n\ge2$,
\begin{multline}\label{XrstDiv}
(t-1+rt)\,X_n(r,s,t)=st(t-1)\,X_{n-1}(r,s,1)+r\,s^{\odd(n)}\,t^2\\
\times\Pf_{\odd(n)\le i<j\le n-1}\left(\begin{cases}\displaystyle[u^iv^j]\,\frac{v-u}{1-uv}\biggl(s^2+\frac{r(1+u)(1+v)}{(1-ru)(1-rv)-uv}\biggr),&j\le n-2\\[4.5mm]
\displaystyle[u^i]\,f_{n-1}\biggl(\!u,\frac{t-1}{rt}\biggr),&j=n-1\end{cases}\right).\end{multline}
The proof of~\eqref{XrstDiv} is given in Section~\ref{proofXrstDiv}.

Some advantages of~\eqref{XrstDiv} relative to~\eqref{Xrst} are as follows.
The prefactors of $X_n(r,s,t)$ on the LHS and $X_{n-1}(r,s,1)$ on the RHS are simpler in~\eqref{XrstDiv} than in~\eqref{Xrst}.
Also, by setting $t=1$ in~\eqref{XrstDiv}, and noting from~\eqref{fddcoeff} that $f_{n-1}(u,0)=[v^{n-1}]\,\frac{v-u}{1-uv}\bigl(s^2+\frac{r(1+u)(1+v)}{(1-ru)(1-rv)-uv}\bigr)$,
it follows that~\eqref{XrstDiv} directly reduces to the expression~\eqref{Xrs} for $X_n(r,s,1)$,
whereas the process of obtaining~\eqref{Xrs} by taking $t\to1$ in~\eqref{Xrst} is not immediately clear.
On the other hand, an advantage of~\eqref{Xrst} is
that the expression for the entries for the Pfaffian for $j=n-1$ is simpler than in~\eqref{XrstDiv}.

\section{The six-vertex model for DSASMs}\label{sixvertexmodelDSASMs}
In this section, a version of the six-vertex model whose configurations are in bijection with DSASMs of fixed size is introduced and studied.
The configurations are defined in Section~\ref{sixvertexmodelconfig},
the bulk and boundary vertex weights are defined in Section~\ref{vertweights},
the partition function is defined in Section~\ref{partitionfun},
and some results involving fundamental properties of the partition function are obtained in Section~\ref{GenProp}.
In Section~\ref{Pfaffians}, the definition of a Pfaffian is given, and some general properties of Pfaffians
are outlined.  In Section~\ref{Pfaffianexpsect}, a Pfaffian expression for the partition function
is obtained.

\subsection{Six-vertex model configurations}\label{sixvertexmodelconfig}
Define a grid graph on a triangle as
\psset{unit=6mm}
\begin{equation}\label{Tn}\G_n=\raisebox{-17mm}{\pspicture(0.1,0.5)(8,7)
\psline[linewidth=0.5pt](1,6)(1,5)(3,5)\psline[linewidth=0.5pt,linestyle=dashed,dash=3pt 2pt](3,5)(4,5)\psline[linewidth=0.5pt](4,5)(6,5)
\psline[linewidth=0.5pt](2,6)(2,4)(3,4)\psline[linewidth=0.5pt,linestyle=dashed,dash=3pt 2pt](3,4)(4,4)\psline[linewidth=0.5pt](4,4)(6,4)
\psline[linewidth=0.5pt](3,6)(3,3)\psline[linewidth=0.5pt,linestyle=dashed,dash=3pt 2pt](3,3)(4,3)\psline[linewidth=0.5pt](4,3)(6,3)
\psline[linewidth=0.5pt](4,6)(4,3)\psline[linewidth=0.5pt,linestyle=dashed,dash=3pt 2pt](4,3)(4,2)\psline[linewidth=0.5pt](4,2)(6,2)
\psline[linewidth=0.5pt](5,6)(5,3)\psline[linewidth=0.5pt,linestyle=dashed,dash=3pt 2pt](5,3)(5,2)\psline[linewidth=0.5pt](5,2)(5,1)(6,1)
\multirput(1,6)(1,0){5}{$\ss\bullet$}\multirput(1,5)(1,0){6}{$\ss\bullet$}\multirput(2,4)(1,0){5}{$\ss\bullet$}
\multirput(3,3)(1,0){4}{$\ss\bullet$}\multirput(4,2)(1,0){3}{$\ss\bullet$}\multirput(5,1)(1,0){2}{$\ss\bullet$}
\rput(0.9,6.4){$\scriptscriptstyle(0,1)$}\rput(2.1,6.4){$\scriptscriptstyle(0,2)$}\rput(5,6.4){$\scriptscriptstyle(0,n)$}
\rput(0.9,4.7){$\scriptscriptstyle(1,1)$}\rput(7,5){$\scriptscriptstyle(1,n+1)$}
\rput(1.9,3.7){$\scriptscriptstyle(2,2)$}\rput(7,4){$\scriptscriptstyle(2,n+1)$}
\rput(4.8,0.7){$\scriptscriptstyle(n,n)$}\rput(7.05,1){$\scriptscriptstyle(n,n+1)$}
\multirput(3.25,6.4)(0.25,0){3}{.}\multirput(6.5,2.25)(0,0.25){3}{.}\multirput(3.02,2.38)(0.18,-0.18){3}{.}
\rput(8.2,3.5){.}\endpspicture}\end{equation}

The vertices of~$\G_n$ consist of top vertices $(0,j)$, $j=1,\ldots,n$, of degree~1, left boundary vertices
$(i,i)$, $i=1,\ldots,n$, of degree~2,
bulk vertices $(i,j)$, $1\le i<j\le n$, of degree~4,
and right boundary vertices $(i,n+1)$, $i=1,\ldots,n$, of degree~1.

Now define a six-vertex model configuration on~$\G_n$ to be an orientation of
the edges of~$\G_n$, such that:
\begin{itemize}
\item There are two edges directed into and two edges directed out of each bulk vertex, i.e., the so-called six-vertex rule is
satisfied at each bulk vertex.
\item Each edge incident with a top vertex is directed upward.
\item Each edge incident with a right boundary vertex is directed leftward.
\end{itemize}
The set of six-vertex model configurations on~$\G_n$ will be denoted as $\SVC(n)$.

\psset{unit=7mm}
For $C\in\SVC(n)$ and a vertex $(i,j)$ of~$\G_n$, define the local
configuration $C_{ij}$ at $(i,j)$ to be the orientation of the edges incident
to $(i,j)$.
Hence, the possible local configurations are~\T\ at a top vertex,
\Lup, \Ldown, \Lout\ or \Lin\ at a left boundary vertex, \Vv, \Vvi, \Vi,
\Vii, \Viii\ or \Viv\ at a bulk vertex,
and \R\ at a right boundary vertex.

There is a well-known bijection from $\ASM(n)$ to the set of six-vertex model configurations
with so-called domain-wall boundary conditions on a certain grid graph $\mathcal{S}_n$ on an $n\times n$ square.
This bijection was first observed (in a different form) by Robbins and Rumsey~\cite[pp.~179--180]{RobRum86}
and then (in the now standard form) by Elkies, Kuperberg, Larsen and Propp~\cite[Sec.~7]{ElkKupLarPro92b}.
The restriction of this bijection to $\DSASM(n)$, and the restriction of each corresponding six-vertex model
configuration on $\mathcal{S}_n$ to the part which lies on or above the main diagonal of~$\mathcal{S}_n$,
provides a bijection from $\DSASM(n)$ to~$\SVC(n)$.
In this bijection, the DSASM~$A$ which corresponds to configuration $C\in\SVC(n)$ has entries given by
\begin{equation}\label{bij}A_{ij}=A_{ji}=
\left\{\rule[-10mm]{0mm}{20mm}\right.\!\!\begin{array}{@{}r@{\;\;\;\;}l@{}}1,&C_{ij}=\Vv \text{ or }\Lup,\\
-1,&C_{ij}=\Vvi\text{ or }\Ldown,\\
0,&C_{ij}=\Vi, \Lout, \Vii, \Lin, \Viii\text{ or }\Viv,\end{array}\end{equation}
for $1\le i\le j\le n$.

Note that the (fixed) local configurations at the top and right boundary vertices are not associated with entries of $A$.

Note also that, in the six-vertex model configuration on $\mathcal{S}_n$
which corresponds to an $n\times n$ DSASM, the diagonal symmetry implies that the local configurations \Viii\ and \Viv\ cannot
occur at vertices on the main diagonal, and the associated local configurations at left boundary vertices of~$\G_n$
are simply the restrictions of \Vv, \Vvi, \Vi\ or \Vii\ to the upper and right edges.

\psset{unit=5mm}
As examples, the sets $\SVC(n)$ for $n=1$, $2$ and $3$ are
\begin{gather}\notag\SVC(1)=\left\{\raisebox{-1.8mm}{
\pspicture(0.9,0.9)(2.5,2.1)\multirput(1,2)(1,-1){2}{$\ss\bullet$}\rput(1,1){$\ss\bullet$}
\psline[linewidth=0.5pt](1,2)(1,1)(2,1)
\psdots[dotstyle=triangle*,dotscale=1.1](1,1.5)\psdots[dotstyle=triangle*,dotscale=1.1,dotangle=90](1.5,1)\endpspicture}\right\},\quad
\SVC(2)=\left\{\raisebox{-4.5mm}{
\pspicture(0.9,0.9)(3.8,3.1)\multirput(1,3)(1,0){2}{$\ss\bullet$}\multirput(1,2)(1,0){3}{$\ss\bullet$}\multirput(2,1)(1,0){2}{$\ss\bullet$}
\psline[linewidth=0.5pt](1,3)(1,2)(3,2)\psline[linewidth=0.5pt](2,3)(2,1)(3,1)
\multirput(1,2)(1,0){2}{\psdots[dotstyle=triangle*,dotscale=1.1](0,0.5)}\multirput(2,1)(0,1){2}{\psdots[dotstyle=triangle*,dotscale=1.1,dotangle=90](0.5,0)}
\psdots[dotstyle=triangle*,dotscale=1.1](2,1.5)\psdots[dotstyle=triangle*,dotscale=1.1,dotangle=90](1.5,2)\rput(3.4,1.9){,}\endpspicture
\pspicture(0.9,0.9)(3.5,3.1)\multirput(1,3)(1,0){2}{$\ss\bullet$}\multirput(1,2)(1,0){3}{$\ss\bullet$}\multirput(2,1)(1,0){2}{$\ss\bullet$}
\psline[linewidth=0.5pt](1,3)(1,2)(3,2)\psline[linewidth=0.5pt](2,3)(2,1)(3,1)
\multirput(1,2)(1,0){2}{\psdots[dotstyle=triangle*,dotscale=1.1](0,0.5)}\multirput(2,1)(0,1){2}{\psdots[dotstyle=triangle*,dotscale=1.1,dotangle=90](0.5,0)}
\psdots[dotstyle=triangle*,dotscale=1.1,dotangle=180](2,1.5)\psdots[dotstyle=triangle*,dotscale=1.1,dotangle=-90](1.5,2)\endpspicture}\right\},\\[3mm]
\label{SV3}\SVC(3)=\left\{\raisebox{-6.7mm}{
\pspicture(0.9,0.9)(4.8,4.1)\multirput(1,4)(1,0){3}{$\ss\bullet$}\multirput(1,3)(1,0){4}{$\ss\bullet$}\multirput(2,2)(1,0){3}{$\ss\bullet$}\multirput(3,1)(1,0){2}{$\ss\bullet$}
\psline[linewidth=0.5pt](1,4)(1,3)(4,3)\psline[linewidth=0.5pt](2,4)(2,2)(4,2)\psline[linewidth=0.5pt](3,4)(3,1)(4,1)
\multirput(1,3)(1,0){3}{\psdots[dotstyle=triangle*,dotscale=1.1](0,0.5)}
\multirput(3,1)(0,1){3}{\psdots[dotstyle=triangle*,dotscale=1.1,dotangle=90](0.5,0)}
\psdots[dotstyle=triangle*,dotscale=1.1](2,2.5)(3,2.5)(3,1.5)
\psdots[dotstyle=triangle*,dotscale=1.1,dotangle=90](1.5,3)(2.5,3)(2.5,2)
\rput(4.4,1.9){,}\endpspicture
\pspicture(0.9,0.9)(4.8,4.1)\multirput(1,4)(1,0){3}{$\ss\bullet$}\multirput(1,3)(1,0){4}{$\ss\bullet$}\multirput(2,2)(1,0){3}{$\ss\bullet$}\multirput(3,1)(1,0){2}{$\ss\bullet$}
\psline[linewidth=0.5pt](1,4)(1,3)(4,3)\psline[linewidth=0.5pt](2,4)(2,2)(4,2)\psline[linewidth=0.5pt](3,4)(3,1)(4,1)
\multirput(1,3)(1,0){3}{\psdots[dotstyle=triangle*,dotscale=1.1](0,0.5)}
\multirput(3,1)(0,1){3}{\psdots[dotstyle=triangle*,dotscale=1.1,dotangle=90](0.5,0)}
\psdots[dotstyle=triangle*,dotscale=1.1](2,2.5)(3,2.5)
\psdots[dotstyle=triangle*,dotscale=1.1,dotangle=180](3,1.5)
\psdots[dotstyle=triangle*,dotscale=1.1,dotangle=90](1.5,3)(2.5,3)
\psdots[dotstyle=triangle*,dotscale=1.1,dotangle=-90](2.5,2)
\rput(4.4,1.9){,}\endpspicture
\pspicture(0.9,0.9)(4.8,4.1)\multirput(1,4)(1,0){3}{$\ss\bullet$}\multirput(1,3)(1,0){4}{$\ss\bullet$}\multirput(2,2)(1,0){3}{$\ss\bullet$}\multirput(3,1)(1,0){2}{$\ss\bullet$}
\psline[linewidth=0.5pt](1,4)(1,3)(4,3)\psline[linewidth=0.5pt](2,4)(2,2)(4,2)\psline[linewidth=0.5pt](3,4)(3,1)(4,1)
\multirput(1,3)(1,0){3}{\psdots[dotstyle=triangle*,dotscale=1.1](0,0.5)}
\multirput(3,1)(0,1){3}{\psdots[dotstyle=triangle*,dotscale=1.1,dotangle=90](0.5,0)}
\psdots[dotstyle=triangle*,dotscale=1.1](3,2.5)(3,1.5)
\psdots[dotstyle=triangle*,dotscale=1.1,dotangle=180](2,2.5)
\psdots[dotstyle=triangle*,dotscale=1.1,dotangle=90](2.5,3)(2.5,2)
\psdots[dotstyle=triangle*,dotscale=1.1,dotangle=-90](1.5,3)
\rput(4.4,1.9){,}\endpspicture
\pspicture(0.9,0.9)(4.8,4.1)\multirput(1,4)(1,0){3}{$\ss\bullet$}\multirput(1,3)(1,0){4}{$\ss\bullet$}\multirput(2,2)(1,0){3}{$\ss\bullet$}\multirput(3,1)(1,0){2}{$\ss\bullet$}
\psline[linewidth=0.5pt](1,4)(1,3)(4,3)\psline[linewidth=0.5pt](2,4)(2,2)(4,2)\psline[linewidth=0.5pt](3,4)(3,1)(4,1)
\multirput(1,3)(1,0){3}{\psdots[dotstyle=triangle*,dotscale=1.1](0,0.5)}
\multirput(3,1)(0,1){3}{\psdots[dotstyle=triangle*,dotscale=1.1,dotangle=90](0.5,0)}
\psdots[dotstyle=triangle*,dotscale=1.1](2,2.5)
\psdots[dotstyle=triangle*,dotscale=1.1,dotangle=180](3,2.5)(3,1.5)
\psdots[dotstyle=triangle*,dotscale=1.1,dotangle=90](2.5,2)
\psdots[dotstyle=triangle*,dotscale=1.1,dotangle=-90](1.5,3)(2.5,3)
\rput(4.4,1.9){,}\endpspicture
\pspicture(0.9,0.9)(4.5,4.1)\multirput(1,4)(1,0){3}{$\ss\bullet$}\multirput(1,3)(1,0){4}{$\ss\bullet$}\multirput(2,2)(1,0){3}{$\ss\bullet$}\multirput(3,1)(1,0){2}{$\ss\bullet$}
\psline[linewidth=0.5pt](1,4)(1,3)(4,3)\psline[linewidth=0.5pt](2,4)(2,2)(4,2)\psline[linewidth=0.5pt](3,4)(3,1)(4,1)
\multirput(1,3)(1,0){3}{\psdots[dotstyle=triangle*,dotscale=1.1](0,0.5)}
\multirput(3,1)(0,1){3}{\psdots[dotstyle=triangle*,dotscale=1.1,dotangle=90](0.5,0)}
\psdots[dotstyle=triangle*,dotscale=1.1](3,2.5)
\psdots[dotstyle=triangle*,dotscale=1.1,dotangle=180](2,2.5)(3,1.5)
\psdots[dotstyle=triangle*,dotscale=1.1,dotangle=90](2.5,3)
\psdots[dotstyle=triangle*,dotscale=1.1,dotangle=-90](1.5,3)(2.5,2)\endpspicture}\right\},\end{gather}
where the elements of each set are written in an order corresponding to that
used for the DSASMs in~\eqref{DSASM123}, and
the element of $\SVC(7)$ which corresponds to the $7\times7$ DSASM
in~\eqref{DSASMexample} is
\begin{equation}\label{configex}\raisebox{-17mm}{\pspicture(0.8,0.8)(8.2,8.2)
\multirput(1,8)(1,0){7}{$\ss\bullet$}\multirput(1,7)(1,0){8}{$\ss\bullet$}\multirput(2,6)(1,0){7}{$\ss\bullet$}\multirput(3,5)(1,0){6}{$\ss\bullet$}
\multirput(4,4)(1,0){5}{$\ss\bullet$}\multirput(5,3)(1,0){4}{$\ss\bullet$}\multirput(6,2)(1,0){3}{$\ss\bullet$}\multirput(7,1)(1,0){2}{$\ss\bullet$}
\psline[linewidth=0.5pt](1,8)(1,7)(8,7)\psline[linewidth=0.5pt](2,8)(2,6)(8,6)\psline[linewidth=0.5pt](3,8)(3,5)(8,5)\psline[linewidth=0.5pt](4,8)(4,4)(8,4)
\psline[linewidth=0.5pt](5,8)(5,3)(8,3)\psline[linewidth=0.5pt](6,8)(6,2)(8,2)\psline[linewidth=0.5pt](7,8)(7,1)(8,1)
\multirput(1,7)(1,0){7}{\psdots[dotstyle=triangle*,dotscale=1.1](0,0.5)}
\multirput(7,1)(0,1){7}{\psdots[dotstyle=triangle*,dotscale=1.1,dotangle=90](0.5,0)}
\psdots[dotstyle=triangle*,dotscale=1.1](2,6.5)(3,6.5)(5,6.5)(6,6.5)(7,6.5)(3,5.5)(4,5.5)(6,5.5)(7,5.5)(4,4.5)(5,4.5)(6,4.5)(6,3.5)
\psdots[dotstyle=triangle*,dotscale=1.1,dotangle=180](4,6.5)(5,5.5)(7,4.5)(5,3.5)(7,3.5)(6,2.5)(7,2.5)(7,1.5)
\psdots[dotstyle=triangle*,dotscale=1.1,dotangle=-90](1.5,7)(2.5,7)(3.5,7)(4.5,6)(5.5,5)(6.5,5)(4.5,4)(5.5,3)
\psdots[dotstyle=triangle*,dotscale=1.1,dotangle=90](4.5,7)(5.5,7)(6.5,7)(2.5,6)(3.5,6)(5.5,6)(6.5,6)(3.5,5)(4.5,5)(5.5,4)(6.5,4)(6.5,3)(6.5,2)
\rput(8.6,4.5){.}
\endpspicture}\end{equation}

\psset{unit=7mm}
It follows immediately from~\eqref{bij} that for $A\in\DSASM(n)$ and $C\in\SVC(n)$ which correspond under the bijection,
the statistics~\eqref{RA}--\eqref{TA} can be expressed in terms of~$C$ as
\begin{align}\label{RC}R(A)&=\text{number of local configurations \Vv\ and \Vvi\ in }C,\\
\label{SC}S(A)&=\text{number of local configurations \Lup\ and \Ldown\ in }C,\\
\label{TC}T(A)&=\text{column of the unique local configuration \Vv\ in the first row of }C.\end{align}

A simple property of any $C\in\SVC(n)$ is that
\begin{equation}\label{inout}\text{number of local configurations \Lout\ in }C=\text{number of local configurations \Lin\ in }C.\end{equation}
This follows by applying to~$C$ the general identity that, in any directed graph, the sum of indegrees over all vertices equals the sum of outdegrees
over all vertices.
In particular, in~$C$ the bulk vertices and the left boundary vertices
with local configurations \Lup\ or \Ldown\ each have equal indegree and outdegree, and so do not contribute to the identity.
Also, the contributions of the~$n$ top vertices (which each have indegree~1 and outdegree~0)
are cancelled by the contributions of the~$n$ right boundary vertices (which each have indegree~0 and outdegree~1),
leaving only the contributions of the left boundary vertices with local configurations \Lout\ and \Lin, as in~\eqref{inout}.

By considering any fixed $k$ with $1\le k\le n-1$, applying the previous argument to the
induced subgraph of~$\G_n$ whose vertices consist of those vertices $(i,j)$ of~$\G_n$ with $j\ge i+k$,
and defining $V_{n,k}=\{(1,k+1),(2,k+2),\ldots,(n-k,n)\}$, it follows that for any $C\in\SVC(n)$,
\begin{multline}\label{inoutk}\text{number of local configurations \Wi\ in $C$ on vertices in }V_{n,k}\\
=\text{number of local configurations \Wii\ in $C$ on vertices in }V_{n,k}.\end{multline}
(Note that the subgraph used to obtain~\eqref{inoutk} includes isolated vertices, $(0,k)$ and $(n-k+\nolinebreak1,n+1)$, which could be removed.)

Using the bijection~\eqref{bij}, it follows from~\eqref{inout} that the number of zero diagonal entries in a DSASM is even,
as already observed in Section~\ref{DSASMstats}.

\subsection{Vertex weights}\label{vertweights}
For each possible local configuration $c$ at a bulk or left boundary vertex,
and for an indeterminate~$u$, assign a weight $W(c,u)$, as given in Table~\ref{weights}
(where each left boundary weight $W(c,u)$ appears in the same row as the bulk
weight whose local configuration restricts to~$c$).  In these weights, the notation $\bar{x}$ is
defined in~\eqref{not},~$\sigma(x)$ and $\sigmah(x)$ are functions defined in~\eqref{not} and~\eqref{sigmah},~$q$,~$\alpha$,~$\beta$,~$\gamma$
and~$\delta$ are arbitrary constants (i.e., independent of~$u$), and~$\phi(u)$ is an
arbitrary function of~$u$. As will be discussed in Section~\ref{partitionfun}, it will be convenient to
assume that~$\phi(u)$ is not identically zero.

\begin{table}[h]\centering
$\begin{array}{|@{\;\;}l|@{\;\;}l|}\hline\rule{0ex}{3.5ex}
\text{Bulk weights}&\text{Left boundary weights}\\[2.2mm]
\hline\rule{0ex}{3.6ex}
W(\Wv,u)=1&W(\WLup,u)=\bigl(\alpha\,q\,u+\beta\,\q\,\u\bigr)\,\phi(u)\\[3.1mm]
W(\Wvi,u)=1&W(\WLdown,u)=\bigl(\alpha\,\q\,\u+\beta\,q\,u\bigr)\,\phi(u)\\[2.8mm]
W(\Wi,u)=\sigmah(q^2u)&W(\WLout,u)=\gamma\,\sigma(q^2u^2)\,\phi(u)\\[4.6mm]
W(\Wii,u)=\sigmah(q^2u)&W(\WLin,u)=\delta\,\sigma(q^2u^2)\,\phi(u)\\[4.6mm]
W(\Wiii,u)=\sigmah(q^2\u)&\\[4.6mm]
W(\Wiv,u)=\sigmah(q^2\u)&\\[5mm]\hline
\end{array}$\\[2.4mm]\caption{Bulk and left boundary weights.}\label{weights}
\end{table}

A fundamental property of the weights in Table~\ref{weights} is that the bulk weights satisfy
the Yang--Baxter equation, and the bulk and left boundary weights together satisfy the reflection equation.
For general information regarding the Yang--Baxter equation for the six-vertex model, see, for example, Baxter~\cite[pp.~187--190]{Bax82}.
The reflection equation was first introduced by Cherednik~\cite[Eq.~(10)]{Che84}, with certain important consequences of the
equation being first identified by Sklyanin~\cite{Skl88}.
The forms of the Yang--Baxter and reflection equations which will be used in this paper are given by Behrend, Fischer and
Konvalinka~\cite[Eq.~(47) \& first eq.~of Eq.~(48)]{BehFisKon17}.
The left boundary weights of Table~\ref{weights} are the most general weights which,
together with the bulk weights of Table~\ref{weights}, satisfy the reflection
equation.  The case of these weights with $\gamma=\delta=0$ was obtained by Cherednik~\cite[Thm.~2]{Che84},
and reformulated by Sklyanin~\cite[Eq.~(29)]{Skl88}.
The general case of the left boundary weights was first obtained, in forms slightly different from the form used here,
by de~Vega and Gonz\'{a}lez-Ruiz~\cite[Eq.~(15)]{DevGon93}, and Ghoshal and
Zamolodchikov~\cite[Eq.~(5.12)]{GhoZam94}.
For a derivation of the left boundary weights in the form used here, see Ayyer, Behrend and Fischer~\cite[Thm.~3.2]{AyyBehFis20}
(in which the constants $\alpha$, $\beta$, $\gamma$, $\delta$ and function~$\phi(u)$ appear as $\beta_\mathrm{L}$,~$\gamma_\mathrm{L}$,~$\delta_\mathrm{L}$,~$\alpha_\mathrm{L}$
and $f(u)/\sigma(q^2)$, respectively).
Various special cases of these weights have previously been used in the
enumeration of certain classes of ASMs, for example by Kuperberg~\cite{Kup02}, Behrend, Fischer and Konvalinka~\cite{BehFisKon17},
and Ayyer, Behrend and Fischer~\cite{AyyBehFis20}.

\subsection{The DSASM partition function}\label{partitionfun}
For $C\in\SVC(n)$ and indeterminates $u_1,\ldots,u_n$, weights
are defined for each vertex of~$\G_n$, as follows.
The weight of each top and right boundary vertex is~$1$, the weight of bulk
vertex $(i,j)$ is $W(C_{ij},u_iu_j)$,
and the weight of left boundary vertex $(i,i)$ is $W(C_{ii},u_i)$,
where $C_{ij}$ is the local configuration of $C$ at $(i,j)$.
Note that~$q$, $\alpha$, $\beta$, $\gamma$ and $\delta$ are the same in all
of these weights.

The weight of configuration $C$ is defined to be the product of its vertex weights over all vertices of~$\G_n$,
and the DSASM partition function $Z_n(u_1,\ldots,u_n)$ is defined to be the
sum of configuration weights, over all
configurations in $\SVC(n)$. Hence,
\begin{equation}\label{Z}Z_n(u_1,\ldots,u_n)=\sum_{C\in\SVC(n)}\,\prod_{i=1}^n
W(C_{ii},u_i)\,\prod_{1\le i<j\le n}W(C_{ij},u_iu_j).\end{equation}

The assignment of parameters in the vertex weights can be illustrated, for
$n=4$, as
\psset{unit=10mm}
\begin{equation}\label{col}\raisebox{-21.8mm}{\pspicture(0.8,-0.3)(5.2,4.2)
\psline[linewidth=0.8pt,linecolor=blue](1,4)(1,3)(5,3)\psline[linewidth=0.8pt,linecolor=green](2,4)(2,2)(5,2)
\psline[linewidth=0.8pt,linecolor=red](3,4)(3,1)(5,1)\psline[linewidth=0.8pt,linecolor=brwn](4,4)(4,0)(5,0)
\multirput(1,4)(1,0){4}{$\ss\bullet$}\multirput(1,3)(1,0){5}{$\ss\bullet$}\multirput(2,2)(1,0){4}{$\ss\bullet$}
\multirput(3,1)(1,0){3}{$\ss\bullet$}\multirput(4,0)(1,0){2}{$\ss\bullet$}
\rput[tr](1.05,2.9){$\ss\ui$}\rput[tr](1.95,2.9){$\ss\ui\uii$}\rput[tr](2.95,2.9){$\ss\ui\uiii$}\rput[tr](3.95,2.9){$\ss\ui\uiv$}
\rput[tr](2.05,1.9){$\ss\uii$}\rput[tr](2.95,1.9){$\ss\uii\uiii$}\rput[tr](3.95,1.9){$\ss\uii\uiv$}
\rput[tr](3.05,0.9){$\ss\uiii$}\rput[tr](3.95,0.9){$\ss\uiii\uiv$}\rput[tr](4.05,-0.1){$\ss\uiv$}\rput(5.6,2){.}\endpspicture}\end{equation}
The colours in~\eqref{col} indicate that $u_i$
can be naturally associated with the edges in row~$i$ and column~$i$ of~$\G_n$, and that the parameter for a
left boundary weight is the single parameter associated with the incident edges,
and the parameter for a bulk weight is the product of the two parameters associated with the incident edges.

It follows from~\eqref{Z} that if the function $\phi(u)$, which appears in
each left boundary weight, is identically zero, then $Z_n(u_1,\ldots,u_n)$ is also identically zero.
In the rest of the paper, this trivial case will be excluded by assuming that $\phi(u)$ is not identically zero.

As examples, the $n=1$, $2$ and $3$ cases of the DSASM partition function are
\begin{align}\notag Z_1(u_1)&=(\alpha qu_1+\beta\q\u_1)\,\phi(u_1),\\
\notag Z_2(u_1,u_2)&=\bigl((\alpha qu_1+\beta\q\u_1)\,(\alpha qu_2+\beta\q\u_2)\,\sigmah(q^2\u_1\u_2)+\gamma\,\sigma(q^2u_1^2)\,\delta\,\sigma(q^2u_2^2)\bigr)\phi(u_1)\,\phi(u_2),\\
\notag Z_3(u_1,u_2,u_3)&=\bigl((\alpha qu_1+\beta\q\u_1)\,(\alpha qu_2+\beta\q\u_2)\,(\alpha qu_3+\beta\q\u_3)\,\sigmah(q^2\u_1\u_2)\,\sigmah(q^2\u_1\u_3)\,\sigmah(q^2\u_2\u_3)\\
\notag&\quad\qquad\mbox{}+(\alpha qu_1+\beta\q\u_1)\,\gamma\,\sigma(q^2u_2^2)\,\delta\,\sigma(q^2u_3^2)\,\sigmah(q^2\u_1\u_2)\,\sigmah(q^2\u_1\u_3)\\
\notag&\quad\qquad\mbox{}+\gamma\,\sigma(q^2u_1^2)\,\delta\,\sigma(q^2u_2^2)\,(\alpha qu_3+\beta\q\u_3)\,\sigmah(q^2\u_1\u_3)\,\sigmah(q^2\u_2\u_3)\\
\notag&\quad\qquad\mbox{}+\gamma\,\sigma(q^2u_1^2)\,(\alpha qu_2+\beta\q\u_2)\,\delta\,\sigma(q^2u_3^2)\,\sigmah(q^2u_1u_2)\,\sigmah(q^2u_2u_3)\\
\label{Z123}&\qquad\qquad\mbox{}+ \gamma\,\sigma(q^2u_1^2)\,(\alpha\q\u_2+\beta qu_2)\,\delta\,\sigma(q^2u_3^2)\,\sigmah(q^2\u_1\u_3)\bigr)\phi(u_1)\,\phi(u_2)\,\phi(u_3),\end{align}
where the terms on each RHS are written in an order corresponding to that
used for the six-vertex model configurations in~\eqref{SV3}.

Note that, due to~\eqref{inout}, $Z_n(u_1,\ldots,u_n)$ as a function of~$\gamma$ and~$\delta$, is a polynomial in~$\gamma\,\delta$.

In order to relate the DSASM partition function~\eqref{Z} to the DSASM
generating function~\eqref{X}, which will be done in Section~\ref{ZXrelation},
the constants $\alpha$, $\beta$, $\gamma$, $\delta$ and function~$\phi(u)$ in Table~\ref{weights} will be set to
\begin{equation}\label{boundassig}\alpha=\beta=s,\quad\gamma=\delta=\frac{1}{\sigma(q)},\quad\phi(u)=\frac{1}{qu+\q\u},\end{equation}
for an indeterminate~$s$.
The DSASM partition function with the assignments of~\eqref{boundassig} will be referred
to as the specialized DSASM partition function and denoted~$\Z_n(u_1,\ldots,u_n)$, i.e.,
\begin{equation}\label{ZZ}\Z_n(u_1,\ldots,u_n)=Z_n(u_1,\ldots,u_n)|_{\alpha=\beta=s,\;\gamma=\delta=1/\sigma(q),\;\phi(u)=1/(qu+\q\u)}.\end{equation}
With the assignments of~\eqref{boundassig}, the left boundary weights are
\begin{equation}\label{leftW}W(\WLup,u)=W(\WLdown,u)=s,\quad W(\WLout,u)=W(\WLin,u)=\frac{\sigma(qu)}{\sigma(q)}.\end{equation}

\subsection{Properties of the DSASM partition function}\label{GenProp}
Some important properties of the DSASM partition function $Z_n(u_1,\ldots,u_n)$ will
now be identified in the following five propositions.

\begin{proposition}\label{ZLaurent}

The function $Z_n(u_1,\ldots,u_n)/\phi(u_1)$ is a Laurent polynomial in~$u_1$
which is even in~$u_1$ for~$n$ even and odd in~$u_1$ for~$n$ odd,
and which (unless it is identically zero) has lower degree in $u_1$ at
least~$-n$ and upper degree in~$u_1$ at most~$n$.
\end{proposition}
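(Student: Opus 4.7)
The plan is to localize the $u_1$-dependence of $Z_n(u_1,\ldots,u_n)$ to the factors coming from row~$1$ of $\G_n$. Since the bulk weight at $(i,j)$ only involves $u_iu_j$ and the left-boundary weight at $(i,i)$ only involves $u_i$, the variable~$u_1$ appears only in $W(C_{11},u_1)$ and in $W(C_{1j},u_1u_j)$ for $j=2,\ldots,n$. From Table~\ref{weights}, each bulk factor $W(C_{1j},u_1u_j)$ is a Laurent polynomial in~$u_1$ of upper degree at most~$1$ and lower degree at least~$-1$, while $W(C_{11},u_1)/\phi(u_1)$ is a Laurent polynomial of degree in $\{\pm1\}$ for $C_{11}\in\{\WLup,\WLdown\}$ and in $\{\pm2\}$ for $C_{11}\in\{\WLout,\WLin\}$. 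Combining these naively yields an upper degree bound of $2+(n-1)=n+1$, one more than claimed, so some constraint among the possible row-$1$ configurations must be exploited.

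The extra restriction comes from the boundary condition that every edge incident to a top vertex is directed upward, which forces the top edge of every row-$1$ vertex to be outgoing. Together with the six-vertex rule this eliminates $\WLdown$, $\WLin$, $\Wii$, $\Wiv$ and $\Wvi$, leaving $C_{11}\in\{\WLup,\WLout\}$ and $C_{1j}\in\{\Wi,\Wiii,\Wv\}$ for $j\ge 2$. I would then trace the horizontal edge orientations across row~$1$: marking each as $\rightarrow$ or $\leftarrow$, the only admissible transitions are $(\rightarrow,\rightarrow)$ at a $\Wi$, $(\leftarrow,\leftarrow)$ at a $\Wiii$ and $(\rightarrow,\leftarrow)$ at a $\Wv$, so the sequence can never revert from $\leftarrow$ to $\rightarrow$. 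Since the right-boundary vertex forces the last horizontal edge to be $\leftarrow$, the admissible row-$1$ shapes are exactly: (a) $C_{11}=\WLup$ with $C_{1j}=\Wiii$ for every $j\ge 2$, or (b) $C_{11}=\WLout$ with a unique switch index $k\in\{2,\ldots,n\}$ at which $C_{1k}=\Wv$, $C_{1j}=\Wi$ for $2\le j<k$, and $C_{1j}=\Wiii$ for $k<j\le n$.

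The remaining step is a direct degree and parity count. In shape~(a), the $u_1$-dependent product consists of $n$ Laurent factors each of degree $\pm1$ and odd in~$u_1$, giving a Laurent polynomial of degree in $[-n,n]$ and parity $n\bmod 2$. In shape~(b), the prefactor $\gamma\,\sigma(q^2u_1^2)$ contributes an even Laurent polynomial of degree $\pm2$, the $\Wv$ factor contributes the constant~$1$, and the remaining $n-2$ factors each contribute an odd Laurent polynomial of degree $\pm1$, again producing degree in $[-n,n]$ and parity $n\bmod 2$. Since the contributions from rows $2$ through $n$ are independent of~$u_1$, the bounds and parity carry over to $Z_n(u_1,\ldots,u_n)/\phi(u_1)$. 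The main obstacle I foresee is the propagation argument in row~$1$: it is precisely the fact that a $\WLout$ at $(1,1)$ forces exactly one $\Wv$ somewhere in row~$1$ that saves the otherwise missing power of~$u_1$ and yields the sharp bound~$n$.
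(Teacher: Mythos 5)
Your proposal is correct and follows essentially the same route as the paper: both localize the $u_1$-dependence to the row-$1$ weights, use the upward top-edge boundary condition and the six-vertex rule to reduce the admissible row-$1$ configurations to exactly your shapes (a) and (b) (the paper indexes these by the column $j$ of the $1$ in the first row of the corresponding DSASM), and then count degrees and parities factor by factor, with the single even degree-$\pm2$ factor $W(\WLout,u_1)/\phi(u_1)$ compensated by the constant weight $W(\Wv,u_1u_j)=1$. The only point the paper adds is a brief remark that the degenerate cases $\alpha=0$, $\beta=0$ or $\gamma=0$ are handled easily, which does not affect the bounds since they are one-sided.
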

Note that the definitions of degrees being used here are that
a Laurent polynomial $\sum_{i=m}^na_ix^i$, with $m\le n$, $a_m\ne0$ and
$a_n\ne0$,
has lower and upper degrees in $x$ of $m$ and $n$, respectively (and that
degrees for the zero function are not defined).
\begin{proof}
Consider $C\in\SVC(n)$, and let $j$ be the column of the~1 in the first row
of the DSASM $A$ which corresponds to $C$ under the bijection of~\eqref{bij}, i.e., $j=T(A)$.
The weight of $C$ consists of a product of vertex weights, among which
one left boundary weight and~$n-1$ bulk weights depend on~$u_1$.
Using the six-vertex rule and the upward orientations of all the top edges of~$\G_n$,
this product of the $u_1$-dependent vertex weights is
\begin{equation*}\begin{cases}W(\WLup,u_1)\,\prod_{i=2}^nW(\Wiii,u_1u_i),&j=1,\\
W(\WLout,u_1)\,\bigl(\prod_{i=2}^{j-1}W(\Wi,u_1u_i)\bigr)\,W(\Wv,u_1u_j)\,\bigl(\prod_{i=j+1}^n
W(\Wiii,u_1u_i)\bigr),&2\le j\le n.\end{cases}\end{equation*}
Considering the explicit vertex weights, as given in Table~\ref{weights},
and assuming that $\alpha$, $\beta$ and $\gamma$ are nonzero,
it can be seen that $W(\WLup,u_1)/\phi(u_1)$, $W(\Wiii,u_1u_i)$ and $W(\Wi,u_1u_i)$
are each odd Laurent polynomials in $u_1$ of lower and upper degrees~$-1$ and~$1$, respectively,
$W(\WLout,u_1)/\phi(u_1)$ is an even Laurent polynomial in $u_1$
of lower and upper degrees~$-2$ and~$2$, respectively, and $W(\Wv,u_1u_i)=1$.
Hence, the weight of $C$ divided by $\phi(u_1)$ is a Laurent polynomial in $u_1$
which is even in $u_1$ for $n$ even and odd in $u_1$ for $n$ odd,
and which has lower and upper degrees in $u_1$ of~$-n$ and~$n$, respectively.
The cases in which~$\alpha$,~$\beta~$ or~$\gamma$ are zero can be dealt with
easily, and the required result now follows by summing over all $C\in\SVC(n)$, as
in~\eqref{Z}.
\end{proof}

\begin{proposition}\label{Z1red}
Suppose that $u_1^2=\pm\q^2$.  Then, for $n\ge2$, the DSASM partition function satisfies
\begin{equation}\label{Z1redeq}Z_n(u_1,u_2,\ldots,u_n)=Z_1(u_1)\,Z_{n-1}(u_2,\ldots,u_n)\,\prod_{i=2}^nW(\Wiii,u_1u_i).\end{equation}
\end{proposition}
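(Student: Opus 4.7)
The plan is to exploit the fact that at $u_1^2=\pm\q^2$ one has $q^2u_1^2=\pm1$, hence $\sigma(q^2u_1^2)=0$. This immediately annihilates both left-boundary weights $W(\Lout,u_1)=\gamma\,\sigma(q^2u_1^2)\,\phi(u_1)$ and $W(\Lin,u_1)=\delta\,\sigma(q^2u_1^2)\,\phi(u_1)$, so any configuration $C\in\SVC(n)$ with $C_{11}\in\{\Lout,\Lin\}$ contributes zero to $Z_n(u_1,\ldots,u_n)$. The remaining a priori possibility $C_{11}=\Ldown$ is excluded globally: its top edge would be directed into $(1,1)$, contradicting the requirement that the edge to the top vertex $(0,1)$ be oriented upward. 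Thus only configurations with $C_{11}=\Lup$ need to be summed.

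Next, I would apply a standard freezing argument along row~1. With $C_{11}=\Lup$, the horizontal edge between $(1,1)$ and $(1,2)$ is directed leftward, so at the bulk vertex $(1,2)$ both the top edge (oriented upward by the top-boundary condition) and the left edge (outward) are outgoing; the six-vertex rule then forces the remaining two edges to be incoming, which identifies $C_{12}=\Viii$ uniquely. In particular the edge between $(1,2)$ and $(1,3)$ is again leftward, and iterating gives $C_{1j}=\Viii$ for every $j=2,\ldots,n$; the right-most forced orientation is compatible with the right-boundary condition at $(1,n+1)$.

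Once row~1 is frozen, deleting row~1 and column~1 from $\G_n$ and relabelling $(i,j)\mapsto(i-1,j-1)$ yields an isomorphic copy of $\G_{n-1}$ with spectral parameters $u_2,\ldots,u_n$. The downward edges leaving the frozen row are all oriented upward, which is exactly the top-boundary condition needed for this smaller graph. Restriction therefore provides a bijection between the admissible $C\in\SVC(n)$ with $C_{11}=\Lup$ and the elements of $\SVC(n-1)$. The weight of any such $C$ factors as $W(\Lup,u_1)\,\prod_{i=2}^n W(\Viii,u_1u_i)$ times the weight of the restricted configuration; summing over $\SVC(n-1)$ and noting that $\SVC(1)$ contains only the configuration with $C_{11}=\Lup$, so that $Z_1(u_1)=W(\Lup,u_1)$, yields~\eqref{Z1redeq}.

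The one step requiring genuine care will be the propagation along row~1: I will need to check from the pictorial definitions that at each $(1,j)$ the forced top-edge orientation together with the leftward orientation inherited on the left edge really does single out \Viii\ and no other admissible local configuration. The vanishing of the two boundary weights at $u_1^2=\pm\q^2$ and the identification of the residual subgraph with $\G_{n-1}$ are then essentially immediate.
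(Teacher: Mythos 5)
Your proposal is correct and follows essentially the same route as the paper's proof: the vanishing of the left boundary weights $\gamma\,\sigma(q^2u_1^2)\,\phi(u_1)$ and $\delta\,\sigma(q^2u_1^2)\,\phi(u_1)$ at $u_1^2=\pm\q^2$, combined with the upward orientation of the top edges, forces the first row of every surviving configuration to freeze into the local configuration $\Lup$ at $(1,1)$ followed by $\Viii$ at $(1,2),\ldots,(1,n)$, after which deletion of the top row yields a configuration on $\G_{n-1}$ and the weight factorizes as claimed. The only cosmetic difference is that you kill the "in" boundary configuration via its zero weight while the paper excludes it (together with the "down" configuration) via the top-edge boundary condition before invoking the vanishing weight; both are valid.
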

\begin{proof}\psset{unit=7mm}
Let $u_1^2=\pm\q^2$, and consider $C\in\SVC(n)$.  Then, since the top left vertical edge of $\G_n$
is oriented upward, $C_{11}$ is either~\Lup\ or~\Lout, with $W(\WLout,u_1)=0$ since $u_1^2=\pm\q^2$
(assuming $\phi(\pm\q^2)$ is finite).
It now follows, using the upward orientations of all the top edges of~$\G_n$ and the six-vertex rule,
that the weight of~$C$ is zero unless $C_{11}=\Lup$ and $C_{12}=\ldots=C_{1n}=\Viii$,
for which the product of corresponding vertex weights is
$W(\WLup,u_1)\,\prod_{i=2}^nW(\Wiii,u_1u_i)$ with $W(\WLup,u_1)=Z_1(u_1)$.
Furthermore, the deletion of the appropriate vertices and edges at the top of $\G_n$
gives a six-vertex model configuration on~$\G_{n-1}$. The required result now follows by summing over all $C\in\SVC(n)$, as in~\eqref{Z}.
(Note that the process used in this proof is closely related to that used in the proof of~(i) of Proposition~\ref{refprop}.)
\end{proof}

\begin{proposition}\label{Z2red1}
Suppose that $u_1u_n=\pm q^2$.  Then, for $n\ge3$, the DSASM partition
function satisfies
\begin{equation}\label{Z2red1eq}Z_n(u_1,u_2,\ldots,u_{n-1},u_n)=Z_2(u_1,u_n)\,Z_{n-2}(u_2,\ldots,u_{n-1})\,\prod_{i=2}^{n-1}W(\Wi,u_1u_i)\,W(\Wii,u_iu_n).\end{equation}
\end{proposition}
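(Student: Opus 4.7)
The plan is to adapt the strategy of the proof of Proposition~\ref{Z1red}: the hypothesis $u_1u_n=\pm q^2$ will force a specific local configuration at the bulk vertex $(1,n)$, and this will then cascade along both row~$1$ and column~$n$, reducing $\G_n$ to $\G_{n-2}$ together with an outer layer whose weight product is precisely $Z_2(u_1,u_n)$ times the stated product of \Wi\ and \Wii\ weights.

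To start, I would observe that $u_1u_n=\pm q^2$ gives $\sigma(q^2\u_1\u_n)=\sigma(\pm 1)=0$, so that $W(\Wiii,u_1u_n)=W(\Wiv,u_1u_n)=0$. At the bulk vertex $(1,n)$ the top edge is forced upward (it is incident to the top vertex $(0,n)$) and the right edge is forced leftward (it is incident to the right-boundary vertex $(1,n+1)$); among the six bulk local configurations only \Wiii\ and \Wv\ are consistent with both orientations, and the vanishing just noted excludes \Wiii. Hence $C_{1n}=\Wv$ for any $C\in\SVC(n)$ with nonzero weight.

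Next, fixing $C_{1n}=\Wv$ (left-in, bottom-out) triggers a cascade. Moving leftward along row~$1$, at each $(1,j)$ with $2\le j\le n-1$ the top edge is up and the right edge is now out (inherited from the left-in of the vertex to its right), and the only bulk configuration with top-out and right-out is \Wi, giving $C_{1j}=\Wi$; iterating then forces $C_{11}=\WLout$ at the left corner. Symmetrically, moving downward along column~$n$ forces $C_{in}=\Wii$ for $2\le i\le n-1$ and $C_{nn}=\WLin$ at the bottom corner. What remains of~$C$ lives on the subgraph of~$\G_n$ obtained by deleting row~$1$ and column~$n$; under the relabeling $(i,j)\mapsto(i-1,j-1)$ this subgraph is $\G_{n-2}$, and the inherited boundary conditions (top-out at $(2,j)$ from $C_{1j}=\Wi$ having bottom-in, right-in at $(i,n-1)$ from $C_{in}=\Wii$ having left-out) are exactly those defining $\SVC(n-2)$. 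Summing over the residual configurations with parameters $u_2,\ldots,u_{n-1}$ therefore yields $Z_{n-2}(u_2,\ldots,u_{n-1})$.

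Collecting factors and using $W(\Wv,u_1u_n)=1$, it only remains to identify the corner contribution $W(\WLout,u_1)\,W(\WLin,u_n)$ with $Z_2(u_1,u_n)$. From the explicit expression in~\eqref{Z123}, the first summand of $Z_2(u_1,u_2)$ is proportional to $\sigmah(q^2\u_1\u_2)$ and hence vanishes at $u_1u_n=\pm q^2$ for the very same reason that ruled out \Wiii\ at $(1,n)$, while the second summand is exactly $W(\WLout,u_1)\,W(\WLin,u_2)$; evaluating at $u_2=u_n$ gives the identification, and hence~\eqref{Z2red1eq}. The main thing to be careful about is the bookkeeping of the orientation cascade and the verification that the residual graph has the correct induced boundary conditions to be $\G_{n-2}$; no analytic input beyond the vanishing $\sigma(\pm1)=0$ is required.
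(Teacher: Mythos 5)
Your proposal is correct and follows essentially the same route as the paper's proof: the vanishing of $W(\Wiii,u_1u_n)$ forces $C_{1n}=\Wv$, the orientation cascade forces $\Wi$ along row~$1$, $\Wii$ along column~$n$, $\Lout$ and $\Lin$ at the two corners, and the residual configuration lives on $\G_{n-2}$; the identification $W(\WLout,u_1)\,W(\WLin,u_n)=Z_2(u_1,u_n)$ at $u_1u_n=\pm q^2$ is also exactly the observation the paper uses. The extra bookkeeping you supply (verifying the induced boundary conditions on the residual graph) is a correct elaboration of what the paper leaves implicit.
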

\begin{proof}\psset{unit=7mm}
Let $u_1u_n=\pm q^2$, and consider $C\in\SVC(n)$.  Then, since the top right vertical and horizontal edges of $\G_n$
are oriented upward and leftward, respectively, $C_{1n}$ is either~\Wv\ or~\Wiii, with $W(\Wiii,u_1u_n)=0$ since $u_1u_n=\pm q^2$.
It now follows, using the upward orientations of all the top edges of $\G_n$,
the leftward orientations of all the rightmost edges of $\G_n$ and the six-vertex rule, that the weight of $C$ is zero unless $C_{1n}=\Wv$,
$C_{11}=\Lout$, $C_{12}=\ldots=C_{1,n-1}=\Wi$, $C_{2n}=\ldots=C_{n-1,n}=\Wii$ and $C_{nn}=\Lin$,
for which the product of corresponding vertex weights is $W(\WLout,u_1)\,W(\WLin,u_n)\,\prod_{i=2}^{n-1}W(\Wi,u_1u_i)\,W(\Wii,u_iu_n)$
with $W(\WLout,u_1)\,W(\WLin,u_n)=Z_2(u_1,u_n)$.
Furthermore, the deletion of the appropriate vertices and edges at the top
and right of $\G_n$ gives a six-vertex model configuration on~$\G_{n-2}$.
The required result now follows by summing over all $C\in\SVC(n)$, as in~\eqref{Z}.
(Note that the process used in this proof is closely related to that used in
the proof of~(iii) of Proposition~\ref{refprop}.)
\end{proof}

\begin{proposition}\label{Z2red2}
Suppose that $u_1u_2=\pm\q^2$. Then, for $n\ge3$, the DSASM partition
function satisfies
\begin{equation}\label{Z2red2eq}Z_n(u_1,u_2,u_3,\ldots,u_n)=Z_2(u_1,u_2)\,Z_{n-2}(u_3,\ldots,u_n)\,\prod_{i=3}^nW(\Wiii,u_1u_i)\,W(\Wiii,u_2u_i).\end{equation}
\end{proposition}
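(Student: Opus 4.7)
The plan is to follow the same general pattern as for Propositions~\ref{Z1red} and~\ref{Z2red1}: identify the local configurations at vertex $(1,2)$ that are forced by the top boundary and by the vanishing of certain vertex weights when $u_1u_2=\pm\q^2$, propagate the resulting constraints along row~$1$, and then analyze and sum the surviving contributions. The genuinely new feature here is that the weight condition does not force a unique $C_{12}$, so a secondary case analysis at the left boundary vertex $(2,2)$ is needed, together with a weight identity that yields a cancellation.

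First I observe that $u_1u_2=\pm\q^2$ gives $q^2u_1u_2=\pm 1$ and hence $W(\Wi,u_1u_2)=W(\Wii,u_1u_2)=\sigmah(\pm 1)=0$. Since the top edge at $(1,2)$ is directed upward, $C_{12}$ has top outward, so a priori $C_{12}\in\{\Wi,\Wiii,\Wv\}$; the weight condition eliminates $\Wi$, leaving two cases. In Case~A ($C_{12}=\Wiii$) the left-out orientation at $(1,2)$ forces $C_{11}=\Lup$, and a rightward propagation along row~$1$ using the six-vertex rule and the top boundary gives $C_{1j}=\Wiii$ for all $j\ge 2$. In Case~B ($C_{12}=\Wv$) the analogous reasoning gives $C_{11}=\Lout$ and $C_{1j}=\Wiii$ for $j\ge 3$. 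Consequently Case~A induces top-out at $(2,j)$ for every $j\ge 2$, while Case~B induces top-in at $(2,2)$ and top-out at $(2,j)$ for $j\ge 3$.

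Next I split each case by the configuration at $(2,2)$: in Case~A it is $\Lup$ or $\Lout$, and in Case~B it is $\Ldown$ or $\Lin$. In the two sub-cases with right-in at $(2,2)$, namely Case~A with $\Lup$ and Case~B with $\Lin$, one has top-out and left-out at $(2,3)$, which forces $C_{23}=\Wiii$, and the same cascade then yields $C_{2j}=\Wiii$ for all $j\ge 3$. The remaining vertices form a copy of $\G_{n-2}$ carrying the parameters $u_3,\ldots,u_n$, with the standard boundary conditions of $\SVC(n-2)$. Collecting the row-$1$ and row-$2$ vertex weights, these two sub-cases sum to exactly $Z_2(u_1,u_2)\,Z_{n-2}(u_3,\ldots,u_n)\prod_{i=3}^n W(\Wiii,u_1u_i)W(\Wiii,u_2u_i)$, since $W(\Wv,u_1u_2)=1$ and the two terms of $Z_2(u_1,u_2)$ appear as the row-$1$--row-$2$ contributions from the two sub-cases respectively.

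The main obstacle is the remaining two sub-cases, Case~A with $C_{22}=\Lout$ and Case~B with $C_{22}=\Ldown$, where the right edge of $(2,2)$ is out and no further cascade occurs. I would handle this by noting that once these choices are fixed, the induced boundary conditions on the subgraph supported by the vertices $(i,j)$ with $i\ge 2$ and $j\ge 3$ are identical in the two sub-cases (top-out at $(2,j)$ for $j\ge 3$, left-in at $(2,3)$, and the original right boundary), so the two sub-partition functions coincide with a common value~$R$. The combined contribution then equals
\[\bigl(W(\Lup,u_1)W(\Lout,u_2)W(\Wiii,u_1u_2)+W(\Lout,u_1)W(\Ldown,u_2)\bigr)\prod_{j=3}^n W(\Wiii,u_1u_j)\cdot R,\]
and substituting $u_2=\pm\q^2\u_1$ into Table~\ref{weights} gives $\sigma(q^2u_2^2)=-\sigma(q^2u_1^2)$, $\alpha\q\u_2+\beta qu_2=\pm(\alpha qu_1+\beta\q\u_1)$ and $\sigmah(q^2\u_1\u_2)=\pm 1$, from which a direct calculation shows that the bracket vanishes. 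Together with the main term above, this yields~\eqref{Z2red2eq}.
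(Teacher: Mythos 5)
Your proof is correct and follows essentially the same route as the paper's: the paper packages your two cancelling sub-cases as a pairing of configurations $C$ and $C'$ that agree everywhere except at the vertices $(1,1)$, $(1,2)$ and $(2,2)$, but the underlying cancellation identity $W(\WLout,u_1)\,W(\WLdown,u_2)\pm W(\WLup,u_1)\,W(\WLout,u_2)=0$ at $u_1u_2=\pm\q^2$ and the identification of the surviving contribution with $Z_2(u_1,u_2)\,Z_{n-2}(u_3,\ldots,u_n)\prod_{i=3}^nW(\Wiii,u_1u_i)\,W(\Wiii,u_2u_i)$ are exactly the steps you carry out. No gaps.
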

\begin{proof}
Let $u_1u_2=\pm\q^2$, and consider $C\in\SVC(n)$.  Then, since the second (from the left) top edge of $\G_n$
is oriented upward, $C_{12}$ is~\Wv,~\Wiii\ or~\Wi, with
$W(\Wi,u_1u_2)=0$ since $u_1u_2=\pm\q^2$.
The case $C_{12}=\Wi$ can thus be excluded from consideration, and it will be assumed that
$C_{12}=\Wv$ or $C_{12}=\Wiii$.  Then there exists $C'\in\SVC(n)$ with $C'_{12}=\Wiii$ or $C'_{12}=\Wv$,
$C'_{12}\ne C_{12}$ and $C'_{ij}=C_{ij}$ for all $(i,j)\notin\{(1,1),\,(1,2),\,(2,2)\}$.
Now the sum of the weights of~$C$ and~$C'$ is zero unless the edge between
$(2,2)$ and $(2,3)$ is directed leftward in~$C$ and~$C'$, since if this edge is directed rightward, then the sum of weights contains the factor
$W(\WLout,u_1)\,W(\Wv,u_1u_2)\,W(\WLdown,u_2)+W(\WLup,u_1)\,W(\Wiii,u_1u_2)\,W(\WLout,u_2)=
W(\WLout,u_1)\,W(\WLdown,u_2)\pm W(\WLup,u_1)\,W(\WLout,u_2)$, which can be
checked to be zero for $u_1u_2=\pm\q^2$.
It now follows, using the upward orientations of all the top edges of $\G_n$,
the leftward orientations of the top two rightmost edges of $\G_n$
and the six-vertex rule, that the sum of the weights of $C$ and $C'$ is zero
unless
$C_{13}=\ldots=C_{1n}=C_{23}=\ldots=C_{2n}=C'_{13}=\ldots=C'_{1n}=C'_{23}=\ldots=C'_{2n}=\Wiii$,
for which the sum of products of corresponding vertex weights, including the vertices $(1,1$), $(1,2)$ and $(2,2)$, is
$\bigl(W(\WLout,u_1)\,W(\WLin,u_2)\pm W(\WLup,u_1)\,W(\WLup,u_2)\bigr)\,\prod_{i=3}^nW(\Wiii,u_1u_i)\,W(\Wiii,u_2u_i)$
with $W(\WLout,u_1)\,W(\WLin,u_2)\pm W(\WLup,u_1)\,W(\WLup,u_2)=Z_2(u_1,u_2)$.
Furthermore, the deletion of the appropriate vertices and edges at the top of~$\G_n$
gives six-vertex model configurations on $\G_{n-2}$.
The required result now follows by summing over all suitable pairs $C,C'\in\SVC(n)$.
(Note that the boundary unitarity equation, which is satisfied by the left
boundary weights, could have been used for some of this proof. See, for example,~\cite[Eq.~(3.7)]{AyyBehFis20} and~\cite[Eq.~(49)]{BehFisKon17}
for certain cases and forms of this equation.)
\end{proof}

\begin{proposition}\label{Zsymm}
The DSASM partition function $Z_n(u_1,\ldots,u_n)$ is symmetric in $u_1,\ldots,u_n$.
\end{proposition}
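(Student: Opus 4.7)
The plan is to use the standard train (or zipper) argument based on the Yang--Baxter and reflection equations, as in Kuperberg's treatment of reflection-symmetric ASM classes. Since the symmetric group $S_n$ is generated by the adjacent transpositions $s_i=(i,i+1)$, it suffices to show, for each $i\in\{1,\ldots,n-1\}$, that $Z_n$ is invariant under exchanging $u_i$ and $u_{i+1}$. Once this is established for every adjacent pair, full symmetry in $u_1,\ldots,u_n$ follows immediately.

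As illustrated in~\eqref{col}, each parameter $u_i$ labels a ``zigzag line'' in $\G_n$ consisting of the $i$th column of vertical edges and the $i$th row of horizontal edges, the two pieces meeting at the left boundary vertex $(i,i)$. Interchanging $u_i$ and $u_{i+1}$ is therefore the exchange of two neighbouring zigzag lines that both terminate at a left boundary vertex. The method I would use is to introduce an auxiliary ``R-vertex'' (a bulk vertex with appropriately chosen spectral parameter depending on $u_i$ and $u_{i+1}$) between these two lines at one end, for instance at the top where columns $i$ and $i+1$ enter $\G_n$ through the vertices $(0,i)$ and $(0,i+1)$. I would then slide this R-vertex inward along the two parallel segments of columns $i,\,i+1$ by repeatedly invoking the Yang--Baxter equation at each bulk crossing $(k,i),(k,i+1)$ with $k<i$, bounce it around the diagonal corner at the boundary vertices $(i,i),(i+1,i+1)$ using the reflection equation (together with the bulk vertex $(i,i+1)$ which lies between them), then continue to slide it outward along rows $i,\,i+1$ via the Yang--Baxter equation at each crossing $(i,j),(i+1,j)$ with $j>i+1$, and finally remove it at the right end beyond $(i,n+1),(i+1,n+1)$. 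Since the inserted and removed auxiliary R-vertex contributes the same scalar factor to both the initial and final expressions, and no other weight in $Z_n$ is affected, the net effect is that $u_i$ and $u_{i+1}$ have been exchanged in the partition function.

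The main obstacle, and the only nontrivial ingredient beyond bookkeeping, is the corner move at the diagonal. At every bulk crossing, the Yang--Baxter equation (\cite[Eq.~(47)]{BehFisKon17}) applies cleanly because two full parallel lines cross a common third line; but at the diagonal the two lines turn through left boundary vertices whose weights are K-matrix weights from Table~\ref{weights} rather than bulk R-matrix weights. This is precisely where the reflection equation (\cite[first eq.\ of Eq.~(48)]{BehFisKon17}) is required, and it is exactly the condition satisfied by the left boundary weights of Table~\ref{weights} together with the bulk weights. Once one has verified that the chosen spectral parameter of the auxiliary R-vertex allows the reflection equation to be applied at the corner, the remaining steps are routine bulk YBE moves, and summing the resulting local identity over $\SVC(n)$ as in~\eqref{Z} yields the claimed symmetry under $s_i$, completing the proof.
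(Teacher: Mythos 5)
Your proposal is correct and follows essentially the same route as the paper: reduce to adjacent transpositions, insert an auxiliary crossing (with parameter $q^2\u_iu_{i+1}$) between the two zigzag lines at the top, transport it through the bulk crossings above the diagonal via the Yang--Baxter equation, turn the corner at the left boundary via the reflection equation, transport it out along the rows via the Yang--Baxter equation, and cancel the common scalar factor at the end. The paper's proof is exactly this argument, attributed to Kuperberg's treatment of the even-order OSASM partition function, with the counts of Yang--Baxter applications ($i-1$ vertical, then $n-i-1$ horizontal) and the single reflection-equation move matching your description.
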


\begin{proof}
This can be proved using the same process as used by Kuperberg~\cite[Lem.~11 \& Fig.~12]{Kup02} to show that the
partition function for even-order OSASMs is symmetric in its variables.
The method involves using the Yang--Baxter and reflection equations
(which, as discussed in Section~\ref{vertweights}, are satisfied by the bulk
and left boundary weights of Table~\ref{weights}) to show that $Z_n(u_1,\ldots,u_n)$ is
symmetric in $u_i$ and $u_{i+1}$, for $i=1,\ldots,n-1$.  Specifically, $W(\Wi,q^2\u_iu_{i+1})\allowbreak
Z(u_1,\ldots,u_n)$ is shown to equal $W(\Wi,q^2\u_iu_{i+1})\,Z(u_1,\ldots
u_{i-1},u_{i+1},u_i,u_{i+2},\ldots,u_n)$ by applying, in succession,
the vertical form of the Yang--Baxter equation~\cite[first eq.~of Eq.~(47)]{BehFisKon17} $i-1$ times,
the left form of the reflection equation~\cite[first eq.~of Eq.~(48)]{BehFisKon17} once,
and the horizontal form of the Yang--Baxter equation~\cite[second eq.~of Eq.~(47)]{BehFisKon17} $n-i-1$ times.
Using the same notation as in~\cite[proof of Prop.~12]{BehFisKon17} and the same colour coding
as in~\eqref{col}, this process for the case $n=4$ and $i=2$ can be illustrated as
\psset{unit=0.9mm}
\begin{multline*}\pspicture(0,-1)(61,45)\rput[r](61,23){$W(\raisebox{-2mm}{\psset{unit=5.8mm}\pspicture(0.1,0)(1,1)
\rput(0.5,0.5){\psline[linewidth=0.7pt,linecolor=red](0,-0.5)(0,0.5)\psline[linewidth=0.7pt,linecolor=green](-0.5,0)(0.5,0)}
\rput(0.5,0.5){$\scriptscriptstyle\bullet$}
\psdots[dotstyle=triangle*,dotscale=0.8](0.5,0.85)\psdots[dotstyle=triangle*,dotscale=0.85](0.5,0.2)
\psdots[dotstyle=triangle*,dotscale=0.8,dotangle=-90](0.85,0.5)\psdots[dotstyle=triangle*,dotscale=0.85,dotangle=-90](0.2,0.5)
\endpspicture},q^2\ubii\uiii)\,Z(\ui,\uii,\uiii,\uiv)$}\endpspicture
\pspicture(0,-1)(12,45)\rput(6,23){$=$}\endpspicture
\pspicture(0,-1)(40,45)\psline[linewidth=0.8pt,linecolor=blue](0,40)(0,30)(40,30)
\psline[linewidth=0.8pt,linecolor=green,linearc=2](20,44)(20,38)(10,32)(10,20)
\psline[linewidth=0.8pt,linecolor=green](10,20)(40,20)
\psline[linewidth=0.8pt,linecolor=red,linearc=2](10,44)(10,38)(20,32)(20,10)
\psline[linewidth=0.8pt,linecolor=red](20,10)(40,10)
\psline[linewidth=0.8pt,linecolor=brwn](30,40)(30,0)(40,0)
\multirput(10,41)(10,0){2}{\psdots[dotstyle=triangle*,dotscale=1.2](0,0)}
\multirput(0,36)(30,0){2}{\psdots[dotstyle=triangle*,dotscale=1.2](0,0)}
\multirput(36,0)(0,10){4}{\psdots[dotstyle=triangle*,dotscale=1.2,dotangle=90](0,0)}
\multirput(10,44)(10,0){2}{$\ss\bullet$}\multirput(0,40)(15,-5){2}{$\ss\bullet$}
\multirput(0,40)(30,0){2}{$\ss\bullet$}\multirput(0,30)(10,0){5}{$\ss\bullet$}\multirput(10,20)(10,0){4}{$\ss\bullet$}
\multirput(20,10)(10,0){3}{$\ss\bullet$}\multirput(30,0)(10,0){2}{$\ss\bullet$}\rput[b](15,32){$\ss
v$}\endpspicture
\pspicture(0,-1)(18,45)\rput(9,23){$\stackrel{\mathrm{YBE}}{=}$}\endpspicture
\pspicture(0,-1)(40,41)\psline[linewidth=0.8pt,linecolor=blue](0,40)(0,30)(40,30)
\psline[linewidth=0.8pt,linecolor=green](20,40)(20,30)
\psline[linewidth=0.8pt,linecolor=green,linearc=2](20,30)(20,28)(10,22)(10,20)
\psline[linewidth=0.8pt,linecolor=green](10,20)(40,20)
\psline[linewidth=0.8pt,linecolor=red](10,40)(10,30)
\psline[linewidth=0.8pt,linecolor=red,linearc=2](10,30)(10,28)(20,22)(20,20)
\psline[linewidth=0.8pt,linecolor=red](20,20)(20,10)(40,10)
\psline[linewidth=0.8pt,linecolor=brwn](30,40)(30,0)(40,0)
\multirput(0,36)(10,0){4}{\psdots[dotstyle=triangle*,dotscale=1.2](0,0)}
\multirput(36,0)(0,10){4}{\psdots[dotstyle=triangle*,dotscale=1.2,dotangle=90](0,0)}\rput(15,25){$\ss\bullet$}
\multirput(0,40)(10,0){4}{$\ss\bullet$}\multirput(0,30)(10,0){5}{$\ss\bullet$}\multirput(10,20)(10,0){4}{$\ss\bullet$}
\multirput(20,10)(10,0){3}{$\ss\bullet$}\multirput(30,0)(10,0){2}{$\ss\bullet$}\rput[b](15,22){$\ss
v$}\endpspicture\\
\pspicture(0,-1)(10,44)\rput(3,23){$\stackrel{\mathrm{RE}}{=}$}\endpspicture
\pspicture(0,-1)(40,44)\psline[linewidth=0.8pt,linecolor=blue](0,40)(0,30)(40,30)
\psline[linewidth=0.8pt,linecolor=green](20,40)(20,10)
\psline[linewidth=0.8pt,linecolor=green,linearc=2](20,10)(22,10)(28,20)(30,20)
\psline[linewidth=0.8pt,linecolor=green](30,20)(40,20)
\psline[linewidth=0.8pt,linecolor=red](10,40)(10,20)(20,20)
\psline[linewidth=0.8pt,linecolor=red,linearc=2](20,20)(22,20)(28,10)(30,10)
\psline[linewidth=0.8pt,linecolor=red](30,10)(40,10)
\psline[linewidth=0.8pt,linecolor=brwn](30,40)(30,0)(40,0)
\multirput(0,36)(10,0){4}{\psdots[dotstyle=triangle*,dotscale=1.2](0,0)}
\multirput(36,0)(0,10){4}{\psdots[dotstyle=triangle*,dotscale=1.2,dotangle=90](0,0)}\rput(25,15){$\ss\bullet$}
\multirput(0,40)(10,0){4}{$\ss\bullet$}\multirput(0,30)(10,0){5}{$\ss\bullet$}\multirput(10,20)(10,0){4}{$\ss\bullet$}
\multirput(20,10)(10,0){3}{$\ss\bullet$}\multirput(30,0)(10,0){2}{$\ss\bullet$}\rput[l](26.5,15){$\ss
v$}\endpspicture
\pspicture(0,-1)(16,44)\rput(8,23){$\stackrel{\mathrm{YBE}}{=}$}\endpspicture
\pspicture(0,-1)(45,44)\psline[linewidth=0.8pt,linecolor=blue](0,40)(0,30)(40,30)
\psline[linewidth=0.8pt,linecolor=green](20,40)(20,10)
\psline[linewidth=0.8pt,linecolor=green,linearc=2](20,10)(32,10)(38,20)(44,20)
\psline[linewidth=0.8pt,linecolor=red](10,40)(10,20)
\psline[linewidth=0.8pt,linecolor=red,linearc=2](10,20)(32,20)(38,10)(44,10)
\psline[linewidth=0.8pt,linecolor=brwn](30,40)(30,0)(40,0)
\multirput(0,36)(10,0){4}{\psdots[dotstyle=triangle*,dotscale=1.2](0,0)}
\multirput(41,10)(0,10){2}{\psdots[dotstyle=triangle*,dotscale=1.2,dotangle=90](0,0)}
\multirput(36,0)(0,30){2}{\psdots[dotstyle=triangle*,dotscale=1.2,dotangle=90](0,0)}\rput(35,15){$\ss\bullet$}
\multirput(44,10)(0,10){2}{$\ss\bullet$}\multirput(0,40)(10,0){4}{$\ss\bullet$}\multirput(0,30)(10,0){5}{$\ss\bullet$}\multirput(10,20)(10,0){3}{$\ss\bullet$}
\multirput(20,10)(10,0){2}{$\ss\bullet$}\multirput(30,0)(10,0){2}{$\ss\bullet$}\rput[l](36.5,15){$\ss
v$}\endpspicture
\pspicture(0,-1)(12,44)\rput(6,23){$=$}\endpspicture
\pspicture(0,-1)(61,44)\rput[l](-1,23){$W(\raisebox{-2mm}{\psset{unit=5.8mm}\pspicture(0.1,0)(1,1)
\rput(0.5,0.5){\psline[linewidth=0.7pt,linecolor=red](0,-0.5)(0,0.5)\psline[linewidth=0.7pt,linecolor=green](-0.5,0)(0.5,0)}
\rput(0.5,0.5){$\scriptscriptstyle\bullet$}
\psdots[dotstyle=triangle*,dotscale=0.8](0.5,0.85)\psdots[dotstyle=triangle*,dotscale=0.85](0.5,0.2)
\psdots[dotstyle=triangle*,dotscale=0.8,dotangle=-90](0.85,0.5)\psdots[dotstyle=triangle*,dotscale=0.85,dotangle=-90](0.2,0.5)
\endpspicture},q^2\ubii\uiii)\,Z(\ui,\uiii,\uii,\uiv)$,}\endpspicture
\end{multline*}
where $v=q^2\ubii\uiii$, YBE indicates the use of the Yang--Baxter equation
and RE indicates the use of the reflection equation.
\end{proof}

Some remarks on generalizations of the results of this section are as follows.

First, it can be seen that if the DSASM partition function for $n=0$ is defined as $Z_0()=1$, then Proposition~\ref{Z1red}
is valid for $n\ge1$, and Propositions~\ref{Z2red1} and~\ref{Z2red2} are valid for $n\ge2$.

Second, it can be seen that Propositions~\ref{ZLaurent}--\ref{Z2red2} can each be generalized by applying Proposition~\ref{Zsymm}.
Specifically, the generalized form of Proposition~\ref{ZLaurent} involves
properties of $Z(u_1,\ldots,u_n)/\allowbreak\phi(u_i)$ as a function of~$u_i$, the generalized form of Proposition~\ref{Z1red} involves
a relation satisfied by $Z(u_1,\ldots,u_n)$ with $u_i^2=\pm\q^2$, and the generalized
forms of Propositions~\ref{Z2red1} and~\ref{Z2red2} involve relations satisfied by $Z(u_1,\ldots,u_n)$ with $u_iu_j=\pm q^2$ and
$u_iu_j=\pm\q^2$, respectively, for any $1\le i,j\le n$ with $i\ne j$.

\subsection{Pfaffians}\label{Pfaffians}
The definitions of Pfaffians of even-order skew symmetric matrices and of certain triangular arrays will now be given.

The Pfaffian of an empty matrix or empty triangular array is taken to be~1.
Now let $A$ be a $2n\times 2n$ skew-symmetric matrix, with $n$ positive. Then the Pfaffian of $A$ is defined as
\begin{equation}\label{Pfdef1}
\Pf A=\sum_{\{\{i_1,j_1\},\ldots,\{i_n,j_n\}\}}\text{sgn}(i_1j_1\ldots i_nj_n)\;A_{i_1,j_1}\ldots A_{i_n,j_n},\end{equation}
where the sum is over all $(2n-1)!!$ perfect matchings
$\{\{i_1,j_1\},\ldots,\{i_n,j_n\}\}$ of $\{1,\ldots,2n\}$ (i.e., all
partitions of $\{1,\ldots,2n\}$ into $n$ two-element subsets), and $(i_1j_1\ldots i_nj_n)$ is
the permutation $1\mapsto i_1$, $2\mapsto j_1$, \ldots, $2n-1\mapsto i_n$,
$2n\mapsto j_n$, in one-line notation.
Note that, for a given perfect matching of~$\{1,\ldots,2n\}$, there are
$2^n\,n!$ ways of choosing $i_1,j_1,\ldots,i_n,j_n$,
since the elements of each two-element subset can be ordered in~2 ways, and
the~$n$ two-element subsets can be ordered in $n!$ ways. However, $\Pf A$ is well-defined, since each summand on the RHS of~\eqref{Pfdef1} is independent of these choices.
In particular, if the ordering of elements within a two-element subset is changed, then the sign of the permutation and the
product of matrix entries both change sign, whereas if the ordering of the $n$ two-elements subsets is changed, then the
sign of the permutation and the product of matrix entries both remain unchanged.

Now let $(B_{ij})_{1\le i<j\le 2n}$ be a triangular array. Then the Pfaffian
$\Pf_{1\le i<j\le2n}(B_{ij})$, with $n$ positive, is defined to be
the Pfaffian of the unique skew-symmetric matrix whose strictly upper triangular part is given by~$B$.  Hence,
\begin{equation}\label{Pfdef2}\Pf_{1\le i<j\le2n}\bigl(B_{ij}\bigr)=\sum_{\substack{\{\{i_1,j_1\},\ldots,\{i_n,j_n\}\}\\i_1<j_1,\,\ldots,\,i_n<j_n}}
\text{sgn}(i_1j_1\ldots i_nj_n)\;B_{i_1,j_1}\ldots B_{i_n,j_n},\end{equation}
where the sum is over all perfect matchings
$\{\{i_1,j_1\},\ldots,\{i_n,j_n\}\}$ of $\{1,\ldots,2n\}$ with a choice in
which $i_1<j_1$, \ldots, $i_n<j_n$ are satisfied, and $(i_1j_1\ldots i_nj_n)$ has the same
meaning as in~\eqref{Pfdef1}.

Some properties of Pfaffians which follow easily from~\eqref{Pfdef1}
and~\eqref{Pfdef2}, and which will be used in the rest of this paper,
are as follows, for any $2n\times2n$ skew-symmetric matrix $A$ and triangular array $(B_{ij})_{1\le i<j\le 2n}$.
\begin{list}{(\roman{pf})}{\usecounter{pf}\setlength{\labelwidth}{7mm}\setlength{\leftmargin}{11mm}\setlength{\labelsep}{3mm}}
\item If, for some $i\ne j$, the skew-symmetric matrix $A_{i\leftrightarrow j}$ is obtained from $A$ by simultaneously interchanging rows $i$ and $j$ and
interchanging columns $i$ and $j$, then
\begin{equation}\label{Pfint}\Pf A_{i\leftrightarrow j}=-\Pf A.\end{equation}
The special case of~\eqref{Pfint} in which rows~$i$ and~$j$ of~$A$ are equal
(and hence columns~$i$ and~$j$ of~$A$ are also equal),
so that~\eqref{Pfint} gives $\Pf A=0$, will often be used.
\item For any $c_1,\ldots,c_{2n}$,
\begin{equation}\label{Pfmult}\Pf_{1\le i<j\le2n}\bigl(c_i\,c_j\,B_{ij}\bigr)=\prod_{i=1}^{2n}c_i\,\Pf_{1\le i<j\le2n}\bigl(B_{ij}\bigr).\end{equation}
The special cases of~\eqref{Pfmult} in which all of the $c_i$'s are equal, or
in which all but one of the~$c_i$'s are~1, will often be used.
\item The Pfaffian can be expanded along the first row of an array as
\begin{equation}\label{Pfredrow}
\Pf_{1\le i<j\le2n}(B_{ij})=\sum_{k=2}^{2n}(-1)^kB_{1k}\,\Pf_{\substack{2\le i<j\le2n\\i,j\ne k}}\bigl(B_{ij}\bigr)\end{equation}
or along the last column of an array as
\begin{equation}\label{Pfredcol}
\Pf_{1\le i<j\le2n}(B_{ij})=\sum_{k=1}^{2n-1}(-1)^{k+1}B_{k,2n}\,\Pf_{\substack{1\le i<j\le2n-1\\i,j\ne k}}\bigl(B_{ij}\bigr).\end{equation}
The special cases of~\eqref{Pfredrow} or~\eqref{Pfredcol} in which all but
one of the entries in the first row or last column are~0 will often be used.
\end{list}

Note that the identities~\eqref{Pfint}--\eqref{Pfredcol} are special cases of more general identities.
For example, the identity
\begin{equation}\label{PfMAMt}\Pf(Y^tAY)=\det Y\,\Pf A,\end{equation}
for any $2n\times2n$ skew-symmetric matrix~$A$ and $2n\times2n$ matrix~$Y$, gives~\eqref{Pfint}
if~$Y$ is the permutation matrix of a transposition, and gives~\eqref{Pfmult}
if $Y$ is the diagonal matrix with diagonal entries $c_1,\ldots,c_{2n}$,
while~\eqref{Pfredrow} and~\eqref{Pfredcol} can be generalized
(using~\eqref{Pfint}) to give expansions of $\Pf A$ along any row or column of a $2n\times2n$ skew-symmetric matrix~$A$.

\subsection{A Pfaffian expression for the DSASM partition function}\label{Pfaffianexpsect}
The main Pfaffian expression for the DSASM partition function will now be obtained in Theorem~\ref{ZPftheorem}. A noteworthy
feature of this result is that it applies to the most general
boundary weights which, together with the six-vertex model bulk weights, satisfy the reflection equation.
By contrast, all other currently-known determinant or Pfaffian expressions for boundary weight-dependent partition functions
associated with classes of ASMs (such as those in~\cite{AyyBehFis20},~\cite{BehFisKon17} or~\cite{Kup02})
apply only to special cases of these boundary weights, in which one or two of the parameters~$\alpha$,~$\beta$,~$\gamma$ and~$\delta$ (in Table~\ref{weights})
are set to zero.

\begin{theorem}\label{ZPftheorem}
The DSASM partition function satisfies
\begin{multline}\label{ZPf1}Z_n(u_1,\ldots,u_n)\\[-2mm]
=\prod_{1\le i<j\le n}\!\!\frac{\sigmah(q^2u_iu_j)\,\sigmah(q^2\u_i\u_j)}{\sigmah(u_i\u_j)}
\Pf_{\even(n)\le i<j\le n}\left(\begin{cases}Z_1(u_j),&i=0\\[1.5mm]
\displaystyle\frac{\sigmah(u_i\u_j)\,Z_2(u_i,u_j)}{\sigmah(q^2u_iu_j)\,\sigmah(q^2\u_i\u_j)},&i\ge1\end{cases}\,\right).\end{multline}
More explicitly, the DSASM partition function is given by
\begin{multline}\label{ZPf2}Z_n(u_1,\ldots,u_n)=\prod_{i=1}^n\phi(u_i)\prod_{1\le i<j\le n}\frac{\sigmah(q^2u_iu_j)\,\sigmah(q^2\u_i\u_j)}{\sigmah(u_i\u_j)}\\
\times\Pf_{\even(n)\le i<j\le n}\left(\begin{cases}\alpha qu_j+\beta\bar q\bar u_j,&i=0\\[1.7mm]
\sigmah(u_i\u_j)\bigl((\alpha qu_i+\beta\q\u_i)\,(\alpha qu_j+\beta\q\u_j)\,\sigmah(q^2\u_i\u_j)\\[0.9mm]
\qquad\qquad\quad+\gamma\,\delta\,\sigma(q^2u_i^2)\sigma(q^2u_j^2)\bigr)\big/\bigl(\sigmah(q^2u_iu_j)\,\sigmah(q^2\u_i\u_j)\bigr),&i\ge1\end{cases}\right).\end{multline}
\end{theorem}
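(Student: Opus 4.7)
The plan is to adapt the standard Izergin--Korepin strategy: identify structural properties that characterize $Z_n(u_1,\ldots,u_n)$ uniquely, then verify that the Pfaffian expression on the right-hand side of~\eqref{ZPf1} satisfies the same properties, and conclude by induction on~$n$. All the ingredients for the uniqueness argument are in hand from Section~\ref{GenProp}: Proposition~\ref{ZLaurent} (Laurent polynomiality, degree bound and parity in $u_1$), Proposition~\ref{Zsymm} (full symmetry in $u_1,\ldots,u_n$, which lifts the single-variable information to every $u_i$), and Propositions~\ref{Z1red}, \ref{Z2red1}, \ref{Z2red2} (reductions at $u_1^2=\pm\q^2$, $u_1u_n=\pm q^2$, and $u_1u_2=\pm\q^2$).

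For uniqueness, fix $u_2,\ldots,u_n$ generically. Then $Z_n/\phi(u_1)$ is a Laurent polynomial in $u_1$ of lower and upper degrees in $[-n,n]$ with parity $n\bmod 2$, so it carries at most $n+1$ free coefficients. Combining Propositions~\ref{Z1red}--\ref{Z2red2} with Proposition~\ref{Zsymm} supplies evaluations of $Z_n$ at $u_1=\pm\q$ and at $u_1=\pm q^{\pm 2}\u_j$ for $j=2,\ldots,n$, yielding roughly $4n-2$ evaluations expressed in terms of $Z_{n-1}$, $Z_{n-2}$ and known vertex weights. A standard Lagrange-interpolation argument in $u_1$ then shows that any function satisfying the above structural properties is uniquely determined by the lower-order partition functions. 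Induction therefore reduces the theorem to checking the base cases $n\in\{0,1,2\}$, which can be matched directly against the explicit formulae in~\eqref{Z123}.

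Next I would verify that the RHS of~\eqref{ZPf1}--\eqref{ZPf2} satisfies these properties. Symmetry in $(u_i,u_j)$ follows from the antisymmetry $\sigmah(u_i\u_j)=-\sigmah(u_j\u_i)$ in the prefactor combined with the sign change of the Pfaffian under simultaneous interchange of rows and columns $i,j$ (property~(i) of Section~\ref{Pfaffians}); in the odd-$n$ case one checks that the row/column indexed by $i=0$, whose entries $Z_1(u_j)$ depend on a single variable, is compatible with the swap. For Laurent polynomiality in $u_1$, I would show that the potential poles of the prefactor at the zeros of $\sigmah(u_1\u_j)$ are canceled by explicit $\sigmah(u_1\u_j)$ factors appearing in the Pfaffian entries indexed by~$1$, and that the remaining degrees and parities match those predicted by Proposition~\ref{ZLaurent}. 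The reduction identities are then verified by expanding the Pfaffian along the appropriate row (or, for the two-variable reductions, along two rows) using~\eqref{Pfredrow}--\eqref{Pfredcol}: at $u_1^2=\pm\q^2$, the entries in row~$1$ factor so that all but one term vanish, and the resulting $Z_1(u_1)$ multiplied by the order-$(n-1)$ subminor matches~\eqref{Z1redeq} once the prefactor ratio is identified with $\prod_{i\ge2}W(\Wiii,u_1u_i)$; analogous expansions handle the specializations $u_1u_n=\pm q^2$ and $u_1u_2=\pm\q^2$ via the factorized form of $Z_2(u_1,u_j)$ at those points.

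The main obstacle I anticipate is the verification of the two-variable reductions in Propositions~\ref{Z2red1} and~\ref{Z2red2}. There one must track how, after the specialization, the products $\prod_{i=2}^{n-1}W(\Wi,u_1u_i)W(\Wii,u_iu_n)$ (or their $\Wiii$ analogues) emerge from cancellations between the prefactor and the two-row Pfaffian expansion, while simultaneously accommodating the different structure of the $i=0$ entries in the odd-$n$ case. The symmetry of Proposition~\ref{Zsymm} allows one to restrict attention to adjacent indices and thereby simplifies the bookkeeping, but the combinatorics of matching factors on both sides, and checking that the size-$(n-2)$ Pfaffian that survives is of exactly the same structural type in the remaining variables, is the technical heart of the argument.
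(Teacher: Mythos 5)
Your proposal follows essentially the same route as the paper: uniqueness of $Z_n$ from the degree, parity and symmetry properties of Propositions~\ref{ZLaurent} and~\ref{Zsymm} together with the specializations of Propositions~\ref{Z1red}--\ref{Z2red2}, then verification that the right-hand side of~\eqref{ZPf1} has the same properties. In fact the paper explicitly records your exact packaging (all $n$ at once, base case $Z_1$, with Proposition~\ref{Z1red} among the characterizing properties) as an alternative; its main proof instead characterizes $Z_n$ for even $n$ only, using Propositions~\ref{ZLaurent}, \ref{Z2red1}, \ref{Z2red2} and~\ref{Zsymm} with base case $Z_2$, and then obtains the odd case by setting an auxiliary variable equal to $\q$ in the even-order formula via Proposition~\ref{Z1red}. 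Two corrections to your sketch of the verification step. First, the two-variable reductions are lighter than you anticipate: by symmetry only the specialization $u_1u_2=q^{\pm2}$ need be checked, and it is a single-row expansion, not a two-row one --- after transferring the factor $\sigmah(q^2u_1u_2)\,\sigmah(q^2\u_1\u_2)/\sigmah(u_1\u_2)$ from the prefactor onto row~$1$ of the array, that row becomes $(Z_2(u_1,u_2),0,\ldots,0)$ at the specialization, since the transferred factor vanishes except where it is cancelled by the denominator of the $(1,2)$ entry, and~\eqref{Pfredrow} then yields~\eqref{Y2redeq}. Second, your stated mechanism for the $u_1^2=\pm\q^2$ reduction would fail as written: at $u_1=\q$, say, the entries of row~$1$ do not ``factor so that all but one term vanish''; rather, since $Z_2(\q,u_j)=Z_1(\q)\,Z_1(u_j)\,\sigmah(q^3\u_j)$, row~$1$ becomes proportional, on columns $j\ge2$, to the $i=0$ row whose entries are $Z_1(u_j)$, so one must first eliminate those entries by a row/column operation as in~\eqref{PfMAMt} before expanding. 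This is repairable, but it is precisely the complication that the paper's even-to-odd derivation is designed to avoid, and your interpolation count should accordingly be read as $2n$ admissible values of $u_1^2$ (not $4n-2$ of $u_1$, since parity halves the count) against the $n+1$ possibly nonzero coefficients.
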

Note that if $\alpha=\beta=0$, $\gamma=\bar{b}$, $\delta=b$ and
$\phi(u)=1/\sigma(q^2u^2)$, for an indeterminate~$b$, then the left boundary weights are those used by
Kuperberg~\cite[Fig.~15]{Kup02} for even-order OSASMs, i.e.,
$W(\WLup,u)=W(\WLdown,u)=0$, $W(\WLout,u)=\bar{b}$ and $W(\WLin,u)=b$,
and~\eqref{ZPf1} then gives the Pfaffian formula of~\cite[$Z_{\mathrm{O}}(n;\vec{x})$ case of Thm.~10]{Kup02} for the even-order OSASM partition function,
i.e., $Z_n(u_1,\ldots,u_n)=\prod_{1\le i<j\le n}\fracsden{\sigmah(q^2u_iu_j)\,\sigmah(q^2\u_i\u_j)}{\sigmah(u_i\u_j)}\,
\Pf_{1\le i<j\le n}\bigl(\fracsden{\sigmah(u_i\u_j)}{\sigmah(q^2u_iu_j)\,\sigmah(q^2\u_i\u_j)}\bigr)$
for $n$ even (and $Z_n(u_1,\ldots,u_n)=0$ for $n$ odd).

\psset{unit=7mm}
Note also that a result related to~\eqref{ZPf1} is obtained by Motegi~\cite[Thm.~3.3]{Mot18}.
The case considered in~\cite{Mot18} essentially involves six-vertex model configurations on a trapezoidal extension of~$\G_n$, with certain boundary conditions
and with the left boundary weights for local configurations \Lin\ being zero.

A detailed proof of Theorem~\ref{ZPftheorem} is as follows, and an alternative version of the proof is
briefly outlined at the end of this section.
\begin{proof} First observe that (assuming that $\phi(u)$ is given)
$Z_n(u_1,\ldots,u_n)$ for~$n$ even and $n\ge4$ is uniquely characterized by the
expression for $Z_2(u_1,u_2)$ in~\eqref{Z123} and the properties in
Propositions~\ref{ZLaurent},~\ref{Z2red1},~\ref{Z2red2} and~\ref{Zsymm},
where this can be shown as follows.  Consider~$n$ even and $n\ge4$, and
assume that $Z_{n-2}(u_1,\ldots,u_{n-2})$ is known.
Due to Proposition~\ref{ZLaurent}, $Z_n(u_1,\ldots,u_n)/\phi(u_1)$ is
determined by its values at $n+1$ distinct nonzero values of~$u_1^2$,
since the coefficient of $u_1^k$ in $Z_n(u_1,\ldots,u_n)/\phi(u_1)$ can be
nonzero for only the $n+1$ cases $k=-n,-n+2,\ldots,n-2,n$.
Propositions~\ref{Z2red1},~\ref{Z2red2} and~\ref{Zsymm} give
$Z_n(u_1,\ldots,u_n)$ at $2(n-1)$ values of $u_1^2$
(with $2(n-1)\ge n+1$, since $n\ge4$), specifically at
$u_1^2=q^4\u_i^2$ and $u_1^2=\q^4\u_i^2$, for $i=2,\ldots,n$.  The required conclusion now follows by induction on~$n$.

For $n$ even, define $Y_n(u_1,\ldots,u_n)$ as the RHS of~\eqref{ZPf1}.
It follows immediately that $Y_2(u_1,u_2)=Z_2(u_1,u_2)$.
Hence, the validity of~\eqref{ZPf1} for $n$ even and $n\ge4$ can
be confirmed by showing that $Y_n(u_1,\ldots,u_n)$ satisfies properties
corresponding to those
in Propositions~\ref{ZLaurent},~\ref{Z2red1},~\ref{Z2red2} and~\ref{Zsymm}.
Specifically, these properties can be taken as follows, with $n$ even in each case.
\begin{list}{(\roman{Ylist})}{\usecounter{Ylist}\setlength{\labelwidth}{7mm}\setlength{\leftmargin}{11mm}\setlength{\labelsep}{3mm}}
\item\label{YLaurent} $Y_n(u_1,\ldots,u_n)/\phi(u_1)$ is an even Laurent polynomial in~$u_1$
which (unless identically zero) has lower degree in $u_1$ at least~$-n$ and
upper degree in~$u_1$ at most~$n$.
\item\label{Y2red} For $n\ge4$ and $u_1u_2=q^{\pm2}$,
\begin{equation}\label{Y2redeq}
Y_n(u_1,u_2,u_3,\ldots,u_n)=Y_2(u_1,u_2)\,Y_{n-2}(u_3,\ldots,u_n)\,\prod_{i=3}^n\sigmah(q^{\pm2}u_1u_i)\,\sigmah(q^{\pm2}u_2u_i).\end{equation}
\item\label{Ysym} $Y_n(u_1,\ldots,u_n)$ is symmetric in $u_1,\ldots,u_n$.
\end{list}
Note that the $u_1u_2=q^2$ case of~(ii),
together with $W(\Wi,u)=W(\Wii,u)=\sigmah(q^2u)$
and an application of~(iii) to interchange~$u_2$ and~$u_n$, corresponds to the $u_1u_n=q^2$ case of Proposition~\ref{Z2red1}, and that the $u_1u_2=\q^2$ case of~(ii),
together with $W(\Wiii,u)=\sigmah(q^2\u)=-\sigmah(\q^2u)$,
corresponds to the $u_1u_2=\q^2$ case of Proposition~\ref{Z2red2}.

In order to verify~(i),
let $Y'_n(u_1,\ldots,u_n)=\fracs{\prod_{1\le i<j\le n}\sigmah(q^2u_iu_j)\,\sigmah(q^2\u_i\u_j)}
{\phi(u_1)\,\prod_{2\le i<j\le n}\sigmah(u_i\u_j)}\,\Pf_{1\le i<j\le n}\bigl(\fracsden{\sigmah(u_i\u_j)\,Z_2(u_i,u_j)}{\sigmah(q^2u_iu_j)\,\sigmah(q^2\u_i\u_j)}\bigr)$,
so that $Y_n(u_1,\ldots,u_n)=\fracsden{\phi(u_1)}{\prod_{j=2}^n\sigmah(u_1\u_j)}\,Y'_n(u_1,\ldots,u_n)$.
Then $Y'_n(u_1,\ldots,u_n)$ is a Laurent polynomial in~$u_1$ since each term
in the expansion~\eqref{Pfdef2} of
the Pfaffian contains a single $u_1$-dependent factor
$\fracsden{\sigmah(u_1\u_j)\,Z_2(u_1,u_j)}{\sigmah(q^2u_1u_j)\,\sigmah(q^2\u_1\u_j)}$, for some $2\le j\le n$, and the product of this factor
with $\frac{\sigmah(q^2u_1u_j)\,\sigmah(q^2\u_1\u_j)}{\phi(u_1)}$ from the prefactor to the Pfaffian
is a Laurent polynomial in~$u_1$.  If $u_1$ is set equal to $\pm u_j$ for any $2\le j\le n$, then in the skew-symmetric matrix
$\bigl(\fracsden{\sigmah(u_i\u_j)\,Z_2(u_i,u_j)}{\sigmah(q^2u_iu_j)\,\sigmah(q^2\u_i\u_j)}\bigr)_{1\le i,j\le n}$,
row~1 is $\pm\frac{\phi(u_1)}{\phi(\pm u_1)}$ times row~$j$ and column~1 is
$\pm\frac{\phi(u_1)}{\phi(\pm u_1)}$ times column~$j$, so that,
using~\eqref{Pfint} and~\eqref{Pfmult},
$Y'_n(u_1,\ldots,u_n)=0$.  Hence $Y'_n(u_1,\ldots,u_n)$ is divisible by
$\prod_{j=2}^n(u_1^2-u_j^2)=\prod_{j=2}^n(u_1u_j\,\sigma(u_1\u_j))$, and so $Y_n(u_1,\ldots,u_n)/\phi(u_1)$ is a Laurent polynomial in~$u_1$.
It can also easily be checked that $Y_n(u_1,\ldots,u_n)/\phi(u_1)$ is even in~$u_1$ and
has the required degree properties in $u_1$.

In order to verify~(ii),
consider $n\ge4$, write $\prod_{1\le i<j\le n}\fracsden{\sigmah(q^2u_iu_j)\,\sigmah(q^2\u_i\u_j)}{\sigmah(u_i\u_j)}=
\fracsden{\sigmah(q^2u_1u_2)\,\sigmah(q^2\u_1\u_2)}{\sigmah(u_1\bar u_2)}\allowbreak\times\allowbreak
\bigl(\prod_{i=1}^2\prod_{j=3}^n\fracsden{\sigmah(q^2u_iu_j)\,\sigmah(q^2\u_i\u_j)}{\sigmah(u_i\u_j)}\bigr)
\bigl(\prod_{3\le i<j\le n}\fracsden{\sigmah(q^2u_iu_j)\,\sigmah(q^2\u_i\u_j)}{\sigmah(u_i\u_j)}\bigr)$,
take out $\fracsden{\sigmah(q^2u_1u_2)\,\sigmah(q^2\u_1\u_2)}{\sigmah(u_1\bar u_2)}$, and use it
to multiply row~$1$ of the array in $\Pf_{1\le i<j\le n}\bigl(\fracsden{\sigmah(u_i\u_j)\,Z_2(u_i,u_j)}{\sigmah(q^2u_iu_j)\,\sigmah(q^2\u_i\u_j)}\bigr)$.
Now set $u_1u_2=q^{\pm2}$. Then row~$1$ of the array is $(Z_2(u_1,u_2),0,\ldots,0)$,
and $\prod_{i=1}^2\prod_{j=3}^n\fracsden{\sigmah(q^2u_iu_j)\,\sigmah(q^2\u_i\u_j)}{\sigmah(u_i\u_j)}=
\prod_{j=3}^n\sigmah(q^{\pm2}u_1u_j)\,\sigmah(q^{\pm2}u_2u_j)$.
Applying~\eqref{Pfredrow}
to the Pfaffian, and recalling that $Z_2(u_1,u_2)=Y_2(u_1,u_2)$, now gives~\eqref{Y2redeq} as required.

The validity of~(iii)
can be confirmed by observing that both $\prod_{1\le i<j\le n}\fracsden{\sigmah(q^2u_iu_j)\,\sigmah(q^2\u_i\u_j)}{\sigmah(u_i\u_j)}$
and $\Pf_{1\le i,j\le n}\bigl(\fracsden{\sigmah(u_i\u_j)\,Z_2(u_i,u_j)}{\sigmah(q^2u_iu_j)\,\sigmah(q^2\u_i\u_j)}\bigr)$
are antisymmetric in $u_1,\ldots,u_n$ (where this follows for the Pfaffian by using~\eqref{Pfint}).

The $n$ even case of~\eqref{ZPf1} has now been verified.
The $n$ odd case of~\eqref{ZPf1} can be obtained from the $n$ even case using Proposition~\ref{Z1red}.
In particular,~\eqref{Z1redeq} (with $n$ replaced by $n+1$, $u_1,\ldots,u_{n+1}$ replaced by $u_0,\ldots,u_n$, and~$u_0$ taken to be~$\q$) gives
$Z_n(u_1,\ldots,u_n)=\frac{Z_{n+1}(\q,u_1,\ldots,u_n)}{Z_1(\q)\,\prod_{i=1}^n\sigmah(q^3\u_i)}$.
Taking $n$ odd, using~\eqref{ZPf1}, performing simple cancellations in the prefactor
and in row~1 of the array for the Pfaffian, and multiplying row~1 of the array by~$-1$ gives
\begin{equation*}
Z_n(u_1,\ldots,u_n)=\frac{1}{Z_1(\q)}\prod_{1\le i<j\le n}\frac{\sigmah(q^2u_iu_j)\,\sigmah(q^2\u_i\u_j)}{\sigmah(u_i\u_j)}
\Pf_{0\le i<j\le n}\left(\begin{cases}\frac{Z_2(\q,u_j)}{\sigmah(q^3\u_j)},&i=0\\[1.5mm]
\frac{\sigmah(u_i\u_j)\,Z_2(u_i,u_j)}{\sigmah(q^2u_iu_j)\,\sigmah(q^2\u_i\u_j)},&i\ge1\end{cases}\,\right).\end{equation*}
Using~\eqref{Z1redeq} to write
$Z_2(\q,u_j)=Z_1(\q)\,Z_1(u_j)\,\sigmah(q^3\u_j)$ now gives the~$n$ odd case of~\eqref{ZPf1} as required.

Finally,~\eqref{ZPf2} can be easily be obtained from~\eqref{ZPf1} by using
the explicit expressions in~\eqref{Z123} for $Z_1(u_j)$ and $Z_2(u_i,u_j)$,
and using~\eqref{Pfmult} to remove $\prod_{i=1}^n\phi(u_i)$ from the Pfaffian.
\end{proof}

Note that an alternative approach to proving~\eqref{ZPf1} would involve observing that
$Z_n(u_1,\ldots,u_n)$ for all $n\ge2$ is uniquely characterized by the
expression for $Z_1(u_1)$ in~\eqref{Z123} and the properties in
Propositions~\ref{ZLaurent},~\ref{Z1red},~\ref{Z2red1} (or~\ref{Z2red2}) and~\ref{Zsymm},
defining $Y_n(u_1,\ldots,u_n)$ as the RHS of~\eqref{ZPf1} for all~$n$, noting that $Y_1(u_1)=Z_1(u_1)$, and confirming
that $Y_n(u_1,\ldots,u_n)$ satisfies properties corresponding to those in Propositions~\ref{ZLaurent},~\ref{Z1red},~\ref{Z2red1} (or~\ref{Z2red2})
and~\ref{Zsymm} for $n\ge2$.

\section{Proofs of the results of Section~\ref{mainresults}}\label{proofs}
In this section, the results of Section~\ref{mainresults} are proved,
where some parts of these proofs use results (in particular, the bijection~\eqref{bij} and
Theorem~\ref{ZPftheorem}) from Section~\ref{sixvertexmodelDSASMs}.

In Section~\ref{ZXrelation}, the exact relationship between
the specialized DSASM partition function~\eqref{ZZ} and the DSASM generating function~\eqref{X} is given in Lemma~\ref{ZXlemma}.
In Section~\ref{Pfaffianid}, a general identity for evaluating certain multivariate Pfaffian expressions
when all the variables are set to zero is obtained in Theorem~\ref{Pfaffianidtheorem}.  Section~\ref{Pfaffianid}
also includes the derivation of a result, Corollary~\ref{Pfaffianidcoroll}, for the effects of certain transformations on Pfaffians whose
entries involve coefficients of power series.

In Sections~\ref{proofXrs}--\ref{proofXrsttheorem}, the proofs of
the Pfaffian formulae~\eqref{Xrs},~\eqref{numDSASM2} and~\eqref{Xrst} in Theorem~\ref{Xrstheorem}, Corollary~\ref{numDSASMcoroll} and Theorem~\ref{Xrsttheorem} are obtained
by combining Theorem~\ref{ZPftheorem}, Lemma~\ref{ZXlemma} and Theorem~\ref{Pfaffianidtheorem},
and the proofs of the explicit expressions~\eqref{Xrsentries},~\eqref{numDSASMentries} and~\eqref{Xrstentries} for the entries for these Pfaffians
are obtained using standard techniques for evaluating coefficients of power series.
Section~\ref{Xrsentriesproof} also includes the derivation of recurrence relations satisfied by the entries for the Pfaffian in Theorem~\ref{Xrstheorem}.
In Sections~\ref{proofXrsReform} and~\ref{proofXrstDiv}, the proofs of the alternative statements~\eqref{XrsReform}--\eqref{numDSASMReform}
and~\eqref{XrstDiv} of certain results are given.

\subsection{The connection between the specialized DSASM partition function~\eqref{ZZ} and DSASM generating function~\eqref{X}}\label{ZXrelation}
\begin{lemma}\label{ZXlemma}
The specialized DSASM partition function~\eqref{ZZ} and DSASM generating function~\eqref{X} are related by
\begin{multline}\label{ZX}\Z_n(z,\underbrace{1,\ldots,1}_{n-1})=\frac{\sigma(q^2)^{(n-1)(n-2)/2}\,\sigma(q^2\z)^{n-1}}{\sigma(q^4)^{n(n-1)/2}\,\sigma(q^2z)}\\
\times\biggl[\frac{(q+\q)\,\sigma(qz)\,\sigma(q^2\z)}{\sigma(q^2z)}\,X_n\biggl(\!q^2+\q^2,s,\frac{\sigma(q^2z)}{\sigma(q^2\z)}\biggr)
-s\,\sigma(z)\,X_{n-1}\bigl(q^2+\q^2,s,1\bigr)\biggr],\end{multline}
where $z$ is arbitrary.
\end{lemma}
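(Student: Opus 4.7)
The plan is to expand $\Z_n(z,1,\ldots,1)$ as a sum over $C\in\SVC(n)$ using~\eqref{Z} and~\eqref{ZZ}, translate each configuration to the corresponding DSASM~$A$ via the bijection~\eqref{bij}, and separate the product of vertex weights according to whether a vertex lies in row~$1$ of~$\G_n$ (so the local parameter is~$z$) or elsewhere (so the local parameter is~$1$). After straightforward bookkeeping one should obtain two clearly identifiable pieces on the LHS that match the two terms inside the bracket on the RHS of~\eqref{ZX}.

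For the non-row-$1$ factor, the specialized weights~\eqref{leftW} with $u=1$ contribute $s$ on each $\Lup$/$\Ldown$ vertex and~$1$ on each $\Lout$/$\Lin$ vertex, while the bulk weights of Table~\ref{weights} with $u_iu_j=1$ contribute $1$ on each $\Wv$/$\Wvi$ vertex and $\sigmah(q^2)=\sigma(q^2)/\sigma(q^4)$ on each of the four zero-entry types. Using the statistic interpretations~\eqref{RC}--\eqref{TC}, together with the observation that row~$1$ contains exactly $1-\delta_{T(A),1}$ vertices of type $\Wv$/$\Wvi$ and exactly $\delta_{T(A),1}$ vertices of type $\Lup$/$\Ldown$, the non-row-$1$ factor collapses to a product of powers of~$s$, $\sigma(q^2)$ and $\sigma(q^4)$ whose exponents are explicit in $n$, $R(A)$, $S(A)$ and $T(A)$; the identity $\sigma(q^4)/\sigma(q^2)=q^2+\q^2=r$ then converts the relevant power of $\sigma(q^4)/\sigma(q^2)$ into $r^{R(A)}$.

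For the row-$1$ factor, once $k=T(A)$ is fixed the six-vertex rule together with the upward orientation of the top edges and the leftward orientation of the rightmost edge of row~$1$ forces the entire first-row configuration: for $k=1$ one has $C_{11}=\Lup$ and $C_{1j}=\Wiii$ for $2\le j\le n$; for $k\ge2$ one has $C_{11}=\Lout$, $C_{1j}=\Wi$ for $2\le j\le k-1$, $C_{1k}=\Wv$, and $C_{1j}=\Wiii$ for $k+1\le j\le n$. Multiplying the corresponding specialized weights gives $s\,\sigma(q^2\z)^{n-1}/\sigma(q^4)^{n-1}$ in the first case and $\bigl(\sigma(qz)/\sigma(q)\bigr)\,\sigma(q^2z)^{k-2}\,\sigma(q^2\z)^{n-k}/\sigma(q^4)^{n-2}$ in the second.

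Combining these and summing over~$A$, grouped by $T(A)$, the $T(A)=1$ piece simplifies via Proposition~\ref{refprop}(i) (which gives $\sum_{A:\,T(A)=1}r^{R(A)}s^{S(A)}=s\,X_{n-1}(r,s,1)$) to $\sigma(q^2)^{(n-1)(n-2)/2}\,s\,\sigma(q^2\z)^{n-1}\,\sigma(q^4)^{-n(n-1)/2}\,X_{n-1}(r,s,1)$, and the $T(A)\ge2$ terms assemble into a multiple of $\sum_{k\ge2}\sigma(q^2z)^{k-2}\sigma(q^2\z)^{n-k}\sum_{A:\,T(A)=k}r^{R(A)}s^{S(A)}$. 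On the RHS, substituting $t=\sigma(q^2z)/\sigma(q^2\z)$ and extracting the $T=1$ summand from $X_n$ combines with the correction $-s\,\sigma(z)\,X_{n-1}(r,s,1)$ via the identity $(q+\q)\,\sigma(qz)-\sigma(z)=\sigma(q^2z)$ to reproduce the $T(A)=1$ piece, while the remaining $T\ge2$ summands of $X_n$ match the LHS $T\ge2$ piece after using $\sigma(q^2)/\sigma(q)=q+\q$. The main obstacle is the short propagation argument that pins down the first-row configuration as a deterministic function of $T(A)$; the rest is bookkeeping of exponents together with the three algebraic identities $\sigma(q^4)/\sigma(q^2)=q^2+\q^2$, $\sigma(q^2)/\sigma(q)=q+\q$, and $(q+\q)\,\sigma(qz)-\sigma(z)=\sigma(q^2z)$.
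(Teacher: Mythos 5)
Your proposal is correct and follows essentially the same route as the paper: decompose each configuration's weight into the row-$1$ part (which is forced once $T(A)$ is fixed, exactly as you argue) and the remainder (expressed through $R(A)$ and $S(A)$ via the specialized weights at parameter $1$), then handle the $T(A)=1$ contribution using Proposition~\ref{refprop}(i$'$). The only difference is cosmetic bookkeeping at the end — the paper writes the sum as a generic term over all of $\DSASM(n)$ plus a correction for $A_{11}=1$, while you split directly into $T(A)=1$ and $T(A)\ge2$ and match each piece of the right-hand side using the identity $(q+\q)\,\sigma(qz)-\sigma(z)=\sigma(q^2z)$; both reduce to the same computation.
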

Note that setting $z=s=q^2+\q^2=1$ (so that $q$ is a primitive 12th root of unity) in~\eqref{ZX} gives
$\Z_n(1,\ldots,1)|_{s=q^2+\q^2=1}=X_n(1,1,1)=|\DSASM(n)|$.
\begin{proof}\psset{unit=7mm}
Consider $A\in\DSASM(n)$ and $C\in\SVC(n)$ which correspond under the bijection of~\eqref{bij}.
Now $A_{11}=0$ or $A_{11}=1$.
If $A_{11}=0$, then the local configurations in row~$1$ of $\G_n$ are
$C_{11}=\Lout$, $C_{12}=\ldots=C_{1,T(A)-1}=\Vi$, $C_{1,T(A)}=\Vv$ and $C_{1,T(A)+1}=\ldots=C_{1n}=\Viii$, where $T$ is defined in~\eqref{TA}.
Furthermore, using~\eqref{SC}, the number of local configurations \Lup\ and \Ldown\ in~$C$ is~DD$S(A)$, and, using~\eqref{RC}, the
number of local configurations \Vi, \Vii, \Viii\ and \Viv\ in~$C$ in rows $2,\ldots,n$ of~$\G_n$ is $(n-1)(n-2)/2-R(A)+1$.
Using the bulk weights from Table~\ref{weights} and left boundary weights from~\eqref{leftW}, where the parameters in these weights
are~$z$ for vertices in row~$1$ of $\G_n$ and $1$ for vertices in rows $2,\ldots,n$ of~$\G_n$,
it follows that the weight of~$C$ for this case is
\begin{equation*}s^{S(A)}\:\frac{\sigma(qz)}{\sigma(q)}\:\sigmah(q^2z)^{T(A)-2}\:\sigmah(q^2\z)^{n-T(A)}\:\sigmah(q^2)^{(n-1)(n-2)/2-R(A)+1}.\end{equation*}
If $A_{11}=1$, then $C_{11}=\Lup$ and $C_{12}=\ldots=C_{1n}=\Viii$, and the
weight (with parameters as before) of $C$ for this case is
\begin{equation*}s^{S(A)}\:\sigmah(q^2\z)^{n-1}\:\sigmah(q^2)^{(n-1)(n-2)/2-R(A)}.\end{equation*}

Using~\eqref{Z} and~\eqref{ZZ}, regarding $\{A\in\DSASM(n)\mid A_{11}=0\}$ as
$\DSASM(n)\setminus\{A\in\DSASM(n)\mid A_{11}=1\}$ and noting that $T(A)=1$ for $A_{11}=1$, it follows that
\begin{multline*}\Z_n(z,\underbrace{1,\ldots,1}_{n-1})
=\sum_{A\in\DSASM(n)}s^{S(A)}\:\frac{\sigma(qz)}{\sigma(q)}\:\sigmah(q^2z)^{T(A)-2}\:\sigmah(q^2\z)^{n-T(A)}\:\sigmah(q^2)^{(n-1)(n-2)/2-R(A)+1}\\
+\sum_{\substack{A\in\DSASM(n)\\A_{11}=1}}\Bigl(-s^{S(A)}\:\frac{\sigma(qz)}{\sigma(q)}\:\sigmah(q^2z)^{-1}\:\sigmah(q^2\z)^{n-1}\:\sigmah(q^2)^{(n-1)(n-2)/2-R(A)+1}\\[-4mm]
+s^{S(A)}\:\sigmah(q^2\z)^{n-1}\:\sigmah(q^2)^{(n-1)(n-2)/2-R(A)}\Bigr).\end{multline*}
Using the bijection from the proof of~(i) of Proposition~\ref{refprop} to replace the sum over
$A\in\DSASM(n)$ with $A_{11}=1$ by a sum over $A'\in\DSASM(n-1)$, noting that under this bijection $R(A)=R(A')$ and $S(A)=S(A')+1$,
performing some straightforward simplifications and applying the definition~\eqref{X}, now gives~\eqref{ZX} as required.
\end{proof}

\subsection{Identities for multivariate Pfaffian expressions}\label{Pfaffianid}
A general identity for evaluating certain multivariate Pfaffian expressions with all of the variables set to zero
will now be obtained in Theorem~\ref{Pfaffianidtheorem}, and a consequence of this identity
will then be identified in Corollary~\ref{Pfaffianidcoroll}.
Some special cases of Theorem~\ref{Pfaffianidtheorem} have appeared in a similar context in, for
example, the work of Hagendorf and Morin--Duchesne~\cite[Sec.~4.2.2]{HagMor16}, and Rosengren~\cite[Lem.~3.12]{Ros16}, and
determinant versions of certain cases have appeared in a similar context in,
for example, the work of Behrend, Di Francesco and Zinn-Justin~\cite[Eqs.~(43)--(47) \& Eq.~(79)]{BehDifZin12}.
However, functions which correspond to~$h(x)$ and $k(x)$ in Theorem~\ref{Pfaffianidtheorem} are
not included in any of these cases.  The inclusion of~$h(x)$ and~$k(x)$ enables certain useful transformations to be performed
on the functions which appear in the entries for the initial multivariate Pfaffian expression. Some comments on the use of
such a transformation, for a specific~$h(x)$, will be given at the end of Section~\ref{proofXrs}.
Determinant versions of certain cases of Corollary~\ref{Pfaffianidcoroll} have appeared in,
for example, the work of Gessel and Xin~\cite[Rules, p.~14]{GesXin06}.

\begin{theorem}\label{Pfaffianidtheorem}
For integers $m$ and $n$ with $0\le m\le2n$, power series $f(x,y)$,
$g_{m+1}(x)$, \ldots, $g_{2n}(x)$, $h(x)$ and $k(x)$ with $f(x,y)$ antisymmetric in~$x$ and~$y$, and an array $(a_{ij})_{m<i<j\le2n}$,
\begin{multline}\label{HomogPf}
\left.\raisebox{-4.6ex}{$\displaystyle\frac{\rule[-5.2ex]{0ex}{0ex}\displaystyle\Pf_{1\le i<j\le2n}\left(\begin{cases}k(x_i)\,k(x_j)\,f\bigl(h(x_i)\,x_i,h(x_j)\,x_j\bigr),&j\le m\\
k(x_i)\,g_j\bigl(h(x_i)\,x_i\bigr),&i\le m<j\\
a_{ij},&i>m\end{cases}\right)}{\prod_{1\le i<j\le m}(x_j-x_i)}$}\right|_{x_1,\ldots,x_m\rightarrow0}\\
=k(0)^m\,h(0)^{m(m-1)/2}\Pf_{1\le i<j\le2n}\left(\begin{cases}[u^{i-1}v^{j-1}]\,f(u,v),&j\le m\\
[u^{i-1}]\,g_j(u),&i\le m<j\\a_{ij},&i>m\end{cases}\right).\end{multline}\end{theorem}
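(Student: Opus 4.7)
The plan is to reduce the identity to an essential form via standard Pfaffian manipulations and then invoke a Cauchy--Binet-type expansion followed by a Schur-polynomial evaluation at the origin. First I would use Pfaffian multiplicativity (property~(ii) of Section~\ref{Pfaffians}) with $c_i=k(x_i)$ for $i\le m$ and $c_i=1$ for $i>m$ to factor $\prod_{i=1}^m k(x_i)$ out of the Pfaffian; in the limit $x_1,\ldots,x_m\to0$ this contributes the $k(0)^m$ on the right-hand side of~\eqref{HomogPf}. Next I would substitute $t_i=h(x_i)\,x_i$, which is a local diffeomorphism near the origin when $h(0)\ne0$. Since $t_j-t_i=h(0)(x_j-x_i)+O(x_i,x_j)^2$, we have $\prod_{1\le i<j\le m}(t_j-t_i)=h(0)^{m(m-1)/2}\prod_{1\le i<j\le m}(x_j-x_i)\cdot(1+O(x))$, so the limit in the $x_i$ variables can be traded for the limit in the $t_i$ variables at the cost of the factor $h(0)^{m(m-1)/2}$.

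After these reductions it suffices to prove $\lim_{t_1,\ldots,t_m\to0}\Pf_{1\le i<j\le 2n}(B_{ij})\big/\prod_{1\le i<j\le m}(t_j-t_i)=\Pf_{1\le i<j\le 2n}(A_{ij})$, where $B_{ij}$ equals $f(t_i,t_j)$, $g_j(t_i)$, or $a_{ij}$ in the three index regions and $A_{ij}$ is the corresponding power-series coefficient. Expanding $f$ and $g_j$ as power series, I would factor $(B_{ij})=L\,\tilde A\,L^T$, where $L$ is a $2n\times N$ matrix (with $N=K+2n-m$ for some large truncation parameter $K$) whose first $m$ rows are ``Vandermonde rows'' $L_{ik}=t_i^{k-1}$ for $1\le k\le K$ (and zero for $k>K$), and whose remaining $2n-m$ rows are indicator rows supported on columns $K+1,\ldots,K+2n-m$. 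Correspondingly, $\tilde A$ is an $N\times N$ antisymmetric matrix whose blocks house $[u^{k-1}v^{l-1}]f(u,v)$ for $1\le k,l\le K$, the coefficients $[u^{k-1}]g_j(u)$ for $k\le K$ and $j\in\{m+1,\ldots,2n\}$, and the entries $a_{ij}$; the role of the indicator rows is to ensure that these three blocks occupy disjoint index ranges, so the assignment is consistent.

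The next step is to apply the Cauchy--Binet identity for Pfaffians, $\Pf(L\,\tilde A\,L^T)=\sum_{I\subseteq[N],\,|I|=2n}\det(L_{[2n],I})\,\Pf(\tilde A_{I,I})$, a standard fact (see, e.g., the Ishikawa--Wakayama identity or the Pfaffian Lindstr\"om--Gessel--Viennot theorem of Okada and Stembridge). The block structure of $L$ forces $\det(L_{[2n],I})=0$ unless $I\supseteq\{K+1,\ldots,K+2n-m\}$, and if $I=I'\cup\{K+1,\ldots,K+2n-m\}$ with $I'=\{k_1<\cdots<k_m\}\subseteq[K]$, then $L_{[2n],I}$ is block diagonal and $\det(L_{[2n],I})=\det(t_i^{k_j-1})_{1\le i,j\le m}$. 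The Jacobi bialternant identity gives $\det(t_i^{k_j-1})\big/\prod_{i<j}(t_j-t_i)=s_\lambda(t_1,\ldots,t_m)$ for the partition $\lambda$ with $\lambda_j=k_{m+1-j}-(m+1-j)$, and this Schur polynomial evaluates to $1$ at $t=0$ precisely when $\lambda=\emptyset$, i.e., when $I'=\{1,\ldots,m\}$. Hence only the single term with $I=\{1,\ldots,m\}\cup\{K+1,\ldots,K+2n-m\}$ survives in the limit, and by construction the corresponding $\Pf(\tilde A_{I,I})$ is exactly $\Pf_{1\le i<j\le 2n}(A_{ij})$.

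The principal obstacle is bookkeeping: arranging the factorization $(B_{ij})=L\,\tilde A\,L^T$ with non-overlapping blocks so that Cauchy--Binet applies cleanly, and justifying termwise convergence when $f,g_j,h,k$ are genuine power series rather than polynomials. The latter is handled by working in the formal power series ring, truncating to order $K-1$ before taking the limit and then letting $K\to\infty$; equivalently, only finitely many terms contribute to any fixed monomial in the $x_i$'s. A minor caveat is the assumption $h(0)\ne0$ used in the change of variables, but the degenerate case $h(0)=0$ follows by a continuity argument, since both sides of~\eqref{HomogPf} depend polynomially on $h(0)$.
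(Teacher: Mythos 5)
Your proof is correct, but the core of it runs along a genuinely different route from the paper's. The outer reductions coincide: you factor out $\prod_ik(x_i)$ with~\eqref{Pfmult} to produce $k(0)^m$, and your change of variables $t_i=h(x_i)\,x_i$ is equivalent to the paper's observation that the quotient picks up the factor $\prod_{i<j}\frac{h(x_j)x_j-h(x_i)x_i}{x_j-x_i}\to h(0)^{m(m-1)/2}$ (though the paper simply composes the power series $Q$ with $h(x_i)x_i$, which avoids inverting the substitution and hence needs no separate treatment of $h(0)=0$). For the essential case $h=k=1$, however, the paper argues by bare coefficient extraction: writing $N=Q\cdot\prod_{i<j}(x_j-x_i)$ with $Q$ a power series, it takes $[x_1^0x_2^1\cdots x_m^{m-1}]$ of both sides, notes that homogeneity of the Vandermonde forces the left side to equal $Q(0,\ldots,0)$, and expands the right side termwise straight from the definition~\eqref{Pfdef2}. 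You instead factor the skew-symmetric matrix as $L\widetilde{A}L^t$ with Vandermonde and indicator rows in $L$, invoke the Ishikawa--Wakayama minor summation formula $\Pf(L\widetilde{A}L^t)=\sum_I\det(L_{[2n],I})\Pf(\widetilde{A}_{I,I})$, and use the bialternant to recognize each quotient $\det(t_i^{k_j-1})/\prod_{i<j}(t_j-t_i)$ as a Schur polynomial, which vanishes at the origin unless the partition is empty. Both arguments are sound; the paper's is shorter and entirely self-contained, while yours buys a stronger intermediate statement --- an exact pre-limit expansion of the quotient as $\sum_\lambda\pm\,s_\lambda(t)\,\Pf(\widetilde{A}_{I_\lambda,I_\lambda})$ --- at the cost of importing the minor summation formula and the truncation bookkeeping needed to pass from polynomials to genuine power series. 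Two small points to tidy: you should note explicitly that the numerator Pfaffian is divisible by $\prod_{i<j}(x_j-x_i)$ so that the left side of~\eqref{HomogPf} is well defined (the paper gets this from~\eqref{Pfint} by setting $x_i=x_j$; your expansion also yields it termwise for each truncation, but it deserves a sentence), and your citation of the Pfaffian Lindstr\"om--Gessel--Viennot theorem is a red herring --- the identity you actually use is the Ishikawa--Wakayama minor summation formula.
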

\begin{proof}
The $m=0$ case of~\eqref{HomogPf} is trivial, with both sides being simply
$\Pf_{1\le i<j\le2n}(a_{ij})$,
and the $m=1$ case of~\eqref{HomogPf} can also be checked easily.  Hence,
in the rest of this proof it will be assumed that $m\ge2$.

Note that the LHS of~\eqref{HomogPf} is well-defined due to the following argument.  Let $P$ be the Pfaffian
on the LHS of~\eqref{HomogPf}, and $M=(M_{ij})_{1\le i,j\le2n}$ be the skew-symmetric matrix whose strictly upper triangular part is the array for this Pfaffian.
Due to the definition~\eqref{Pfdef2} of the Pfaffian and the fact that $f(x,y)$, $g_j(x)$, $h(x)$ and $k(x)$ are power series,~$P$ is a power series in $x_1,\ldots,x_m$.
Relevant entries of~$M$ are $M_{ij}=k(x_i)\,k(x_j)\,f\bigl(h(x_i)\,x_i,h(x_j)\,x_j\bigr)$ for $i,j\le m$ (where this depends on the antisymmetry of $f(x,y)$ in~$x$ and~$y$),
$M_{ij}=k(x_i)\,g_j\bigl(h(x_i)\,x_i\bigr)$ for $i\le m<j$, and $M_{ij}=-k(x_j)\,g_i\bigl(h(x_j)\,x_j\bigr)$ for $j\le m<i$.
It follows that if $x_i$ and $x_j$ are set equal for
$i\ne j$, then rows~$i$ and~$j$ of~$M$ are equal and columns~$i$ and~$j$ of~$M$ are equal, which implies,
using~\eqref{Pfint}, that~$P$ is zero in this case.
Therefore, $P$ is divisible by $\prod_{1\le i<j\le m}(x_j-x_i)$, and so
the quotient is a power series in $x_1,\ldots,x_m$ with a well-defined value at $x_1=\ldots=x_m=0$.

The case of~\eqref{HomogPf} with $h(x)=1$ and $k(x)=1$, i.e.,
\begin{multline}\label{HomogPf1}
\left.\raisebox{-4.6ex}{$\displaystyle\frac{\rule[-5.2ex]{0ex}{0ex}\displaystyle\Pf_{1\le i<j\le2n}\left(\begin{cases}f(x_i,x_j),&j\le m\\
g_j(x_i),&i\le m<j\\a_{ij},&i>m\end{cases}\right)}{\prod_{1\le i<j\le m}(x_j-x_i)}$}\right|_{x_1,\ldots,x_m\rightarrow0}\\
=\Pf_{1\le i<j\le2n}\left(\begin{cases}[u^{i-1}v^{j-1}]\,f(u,v),&j\le m\\
[u^{i-1}]\,g_j(u),&i\le m<j\\a_{ij},&i>m\end{cases}\right),\end{multline}
will be proved first, and this case will subsequently be used in the proof of the general case.

Let
\begin{equation}\label{N}N(x_1,\ldots,x_m)=\Pf_{1\le i<j\le2n}\left(\begin{cases}f(x_i,x_j),&j\le m\\g_j(x_i),&i\le m<j\\a_{ij},&i>m\end{cases}\right)\end{equation}
and
\begin{equation}\label{Q}Q(x_1,\ldots,x_m)=\frac{N(x_1,\ldots,x_m)}{\prod_{1\le i<j\le m}(x_j-x_i)}.\end{equation}
Then (as already shown for the more general case of terms on the LHS of~\eqref{HomogPf})
$N(x_1,\ldots,x_m)$ and $Q(x_1,\ldots,x_m)$ are power series.

Now take the coefficient of $x_1^0\ldots x_m^{m-1}$ on each side of
$Q(x_1,\ldots,x_m)\prod_{1\le i<j\le m}(x_j-x_i)=N(x_1,\ldots,x_m)$, i.e.,
\begin{equation}\label{coeff}
\textstyle[x_1^0\ldots x_m^{m-1}]\bigl(Q(x_1,\ldots,x_m)\prod_{1\le i<j\le
m}(x_j-x_i)\bigr)=[x_1^0\ldots x_m^{m-1}]\,N(x_1,\ldots,x_m).
\end{equation}
Using the facts that $\prod_{1\le i<j\le m}(x_j-x_i)$ is a homogeneous polynomial of degree $m(m-1)/2$, that
$x_1^0\ldots x_m^{m-1}$ has degree $m(m-1)/2$ and that $[x_1^0\ldots x_m^{m-1}]\prod_{1\le i<j\le m}(x_j-x_i)=1$, the LHS of~\eqref{coeff} is
\begin{align}\notag&\hspace{-6mm}\textstyle[x_1^0\ldots x_m^{m-1}]\bigl(Q(x_1,\ldots,x_m)\prod_{1\le i<j\le m}(x_j-x_i)\bigr)\\
\notag&\hspace{63mm}\textstyle=Q(0,\ldots,0)\:[x_1^0\ldots x_m^{m-1}]\prod_{1\le i<j\le m}(x_j-x_i)\\
\notag&\hspace{63mm}=Q(0,\ldots,0)\\
\label{coeffLHS}&\hspace{63mm}=\left.\frac{N(x_1,\ldots,x_m)}{\prod_{1\le i<j\le m}(x_j-x_i)}\right|_{x_1,\ldots,x_m\rightarrow0}.\end{align}
The RHS of~\eqref{coeff} is, using the same notation as in~\eqref{Pfdef2},
\begin{align}
\notag&[x_1^0\ldots x_m^{m-1}]\,N(x_1,\ldots,x_m)\\
\notag&\qquad=[x_1^0\ldots x_m^{m-1}]\left(\sum_{\substack{\{\{i_1,j_1\},\ldots,\{i_n,j_n\}\}\\i_1<j_1,\,\ldots,\,i_n<j_n}}
\text{sgn}(i_1j_1\ldots i_nj_n)\prod_{p=1}^n\left(\begin{cases}f(x_{i_p},x_{j_p}),&j_p\le m\\g_{j_p}(x_{i_p}),&i_p\le m<j_p\\a_{i_p,j_p},&i_p>m\end{cases}\right)\right)\\
\notag&\qquad=\sum_{\substack{\{\{i_1,j_1\},\ldots,\{i_n,j_n\}\}\\i_1<j_1,\,\ldots,\,i_n<j_n}}
\text{sgn}(i_1j_1\ldots i_nj_n)\prod_{p=1}^n\left(\begin{cases}[x_{i_p}^{i_p-1}x_{j_p}^{j_p-1}]\,f(x_{i_p},x_{j_p}),&j_p\le m\\
[x_{i_p}^{i_p-1}]\,g_{j_p}(x_{i_p}),&i_p\le m<j_p\\a_{i_p,j_p},&i_p>m\end{cases}\right)\\
\notag&\qquad=\sum_{\substack{\{\{i_1,j_1\},\ldots,\{i_n,j_n\}\}\\i_1<j_1,\,\ldots,\,i_n<j_n}}
\text{sgn}(i_1j_1\ldots i_nj_n)\prod_{p=1}^n\left(\begin{cases}[u^{i_p-1}v^{j_p-1}]\,f(u,v),&j_p\le m\\
[u^{i_p-1}]\,g_{j_p}(u),&i_p\le m<j_p\\a_{i_p,j_p},&i_p>m\end{cases}\right)\\
\label{coeffRHS}&\qquad=\Pf_{1\le i<j\le2n}\left(\begin{cases}[u^{i-1}v^{j-1}]\,f(u,v),&j\le m\\
[u^{i-1}]\,g_j(u),&i\le m<j\\a_{ij},&i>m\end{cases}\right).\end{align}
Combining~\eqref{coeff}--\eqref{coeffRHS} now gives~\eqref{HomogPf1}.

The general case of~\eqref{HomogPf}, i.e., with $h(x)$ and $k(x)$ arbitrary, will now be proved.
Applying~\eqref{Pfmult} (with $c_i=k(x_i)$ for $i=1,\ldots,m$, and $c_i=1$ for $i=m+1,\ldots,2n$)
on the LHS of~\eqref{HomogPf}, and noting that $x_1,\ldots,x_m$ can
immediately be set to zero in the resulting factor $\prod_{i=1}^m k(x_i)$, the LHS of~\eqref{HomogPf} becomes
\begin{equation*}
\left.\raisebox{-4.6ex}{$\displaystyle k(0)^m\:\frac{\rule[-5.2ex]{0ex}{0ex}\displaystyle\Pf_{1\le i<j\le2n}\left(\begin{cases}f\bigl(h(x_i)\,x_i,h(x_j)\,x_j\bigr),&j\le m\\
g_j\bigl(h(x_i)\,x_i\bigr),&i\le m<j\\
a_{ij},&i>m\end{cases}\right)}{\prod_{1\le i<j\le m}(x_j-x_i)}$}\right|_{x_1,\ldots,x_m\rightarrow0}.\end{equation*}
This can be written, using the definitions~\eqref{N} and~\eqref{Q}, as
\begin{equation*}k(0)^m\,Q\bigl(h(x_1)\,x_1,\ldots,h(x_m)\,x_m\bigr)
\left.\prod_{1\le i<j\le m}\frac{h(x_j)\,x_j-h(x_i)\,x_i}{x_j-x_i}\right|_{x_1,\ldots,x_m\rightarrow0},\end{equation*}
which, observing that $Q\bigl(h(x_1)\,x_1,\ldots,h(x_m)\,x_m\bigr)$ is a power series, becomes
\begin{equation*}k(0)^m\,Q(0,\ldots,0)
\left.\prod_{1\le i<j\le m}\frac{h(x_j)\,x_j-h(x_i)\,x_i}{x_j-x_i}\right|_{x_1,\ldots,x_m\rightarrow0}.\end{equation*}
Applying~\eqref{HomogPf1} (whose LHS is $Q(0,\ldots,0)$), and using the easily-verified fact that
\begin{equation*}\left.\frac{h(x_j)\,x_j-h(x_i)\,x_i}{x_j-x_i}\right|_{x_i,x_j\rightarrow0}\!=h(0),\end{equation*}
now gives the RHS of~\eqref{HomogPf} as required.\end{proof}

A useful corollary of Theorem~\ref{Pfaffianidtheorem} is as follows.

\begin{corollary}\label{Pfaffianidcoroll}
For integers $m$ and $n$ with $0\le m\le2n$, power series $f(u,v)$,
$g_{m+1}(u)$, \ldots, $g_{2n}(u)$, $h(u)$ and $k(u)$ with $f(u,v)$ antisymmetric in~$u$ and~$v$, and an array $(a_{ij})_{m<i<j\le2n}$,
\begin{multline}\label{HomogPf2}
\Pf_{1\le i<j\le2n}\left(\begin{cases}[u^{i-1}v^{j-1}]\,\bigl(k(u)\,k(v)\,f(h(u)\,u,h(v)\,v)\bigr),&j\le m\\
[u^{i-1}]\,\bigl(k(u)\,g_j(h(u)\,u)\bigr),&i\le m<j\\a_{ij},&i>m\end{cases}\right)\\
=k(0)^m\,h(0)^{m(m-1)/2}\Pf_{1\le i<j\le2n}\left(\begin{cases}[u^{i-1}v^{j-1}]\,f(u,v),&j\le m\\
[u^{i-1}]\,g_j(u),&i\le m<j\\a_{ij},&i>m\end{cases}\right).\end{multline}
\end{corollary}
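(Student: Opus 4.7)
The plan is to deduce Corollary~\ref{Pfaffianidcoroll} by applying Theorem~\ref{Pfaffianidtheorem} twice, using it as a two-way bridge between the ``Pfaffian of coefficients'' form and the ``limit of quotient'' form that appear on its two sides. The key observation is that the LHS of the corollary has exactly the shape of the RHS of the theorem, but with $f$ replaced by a composite power series and $g_j$ replaced by another composite; meanwhile the RHS of the corollary has the shape of the RHS of the theorem applied to the original data.

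Concretely, first I would set
\[F(u,v)=k(u)\,k(v)\,f\bigl(h(u)\,u,h(v)\,v\bigr),\qquad G_j(u)=k(u)\,g_j\bigl(h(u)\,u\bigr),\]
noting that each is a power series, and that $F$ is antisymmetric in $u,v$ because $f$ is (the factor $k(u)k(v)$ is symmetric while $f(h(u)u,h(v)v)$ picks up a sign under swapping). Applying Theorem~\ref{Pfaffianidtheorem} to $F$, $G_j$, together with $\tilde h(x)=1$ and $\tilde k(x)=1$, the prefactor $\tilde k(0)^m\tilde h(0)^{m(m-1)/2}$ collapses to~$1$, and the theorem identifies the LHS of the corollary with
\[\left.\frac{\Pf_{1\le i<j\le2n}\!\left(\begin{cases}F(x_i,x_j),&j\le m\\ G_j(x_i),&i\le m<j\\ a_{ij},&i>m\end{cases}\right)}{\prod_{1\le i<j\le m}(x_j-x_i)}\right|_{x_1,\ldots,x_m\to0}.\]

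Second, I would apply Theorem~\ref{Pfaffianidtheorem} to the original data $f,g_j,h,k$. Because $F(x_i,x_j)=k(x_i)\,k(x_j)\,f(h(x_i)\,x_i,h(x_j)\,x_j)$ and $G_j(x_i)=k(x_i)\,g_j(h(x_i)\,x_i)$, the LHS of this second application is literally the same limit-of-quotient expression just displayed, while its RHS is $k(0)^m\,h(0)^{m(m-1)/2}$ times the Pfaffian appearing on the RHS of the corollary. Chaining the two equalities yields the required identity.

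I do not anticipate a genuine obstacle here: Corollary~\ref{Pfaffianidcoroll} is a purely formal consequence of Theorem~\ref{Pfaffianidtheorem}, obtained by invoking it in both directions. The only points to verify are that $F$ and $G_j$ are well-defined power series (immediate from the power-series hypotheses on $f,g_j,h,k$) and that $F$ inherits antisymmetry from $f$ (immediate as above); no additional combinatorial or analytic input beyond the theorem itself is needed.
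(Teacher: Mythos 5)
Your proposal is correct and coincides with the paper's own (first) proof: the paper likewise obtains Corollary~\ref{Pfaffianidcoroll} by substituting $k(x)k(y)f(h(x)x,h(y)y)$ and $k(x)g_j(h(x)x)$ into the special case~\eqref{HomogPf1} of Theorem~\ref{Pfaffianidtheorem}, observing that the resulting left-hand side equals the left-hand side of the general identity~\eqref{HomogPf}, and equating the two right-hand sides. (The paper also records a second, direct proof via the identity $\Pf(Y^tBY)=\det Y\,\Pf B$ with an explicit upper-triangular $Y$, but that is offered only as an alternative.)
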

This result states that removing all occurrences of $h(x)$ and $k(x)$ from the Pfaffian on the LHS of~\eqref{HomogPf2} simply causes the
appearance of the prefactor $k(0)^m\,h(0)^{m(m-1)/2}$.

Corollary~\ref{Pfaffianidcoroll} can be obtained from Theorem~\ref{Pfaffianidtheorem} by taking $f(x,y)$ to be $k(x)k(y)f(h(x)x,h(y)y)$ and
$g_j(x)$ to be $k(x)g_j(h(x)\,x)$ in the special case~\eqref{HomogPf1} of~\eqref{HomogPf}.
The LHS of the resulting equation is equal to the LHS of~\eqref{HomogPf},
and the equality between the RHSs then gives~\eqref{HomogPf2}.

Alternatively, Corollary~\ref{Pfaffianidcoroll} can be proved directly as follows.  The expansion of $f(u,v)$ gives
$f(h(u)\,u,h(v)\,v)=\sum_{l,m=1}^\infty([u^{l-1}v^{m-1}]\,f(u,v))\,h(u)^{l-1}\,u^{l-1}\,h(v)^{m-1}\,v^{m-1}$,
and the expansion of~$g_j(u)$ gives
$g_j(h(u)\,u)=\sum_{l=1}^\infty([u^{l-1}]\,g_j(u))\,h(u)^{l-1}\,u^{l-1}$. Multiplying the first of these equations by $k(u)\,k(v)$
and taking the coefficient of $u^{i-1}v^{j-1}$ on both sides gives
\begin{multline*}[u^{i-1}v^{j-1}]\,\bigl(k(u)\,k(v)\,f(h(u)u,h(v)v)\bigr)\\
=\sum_{l=1}^i\sum_{m=1}^j\bigl([u^{l-1}v^{m-1}]\,f(u,v)\bigr)\,\bigl([u^{i-l}]\,\bigl(k(u)\,h(u)^{l-1}\bigr)\bigr)\,
\bigl([v^{j-m}]\,\bigl(k(v)\,h(v)^{m-1}\bigr)\bigr),\end{multline*}
and multiplying the second equation by~$k(u)$ and taking the coefficient of~$u^{i-1}$ on both sides gives
\begin{equation*}[u^{i-1}]\bigl(k(u)\,g_j(h(u)\,u)\bigr)=\sum_{l=1}^i\bigl([u^{l-1}]\,g_j(u)\bigr)\,\bigl([u^{i-l}]\,\bigl(k(u)\,h(u)^{l-1}\bigr)\bigr).\end{equation*}
These equations can be used to check that the skew-symmetric matrices~$A$ and $B$
whose strictly upper triangular parts are the arrays for the Pfaffians on the LHS and RHS, respectively, of~\eqref{HomogPf2}
are related by $A=Y^tBY$, where the matrix $Y=(Y_{ij})_{1\le i,j\le2n}$ is given by
\begin{equation*}Y_{ij}=\begin{cases}
[u^{j-i}]\,\bigl(k(u)\,h(u)^{i-1}\bigr),&i\le j\le m,\\
\delta_{ij},&\text{otherwise}.\end{cases}\end{equation*}
Applying~\eqref{PfMAMt}, and observing that $Y$ is upper triangular with
diagonal entries $Y_{ii}=k(0)h(0)^{i-1}$ for $i=1,\ldots,m$ and $Y_{ii}=1$ for $i=m+1,\ldots,2n$ (so that
$\det Y=k(0)^m\,h(0)^{m(m-1)/2}$), then gives~\eqref{HomogPf2}.

An example of the use of~\eqref{HomogPf2} will appear in Section~\ref{proofXrsReform}, in the derivation of
a certain identity~\eqref{Pfaffianshift}.

\subsection{Proof of~\eqref{Xrs}}\label{proofXrs}
Setting $z=1$ in~\eqref{ZX} gives
\begin{equation}\label{Zunref1}X_n\bigl(q^2+\q^2,s,1\bigr)=\bigl(q^2+\q^2\bigr)^{n(n-1)/2}\,\Z_n(\underbrace{1,\ldots,1}_n).\end{equation}

Using~\eqref{ZZ} and~\eqref{ZPf1}, and applying the identity that, for any
odd $n$, $(a_{ij})_{1\le i<j\le n}$ and $(b_j)_{1\le j\le n}$,
\begin{equation}\label{PfFirstRowToLastCol}\Pf_{0\le i<j\le n}
\left(\left\{\begin{array}{@{\:}ll@{}}b_j,&i=0\\a_{ij},&i\ge1\end{array}\right.\right)=\Pf_{1\le i<j\le n+1}
\left(\left\{\begin{array}{@{\:}ll@{}}a_{ij},&j\le
n\\b_i,&j=n+1\end{array}\right.\right),\end{equation}
where this identity follows from~\eqref{Pfint} and~\eqref{Pfmult}, gives
\begin{multline*}\Z_n(u_1,\ldots,u_n)\\[-3.5mm]
=\prod_{1\le i<j\le n}\!\!\frac{\sigmah(q^2u_iu_j)\,\sigmah(q^2\u_i\u_j)}{\sigmah(u_i\u_j)}\:\Pf_{1\le i<j\le n+\odd(n)}\left(\begin{cases}
\displaystyle\frac{\sigmah(u_i\u_j)\,\Z_2(u_i,u_j)}{\sigmah(q^2u_iu_j)\,\sigmah(q^2\u_i\u_j)},&j\le n\\[4mm]
\Z_1(u_i),&j=n+1\end{cases}\,\right).\end{multline*}
Using~\eqref{Z123} and~\eqref{ZZ} to give $\Z_1(u_i)=s$ and
$\displaystyle\Z_2(u_i,u_j)=s^2\,\sigmah(q^2\u_i\u_j)+\frac{\sigma(q\,u_i)\,\sigma(q\,u_j)}{\sigma(q)^2}$,
writing certain cases of $\sigma(x)$ and $\sigmah(x)$ explicitly, and simplifying, then gives
\begin{multline}\label{ZPfK}\Z_n(u_1,\ldots,u_n)
=\frac{s^{\odd(n)}}{\sigma(q^4)^{n(n-1)/2}}\,\prod_{i=1}^nu_i^{n-1}\\
\times\prod_{1\le i<j\le n}\!\!\frac{\sigma(q^2u_iu_j)\,\sigma(q^2\u_i\u_j)}{u_j^2-u_i^2}
\:\Pf_{1\le i<j\le n+\odd(n)}\left(\begin{cases}G(u_i^2,u_j^2),&j\le n\\
1,&j=n+1\end{cases}\,\right),\end{multline}
where
\begin{equation}\label{Kdef}G(x,y)=\frac{(y-x)\,q^2\,\bigl(s^2\,\sigma(q)\,(\q^4xy-1)-(q+\q)\,(q^2+\q^2)\,(x-\q^2)\,(y-\q^2)\bigr)}
{\sigma(q)\,(xy-q^4)\,(xy-\q^4)}.\end{equation}

The expression~\eqref{ZPfK} for~$\Z_n(u_1,\ldots,u_n)$ will now be used on the RHS of~~\eqref{Zunref1}.
Immediately setting~$u_1=\ldots=u_n=1$ in the terms~$\prod_{i=1}^nu_i^{n-1}$
and $\prod_{1\le i<j\le n}\sigma(q^2u_iu_j)\,\sigma(q^2\u_i\u_j)$,
substituting $u_i^2=x_i+1$ for the remaining occurrences of $u_1,\ldots,u_n$,
and taking $x_1,\ldots,x_n\rightarrow0$, gives
\begin{multline}\label{XK}
X_n\bigl(q^2+\q^2,s,1\bigr)\\
=s^{\odd(n)}\,\sigma(q^2)^{n(n-1)/2}\:\raisebox{2.5ex}{$\left.\raisebox{-2.5ex}{$\displaystyle\frac{\rule[-3.5ex]{0ex}{0ex}\displaystyle\Pf_{1\le i<j\le n+\odd(n)}
\left(\begin{cases}G(x_i+1,x_j+1),&j\le n\\
1,&j=n+1\end{cases}\right)}{\prod_{1\le i<j\le n}(x_j-x_i)}$}\right|_{x_1,\ldots,x_n\rightarrow0}$}\,.\end{multline}

It can easily be checked that
\begin{equation}\label{KF}G(x+1,y+1)=F\biggr(\frac{x}{q^2x+\sigma(q^2)},\frac{y}{q^2y+\sigma(q^2)}\biggr),\end{equation}
where
\begin{equation}\label{Fxy}F(x,y)=\frac{y-x}{1-xy}\biggl(s^2+\frac{(q^2+\q^2)(1+x)(1+y)}{\bigl(1-(q^2+\q^2)x\bigr)\bigl(1-(q^2+\q^2)y\bigr)-xy}\biggr).\end{equation}

Hence, applying~\eqref{HomogPf} (with $2n$ replaced by $n+\odd(n)$, $m=n$,
$f(x,y)=F(x,y)$, $g_{n+1}(x)=1$, $h(x)=1/(q^2x+\sigma(q^2))$ and $k(x)=1$), noting that
$[u^{i-1}]\,g_{n+1}(u)=[u^{i-1}]\,1=\delta_{i,1}$, using~\eqref{Pfredcol} for~$n$ odd, and replacing $i$ by $i+1$ and $j$ by $j+1$, gives
\begin{equation}\label{Zunref2}
X_n(q^2+\q^2,s,1)=s^{\odd(n)}\Pf_{\odd(n)\le i<j\le n-1}\bigl([u^iv^j]\,F(u,v)\bigr).\end{equation}
Finally, setting $r=q^2+\q^2$ (which enables arbitrary~$r$ to be parameterized in terms of~$q$) in~\eqref{Zunref2} gives the required result~\eqref{Xrs}.

As a remark on this proof, note that the use of the transformation~\eqref{KF} enables~$G$ to be replaced by~$F$,
where~$F$, unlike $G$, is explicitly a function of $q^2+\q^2$. If the transformation~\eqref{KF} (which involves taking the function~$h(x)$
in Theorem~\ref{Pfaffianidtheorem} to be $h(x)=1/(q^2x+\sigma(q^2))$) is not performed, then this leads to
\begin{equation}\label{XF}X_n(q^2+\q^2,s,1)=s^{\odd(n)}\,\sigma(q^2)^{n(n-1)/2}\Pf_{\odd(n)\le i<j\le n-1}\bigl([u^iv^j]\,G(u+1,v+1)\bigr)\end{equation}
instead of~\eqref{Zunref2}.
The RHS of~\eqref{XF} can be regarded as a Pfaffian formula for $X_n(r,s,1)$ in which~$q$ should be taken as any solution to $r=q^2+\q^2$.
For example, if $r=s=1$, then $q$ is a primitive 12th root of unity, and~\eqref{XF} gives
\begin{multline}
|\DSASM(n)|=(-1)^{\lfloor(n-1)^2/4\rfloor}\,3^{n(n-1)/4}\\[-2mm]
\times\Pf_{\odd(n)\le i<j\le n-1}\biggl([u^iv^j]\,\frac{(v-u)\bigl(\sqrt{3}\,(u+v+2)\pm\mathfrak{i}\,(u+v+uv)\bigr)}
{1+(u+1)(v+1)+(u+1)^2(v+1)^2}\biggr),\end{multline}
where $\mathfrak{i}$ is the imaginary unit, and the two cases given by the $\pm$ are both valid
(and related to each other by complex conjugation of the entire equation).

\subsection{Proof of~\eqref{Xrsentries} and derivation of associated recurrence relations}\label{Xrsentriesproof}
It can immediately be seen that for any power series $f(u,v)$ and $g(u,v)$, and any nonnegative integers $i$ and $j$,
\begin{equation}\label{coeffprod}[u^iv^j]\,\bigl(f(u,v)\,g(u,v)\bigr)=\sum_{k=0}^i\sum_{l=0}^j\bigl([u^kv^l]\,f(u,v)\bigr)\,\bigl([u^{i-k}v^{j-l}]\,g(u,v)\bigr)\end{equation}
and
\begin{equation}\label{coeff1}[u^iv^j]\,\frac{1}{1-uv}=\delta_{i,j}.\end{equation}
It can also easily be checked that
\begin{equation}\label{coeff2}[u^iv^j]\,\frac{1}{(1-ru)(1-rv)-uv}=\sum_{k=0}^{\min(i,j)}\binom{i}{k}\binom{j}{k}\,r^{i+j-2k}.\end{equation}
Applying~\eqref{coeffprod} to the power series in \eqref{coeff1} and~\eqref{coeff2} gives
\begin{equation}\label{coeff3}[u^iv^j]\,\frac{1}{(1-uv)((1-ru)(1-rv)-uv)}
=\sum_{k=0}^{\min(i,j)}\,\sum_{l=0}^{\min(i,j)-k}\binom{i-k}{l}\binom{j-k}{l}\,r^{i+j-2k-2l}.
\end{equation}
Using~\eqref{coeff1} and~\eqref{coeff3}, and noting that $(v-u)(1+u)(1+v)=v-u+v^2-u^2+uv^2-u^2v$, now gives
\begin{align}
\notag&[u^iv^j]\,\frac{v-u}{1-uv}\biggl(s^2+\frac{r(1+u)(1+v)}{(1-ru)(1-rv)-uv}\biggr)\\[2mm]
\notag&\;\;=s^2\,\bigl(\delta_{i+1,j}-\delta_{i,j+1}\bigr)\\
\notag&\qquad+\sum_{k=0}^{\min(i,j)}\,\sum_{l=0}^{\min(i,j)-k}\biggl[\biggl(\binom{i-k}{l}\binom{j-k-1}{l}-\binom{i-k-1}{l}\binom{j-k}{l}\biggr)r^{i+j-2k-2l}\\
\notag&\quad\qquad\qquad\qquad\qquad+\biggl(\binom{i-k}{l}\binom{j-k-2}{l}-\binom{i-k-2}{l}\binom{j-k}{l}\biggr)r^{i+j-2k-2l-1}\\
\notag&\qquad\qquad\qquad+\biggl(\binom{i-k-1}{l}\binom{j-k-2}{l}-\binom{i-k-2}{l}\binom{j-k-1}{l}\biggr)r^{i+j-2k-2l-2}\biggr]\\[2mm]
\notag&\;\;=s^2\,\bigl(\delta_{i+1,j}-\delta_{i,j+1}\bigr)\\
\notag&\qquad+\text{sign}(j-i)\sum_{k=0}^{\min(i,j)}\sum_{l=0}^k\biggl[\biggl(\binom{k}{l}\binom{|j-i|+k-1}{l}-\binom{k-1}{l}\binom{|j-i|+k}{l}\biggr)r^{|j-i|+2k-2l}\\
\notag&\quad\qquad\qquad\qquad\qquad\qquad+\biggl(\binom{k}{l}\binom{|j-i|+k-2}{l}-\binom{k-2}{l}\binom{|j-i|+k}{l}\biggr)r^{|j-i|+2k-2l-1}\\
\label{Xrsentriesgen}&\qquad\qquad+\biggl(\binom{k-1}{l}\binom{|j-i|+k-2}{l}-\binom{k-2}{l}\binom{|j-i|+k-1}{l}\biggr)r^{|j-i|+2k-2l-2}\biggr],
\end{align}
where the second equality is obtained by replacing $k$ in the summand with $\min(i,j)-k$,
the convention of~\eqref{binomial} is taken for the binomial coefficients, and sign$(j-i)$ is~$1$, $-1$ or~$0$,
according to whether $j-i$ is positive, negative or zero.

The required result~\eqref{Xrsentries} follows by taking $i<j$
in~\eqref{Xrsentriesgen}.

Since the numbers in~\eqref{Xrsentriesgen}
are coefficients of $u^iv^j$ in the expansion of a rational function of~$u$ and~$v$, recurrence relations for
these numbers can be obtained straightforwardly, as will be done now. Let
\begin{equation*}f(u,v)=\frac{v-u}{1-uv}\biggl(s^2+\frac{r(1+u)(1+v)}{(1-ru)(1-rv)-uv}\biggr)\quad\text{and}\quad
M_{ij}=\begin{cases}[u^iv^j]\,f(u,v),&i,j\ge0,\\0,&\text{otherwise}.\end{cases}\end{equation*}
Also, let $N(u,v)=(v-u)(s^2((1-ru)(1-rv)-uv)+r(1+u)(1+v))$,
$D_1(u,v)=1-uv$ and $D_2(u,v)=(1-ru)(1-rv)-uv$, so that the numerator and denominator of $f(u,v)$ are $N(u,v)$
and $D_1(u,v)\,D_2(u,v)$, respectively.

Taking the coefficient of $u^iv^j$ on both sides of the three equations $f(u,v)D_1(u,v)D_2(u,v)=N(u,v)$,
$f(u,v)D_2(u,v)=N(u,v)/D_1(u,v)$ and $f(u,v)D_1(u,v)=N(u,v)/D_2(u,v)$ gives three respective recurrence relations for $M_{ij}$.
Explicitly, these are
\begin{align}\notag&\hspace{-1mm}M_{ij}-r(M_{i,j-1}+M_{i-1,j})+(r^2-2)M_{i-1,j-1}+r(M_{i-1,j-2}+M_{i-2,j-1})+(1-r^2)M_{i-2,j-2}\\
\notag&=(r+s^2)(\delta_{i,0}\delta_{j,1}-\delta_{i,1}\delta_{j,0})+r(1-s^2)(\delta_{i,0}\delta_{j,2}-\delta_{i,2}\delta_{j,0})\\
\label{DSASMentryrecurr1}&\hspace{93mm}+(r-s^2+r^2s^2)(\delta_{i,1}\delta_{j,2}-\delta_{i,2}\delta_{j,1}),\\
\notag&\hspace{-1mm}M_{ij}-r(M_{i,j-1}+M_{i-1,j})+(r^2-1)M_{i-1,j-1}\\
\label{DSASMentryrecurr2}&=r(rs^2+2)(\delta_{i+1,j}-\delta_{i,j+1})+r(1-s^2)(\delta_{i+2,j}-\delta_{i,j+2})
+(s^2-r^2s^2-r)(\delta_{i,0}\delta_{j,1}-\delta_{i,1}\delta_{j,0})\\
\intertext{and}
\notag&\hspace{-1mm}M_{ij}-M_{i-1,j-1}\\[-2.5mm]
\notag&=s^2(\delta_{i,0}\delta_{j,1}-\delta_{i,1}\delta_{j,0})+\sum_{k=0}^{\min(i,j)}\biggl[\biggl(\binom{i}{k}\binom{j-1}{k}-\binom{i-1}{k}\binom{j}{k}\biggr)r^{i+j-2k}\\
\notag&\hspace{57mm}+\biggl(\binom{i}{k}\binom{j-2}{k}-\binom{i-2}{k}\binom{j}{k}\biggr)r^{i+j-2k-1}\\
\label{DSASMentryrecurr3}&\hspace{63mm}+\biggl(\binom{i-1}{k}\binom{j-2}{k}-\binom{i-2}{k}\binom{j-1}{k}\biggr)r^{i+j-2k-2}\biggr],\end{align}
for all $i,j\ge0$.

Any one of~\eqref{DSASMentryrecurr1},~\eqref{DSASMentryrecurr2} or~\eqref{DSASMentryrecurr3}
provides a recurrence which, together with the initial condition $M_{ij}=0$ for $i\le0$ or $j\le0$,
can be used for the efficient computation of $M_{ij}$ for $i,j\ge0$.  The property that $M_{ji}=-M_{ij}$ could also
be used in such computations.

\subsection{Proof of~\eqref{numDSASM2} and~\eqref{numDSASMentries}}\label{numDSASMentriesproof}
Setting $r=s=1$ in~\eqref{X} and~\eqref{Xrs} immediately gives~\eqref{numDSASM2}.

Setting $r=1$ in~\eqref{coeff3} and using the Chu--Vandermonde summation for binomial coefficients,
gives
\begin{equation*}[u^iv^j]\,\frac{1}{(1-uv)(1-u-v)}=\sum_{k=0}^{\min(i,j)}\binom{i+j-2k}{i-k},\end{equation*}
which then gives
\begin{multline}\label{coeff4}[u^iv^j]\,\frac{(v-u)(2+uv)}{(1-uv)(1-u-v)}=
2\sum_{k=0}^{\min(i,j)}\biggl(\binom{i+j-2k-1}{i-k}-\binom{i+j-2k-1}{j-k}\biggr)\\
+\sum_{k=1}^{\min(i,j)}\biggl(\binom{i+j-2k-1}{i-k}-\binom{i+j-2k-1}{j-k}\biggr),\end{multline}
for any nonnegative integers $i$ and $j$.
For $i<j$, the first equality in~\eqref{numDSASMentries} follows immediately from~\eqref{coeff4},
and the second equality in~\eqref{numDSASMentries} then follows by applying a simple binomial coefficient identity.

\subsection{Proof of~\eqref{Xrst} and~\eqref{Xrstentries}}\label{proofXrsttheorem}
Setting $u_n=z$ in~\eqref{ZPfK}, and using the symmetry of $\Z_n(u_1,\ldots,u_n)$ in all variables to
write $\Z_n(u_1,\ldots,u_{n-1},z)$ as $\Z_n(z,u_1,\ldots,u_{n-1})$, gives
\begin{multline*}\Z_n(z,u_1,\ldots,u_{n-1})=
\frac{s^{\odd(n)}}{\sigma(q^4)^{n(n-1)/2}}\,\prod_{i=1}^{n-1}u_i^{n-1}\prod_{1\le i<j\le n-1}\!\!\frac{\sigma(q^2u_iu_j)\,\sigma(q^2\u_i\u_j)}{u_j^2-u_i^2}\\
\times z^{n-1}\,\prod_{i=1}^{n-1}\frac{\sigma(q^2u_iz)\,\sigma(q^2\u_i\z)}{z^2-u_i^2}\:\Pf_{1\le i<j\le n+\odd(n)}\left(\begin{cases}G(u_i^2,u_j^2),&j\le n-1\\
G(u_i^2,z^2),&j=n\\1,&j=n+1\end{cases}\,\right),\end{multline*}
where $G(x,y)$ is again given by~\eqref{Kdef}.
Setting~$u_1=\ldots=u_{n-1}=1$ in the terms~$\prod_{i=1}^{n-1}u_i^{n-1}$,
$\prod_{1\le i<j\le n-1}\sigma(q^2u_iu_j)\,\sigma(q^2\u_i\u_j)$
and $\prod_{i=1}^{n-1}\sigma(q^2u_iz)\,\sigma(q^2\u_i\z)/(z^2-u_i^2)$,
substituting $u_i^2=x_i+1$ for the remaining occurrences of
$u_1,\ldots,u_{n-1}$, and taking $x_1,\ldots,x_{n-1}\rightarrow0$, then gives
\begin{multline*}
\Z_n(z,\underbrace{1,\ldots,1}_{n-1})=\frac{s^{\odd(n)}\,\sigma(q^2)^{(n-1)(n-2)}}{\sigma(q^4)^{n(n-1)/2}}\,\left(\frac{\sigma(q^2z)\,\sigma(q^2\z)}{\sigma(z)}\right)^{n-1}\\
\times\raisebox{4.5ex}{$\left.\raisebox{-4.5ex}{$\displaystyle\frac{\rule[-5.3ex]{0ex}{0ex}\displaystyle\Pf_{1\le i<j\le n+\odd(n)}\left(\begin{cases}G(x_i+1,x_j+1),&j\le n-1\\
G(x_i+1,z^2),&j=n\\
1,&j=n+1\end{cases}\right)}{\prod_{1\le i<j\le n-1}(x_j-x_i)}$}\right|_{x_1,\ldots,x_{n-1}\rightarrow0}$}\,.\end{multline*}
Recalling that $G(x+1,y+1)$ satisfies~\eqref{KF} with $F(x,y)$ given
by~\eqref{Fxy}, applying~\eqref{HomogPf} (with~$2n$ replaced by $n+\odd(n)$, $m=n-1$, $f(x,y)=F(x,y)$,
$g_n(x)=F\bigl(x,(z^2-1)/(q^2(z^2-1)+\sigma(q^2))\bigr)$, $g_{n+1}(x)=1$, $h(x)=1/(q^2x+\sigma(q^2))$, $k(x)=1$
and $a_{n,n+1}=1$), and noting that $[u^{i-1}]\,g_{n+1}(u)=\delta_{i,1}$, then gives
\begin{multline*}
\Z_n(z,\underbrace{1,\ldots,1}_{n-1})=\frac{s^{\odd(n)}\,\sigma(q^2)^{(n-1)(n-2)/2}}{\sigma(q^4)^{n(n-1)/2}}\,\left(\frac{\sigma(q^2z)\,\sigma(q^2\z)}{\sigma(z)}\right)^{n-1}\\
\times\Pf_{1\le i<j\le n+\odd(n)}\left(\begin{cases}[u^{i-1}v^{j-1}]\,F(u,v),&j\le n-1\\[1.9mm]
\displaystyle[u^{i-1}]\,F\biggl(\!u,\frac{z^2-1}{q^2(z^2-1)+\sigma(q^2)}\biggr),&j=n\\
\delta_{i,1}+\delta_{i,n},&j=n+1\end{cases}\right).\end{multline*}
Using~\eqref{Pfredcol} for odd~$n$, replacing $i$ by $i+1$ and $j$ by $j+1$, and applying~\eqref{Zunref2}
to the term $\Pf_{0\le i<j\le n-2}\bigl([u^iv^j]\,F(u,v)\bigr)$ which occurs for odd~$n$, then gives
\begin{multline*}\Z_n(z,\underbrace{1,\ldots,1}_{n-1})
=\frac{s^{\odd(n)}\,\sigma(q^2)^{(n-1)(n-2)/2}}{\sigma(q^4)^{n(n-1)/2}}\,\left(\frac{\sigma(q^2z)\,\sigma(q^2\z)}{\sigma(z)}\right)^{n-1}\\
\times\Biggl(\Pf_{\odd(n)\le i<j\le n-1}\left(\begin{cases}[u^iv^j]\,F(u,v),&j\le n-2\\[1.9mm]
\displaystyle[u^i]\,F\biggl(\!u,\frac{z^2-1}{q^2(z^2-1)+\sigma(q^2)}\biggr),&j=n-1\end{cases}\right)\\
+\odd(n)\,X_{n-1}\bigl(q^2+\q^2,s,1\bigr)\Biggr).\end{multline*}
Using this expression for $\Z_n(z,\underbrace{1,\ldots,1}_{n-1})$ in~\eqref{ZX}, then gives
\begin{multline*}
\biggl(\frac{\sigma(z)}{\sigma(q^2z)}\biggr)^{\!n}\,
\Biggl(\frac{(q+\q)\,\sigma(qz)\,\sigma(q^2\z)}{\sigma(z)\,\sigma(q^2z)}\,X_n\biggl(q^2+\q^2,s,\frac{\sigma(q^2z)}{\sigma(q^2\z)}\biggr)-s\,X_{n-1}\bigl(q^2+\q^2,s,1\bigr)\Biggr)\\
\shoveleft{\qquad=s^{\odd(n)}\Pf_{\odd(n)\le i<j\le n-1}\left(\begin{cases}[u^iv^j]\,F(u,v),&j\le n-2\\[1.5mm]
\displaystyle[u^i]\,F\biggl(\!u,\frac{z^2-1}{q^2(z^2-1)+\sigma(q^2)}\biggr),&j=n-1\end{cases}\right)}\\
+\odd(n)\,s\,X_{n-1}\bigl(q^2+\q^2,s,1\bigr).\end{multline*}
Setting $r=q^2+\q^2$ and $t=\sigma(q^2z)/\sigma(q^2\z)$ (which enables arbitrary~$r$ and~$t$ to be parameterized in terms of~$q$ and~$z$), and noting that
\begin{equation*}\frac{\sigma(z)}{\sigma(q^2z)}=\frac{z^2-1}{q^2(z^2-1)+\sigma(q^2)}=\frac{t-1}{rt}
\qquad\text{and}\qquad\frac{(q+\q)\,\sigma(qz)\,\sigma(q^2\z)}{\sigma(z)\,\sigma(q^2z)}=\frac{t-1+rt}{t(t-1)},\end{equation*}
gives~\eqref{Xrst}.

Finally, the first equality in~\eqref{Xrstentries} can be obtained straightforwardly, where the computations can be simplified by using~\eqref{Xrstentriesfun}.
The second equality in~\eqref{Xrstentries} then follows by simply evaluating the sum over~$k$.

\subsection{Proofs of~\eqref{XrsReform}--\eqref{numDSASMReform}}\label{proofXrsReform}
First note that for any power series $f(u,v)$ which is antisymmetric in~$u$ and~$v$,
\begin{equation}\label{Pfaffianshift}
\Pf_{\odd(n)\le i<j\le n-1}\bigl([u^iv^j]\,f(u,v)\bigr)\\
=\Pf_{0\le i<j\le n-\even(n)}\left(\begin{cases}\displaystyle[u^iv^j]\,\frac{f(u,v)}{(1+u)(1+v)},&j\le n-1\\
(-1)^i,&j=n\end{cases}\right),\end{equation}
where this can be obtained by writing $(-1)^i=[u^i]\,(1/(1+u))$, applying~\eqref{HomogPf2}
(with $2n$ replaced by $n+\odd(n)$, $m=n$, $g_{n+1}(u)=1$, $h(u)=1$, $k(u)=1/(1+u)$,~$i$ replaced by $i+1$ and~$j$ replaced by $j+1$)
to the RHS of~\eqref{Pfaffianshift}, writing $[u^i]\,g_{n+1}(u)=[u^i]\,1=\delta_{i,0}$, and using~\eqref{Pfredcol} for~$n$ odd,
which then gives the LHS of~\eqref{Pfaffianshift}.

Applying~\eqref{Pfaffianshift} to~\eqref{Xrs} immediately gives~\eqref{XrsReform}.  It can easily be checked that
\begin{equation}\label{coeff5}[u^iv^j]\,\frac{1}{(1-uv)(1+u)(1+v)}=(-1)^{i+j}\bigl(1+\min(i,j)\bigr).\end{equation}
Using~\eqref{coeff3} and~\eqref{coeff5}, together with a similar approach to that used to obtain~\eqref{Xrsentriesgen}, now gives~\eqref{XrsentriesReform}.

Finally, setting $r=s=1$ in~\eqref{XrsReform} and~\eqref{XrsentriesReform}, and using a similar approach to that used in Section~\ref{numDSASMentriesproof}, gives~\eqref{numDSASMReform}.

\subsection{Proof of~\eqref{XrstDiv}}\label{proofXrstDiv}
The proof of~\eqref{XrstDiv} will use the identity that, for any constants $a_{ij}$ which satisfy $a_{ij}=-a_{ji}$,
\begin{multline}\label{PfDivId}
\Pf_{\odd(n)\le i<j\le n-1}\left(\begin{cases}a_{ij},&j\le n-2\\
\sum_{k=0}^\infty a_{ik}\,v^k,&j=n-1\end{cases}\right)
=-\odd(n)\Pf_{0\le i<j\le n-2}(a_{ij})\\
+v^{n-1}\Pf_{\odd(n)\le i<j\le n-1}\left(\begin{cases}a_{ij},&j\le n-2\\
\sum_{k=0}^\infty a_{i,k+n-1}\,v^k,&j=n-1\end{cases}\right),\end{multline}
where this can be obtained as follows
(and where, in the first term on the RHS, the fact that $\Pf_{0\le i<j\le n-2}(a_{ij})$ is undefined for~$n$ even
can be disregarded due to the factor $\odd(n)$).
Let $A$ be the skew-symmetric matrix whose strictly upper triangular part is the array for the Pfaffian on the LHS of~\eqref{PfDivId}, and let
$\rule[-5ex]{0ex}{11ex}Y=\left(\begin{smallmatrix}1&0&0&\ldots&0&-1\\0&1&0&\ldots&0&-v\\0&0&1&\ldots&0&-v^2\\[-1.8mm]\vdots&\vdots&\vdots&\ddots&\vdots&\vdots\\0&0&0&\ldots&1&-v^{n-2}\\0&0&0&\ldots&0&1\end{smallmatrix}\!\right)$
for~$n$ even, or
$Y=\left(\begin{smallmatrix}1&0&\ldots&0&-v\\0&1&\ldots&0&-v^2\\[-1.8mm]\vdots&\vdots&\ddots&\vdots&\vdots\\0&0&\ldots&1&-v^{n-2}\\0&0&\ldots&0&1\end{smallmatrix}\!\right)$ for $n$ odd,
where the rows and columns of $A$ and $Y$ are indexed by $\odd(n),\ldots,n-1$.
By applying~\eqref{PfMAMt} (where $Y^tAY$ is obtained from $A$ by subtracting~$v^j$ times column~$j$ from column $n-1$,
and $v^i$ times row~$i$ from row $n-1$, for each $i,j\in\{\odd(n),\ldots,n-2\}$), it follows that the LHS of~\eqref{PfDivId} is
$\Pf_{\odd(n)\le i<j\le n-1}\left(\begin{cases}a_{ij},&j\le n-2\\
\odd(n)a_{i0}+v^{n-1}\sum_{k=0}^\infty a_{i,k+n-1}\,v^k,&j=n-1\end{cases}\right)$.  The RHS of~\eqref{PfDivId} can be obtained from this by using~\eqref{Pfredcol} (to expand
along the last column of the array to give a sum of two Pfaffians, one with an overall factor of $\odd(n)$, and the other with an overall factor of~$v^{n-1}$),
and then applying~\eqref{PfFirstRowToLastCol} for $n$ odd (to the first Pfaffian in the sum).

For the main part of the proof of~\eqref{XrstDiv}, substituting $t=1/(1-rv)$ into~\eqref{Xrst}, and using~\eqref{Xrstentriesfun}, gives
\begin{multline*}
(1+v)(1-rv)\,X_n\Bigl(r,s,\frac{1}{1-rv}\Bigr)=s\biggl(\!v+\frac{\odd(n)}{v^{n-1}}\biggr)X_{n-1}(r,s,1)\\[1mm]
+\frac{s^{\odd(n)}}{v^{n-1}}\,\Pf_{\odd(n)\le i<j\le n-1}\left(\begin{cases}\displaystyle[u^iv^j]\,\frac{v-u}{1-uv}\biggl(s^2+\frac{r(1+u)(1+v)}{(1-ru)(1-rv)-uv}\biggr),&j\le n-2\\[4mm]
\displaystyle[u^i]\,\frac{v-u}{1-uv}\biggl(s^2+\frac{r(1+u)(1+v)}{(1-ru)(1-rv)-uv}\biggr),&j=n-1\end{cases}\right).\end{multline*}
Applying~\eqref{PfDivId} with $a_{ij}=[u^iv^j]\,\frac{v-u}{1-uv}\bigl(s^2+\frac{r(1+u)(1+v)}{(1-ru)(1-rv)-uv}\bigr)$, using~\eqref{Xrs} to write $\Pf_{0\le i<j\le n-2}(a_{ij})=X_n(r,s,1)$ for~$n$ odd,
and using~\eqref{fddcoeff} to write $\sum_{k=0}^\infty a_{i,k+n-1}\,v^k=f_{n-1}(u,v)$, then gives
\begin{multline*}
(1+v)(1-rv)\,X_n\Bigl(r,s,\frac{1}{1-rv}\Bigr)=sv\,X_{n-1}(r,s,1)\\[1mm]
+s^{\odd(n)}\,\Pf_{\odd(n)\le i<j\le n-1}\left(\begin{cases}\displaystyle[u^iv^j]\,\frac{v-u}{1-uv}\biggl(s^2+\frac{r(1+u)(1+v)}{(1-ru)(1-rv)-uv}\biggr),&j\le n-2\\[2.5mm]
\displaystyle[u^i]\,f_{n-1}(u,v),&j=n-1\end{cases}\right).\end{multline*}
Finally, substituting $v=(t-1)/(rt)$ gives~\eqref{XrstDiv}.

\section{Diagonally symmetric permutation matrices}\label{DSPMsect}
In this section, the special case of DSASMs with no~$-1$'s, i.e., diagonally symmetric permutation matrices (DSPMs), is considered.
A Pfaffian formula for a DSPM generating function is given in~\eqref{DSPMX3}, where this can be obtained either using Theorems~\ref{Xrstheorem} and~\ref{Xrsttheorem},
or directly.

The set of $n\times n$ DSPMs will be denoted as~$\DSPM(n)$. Note that $\DSPM(n)$ consists of the permutation matrices of all involutions on $\{1,\ldots,n\}$.

For any $A\in\DSPM(n)$, the statistics $R$ and $S$ are related by
\begin{equation}\label{DSPMRS}2R(A)+S(A)=n,\end{equation}
which can be obtained by setting $M(A)=0$ in~\eqref{MRS}.
Hence, it is sufficient to consider only one of these statistics in a generating function for DSPMs.
Taking this to be~$S$, the DSPM generating function associated with~$S$ and~$T$ is defined as
\begin{equation}\label{DSPMXdef}\XPM_n(s,t)=\sum_{A\in\DSPM(n)}s^{S(A)}\,t^{T(A)}.\end{equation}
Note that, for any DSPM $A$, $S(A)=\mathop{\mathrm{tr}}A$ is the number of fixed points in the involution associated with~$A$.

It follows from simple and well-known combinatorial arguments that
\begin{align}\label{XDSPMs}\XPM_n(s,1)&=\sum_{i=0}^{\lfloor n/2\rfloor}\frac{n!}{2^i\,i!\,(n-2i)!}\,s^{n-2i},\\
\label{XDSPMst}\XPM_n(s,t)&=s\,t\,\XPM_{n-1}(s,1)+\sum_{i=2}^{n}t^i\,\XPM_{n-2}(s,1),\end{align}
and it follows from~\eqref{DSPMRS} that $\XPM_n(s,t)$ is related to the DSASM generating function~\eqref{X} by
\begin{equation}\label{DSPMX1}\XPM_n(s,t)=\bigl(x^{-n}\,X_n(x^2,xs,t)\bigr)|_{x\rightarrow0}.\end{equation}

Using~\eqref{Xrs} and~\eqref{Xrst} in~\eqref{DSPMX1}, and performing some simplifications, gives the Pfaffian formula
\begin{multline}\label{DSPMX2}
\XPM_n(s,t)=s^{\odd(n)}\,t\Pf_{\odd(n)\le i<j\le n-1}
\biggl([u^iv^j]\,\frac{v-u}{1-uv}\biggl(s^2+\frac{(1+u)(1+v)}{1-uv}\\[-1.5mm]
\textstyle+u^{n-2}v^{n-2}(t-1)\sum_{k=0}^{n-2}(n-k-1)\,t^k\biggr)\biggr).\end{multline}
Evaluating the coefficients which comprise the entries for the Pfaffian in~\eqref{DSPMX2} then gives
\begin{multline}\label{DSPMX3}
\XPM_n(s,t)=s^{\odd(n)}\,t\\
\times\Pf_{\odd(n)\le i<j\le n-1}\textstyle\Bigl(\bigl(2i+1+s^2+(t-1)\sum_{k=0}^{n-2}(n-k-1)\,t^k\,\delta_{i,n-2}\bigr)\delta_{i+1,j}+(i+1)\,\delta_{i+2,j}\Bigr).\end{multline}

Alternatively,~\eqref{DSPMX3} can be proved directly by showing that it holds for $n=1,\ldots,4$, and that the recurrence
\begin{equation}\label{DSPMX4}
F_n(s,t)=\textstyle\bigl(n-2+s^2+\sum_{i=1}^{n-1}t^i\bigr)\,t\,F_{n-2}(s,1)-(n-2)(n-3)\,t\,F_{n-4}(s,1),\end{equation}
for $n\ge5$, is satisfied with $F_n(s,t)$ taken to be either side of~\eqref{DSPMX3}.
The case of~\eqref{DSPMX4} with $F_n(s,t)$ taken as the LHS of~\eqref{DSPMX3}
follows by replacing~$n$ in the $t=1$ case of~\eqref{XDSPMst} by $n-1$ and $n-2$,
and combining these two equations with~\eqref{XDSPMst} itself.
The case of~\eqref{DSPMX4} with $F_n(s,t)$ taken as the RHS of~\eqref{DSPMX3} follows
by first applying~\eqref{Pfredcol} to the Pfaffian
(whose underlying array has only two nonzero entries in the last column)
to give a sum of multiples of two further Pfaffians. One of these Pfaffians is $F_{n-2}(s,1)$,
and applying~\eqref{Pfredcol} to the other Pfaffian (whose underlying
array has only one nonzero entry in the last column) then gives the required case of~\eqref{DSPMX4}.

Note that the Pfaffian in~\eqref{DSPMX3} can also be written as the determinant of a $\lfloor n/2\rfloor\times\lfloor n/2\rfloor$ tridiagonal matrix using the identity
\begin{multline}\label{tridiag}
\Pf_{\odd(n)\le i<j\le n-1}\bigl(a_i\,\delta_{i+1,j}+b_i\,\delta_{i+2,j}\bigr)\\[-2mm]
=\det_{1\le i,j\le\lfloor n/2\rfloor}\Biggl(\begin{cases}
a_{2i-2}\,\delta_{i,j}+b_{2i-2}\,\delta_{i+1,j}+b_{2i-3}\,\delta_{i,j+1},&n\text{ even}\\
a_{2i-1}\,\delta_{i,j}+b_{2i-1}\,\delta_{i+1,j}+b_{2i-2}\,\delta_{i,j+1},&n\text{ odd}\end{cases}\Biggr),
\end{multline}
for any $a_{\odd(n)},\ldots,a_{n-2}$ and $b_{\odd(n)},\ldots,b_{n-3}$.  (A way of proving this identity is to show that
it holds for $n=1,\ldots,4$, and that the recurrence $F_n=a_{n-2}\,F_{n-2}-b_{n-3}\,b_{n-4}\,F_{n-4}$, for $n\ge5$,
is satisfied with~$F_n$ taken to be either side of~\eqref{tridiag}.)
Applying~\eqref{tridiag} to~\eqref{DSPMX3} gives
\begin{multline}
\XPM_n(s,t)=s^{\odd(n)}\,t\\
\times\det_{1\le i,j\le\lfloor n/2\rfloor}\left(\begin{cases}
\bigl(4i-3+s^2+(t-1)\sum_{k=0}^{n-2}(n-k-1)\,t^k\,\delta_{i,n/2}\bigr)\,\delta_{i,j}\\
\hspace*{44mm}\mbox{}+(2i-1)\,\delta_{i+1,j}+(2i-2)\,\delta_{i,j+1},&n\text{ even}\\[1.8mm]
\bigl(4i-1+s^2+(t-1)\sum_{k=0}^{n-2}(n-k-1)\,t^k\,\delta_{i,(n-1)/2}\bigr)\,\delta_{i,j}\\
\hspace*{54.4mm}\mbox{}+2i\,\delta_{i+1,j}+(2i-1)\,\delta_{i,j+1},&n\text{ odd}\end{cases}\right).\end{multline}

\section{Off-diagonally symmetric ASMs}\label{OSASMsect}
In this section, the special case of off-diagonally symmetric ASMs (OSASMs) is considered.
As indicated in Section~\ref{intro}, an even-order OSASM is defined to be an even-order DSASM in which each diagonal entry is~$0$,
and an odd-order OSASM is defined to be an odd-order DSASM in which exactly one diagonal entry is nonzero.

In Section~\ref{OSASMgenconsid}, some general considerations for OSASMs are outlined,
and an OSASM generating function is defined.  In Section~\ref{OSASMmainresults}, Pfaffian formulae for the OSASM generating function
and number of OSASMs are obtained using~\eqref{Xrs},~\eqref{Xrst} and~\eqref{XrsReform}.
In Section~\ref{OSASMHSASM}, previously-known enumerative connections between even-order OSASMs,
odd-order horizontally symmetric ASMs and even-order ASMs are outlined, and explicit formulae are
given for the number of even-order OSASMs and the even-order OSASM generating function.
In Section~\ref{OddOSASMConjSect}, it is conjectured (see Conjecture~\ref{numOSASModdconj}) that the number of odd-order OSASMs is given by a certain explicit product formula.
In Section~\ref{EvenOSASMSymmConjSect}, it is conjectured (see Conjecture~\ref{evenOSASMsymmconj}) that the even-order OSASM generating function satisfies a certain symmetry property.
In Section~\ref{OSASMSymplSect}, a previously-known connection between the partition function for even-order OSASMs and certain symplectic characters is outlined,
and it is conjectured (see Conjecture~\ref{symplconj}) that an analogous connection holds for odd-order OSASMs.

\subsection{General considerations for OSASMs}\label{OSASMgenconsid}
The set of $n\times n$ OSASMs will be denoted as $\OSASM(n)$.
For example, the sets $\OSASM(n)$ for $n=1,\ldots,4$ are
\begin{gather}\notag\OSASM(1)=\{(1)\},\quad\OSASM(2)=\left\{\begin{pmatrix}0&1\\1&0\end{pmatrix}\right\},\\
\notag\OSASM(3)=\left\{\begin{pmatrix}1&0&0\\0&0&1\\0&1&0\end{pmatrix},\,\begin{pmatrix}0&1&0\\1&0&0\\0&0&1\end{pmatrix},\,
\begin{pmatrix}0&0&1\\0&1&0\\1&0&0\end{pmatrix},\,\begin{pmatrix}0&1&0\\1&-1&1\\0&1&0\end{pmatrix}\right\},\\
\label{OSASM1234}\OSASM(4)=\left\{\begin{pmatrix}0&1&0&0\\1&0&0&0\\0&0&0&1\\0&0&1&0\end{pmatrix},\,
\begin{pmatrix}0&0&1&0\\0&0&0&1\\1&0&0&0\\0&1&0&0\end{pmatrix},\,
\begin{pmatrix}0&0&0&1\\0&0&1&0\\0&1&0&0\\1&0&0&0\end{pmatrix}\right\}.\end{gather}
The cardinalities of $\OSASM(n)$ for $n=1,\ldots,12$ are given in Table~\ref{cardO}.
\begin{table}\centering
$\begin{array}{|@{\;\;}c@{\;\;}|@{\;\;\;}c@{\;\;\;\;}c@{\;\;\;\;}c@{\;\;\;\;}c@{\;\;\;\;}c@{\;\;\;}c@{\;\;\;}c@{\;\;\;}c@{\;\;\;}
c@{\;\;\;}c@{\;\;\;}c@{\;\;\;}c@{\;\;}|}\hline\rule{0ex}{4.7mm}
n&1&2&3&4&5&6&7&8&9&10&11&12\\[1mm]\hline
\rule{0ex}{5.9mm}|\OSASM(n)|&1&1&4&3&32&26&640&646&34048&45885&4945920&9304650\\[1.8mm]\hline\end{array}$\\[2.4mm]\caption{Numbers of OSASMs.}\label{cardO}
\end{table}

The OSASM generating function associated with the statistics~$R$ and~$T$ is defined as
\begin{equation}\label{OSASMX}\XO_n(r,t)=\sum_{A\in\OSASM(n)}r^{R(A)}\,t^{T(A)}.\end{equation}
For example, the $n=1,\ldots,4$ cases of the OSASM generating function~\eqref{OSASMX} are
\begin{equation}\XO_1(r,t)=t,\ \XO_2(r,t)=rt^2,\ \XO_3(r,t)=rt+rt^2+rt^3+r^2t^2,\ \XO_4(r,t)=r^2t^2+r^2t^3+r^2t^4,\end{equation}
where the terms on each RHS are written in an order corresponding to that used for the OSASMs in~\eqref{OSASM1234}.

It follows from the definitions of OSASMs and the statistic~$S$ that the OSASM generating function~\eqref{OSASMX} is related to the DSASM generating function~\eqref{X} by
\begin{equation}\label{OSASMX1}\XO_n(r,t)=\begin{cases}X_n(r,0,t),&n\text{ even},\\
\bigl(X_n(r,s,t)/s\bigr)|_{s\rightarrow0},&n\text{ odd}.\end{cases}\end{equation}

An elementary property of the even-order OSASM generating function is
\begin{equation}\label{OSASMXmin}X_{2n}^\mathrm{O}(-r,t)=(-1)^n\,\XO_{2n}(r,t),\end{equation}
which can be obtained by noting that, for any $A\in\OSASM(2n)$, $R(A)=M(A)+n$
(as given by~\eqref{MRS}), where~$M(A)$ is even (since $M(A)$ is twice the number of strictly upper triangular~$-1$'s in~$A$).

It follows from~\eqref{OSASMX1} and (i$'$)--(iii$'$) of Proposition~\ref{refprop} that
\begin{align}
\label{XOcoeff1}[t^1]\,\XO_n(r,t)&=\odd(n)\,\XO_{n-1}(r,1),\\
\label{XOcoeff2}[t^2]\,\XO_n(r,t)&=r\bigl(\odd(n)\,\XO_{n-1}(r,1)+\XO_{n-2}(r,1)\bigr),\\
\label{XOcoeffn}[t^n]\,\XO_n(r,t)&=r\,\XO_{n-2}(r,1),
\end{align}
for $n\ge2$ in~\eqref{XOcoeff1}, and $n\ge3$ in~\eqref{XOcoeff2} and~\eqref{XOcoeffn}.

A relation between OSASMs and the DSASM generating function~\eqref{X} at $r=-1$ and $s=1$ is given by
\begin{equation}\label{Xmin2}
(-1)^{n(n-1)/2}\,X_n(-1,1,t)=\begin{cases}|\OSASM(n-2)|\,t(t-1),&n\text{ even},\\t\,\XO_{n-1}(1,t),&n\text{ odd},\end{cases}\end{equation}
for $n\ge3$.
This can be obtained by setting $r=-1$ in~\eqref{diagref3} to give $X_n(-1,1,t)=t\,X_{n-1}(-1,0,t)+t(1-t)\,X_{n-2}(-1,0,1)$,
and then using $X_n(r,0,t)=0$ for~$n$ odd, the~$n$ even case of~\eqref{OSASMX1}, and~\eqref{OSASMXmin}.

\subsection{Main results for the OSASM generating function and number of OSASMs}\label{OSASMmainresults}
Pfaffian formulae for the OSASM generating function $\XO_n(r,t)$ at $t=1$ can be obtained by using~\eqref{Xrs}
and~\eqref{XrsReform} in~\eqref{OSASMX1}, which gives
\begin{align}\notag&\XO_n(r,1)\\
\notag&\qquad=r^{\lfloor n/2\rfloor}\Pf_{\odd(n)\le i<j\le n-1}\biggl([u^iv^j]\,\frac{(v-u)(1+u)(1+v)}{(1-uv)\,\bigl((1-ru)(1-rv)-uv\bigr)}\biggr)\\
\label{OSASMX2}&\qquad=r^{\lfloor n/2\rfloor}\Pf_{0\le i<j\le n-\even(n)}\left(\begin{cases}\displaystyle[u^iv^j]\,\frac{v-u}{(1-uv)\,\bigl((1-ru)(1-rv)-uv\bigr)},&j\le n-1\\[3mm]
(-1)^i,&j=n\end{cases}\right).\end{align}

A Pfaffian identity for the OSASM generating function $\XO_n(r,t)$ can be obtained by using~\eqref{Xrst} in~\eqref{OSASMX1}, which gives, after some simplification,
\begin{multline}\label{OSASMX2t}
(t-1)^{n-1}\,\XO_n(r,t)=\textstyle\odd(n)\,t\,\sum_{i=0}^{n-1}(rt)^i\,(1-t)^{n-i-1}\,\XO_{n-1}(r,1)\\
+r^{\lfloor3n/2\rfloor}\,t^{n+1}\Pf_{\odd(n)\le i<j\le n-1}\left(\begin{cases}\displaystyle[u^iv^j]\,\frac{(v-u)(1+u)(1+v)}{(1-uv)\,\bigl((1-ru)(1-rv)-uv\bigr)},&j\le n-2\\[4mm]
\displaystyle[u^i]\,\frac{(t-1-rtu)(1+u)}{\bigl(rt-(t-1)u\bigr)\bigl(r-(t-1+r^2)u\bigr)},&j=n-1\end{cases}\right),\end{multline}
for $n\ge2$.

Pfaffian formulae for the number of $n\times n$ OSASMs can be obtained by setting $r=1$ in~\eqref{OSASMX2}, which gives
\begin{align}\notag|\OSASM(n)|&=\Pf_{\odd(n)\le i<j\le n-1}\biggl([u^iv^j]\,\frac{(v-u)(1+u)(1+v)}{(1-uv)(1-u-v)}\biggr),\\
\label{numOSASM}&=\Pf_{0\le i<j\le n-\even(n)}\left(\begin{cases}\displaystyle[u^iv^j]\,\frac{v-u}{(1-uv)(1-u-v)},&j\le n-1\\
(-1)^i,&j=n\end{cases}\right).\end{align}

The coefficients which comprise the entries for the Pfaffians in~\eqref{OSASMX2}--\eqref{numOSASM} can be obtained easily
using~\eqref{Xrsentries},~\eqref{Xrstentries} and~\eqref{XrsentriesReform}, and some simplification.
For example, the coefficients in the second Pfaffian formula in~\eqref{numOSASM} are
\begin{align}\notag[u^iv^j]\,\frac{v-u}{(1-uv)(1-u-v)}&=\sum_{k=0}^i\biggl(\binom{i+j-2k-1}{i-k}-\binom{i+j-2k-1}{j-k}\biggr)\\
\label{numOSASMentries}&=(j-i)\,\sum_{k=0}^i\frac{1}{i+j-2k}\,\binom{i+j-2k}{i-k},\end{align}
for $0\le i<j$.

\subsection{\,\, Relations between even-order OSASMs, odd-order horizontally symmetric ASMs and even-order ASMs}\label{OSASMHSASM}
It is known that there are certain close enumerative connections between $2n\times2n$ OSASMs,
$(2n+1)\times(2n+1)$ horizontally symmetric ASMs (HSASMs) and $2n\times2n$ ASMs, and that these provide explicit formulae
for $|\OSASM(2n)|$ and $X_{2n}^\mathrm{O}(1,t)$. These connections and formulae are sometimes
stated in terms of vertically symmetric ASMs, but are stated here in terms of HSASMs, since
the connections involving the statistic~$T$ then seem more natural.

Specifically, it was shown by Kuperberg~\cite[Thm.~2, second eq.~\& Thm.~5, first eq.]{Kup02} that
\begin{align}\notag|\OSASM(2n)|&=|\HSASM(2n+1)|\\
\label{numOSASMeven2}&=\prod_{i=1}^n\frac{(6i-2)!}{(2n+2i)!},\end{align}
where the second equality is also given in~\eqref{numVSASM} (since $|\HSASM(2n+1)|=|\VSASM(2n+1)|$),
and it was shown by Razumov and Stroganov~\cite[Secs.~3~\&~4]{RazStr04b} that
\begin{align}\notag\XO_{2n}(1,t)&=\sum_{A\in\HSASM(2n+1)}t^{T(A)}\\
\notag&=\frac{2\,|\OSASM(2n)|}{(t+1)\,|\ASM(2n)|}\sum_{A\in\ASM(2n)}t^{T(A)+1}\\
\notag&=\frac{1}{(t+1)\,(2n)!}\,\prod_{i=1}^{n-1}\frac{(6i-2)!}{(2n+2i)!}\,\sum_{i=0}^{2n-1}\frac{(2n+i-1)!\,(4n-i-2)!}{i!\,(2n-i-1)!}\,t^{i+2}\\
\label{OSASMX5}&=\frac{1}{(2n)!}\,\prod_{i=1}^{n-1}\frac{(6i-2)!}{(2n+2i)!}\,\sum_{0\le i\le j\le2n-2}\frac{(2n+i-1)!\,(4n-i-2)!}{i!\,(2n-i-1)!}\:(-1)^{i+j}\,t^{j+2},\end{align}
where, using~\eqref{OSASMX}, $\XO_{2n}(1,t)=\sum_{A\in\OSASM(2n)}t^{T(A)}$.

No bijective proofs are currently known for the enumerative relations between $\OSASM(2n)$, $\HSASM(2n+1)$ and $\ASM(2n)$
given by the first equality of~\eqref{numOSASMeven2} and first two equalities of~\eqref{OSASMX5}.

It can be shown using~\eqref{OSASMX5} that $\XO_{2n}(1,-1)$ is given by the product formula
\begin{equation}\label{OSASMX6}\XO_{2n}(1,-1)=\frac{(3n-1)!}{2^n\,(2n-1)!}\,\prod_{i=1}^{n-1}\frac{(6i-2)!}{(2n+2i-1)!},\end{equation}
where the details of the derivation are as follows.  The third expression for $\XO_{2n}(1,t)$ in~\eqref{OSASMX5} gives
\begin{align*}\XO_{2n}(1,t)&=\frac{(2n-1)!}{2n\,(t+1)}\,\prod_{i=1}^{n-1}\frac{(6i-2)!}{(2n+2i)!}\,\sum_{i=0}^{2n-1}\binom{2n+i-1}{i}\,\binom{4n-i-2}{2n-i-1}\,t^{i+2}\\
&=-\frac{(2n-1)!}{2n\,(t+1)}\,\prod_{i=1}^{n-1}\frac{(6i-2)!}{(2n+2i)!}\:[x^{2n-1}]\,\frac{t^2}{\bigl((1+x)(1+xt)\bigr)^{2n}},\end{align*}
which then gives
\begin{align*}\XO_{2n}(1,-1)&=-\frac{(2n-1)!}{2n}\,\prod_{i=1}^{n-1}\frac{(6i-2)!}{(2n+2i)!}\:[x^{2n-1}]\,\biggl(\frac{1}{(1+x)^{2n}}\,\frac{d}{dt}\,\frac{t^2}{(1+xt)^{2n}}\bigg|_{t=-1}\,\biggr)\\
&=\frac{(2n-1)!}{n}\,\prod_{i=1}^{n-1}\frac{(6i-2)!}{(2n+2i)!}\:[x^{2n-1}]\,\biggl(\frac{1}{(1-x^2)^{2n}}+\frac{nx}{(1-x)(1-x^2)^{2n}}\biggr)\\
&=\frac{(2n-1)!}{n}\,\prod_{i=1}^{n-1}\frac{(6i-2)!}{(2n+2i)!}\:\biggl(0+n\,[x^{2n-2}]\,\frac{1}{(1-x)(1-x^2)^{2n}}\biggr)\\
&=(2n-1)!\,\prod_{i=1}^{n-1}\frac{(6i-2)!}{(2n+2i)!}\:[x^{n-1}]\,\frac{1}{(1-x)^{2n+1}}\\
&=(2n-1)!\,\prod_{i=1}^{n-1}\frac{(6i-2)!}{(2n+2i)!}\,\binom{3n-1}{n-1}\\
&=\frac{(3n-1)!}{2^n\,(2n-1)!}\,\prod_{i=1}^{n-1}\frac{(6i-2)!}{(2n+2i-1)!},\end{align*}
where the third-last equality follows from the identity $[x^{2i}]\,f(x^2)/(1-x)=[x^i]\,f(x)/(1-x)$, for any power series $f(x)$ and nonnegative integer $i$.

\subsection{Conjectured product formula for the number of odd-order OSASMs}\label{OddOSASMConjSect}
For odd-order OSASMs, it is conjectured that the Pfaffians in~\eqref{numOSASM}
can be evaluated to give a product formula, as follows.
\begin{conjecture}\label{numOSASModdconj}
The number of $(2n+1)\times(2n+1)$ OSASMs is conjectured to be given by
\begin{equation}\label{numOSASModd}|\OSASM(2n+1)|=\frac{2^{n-1}\,(3n+2)!}{(2n+1)!}\,\prod_{i=1}^n\frac{(6i-2)!}{(2n+2i+1)!}.
\end{equation}\end{conjecture}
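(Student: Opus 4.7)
The plan is to evaluate the Pfaffian formula for $|\OSASM(2n+1)|$ provided by~\eqref{numOSASM} together with the explicit entries in~\eqref{numOSASMentries}, and to show that it equals the product on the right-hand side of~\eqref{numOSASModd}. For $n$ replaced by $2n+1$ the array has even order $2n+2$, and the final column has the simple entries $(-1)^i$, which should make it amenable to reduction.

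The most promising route, in my view, passes through the six-vertex model and symplectic characters, in the spirit of Okada's proof~\cite{Oka06} of~\eqref{numVSASM} and Kuperberg's proof~\cite{Kup02} of~\eqref{numOSASMeven2}. Concretely, the steps would be as follows. First, I would choose the boundary parameters in Table~\ref{weights} so that the partition function $Z_{2n+1}(u_1,\ldots,u_{2n+1})$ counts odd-order OSASM configurations with a single unrestricted diagonal insertion; by~\eqref{OSASMX1} this corresponds to extracting $[s^1]\,X_{2n+1}(r,s,t)$ at $r=t=1$, which in the six-vertex language means taking the linear term in~$\alpha,\beta$ of Theorem~\ref{ZPftheorem} at $\alpha=\beta=0$, retaining only cross-terms between the diagonal \textbf{Lup}/\textbf{Ldown} factor and the \textbf{Lout}/\textbf{Lin} factors in the expansion~\eqref{ZPf1}. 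Second, I would compare the resulting Pfaffian with a known Pfaffian expression for the symplectic character $\mathrm{sp}_{2n}(\mu)$ at a staircase-type partition $\mu$; establishing this comparison is Conjecture~\ref{symplconj}. Third, evaluating the character at the specialization corresponding to Lemma~\ref{ZXlemma} (so that all~$u_i$'s collapse to the unrefined point) via Weyl's dimension formula would give a quotient of products, which after simplification should match~\eqref{numOSASModd}.

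A complementary, more direct attempt would bypass symplectic characters and try to evaluate the Pfaffian~\eqref{numOSASM} at odd~$n$ by standard techniques: expand along the last column (whose entries $(-1)^i$ are especially simple) using~\eqref{Pfredcol} to reduce to a linear combination of odd-order sub-Pfaffians, then exploit~\eqref{Pfint}--\eqref{Pfmult} and the Catalan-like structure of the entries in~\eqref{numOSASMentries} to produce a closed form; alternatively one could apply an Ishikawa--Wakayama-type Pfaffian-to-determinant conversion, obtaining a determinant of binomial entries which can then be evaluated by row/column operations or by the Lindstr\"om--Gessel--Viennot--Stembridge interpretation in terms of nonintersecting lattice paths. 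This route sidesteps Conjecture~\ref{symplconj} but requires guessing, and then verifying, the correct intermediate closed form.

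The main obstacle in either route is the exact factorization of the Pfaffian~\eqref{numOSASM} at odd order, or equivalently the symplectic-character identity of Conjecture~\ref{symplconj}. Kuperberg's even-order argument cannot simply be transplanted, because the single nonzero diagonal entry breaks the pure reflection structure and introduces the extra row and column carrying the $Z_1(u_j)$ entries in~\eqref{ZPf1}; controlling this extra data is precisely what is new in the odd-order case. A secondary, arithmetic obstacle is the appearance of the prefactor $2^{n-1}(3n+2)!/(2n+1)!$ in~\eqref{numOSASModd}, which is absent from both~\eqref{numVSASM} and~\eqref{numOSASMeven2}: this indicates that~\eqref{numOSASModd} is not a specialization of any previously known product formula, and so the Weyl-dimension computation for the correct partition~$\mu$ (or the corresponding binomial identity) must be carried out independently, and is likely to be the most technical step of the whole argument.
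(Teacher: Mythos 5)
The statement you are addressing is a conjecture, and the paper offers no proof of it; what you have written is likewise not a proof but a survey of strategies, each of which terminates at an acknowledged open problem. Your first route (through the six-vertex model and symplectic characters) is essentially the one the paper itself suggests: it reduces the claim to Conjecture~\ref{symplconj} plus a Weyl-dimension computation, and the paper explicitly states that proving Conjecture~\ref{symplconj} and then matching the right-hand sides of~\eqref{numOSASModd} and~\eqref{numoddOSASMsympl} ``would be a possible means of proving'' the conjecture. Your second route (direct evaluation of the Pfaffian in~\eqref{numOSASM}) is also anticipated by the paper, which carries it further than you do: using~\eqref{OSASMX6} and~\eqref{OSASMX2t} it reduces the conjecture to the concrete Pfaffian identity~\eqref{OSASMX8}, comparing two Pfaffians that differ only in their last column, and then observes that ``it still seems to be difficult to prove that they are equal.'' So neither of your routes closes the gap; in both cases the decisive step --- the factorization of the odd-order Pfaffian, or equivalently the symplectic-character identity --- is exactly the open problem, and you correctly identify it as such in your final paragraph.

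To be useful, an attempt on this statement would need to supply one of the missing ingredients concretely: either a proof of~\eqref{OSASMX8} (for instance by finding a matrix $Y$ with $\det Y=1$ relating the two underlying skew-symmetric matrices via~\eqref{PfMAMt}, which the near-identity of the two arrays suggests but which you do not exhibit), or a proof of Conjecture~\ref{symplconj} together with the verification that the specialization $\mathit{sp}_{(n,n,n-1,n-1,\ldots,1,1,0,0)}(1,\ldots,1,-1)$ in~\eqref{numoddOSASMsympl} yields the product in~\eqref{numOSASModd}. Note also one technical inaccuracy in your first route: by~\eqref{OSASMX1} the odd-order OSASM count is $\bigl(X_{2n+1}(1,s,1)/s\bigr)|_{s\to0}$ with the specialization~\eqref{boundassig} in which $\alpha=\beta=s$, so the relevant extraction is the coefficient of $s^1$ in the specialized partition function $\Z_{2n+1}$, not a ``linear term in $\alpha,\beta$ at $\alpha=\beta=0$'' of the general Theorem~\ref{ZPftheorem}; the paper performs this extraction cleanly in deriving~\eqref{OSASMX2} from~\eqref{Xrs}, and the difficulty lies entirely downstream of that point.
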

It follows using~\eqref{OSASMX6} that~\eqref{numOSASModd} is equivalent to the conjecture that
\begin{equation}\label{OSASMX7}|\OSASM(2n+1)|=2^{2n}\XO_{2n+2}(1,-1).\end{equation}
Using the second Pfaffian in~\eqref{numOSASM} for $|\OSASM(2n+1)|$ and~\eqref{OSASMX2t} for $\XO_{2n+2}(1,-1)$,~\eqref{OSASMX7} is then
(after applying~\eqref{HomogPf2} on the RHS with $m=2n-1$, $f(u,v)=(v-u)/((1-uv)((1-ru)(1-rv)-uv))$, $g_{2n}(u)=(t-1-rtu)/((rt-(t-1)u)(r-(t-1+r^2)u))$, $h(u)=1$ and $k(u)=1+u$)
equivalent to the conjecture that
\begin{multline}\label{OSASMX8}\Pf_{0\le i<j\le2n+1}\left(\begin{cases}\displaystyle[u^iv^j]\,\frac{v-u}{(1-uv)(1-u-v)},&j\le2n\\(-1)^i,&j=2n+1\end{cases}\right)\\
=\Pf_{0\le i<j\le2n+1}\left(\begin{cases}\displaystyle[u^iv^j]\,\frac{v-u}{(1-uv)(1-u-v)},&j\le2n\\\bigl((-1)^i+2^i\bigr)/2,&j=2n+1\end{cases}\right),\end{multline}
where the coefficients which comprise the entries with $j\le2n$ in these Pfaffians are given in~\eqref{numOSASMentries}.
Although the two sides of~\eqref{OSASMX8} are similar, it still seems to be difficult to prove that they are equal.

\subsection{Conjectured symmetry property for even-order OSASMs}\label{EvenOSASMSymmConjSect}
For even-order OSASMs, it is conjectured that a certain symmetry property holds, as follows.
\begin{conjecture}\label{evenOSASMsymmconj}
The even-order OSASM generating function $\XO_{2n}(r,t)$ is conjectured to satisfy
\begin{equation}\label{OSASMXt1}\XO_{2n}(r,t)=t^{2n+2}\,\XO_{2n}(r,\bar{t}).\end{equation}
Equivalently, by equating coefficients of $r$ and $t$ on both sides of~\eqref{OSASMXt1}, it is conjectured that
\begin{multline}\label{OSASMXt2}|\{A\in\OSASM(2n)\,|\,R(A)=\rho,\,T(A)=\tau\}|\\
=|\{A\in\OSASM(2n)\,|\,R(A)=\rho,\,T(A)=2n+2-\tau\}|,\end{multline}
for any $\rho$ and $\tau$.

For the coefficients of $t^3$ on both sides of~\eqref{OSASMXt1}, corresponding to the case $\tau=3$ in~\eqref{OSASMXt2}, it is conjectured further that
\begin{equation}[t^3]\,\XO_{2n}(r,t)=[t^{2n-1}]\,\XO_{2n}(r,t)=r\bigl((n-2)r^2+1\bigr)\,\XO_{2n-2}(r,1).\end{equation}
\end{conjecture}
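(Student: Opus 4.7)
The plan is to establish Conjecture~\ref{evenOSASMsymmconj} by reducing the functional symmetry~\eqref{OSASMXt1} to a Pfaffian identity derived from~\eqref{OSASMX2t}. Setting $n\mapsto 2n$ in~\eqref{OSASMX2t} makes the $\odd(n)$-summand vanish, so the formula reads $(t-1)^{2n-1}\,\XO_{2n}(r,t)=r^{3n}\,t^{2n+1}\,P_{2n}(r,t)$, where the order-$2n$ Pfaffian $P_{2n}(r,t)$ has its $t$-dependence confined to the last column, whose entries are $e_k(r,t):=[u^k]\,E(u,r,t)$ for $E(u,r,t):=\frac{(t-1-rtu)(1+u)}{(rt-(t-1)u)(r-(t-1+r^2)u)}$, while the other entries are $a_{ij}(r):=[u^iv^j]\,A(u,v,r)$ for $A(u,v,r):=\frac{(v-u)(1+u)(1+v)}{(1-uv)((1-ru)(1-rv)-uv)}$. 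Substituting this expression into both sides of~\eqref{OSASMXt1} and simplifying using $(\bar{t}-1)^{2n-1}=-(t-1)^{2n-1}/t^{2n-1}$ reduces the conjecture to the clean Pfaffian identity $t\,P_{2n}(r,t)+P_{2n}(r,\bar{t})=0$.

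To attack this identity, I would expand $P_{2n}(r,t)$ along its last column using~\eqref{Pfredcol}, writing $P_{2n}(r,t)=\sum_{k=0}^{2n-2}(-1)^k\,e_k(r,t)\,Q_k(r)$ where the subPfaffians $Q_k(r)$ are $t$-independent. The identity then becomes $\sum_{k=0}^{2n-2}(-1)^k\,g_k(r,t)\,Q_k(r)=0$, with $g_k(r,t):=t\,e_k(r,t)+e_k(r,\bar{t})$. A first encouraging check is that the generating function $G(u,r,t):=t\,E(u,r,t)+E(u,r,\bar{t})$ vanishes at $u=0$: a direct computation gives $e_0(r,t)=(t-1)/(r^2t)$ and $e_0(r,\bar{t})=(1-t)/r^2$, whence $g_0(r,t)=0$. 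The natural next step is to express the column $(g_k(r,t))_k$ as a linear combination, with $k$-independent coefficients, of the preceding columns $(a_{ki}(r))_k$ of $P_{2n}$; equivalently, to decompose $G(u,r,t)$, after clearing denominators, in the form $\sum_j\lambda_j(r,t)\,[v^j]A(u,v,r)$. Once such a decomposition is in hand, the alternating sum $\sum_k(-1)^k\,g_k(r,t)\,Q_k(r)$ is identified with the Pfaffian of a matrix one of whose columns is a linear combination of the others, and so it vanishes by~\eqref{Pfint}.

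The main obstacle is carrying out this decomposition: the rational functions $E$ and $A$ share some denominator factors (notably $(1-ru)$) but differ in other features, and making the column-dependence explicit is nontrivial. A promising alternative approach uses the six-vertex-model parametrization $r=q^2+\q^2$ and $t=\sigma(q^2z)/\sigma(q^2\z)$ from Section~\ref{ZXrelation}, under which $t\mapsto\bar{t}$ corresponds to $z\mapsto\z$: the symmetry~\eqref{OSASMXt1} would then follow from a functional equation for the specialized DSASM partition function $\Z_{2n}(z,1,\ldots,1)|_{s=0}$ under $z\mapsto\z$, exploiting the symmetry of $Z_n$ in its variables from Proposition~\ref{Zsymm} together with the reduction properties of Section~\ref{GenProp}. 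The final formula in Conjecture~\ref{evenOSASMsymmconj} for $[t^3]\XO_{2n}(r,t)=[t^{2n-1}]\XO_{2n}(r,t)$ would then follow from the main symmetry together with an independent computation of $[t^3]\XO_{2n}(r,t)$: adapting the bijection in the proof of Proposition~\ref{refprop}(iv) to the OSASM setting, using that the OSASM conditions $A_{22}=A_{33}=0$ force $A_{23}=0$ whenever $T(A)=3$, should yield the claimed closed form $r((n-2)r^2+1)\,\XO_{2n-2}(r,1)$.
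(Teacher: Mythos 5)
The statement you are trying to prove is a conjecture: the paper offers no proof of \eqref{OSASMXt1}, establishes only the special cases $t=1$, $r=1$ and $\tau\in\{1,2,n+1,2n,2n+1\}$, and explicitly remarks that ``it may be possible to prove~\eqref{OSASMXt1} algebraically using~\eqref{OSASMX2t}'' --- which is precisely the route you sketch. Your reduction is correct as far as it goes: with $n$ replaced by $2n$ in \eqref{OSASMX2t} the $\odd$-term vanishes, the prefactor bookkeeping using $(\bar t-1)^{2n-1}=-(t-1)^{2n-1}/t^{2n-1}$ checks out, the conjecture is equivalent to $t\,P_{2n}(r,t)+P_{2n}(r,\bar t)=0$, and your verification that $g_0(r,t)=0$ is right. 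But the proposal stops exactly where the paper stops. The entire content of the conjecture is now concentrated in the decomposition of $G(u,r,t)$ as a linear combination $\sum_j\lambda_j(r,t)\,[v^j]A(u,v,r)$ with coefficients independent of the row index, and you do not produce it --- you name it yourself as ``the main obstacle.'' Without that decomposition (or the involution on $\OSASM(2n)$ with $R(A')=R(A)$ and $T(A)+T(A')=2n+2$ that the paper suggests as the other natural strategy) nothing is proved. The alternative route via $z\mapsto\z$ has the same status: Proposition~\ref{Zsymm} gives symmetry of the partition function in $u_1,\ldots,u_n$, not a crossing-type symmetry under inversion of a single variable, and no such functional equation for $\Z_{2n}(z,1,\ldots,1)$ at $s=0$ is available in the paper or supplied by you.

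The final claim also has a gap. Your observation that $T(A)=3$ forces $A_{23}=0$ for $A\in\OSASM(2n)$ is correct (row~$2$ begins $0,0,A_{23},\ldots$, so $A_{23}\ne-1$, while column~$3$ forces $A_{23}\ne1$), but the bijection in the proof of Proposition~\ref{refprop}(iv) for the case $A_{23}=0$ replaces $A_{33}$ by $A_{33}+1$ and deletes the first row and column; applied to an OSASM this creates a nonzero diagonal entry, so the image lands in $\DSASM(2n-1)$ rather than in a set of OSASMs, and the factor $(n-2)r^2+1$, with its explicit dependence on $n$, does not fall out of a routine adaptation. Moreover, even granting a combinatorial evaluation of $[t^3]\,\XO_{2n}(r,t)$, the asserted equality with $[t^{2n-1}]\,\XO_{2n}(r,t)$ is again exactly the unproven symmetry. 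In short, your proposal is a sensible plan --- essentially the one the authors themselves outline --- but it contains no step that settles the conjecture, so it cannot be counted as a proof.
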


As an example, for $n=3$, $\rho=5$ and $\tau=3$, the sets in~\eqref{OSASMXt2} both have size~3, and are
\begin{equation*}\left\{\left(\begin{smallmatrix}0&0&1&0&0&0\\0&0&0&1&0&0\\1&0&0&-1&1&0\\0&1&-1&0&0&1\\0&0&1&0&0&0\\0&0&0&1&0&0\end{smallmatrix}\right),\,
\left(\begin{smallmatrix}0&0&1&0&0&0\\0&0&0&1&0&0\\1&0&0&-1&0&1\\0&1&-1&0&1&0\\0&0&0&1&0&0\\0&0&1&0&0&0\end{smallmatrix}\right),\,
\left(\begin{smallmatrix}0&0&1&0&0&0\\0&0&0&0&1&0\\1&0&0&0&-1&1\\0&0&0&0&1&0\\0&1&-1&1&0&0\\0&0&1&0&0&0\end{smallmatrix}\right)\right\}\end{equation*}
on the LHS, and
\begin{equation*}\left\{\left(\begin{smallmatrix}0&0&0&0&1&0\\0&0&1&0&0&0\\0&1&0&0&-1&1\\0&0&0&0&1&0\\1&0&-1&1&0&0\\0&0&1&0&0&0\end{smallmatrix}\right),\,
\left(\begin{smallmatrix}0&0&0&0&1&0\\0&0&1&0&-1&1\\0&1&0&0&0&0\\0&0&0&0&1&0\\1&-1&0&1&0&0\\0&1&0&0&0&0\end{smallmatrix}\right),\,
\left(\begin{smallmatrix}0&0&0&0&1&0\\0&0&0&1&-1&1\\0&0&0&0&1&0\\0&1&0&0&0&0\\1&-1&1&0&0&0\\0&1&0&0&0&0\end{smallmatrix}\right)\right\}\end{equation*}
on the RHS.

Note that it follows using~\eqref{MRS} that for each OSASM in either of the sets in~\eqref{OSASMXt2}, the numbers of~$-1$'s and~$1$'s
are $\rho-n$ and $\rho+n$, respectively.

Ideally,~\eqref{OSASMXt1} and~\eqref{OSASMXt2} would be proved by finding an involution on $\OSASM(2n)$ with
the properties that $R(A')=R(A)$ and $T(A)+T(A')=2n+2$, for all $A\in\OSASM(2n)$,
where~$A'$ denotes the image of~$A$ under the involution.
Alternatively, it may be possible to prove~\eqref{OSASMXt1} algebraically using~\eqref{OSASMX2t}.

The following special cases of~\eqref{OSASMXt1} or~\eqref{OSASMXt2} have been proved.
\begin{list}{$\bullet$}{\setlength{\labelwidth}{4mm}\setlength{\leftmargin}{8mm}\setlength{\labelsep}{3mm}\setlength{\topsep}{0.9mm}}
\item The $t=1$ case of $\eqref{OSASMXt1}$, and the $\tau=1$, $\tau=n+1$ and $\tau=2n+1$ cases of~\eqref{OSASMXt2}, all of which are trivial.
\item The $r=1$ case of~\eqref{OSASMXt1}. This can be obtained from the first equality of~\eqref{OSASMX5} using vertical reflection of HSASMs,
which satisfies $T(A)+T(A^v)=2n+2$ for all $A\in\HSASM(2n+1)$, where $A^v$ denotes the reflection of $A$ in a central vertical line.
\item The $\tau=2$ and $\tau=2n$ cases of~\eqref{OSASMXt2}.  These follow from~\eqref{XOcoeff2} and~\eqref{XOcoeffn},
which give $[t^2]\,\XO_{2n}(r,t)=[t^{2n}]\,\XO_{2n}(r,t)=r\,\XO_{2n-2}(r,1)$. This can also
be proved directly by observing that the sets in~\eqref{OSASMXt2} in these cases are
$\Bigl\{\biggl({\scriptsize\begin{array}{@{\,}c@{\:}|@{\:}c@{\:}|@{\:}c@{\,}}0&1&0\\[-0.4mm]\hline1&0&0\\[-0.4mm]\hline0&0&A\end{array}}\biggr)\,\Big|\,A\in\OSASM(2n-2),\,
R(A)=\rho-1\Bigr\}$ and
$\Bigl\{\biggl({\scriptsize\begin{array}{@{\,}c@{\:}|@{\:}c@{\:}|@{\:}c@{\,}}0&0&1\\[-0.4mm]\hline0&A&0\\[-0.4mm]\hline1&0&0\end{array}}\biggr)\,\Big|\,A\in\OSASM(2n-2),\,
R(A)=\rho-1\Bigr\}$.
\end{list}

\subsection{OSASMs and symplectic characters}\label{OSASMSymplSect}
It is known that for $q^2+\q^2=1$,
the partition function for even-order OSASMs is, up to a simple prefactor, a certain symplectic character.

Specifically, it follows from results of Okada~\cite[Thm.~2.5(2), second eq.]{Oka06}, and Razumov and Stroganov~\cite[Thm.~5]{RazStr04b}, that
\begin{equation}\label{sympleven}
\frac{\sigma(q)^{2n}\,\Z_{2n}(u_1,\ldots,u_{2n})}{\prod_{i=1}^{2n}\sigma(qu_i)}\bigg|_{s=0,\,q^2+\q^2=1}=3^{-n(n-1)}\,
\mathit{sp}_{(n-1,n-1,n-2,n-2,\ldots,1,1,0,0)}(u_1^2,\ldots,u_{2n}^2),\end{equation}
where $\mathit{sp}_{(n-1,n-1,n-2,n-2,\ldots,1,1,0,0)}(u_1^2,\ldots,u_{2n}^2)$ denotes
the symplectic character (as given, for example, in Fulton and Harris~\cite[Eq.~(24.18)]{FulHar91}), indexed by the partition $(n-1,n-1,n-2,n-2,\ldots,1,1,0,0)$
and with variables $u_1^2,\ldots,u_{2n}^2$.

Setting $u_1=\ldots=u_{2n}=1$ in~\eqref{sympleven} provides a proof that $|\OSASM(2n)|$
is given by the product formula in~\eqref{numOSASMeven2}, since the LHS of~\eqref{sympleven} is then $|\OSASM(2n)|$,
and the RHS of~\eqref{sympleven} is, using the dimension formula for the symplectic group
(as given, for example in Fulton and Harris~\cite[Exercise~24.20]{FulHar91}), the product formula in~\eqref{numOSASMeven2}.

It is conjectured that an analogue of~\eqref{sympleven} for odd-order OSASMs is as follows.
\begin{conjecture}\label{symplconj}
The partition function for odd-order OSASMs is conjectured to satisfy
\begin{multline}\label{symplodd}
\Z_{2n+1}(u_1,\ldots,u_n,\u_1,\ldots,\u_n,1)/s\big|_{s=0,\,q^2+\q^2=1}\\
=3^{-n^2}\,\prod_{i=1}^n\frac{(u_i+\u_i)^2}{u_i^2-1+\u_i^2}\;
\mathit{sp}_{(n,n,n-1,n-1,\ldots,1,1,0,0)}(u_1^2,\ldots,u_n^2,\u_1^2,\ldots,\u_n^2,1,-1).\end{multline}
\end{conjecture}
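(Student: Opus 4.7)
The plan is to deduce Conjecture~\ref{symplconj} from Theorem~\ref{ZPftheorem} by specializing the variables and matching the resulting expression against the Weyl character formula for symplectic characters, in parallel with the derivation of~\eqref{sympleven} by Okada~\cite{Oka06} and Razumov--Stroganov~\cite{RazStr04b} in the even-order case.

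First I would apply Theorem~\ref{ZPftheorem} to $\Z_{2n+1}(w_1,\ldots,w_{2n+1})$ under the assignments~\eqref{boundassig}, giving a Pfaffian of order $2n+2$ indexed by $0\le i<j\le 2n+1$. Every entry in the $i=0$ row equals $s(qw_j+\q\w_j)$, while each entry with $i\ge1$ splits as an $s^2$-term plus an $s$-free term coming from $\gamma\delta=1/\sigma(q)^2$. Because the Pfaffian of a $(2n+2)\times(2n+2)$ skew-symmetric matrix is linear in each row, the whole Pfaffian is proportional to $s$, and dividing by $s$ and then setting $s=0$ simply strips the $s$ from the $i=0$ row and kills the $s^2$-contributions elsewhere, leaving a Pfaffian whose $(i,j)$-entry for $i\ge1$ is a product of $\sigma$-factors built from $w_i$ and $w_j$. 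Substituting $w_i=u_i$, $w_{n+i}=\u_i$ for $1\le i\le n$, $w_{2n+1}=1$, and imposing $q^2+\q^2=1$ (so that $\sigma(q^4)=\pm\mathfrak{i}\sqrt{3}$) produces a completely explicit Pfaffian in the $u_i^2$.

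Second I would identify this Pfaffian with the symplectic character side via the Weyl character formula, which gives $\mathit{sp}_\lambda(x_1,\ldots,x_{2n+2})$ as a ratio of two $(2n+2)\times(2n+2)$ determinants of the form $\det\bigl(x_i^{a_j}-x_i^{-a_j}\bigr)$, with $\lambda=(n,n,n-1,n-1,\ldots,0,0)$ and $(x_1,\ldots,x_{2n+2})=(u_1^2,\ldots,u_n^2,\u_1^2,\ldots,\u_n^2,1,-1)$. The Weyl denominator factorizes as $\prod_i(x_i-x_i^{-1})\prod_{i<j}(x_i-x_j)(1-x_ix_j)/(x_ix_j)$, and after the specialization it should combine with the prefactor $\prod_{1\le i<j\le 2n+1}\sigmah(q^2w_iw_j)\sigmah(q^2\w_i\w_j)/\sigmah(w_i\w_j)$ from Theorem~\ref{ZPftheorem} and with the factor $\prod_i(u_i+\u_i)^2/(u_i^2-1+\u_i^2)$ on the right of~\eqref{symplodd} to yield, up to the expected $3^{-n^2}$ coming from powers of $\sigma(q^4)$, exactly the right prefactor. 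The Pfaffian itself must then be matched with the Weyl numerator, which calls for a Pfaffian--determinant identity of Ishikawa--Wakayama type.

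The main obstacle will be precisely this Pfaffian-to-determinant collapse under the odd-order specialization. In the even-order case of~\eqref{sympleven} the matching is between a partition-function Pfaffian on $2n$ paired variables and a symplectic character on $2n$ reciprocal-pair eigenvalues, and the Ishikawa--Wakayama machinery handles it cleanly. Here the unpaired variable $w_{2n+1}=1$ and the non-reciprocal eigenvalue pair $\{1,-1\}$ on the symplectic side break the clean symmetry and introduce boundary contributions that have no counterpart in the even case. My plan for this step is to first expand the Pfaffian along its $i=0$ row after extracting $s$, so that the resulting $2n+1$ minors are pure even-order Pfaffians; then apply an Ishikawa--Wakayama-type collapse to each minor; and finally reassemble the weighted sum of determinants as a single Laplace expansion of the Weyl numerator along the row indexed by the eigenvalue $-1$ (or equivalently by $1$). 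Specialization checks, for instance via Proposition~\ref{Z1red} at $u_i^2=\q^2$ to reduce the odd-order identity to a lower-order instance, would provide useful consistency tests during the argument.
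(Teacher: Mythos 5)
The statement you are addressing is stated in the paper as Conjecture~\ref{symplconj} and is left unproved there: the authors offer no derivation, and indeed remark that proving it would be one route towards their other open Conjecture~\ref{numOSASModdconj}. So there is no proof in the paper to compare yours against, and what you have written is a research plan rather than a proof. Your first step is sound and essentially routine: in the Pfaffian of Theorem~\ref{ZPftheorem} for $n=2n+1$ (index range $0\le i<j\le 2n+1$), every perfect matching uses exactly one entry from the $i=0$ row, which under the assignments~\eqref{boundassig} carries a single factor of $s$, while the $i\ge1$ entries split into an $s^2$-part and an $s$-free part proportional to $\gamma\delta$; hence $\Z_{2n+1}/s$ at $s=0$ is a well-defined Pfaffian with the $s$-free kernel, as you say.

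The genuine gap is your second step, and it is exactly the content of the conjecture. The identification of that specialized Pfaffian (at $q^2+\q^2=1$, variables $u_1,\ldots,u_n,\u_1,\ldots,\u_n,1$) with $\mathit{sp}_{(n,n,n-1,n-1,\ldots,1,1,0,0)}(u_1^2,\ldots,u_n^2,\u_1^2,\ldots,\u_n^2,1,-1)$ is not established by any identity you cite; you yourself label it ``the main obstacle'' and only sketch a hoped-for resolution (row expansion, Ishikawa--Wakayama collapse of each minor, reassembly as a Laplace expansion of the Weyl numerator). Two concrete difficulties are not addressed. First, the Weyl character formula expresses $\mathit{sp}_\lambda$ as a ratio of determinants whose denominator contains the factors $x_i-x_i^{-1}$, which vanish at the arguments $x=1$ and $x=-1$; the evaluation at $(\ldots,1,-1)$ is therefore a $0/0$ limit requiring a regularized (derivative) form of the character, and it is far from clear that the regularized numerator matches a Laplace expansion of your Pfaffian minors. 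Second, even in the even-order case the known identity~\eqref{sympleven} is not a formal consequence of the Pfaffian formula of Theorem~\ref{ZPftheorem} alone but rests on separate results of Okada and of Razumov and Stroganov; no analogue of those inputs is supplied here for the odd case. Until the Pfaffian--character matching is actually carried out, the proposal does not prove the statement; it reformulates the conjecture as a conjectural determinant identity.
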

Note that setting $u_1=\ldots=u_n=1$ in~\eqref{symplodd} gives the conjecture that
\begin{equation}\label{numoddOSASMsympl}|\OSASM(2n+1)|=2^{2n}\,3^{-n^2}\,\mathit{sp}_{(n,n,n-1,n-1,\ldots,1,1,0,0)}(\underbrace{1,\ldots,1}_{2n+1},-1).\end{equation}
Hence, a possible means of proving Conjecture~\ref{numOSASModdconj} would be to prove Conjecture~\ref{symplconj}, and then to show that the RHSs of~\eqref{numOSASModd} and~\eqref{numoddOSASMsympl} are equal.

\section{Involutions on DSASMs}\label{invol}
In this section, certain involutions on $\DSASM(n)$ are studied.
The specific involutions under consideration are antidiagonal reflection and two
involutions, denoted~$\ast$ and~$\dagger$, each of which acts only on diagonal, superdiagonal and subdiagonal DSASM entries.

Antidiagonal reflection is considered in Section~\ref{AntidiagreflSect}, and the involutions~$\ast$ and~$\dagger$ are defined in Sections~\ref{astSect} and~\ref{daggerSect}.
Properties of~$\ast$ and~$\dagger$ are studied in Section~\ref{astdaggerPropSect}, and the numbers of $\ast$-invariant, $\dagger$-invariant and $\ast\dagger$-invariant DSASMs and OSASMs
are determined in Section~\ref{astdaggerNumInvSect}.

For an involution~$\nu$ on $\DSASM(n)$, the image of a DSASM~$A$ under~$\nu$ will be denoted~$A^\nu$,
and the sets of $\nu$-invariant $n\times n$ DSASMs and OSASMs will be denoted $\DSASM^\nu(n)$ and $\OSASM^\nu(n)$, respectively, i.e.,
\begin{equation*}
\DSASM^\nu(n)=\{A\in\DSASM(n)\mid A^\nu=A\},\quad\OSASM^\nu(n)=\{A\in\OSASM(n)\mid A^\nu=A\}.
\end{equation*}

\subsection{Antidiagonal reflection on $\DSASM(n)$}\label{AntidiagreflSect}
A simple and immediately-apparent involution on $\DSASM(n)$ is antidiagonal reflection~$a$,
i.e., $(A^a)_{ij}=(\mathcal{A}A)_{ij}=A_{n+1-j,n+1-i}$, for any $A\in\DSASM(n)$, where the notation~$\mathcal{A}$ is used in Section~\ref{ASMSymmClassSect}.
It can be seen that
the statistics~$R$ and~$S$ satisfy $R(A^a)=R(A)$ and $S(A^a)=S(A)$,
and also that the further statistics which will be defined in Section~\ref{FurthStatSect} satisfy
$P(A^a)=P(A)$, $S_+(A^a)=S_+(A)$ and $S_-(A^a)=S_-(A)$.
It follows that $X_n(r,s,t)$ and $\X_n(p,r,s_+,s_-,t)$ remain unchanged if $T(A)$ on the RHS of~\eqref{Xrst} and~\eqref{Xss}
is replaced by~$T(A^a)$, i.e., by $n+1$ minus the row of the~$1$ in the last column of~$A$.
The set of $a$-invariant $n\times n$ DSASMs is $\DSASM^a(n)=\DASASM(n)$,
and information regarding $|\DASASM(n)|$ is given in Section~\ref{ASMSymmClassSect}.

\subsection{The involution \texorpdfstring{$\ast$}{*} on $\DSASM(n)$}\label{astSect}
Another natural involution~$\ast$ on $\DSASM(n)$
will now be introduced. For $n=1$, the single element of $\DSASM(1)$ is necessarily invariant under~$\ast$.
For $n\ge2$, the mapping can be defined most easily in terms of six-vertex model configurations, as follows.
Consider any $A\in\DSASM(n)$, and let~$C$ and~$C^\ast$ be the six-vertex model configurations which, under the bijection of~\eqref{bij},
correspond to $A$ and $A^\ast$, respectively. In the graph~$\G_n$ (as shown in~\eqref{Tn}),
the vertices $(k-1,k)$, for $k=2,\ldots,n$, will be referred to as superdiagonal vertices, i.e., these are the
$n-1$ bulk vertices which are adjacent to left boundary vertices.
The definition of~$\ast$ is now that~$C^\ast$ is obtained from~$C$ by, for all $k=2,\ldots,n$, interchanging the orientations of the two edges
which connect the superdiagonal vertex $(k-1,k)$ to the left boundary vertices $(k-1,k-1)$ and $(k,k)$
(where an orientation is regarded as being into or out of $(k-1,k)$), and leaving the orientations
of all other edges unchanged. \psset{unit=7mm}
Hence, the action of $\ast$ at each superdiagonal vertex is that local configurations~\Viii\ and~\Vv\ are interchanged,
local configurations~\Vvi\ and~\Viv\ are interchanged, and local configurations~\Vi\ and~\Vii\ each remain unchanged.
It follows that~$C^\ast$ is an element of $\SVC(n)$, since
there are still two edges directed into and two edges directed out of each bulk vertex,
and the orientations of the top and right boundary edges are unchanged (and hence still directed upward and leftward, respectively).
It also follows immediately that~$\ast$ is an involution.

Note that if $A_{ij}\ne A^\ast_{ij}$,
then $i-j$ is $0$, $-1$ or $1$, i.e., entries which differ between~$A$ and~$A^\ast$ are on the diagonal, superdiagonal or subdiagonal.

Note also that~$\ast$ is the composition of $n-1$ mutually commuting local involutions $\ast_2$, \ldots, $\ast_n$, where~$\ast_k$
interchanges the orientations of the two edges which connect $(k-1,k)$ to $(k-1,k-1)$ and $(k,k)$,
while leaving the orientations of all other edges unchanged.

As an example of the action of~$\ast$, for the DSASM $A$ in~\eqref{DSASMexample}, the six-vertex model
configurations~$C$ (as given in~\eqref{configex}) and $C^\ast$ are
\psset{unit=5mm}
\begin{equation}\label{configexinv}\raisebox{-17mm}{
\pspicture(-1.2,0.8)(13.2,8.2)\rput[l](-1.2,4.5){$C=$}
\multirput(1,8)(1,0){7}{$\ss\bullet$}\multirput(3,7)(1,0){6}{$\ss\bullet$}\multirput(4,6)(1,0){5}{$\ss\bullet$}\multirput(5,5)(1,0){4}{$\ss\bullet$}
\multirput(6,4)(1,0){3}{$\ss\bullet$}\multirput(7,3)(1,0){2}{$\ss\bullet$}\multirput(8,1)(0,1){2}{$\ss\bullet$}
\psline[linewidth=0.5pt](1,8)(1,7)(8,7)\psline[linewidth=0.5pt](2,8)(2,6)(8,6)\psline[linewidth=0.5pt](3,8)(3,5)(8,5)\psline[linewidth=0.5pt](4,8)(4,4)(8,4)
\psline[linewidth=0.5pt](5,8)(5,3)(8,3)\psline[linewidth=0.5pt](6,8)(6,2)(8,2)\psline[linewidth=0.5pt](7,8)(7,1)(8,1)
\multirput(1,7)(1,-1){7}{$\ss\bullet$}\multirput(2,7)(1,-1){6}{$\color{blue}\ss\bullet$}
\multirput(1,7)(1,0){7}{\psdots[dotstyle=triangle*,dotscale=1.1](0,0.5)}
\multirput(7,1)(0,1){7}{\psdots[dotstyle=triangle*,dotscale=1.1,dotangle=90](0.5,0)}
\psdots[dotstyle=triangle*,dotscale=1.1](2,6.5)(3,6.5)(5,6.5)(6,6.5)(7,6.5)(4,5.5)(6,5.5)(7,5.5)(5,4.5)(6,4.5)(6,3.5)
\psdots[dotstyle=triangle*,dotscale=1.1,dotangle=180](4,6.5)(5,5.5)(7,4.5)(7,3.5)(7,2.5)(7,1.5)
\psdots[dotstyle=triangle*,dotscale=1.1,dotangle=-90](1.5,7)(2.5,7)(3.5,7)(4.5,6)(5.5,5)(6.5,5)
\psdots[dotstyle=triangle*,dotscale=1.1,dotangle=90](4.5,7)(5.5,7)(6.5,7)(3.5,6)(5.5,6)(6.5,6)(4.5,5)(5.5,4)(6.5,4)(6.5,3)(6.5,2)
\psdots[dotstyle=triangle,dotscale=1.1,fillcolor=red](3,5.5)(4,4.5)
\psdots[dotstyle=triangle,dotscale=1.1,dotangle=180,fillcolor=red](5,3.5)(6,2.5)
\psdots[dotstyle=triangle,dotscale=1.1,dotangle=90,fillcolor=red](2.5,6)(3.5,5)
\psdots[dotstyle=triangle,dotscale=1.1,dotangle=-90,fillcolor=red](4.5,4)(5.5,3)
\rput(10.7,4.5){and}\endpspicture
\pspicture(-1.2,0.8)(8.6,8.2)\rput[l](-1.2,4.5){$C^\ast=$}
\multirput(1,8)(1,0){7}{$\ss\bullet$}\multirput(3,7)(1,0){6}{$\ss\bullet$}\multirput(4,6)(1,0){5}{$\ss\bullet$}\multirput(5,5)(1,0){4}{$\ss\bullet$}
\multirput(6,4)(1,0){3}{$\ss\bullet$}\multirput(7,3)(1,0){2}{$\ss\bullet$}\multirput(8,1)(0,1){2}{$\ss\bullet$}
\psline[linewidth=0.5pt](1,8)(1,7)(8,7)\psline[linewidth=0.5pt](2,8)(2,6)(8,6)\psline[linewidth=0.5pt](3,8)(3,5)(8,5)\psline[linewidth=0.5pt](4,8)(4,4)(8,4)
\psline[linewidth=0.5pt](5,8)(5,3)(8,3)\psline[linewidth=0.5pt](6,8)(6,2)(8,2)\psline[linewidth=0.5pt](7,8)(7,1)(8,1)
\multirput(1,7)(1,-1){7}{$\ss\bullet$}\multirput(2,7)(1,-1){6}{$\color{blue}\ss\bullet$}
\multirput(1,7)(1,0){7}{\psdots[dotstyle=triangle*,dotscale=1.1](0,0.5)}
\multirput(7,1)(0,1){7}{\psdots[dotstyle=triangle*,dotscale=1.1,dotangle=90](0.5,0)}
\psdots[dotstyle=triangle*,dotscale=1.1](2,6.5)(3,6.5)(5,6.5)(6,6.5)(7,6.5)(4,5.5)(6,5.5)(7,5.5)(5,4.5)(6,4.5)(6,3.5)
\psdots[dotstyle=triangle*,dotscale=1.1,dotangle=180](4,6.5)(5,5.5)(7,4.5)(7,3.5)(7,2.5)(7,1.5)
\psdots[dotstyle=triangle*,dotscale=1.1,dotangle=-90](1.5,7)(2.5,7)(3.5,7)(4.5,6)(5.5,5)(6.5,5)
\psdots[dotstyle=triangle*,dotscale=1.1,dotangle=90](4.5,7)(5.5,7)(6.5,7)(3.5,6)(5.5,6)(6.5,6)(4.5,5)(5.5,4)(6.5,4)(6.5,3)(6.5,2)
\psdots[dotstyle=triangle,dotscale=1.1,dotangle=180,fillcolor=red](3,5.5)(4,4.5)
\psdots[dotstyle=triangle,dotscale=1.1,fillcolor=red](5,3.5)(6,2.5)
\psdots[dotstyle=triangle,dotscale=1.1,dotangle=-90,fillcolor=red](2.5,6)(3.5,5)
\psdots[dotstyle=triangle,dotscale=1.1,dotangle=90,fillcolor=red](4.5,4)(5.5,3)
\rput(8.6,4.5){,}\endpspicture}\end{equation}
for which $\rule[-5.1ex]{0ex}{0ex}A=\left(\begin{smallmatrix}0&0&0&1&0&0&0\\0&\color{red}1&\color{red}0&-1&1&0&0\\0&\color{red}0&\color{red}1&\color{red}0&-1&0&1\\
1&-1&\color{red}0&0&\color{red}1&0&0\\0&1&-1&\color{red}1&\color{red}-1&\color{red}1&0\\0&0&0&0&\color{red}1&\color{red}0&0\\0&0&1&0&0&0&0\end{smallmatrix}\right)$ and
$\rule[-5.1ex]{0ex}{0ex}A^\ast=
\left(\begin{smallmatrix}0&0&0&1&0&0&0\\0&\color{red}0&\color{red}1&-1&1&0&0\\0&\color{red}1&\color{red}-1&\color{red}1&-1&0&1\\1&-1&\color{red}1&0&\color{red}0&0&0\\
0&1&-1&\color{red}0&\color{red}1&\color{red}0&0\\0&0&0&0&\color{red}0&\color{red}1&0\\0&0&1&0&0&0&0\end{smallmatrix}\right)$,
where the superdiagonal vertices are shown in blue, the edge orientations which differ between~$C$ and~$C^\ast$ are shown in red,
and the matrix entries which differ between~$A$ and~$A^\ast$ are also shown in red.

\subsection{The involution \texorpdfstring{$\dagger$}{†} on $\DSASM(n)$}\label{daggerSect}
\psset{unit=7mm}
A related involution $\dagger$ on $\DSASM(n)$ is defined as follows.
For any DSASM~$A$, again let~$C$ and~$C^\dagger$ be the six-vertex model configurations which, under the bijection of~\eqref{bij},
correspond to~$A$ and~$A^\dagger$, respectively.
If~$A$ is $\ast$-invariant, or equivalently if the local configuration at each superdiagonal vertex in~$C$ is~$\Vi$ or~$\Vii$, then~$A^\dagger$ is taken to be~$A$.
If~$A$ is not \mbox{$\ast$-invariant}, or equivalently if there is a superdiagonal vertex at which the local configuration
in~$C$ is~$\Viii$,~$\Vv$,~$\Vvi$\ or~$\Viv$, then let $(k-1,k)$ be the first such superdiagonal vertex from the top-left of~$\G_n$,
and obtain~$C^\dagger$ from~$C$ by interchanging the orientations of the two edges which connect
$(k-1,k)$ to $(k-1,k-1)$ and $(k,k)$, while leaving the orientations
of all other edges unchanged, i.e., in this case,~$\dagger$ has the same action as the local involution~$\ast_k$.

As an example, taking
$\rule[-5.1ex]{0ex}{11.3ex}A=\left(\begin{smallmatrix}0&0&0&1&0&0&0\\0&\color{red}1&\color{red}0&-1&1&0&0\\0&\color{red}0&\color{red}1&0&-1&0&1\\1&-1&0&0&1&0&0\\
0&1&-1&1&-1&1&0\\0&0&0&0&1&0&0\\0&0&1&0&0&0&0\end{smallmatrix}\right)$ as in~\eqref{DSASMexample} gives
$\rule[-5.1ex]{0ex}{11.3ex}A^\dagger=\left(\begin{smallmatrix}0&0&0&1&0&0&0\\0&\color{red}0&\color{red}1&-1&1&0&0\\0&\color{red}1&\color{red}0&0&-1&0&1\\
1&-1&0&0&1&0&0\\0&1&-1&1&-1&1&0\\0&0&0&0&1&0&0\\0&0&1&0&0&0&0\end{smallmatrix}\right)$ (with $k=3$),
where the matrix entries which differ between~$A$ and~$A^\dagger$ are shown in red.

\subsection{Properties of \texorpdfstring{$\ast$}{*} and \texorpdfstring{$\dagger$}{†}}\label{astdaggerPropSect}
It follows from the definitions of $\ast$ and $\dagger$ that these involutions commute. Hence,
$\{1,\ast,\dagger,\ast\dagger\}$ (where $1$ denotes the identity and $\ast\dagger$ denotes the composition
of $\ast$ and $\dagger$) can be regarded as a group which is isomorphic to the Klein four-group,
and which has an action on $\DSASM(n)$.

It can also be seen that
\begin{equation}\label{astdag}\DSASM^\ast(n)=\DSASM^\dagger(n)\subseteq\DSASM^{\ast\dagger}(n),\end{equation}
where the equality between $\DSASM^\ast(n)$ and $\DSASM^\dagger(n)$ follows immediately from the definition of $\dagger$,
and their containment in $\DSASM^{\ast\dagger}(n)$ follows by observing that if~$A$ is in $\DSASM^\ast(n)=\DSASM^\dagger(n)$
then $A=A^\ast=A^\dagger$, which (applying $\dagger$ to both sides of $A^\ast=A^\dagger$) gives $A^{\ast\dagger}=A$.  It will be seen
in~\eqref{invinvariant1} and~\eqref{invinvariant3} that for $n$ even,
$\DSASM^\ast(n)=\DSASM^\dagger(n)$ is empty and not equal to $\DSASM^{\ast\dagger}(n)$,
and it will be seen in~\eqref{invinvariant2} that for $n$ odd, $\DSASM^\ast(n)=\DSASM^\dagger(n)$ is nonempty and equal to $\DSASM^{\ast\dagger}(n)$.

The behaviour of certain statistics under the action of $\ast$ and $\dagger$ is given in the following result.
\begin{proposition}\label{invprop}
Consider any $A\in\DSASM(n)$, and let $U(A)$ be the sum of all strictly upper triangular entries in~$A$
(or the sum of all strictly lower triangular entries in~$A$), i.e., $U(A)=\sum_{1\le i<j\le n}A_{ij}$.
Then
\begin{align}\label{Uast}U(A)+U(A^\ast)&=n-1,\\
\label{Udag}|U(A)-U(A^\dagger)|&=1-\delta_{A,A^\dagger},\\
\label{Rast}R(A)+R(A^\ast)&\equiv n-1\pmod{2}\\
\intertext{and}
\label{Rdag}R(A)+R(A^\dagger)&\equiv1-\delta_{A,A^\dagger}\pmod{2}.
\end{align}
\end{proposition}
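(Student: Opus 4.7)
The plan is to reduce all four identities to a single telescoping identity for signed sums of edge orientations in the six-vertex configuration $C\in\SVC(n)$ corresponding to $A$ under the bijection~\eqref{bij}. First, since every row of an ASM sums to $1$ and $A$ is diagonally symmetric, one has $2U(A)+\mathrm{tr}(A)=n$, so~\eqref{Uast} is equivalent to $\mathrm{tr}(A)+\mathrm{tr}(A^\ast)=2$. Moreover, $R(A)-U(A)$ equals twice the number of $-1$'s in the strictly upper triangular part of $A$, so $R(A)\equiv U(A)\pmod 2$; hence~\eqref{Rast} follows from~\eqref{Uast} by parity, and in the same way~\eqref{Rdag} will reduce to~\eqref{Udag} once the latter is established.

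The next step is to encode each left-boundary vertex $(i,i)$ by a pair $(\epsilon_i,\rho_i)\in\{+1,-1\}^2$ where $\epsilon_i$ and $\rho_i$ record whether the top and right edges of $(i,i)$ are directed out of ($+1$) or into ($-1$) that vertex. A direct inspection of the four configurations $\Lup,\Ldown,\Lout,\Lin$ yields $A_{ii}=(\epsilon_i-\rho_i)/2$. Introduce the signed count
\[
E(A)=\sum_{k=2}^n e_k(A),\qquad e_k(A)=\begin{cases}+1,&(k-1,k)\text{ is }\Vv\text{ or }\Viv,\\-1,&(k-1,k)\text{ is }\Viii\text{ or }\Vvi,\\0,&\text{otherwise.}\end{cases}
\]
Going through the six possible bulk configurations at the superdiagonal vertex $(k-1,k)$ then yields $e_k(A)=(\rho_{k-1}-\epsilon_k)/2$. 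A case analysis of the four nontrivial actions $\Viii\leftrightarrow\Vv$ and $\Vvi\leftrightarrow\Viv$ of $\ast_k$, using the compatibility between the bulk configuration at $(k-1,k)$ and the left-boundary configurations at $(k-1,k-1)$ and $(k,k)$, shows that $\ast_k$ changes $A_{k-1,k}$ by $\pm 1$ and each of $A_{k-1,k-1}$ and $A_{k,k}$ by the same sign $\mp 1$; summing over $k$ gives $\mathrm{tr}(A^\ast)-\mathrm{tr}(A)=2E(A)$, so that~\eqref{Uast} is equivalent to $\mathrm{tr}(A)+E(A)=1$.

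The latter identity follows from the one-line telescoping
\[
\mathrm{tr}(A)+E(A)=\tfrac12\sum_{i=1}^n(\epsilon_i-\rho_i)+\tfrac12\sum_{k=2}^n(\rho_{k-1}-\epsilon_k)=\frac{\epsilon_1-\rho_n}{2},
\]
together with the fact that the top edge at $(1,1)$ is the top-boundary edge of $\G_n$ (directed upward, so $\epsilon_1=+1$) and the right edge at $(n,n)$ is the rightmost edge (directed leftward, so $\rho_n=-1$), which makes the right-hand side equal to $1$. For~\eqref{Udag} and~\eqref{Rdag}, observe that $A=A^\dagger$ iff no $\ast_k$ acts nontrivially on $A$, iff every $e_k(A)=0$, iff $A=A^\ast$; in the complementary case, $\dagger$ applies a single local swap $\ast_k$ at the smallest $k$ with $e_k(A)\ne 0$, which changes exactly one strictly upper triangular entry $A_{k-1,k}$ by $\pm 1$, giving $|U(A)-U(A^\dagger)|=1$ as required.

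The main obstacle is the case analysis at $\ast_k$: one must verify, using the constraints between the bulk configuration at $(k-1,k)$ and the adjacent left-boundary configurations, that the two diagonal entries $A_{k-1,k-1}$ and $A_{k,k}$ really do change by the \emph{same} sign under a nontrivial $\ast_k$, so that the net effect on $\mathrm{tr}$ is $\pm 2$ rather than $0$. Once this and the twin encodings $A_{ii}=(\epsilon_i-\rho_i)/2$ and $e_k(A)=(\rho_{k-1}-\epsilon_k)/2$ are in place, the telescoping sum above and the elementary parity argument together deliver all four statements of Proposition~\ref{invprop} uniformly.
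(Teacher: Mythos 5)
Your proof is correct, and it takes a route that differs in its mechanics from the paper's, even though both arguments live entirely inside the six-vertex picture. The paper proves \eqref{Uast} column by column: it observes that $\sum_{i<j}A_{ij}$ is $0$ or $1$ according to the orientation of the edge between $(j-1,j)$ and $(j,j)$, computes $\sum_{i<j}(A_{ij}+A^{\ast}_{ij})$ in terms of the local configuration at the superdiagonal vertex $(j-1,j)$, and then sums over $j$, at which point the discrepancy from $n-1$ equals the difference between the numbers of the two ``straight-through'' local configurations on the superdiagonal; this difference vanishes by the separately proved conservation identity \eqref{inoutk} with $k=1$. You instead pass to the trace via $2U(A)+\mathop{\mathrm{tr}}A=n$, encode the diagonal entries and the superdiagonal defects in terms of the shared edge orientations $\epsilon_i,\rho_i$, and close the argument with a self-contained telescoping sum whose boundary terms are pinned down by the boundary conditions of $\G_n$ (namely $\epsilon_1=+1$ and $\rho_n=-1$). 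The two computations are dual faces of the same flow-conservation principle, but yours has the advantage of not needing \eqref{inoutk} as an external input, and it yields the slightly finer intermediate identities $\mathop{\mathrm{tr}}A+E(A)=1$ and $\mathop{\mathrm{tr}}A^{\ast}-\mathop{\mathrm{tr}}A=2E(A)$; the paper's has the advantage of manipulating $U$ directly, with no detour through the trace. Your treatment of \eqref{Udag}, and the parity reductions of \eqref{Rast} and \eqref{Rdag} via $R(A)\equiv U(A)\pmod 2$, coincide with the paper's. The one point you flagged as a potential obstacle --- that a nontrivial $\ast_k$ moves $A_{k-1,k-1}$ and $A_{k,k}$ by the same sign --- is in fact immediate in your encoding: $\ast_k$ simply swaps the values of $\rho_{k-1}$ and $\epsilon_k$, so $A_{k-1,k-1}$ and $A_{k,k}$ each change by $e_k(A)=(\rho_{k-1}-\epsilon_k)/2$ while $A_{k-1,k}$ changes by $-e_k(A)$, and the $e_k$ are unaffected by the other local swaps since the pairs $\{\rho_{k-1},\epsilon_k\}$ are pairwise disjoint.
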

Note that, due to the equality in~\eqref{astdag}, the term $\delta_{A,A^\dagger}$ in~\eqref{Udag} and~\eqref{Rdag} can be replaced by~$\delta_{A,A^\ast}$.
\begin{proof}
The proof of~\eqref{Uast} will be considered first.  The result is trivial for $n=1$.
For $n\ge2$, let $C\in\SVC(n)$ correspond to~$A$ under the bijection of~\eqref{bij}.
It follows from the basic properties of this bijection that, for $2\le j\le n$,
the sum of strictly upper triangular entries in column~$j$ of~$A$ is~0 or~1
according to whether the edge between vertices $(j-1,j)$ and $(j,j)$ is oriented up or down, respectively, in~$C$.
Hence, \psset{unit=7mm}
\begin{equation*}\sum_{i=1}^{j-1}A_{ij}=\begin{cases}0,&C_{j-1,j}=\text{\Vi, \Viii\ or \Vvi},\\
1,&C_{j-1,j}=\text{\Vii, \Viv\ or \Vv}.\end{cases}\end{equation*}
By considering the action of $\ast$ at superdiagonal vertices, it now follows that
\begin{equation}\label{inv1}
\sum_{i=1}^{j-1}A_{ij}+\sum_{i=1}^{j-1}A^\ast_{ij}=\begin{cases}0,&C_{j-1,j}=\Vi,\\
1,&C_{j-1,j}=\text{\Viii, \Vvi, \Viv\ or \Vv,}\\
2,&C_{j-1,j}=\Vii.\end{cases}\end{equation}
Subtracting~1 from each side of~\eqref{inv1}, and summing over $j=2,\ldots,n$, gives
\begin{multline}\label{inv2}
U(A)+U(A^\ast)-n+1\\
=\text{(number of local configurations \Vii\ in $C$ on superdiagonal vertices)}\\
-\text{(number of local configurations \Vi\ in $C$ on superdiagonal vertices)}.\end{multline}
Using~\eqref{inoutk} with $k=1$, the RHS of~\eqref{inv2} is~0, which then gives~\eqref{Uast}.

Proceeding to the proof of~\eqref{Udag}, if $A=A^\dagger$, then the result is trivial.  If $A\ne A^\dagger$,
then it follows from the definition of~$\dagger$ that $|A_{k-1,k}-A^\dagger_{k-1,k}|=1$ for a unique $2\le k\le n$,
and $A_{ij}=A^\dagger_{ij}$ for all other $1\le i<j\le n$, so that $|U(A)-U(A^\dagger)|=|\sum_{1\le i<j\le n}(A_{ij}-A^\dagger_{ij})|=1$, as required.

Finally, for the proofs of~\eqref{Rast} and~\eqref{Rdag}, note that~$R(A)$ equals $U(A)$ plus twice the
number of strictly upper triangular $-1$'s in $A$, so that $U(A)\equiv R(A)\pmod{2}$.
Applying this to~\eqref{Uast} and~\eqref{Udag} gives~\eqref{Rast} and~\eqref{Rdag}, respectively.
\end{proof}

\subsection{Numbers of \texorpdfstring{$\ast$}{*}-invariant, \texorpdfstring{$\dagger$}{†}-invariant and \texorpdfstring{$\ast\dagger$}{*†}-invariant DSASMs and OSASMs}\label{astdaggerNumInvSect}
The numbers of $\ast$-invariant, $\dagger$-invariant and $\ast\dagger$-invariant DSASMs and OSASMs are given in the following result.
\begin{proposition}\label{invinvariantprop}
For $n\ge1$,
\begin{align}
\label{invinvariant1}|\DSASM^\ast(2n)|&=|\DSASM^\dagger(2n)|=|\OSASM^\ast(2n)|=|\OSASM^\dagger(2n)|=0,\\
\label{invinvariant2}|\DSASM^\ast(2n+1)|&=|\DSASM^\dagger(2n+1)|=|\DSASM^{\ast\dagger}(2n+1)|=|\OSASM(2n)|,\\
\label{invinvariant3}|\DSASM^{\ast\dagger}(2n)|&=2\,|\OSASM(2n-1)|\\
\intertext{and}
\label{invinvariant4}\notag|\OSASM^{\ast\dagger}(2n)|&=|\OSASM^\ast(2n+1)|=|\OSASM^\dagger(2n+1)|=|\OSASM^{\ast\dagger}(2n+1)|\\
&=|\ASM(n)|.\end{align}
\end{proposition}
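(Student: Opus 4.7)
The proof divides naturally into four parts, one for each displayed equality. For \eqref{invinvariant1}, a single parity argument suffices: if $A \in \DSASM^\ast(2n)$, then $A = A^\ast$, so Proposition~\ref{invprop}\eqref{Uast} gives $2U(A) = 2n-1$, which is impossible. Hence $\DSASM^\ast(2n) = \emptyset$; by \eqref{astdag}, $\DSASM^\dagger(2n) = \emptyset$ as well, and since $\OSASM(m) \subseteq \DSASM(m)$ the two OSASM sets are also empty.

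For \eqref{invinvariant2}, I would first upgrade the containment $\DSASM^\ast(n) \subseteq \DSASM^{\ast\dagger}(n)$ to an equality when $n$ is odd. If $A \in \DSASM^{\ast\dagger}(n) \setminus \DSASM^\ast(n)$, then $A^\ast = A^\dagger \ne A$ (using that $\dagger$ is an involution), so $\delta_{A,A^\dagger} = 0$ and $U(A^\ast) = U(A^\dagger)$. Combining \eqref{Uast} with \eqref{Udag} gives $|2U(A) - (n-1)| = 1$, i.e.\ $2U(A) \in \{n-2, n\}$, which has no integer solution when $n$ is odd, so $\DSASM^{\ast\dagger}(2n+1) = \DSASM^\ast(2n+1)$. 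The substantive step is then to construct a bijection $\DSASM^\ast(2n+1) \to \OSASM(2n)$. I would work at the level of six-vertex configurations: a $\ast$-invariant configuration on $\G_{2n+1}$ has local configuration \Vi or \Vii at each of the $2n$ superdiagonal vertices, which rigidly constrains the $4n$ edges incident to left boundary vertices. Removing the top-left boundary structure and relabelling should yield a configuration on $\G_{2n}$ whose corresponding DSASM has all diagonal entries equal to zero, i.e.\ an OSASM; the inverse is obtained by reinserting the boundary structure dictated by each superdiagonal choice.

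For \eqref{invinvariant3} and \eqref{invinvariant4}, analogous bijective strategies apply but with finer case analysis. Since $\DSASM^\ast(2n) = \emptyset$, every $A \in \DSASM^{\ast\dagger}(2n)$ satisfies $A^\ast = A^\dagger \ne A$, and the same parity argument now forces $2U(A) \in \{2n-2, 2n\}$; this gives a natural partition of $\DSASM^{\ast\dagger}(2n)$ into two subsets, each of which I would map bijectively onto $\OSASM(2n-1)$ to explain the factor of $2$. For \eqref{invinvariant4} the appearance of $|\ASM(n)|$ is the striking feature: an $\ast$- or $\dagger$-invariant OSASM has both its diagonal and its super/subdiagonal almost entirely zero, and this doubly sparse pattern together with diagonal symmetry and the alternation rule should encode an arbitrary $n \times n$ ASM through a block-folding map that groups the $2n$ off-central rows and columns of the OSASM into $n$ pairs, one for each row and column of the ASM.

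The main obstacle is the bijection underlying \eqref{invinvariant4}. Because $|\ASM(n)|$ is given by the nontrivial product \eqref{numASM}, it cannot be matched by direct computation and a genuinely combinatorial map is indispensable. Verifying that the ASM alternation-sign rule lifts correctly under block-folding to a valid OSASM satisfying the prescribed $\ast\dagger$-invariance, and conversely that every such OSASM arises uniquely from an ASM, will require the most delicate case analysis of all four parts.
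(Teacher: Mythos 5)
Your treatment of \eqref{invinvariant1} and \eqref{invinvariant2} matches the paper's proof: the same parity obstruction via \eqref{Uast} and \eqref{Udag} (the paper argues with the parity of $U(A)+U(A^\ast)$ rather than solving for $2U(A)$, but it is the identical computation), and the same six-vertex contraction/extension between $\DSASM^\ast(2n+1)$ and $\OSASM(2n)$, in which superdiagonal local configurations \Vi\ and \Vii\ correspond to left boundary configurations \Lout\ and \Lin. For \eqref{invinvariant3} your strategy is also the paper's: since $\DSASM^\ast(2n)=\emptyset$, every $A\in\DSASM^{\ast\dagger}(2n)$ satisfies $A^\ast=A^\dagger\ne A$, which forces exactly one superdiagonal vertex of the corresponding configuration to carry a configuration in $\{\Viii,\Vv,\Vvi,\Viv\}$; contracting that vertex to \Lup\ or \Ldown\ and the remaining ones to \Lout\ or \Lin\ gives a two-to-one map onto $\OSASM(2n-1)$, the factor $2$ coming from the two extensions ($\Viii$ or $\Vv$, respectively $\Vvi$ or $\Viv$) of the unique \Lup\ or \Ldown. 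You do not actually construct this map, but it is the evident extension of your map for \eqref{invinvariant2}, and your partition of $\DSASM^{\ast\dagger}(2n)$ by the value of $U$ coincides with the paper's partition by whether the superdiagonal is entirely zero.

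The genuine gap is in \eqref{invinvariant4}, and the route you sketch there would not work. There is no ``block-folding map that groups the $2n$ off-central rows and columns into $n$ pairs'': the correct statement is that $\ast\dagger$-invariance together with the OSASM condition rigidifies the matrix entirely outside a single $n\times n$ block. One checks directly that
\begin{gather*}
\OSASM^{\ast\dagger}(2n)=\Bigl\{\bigl(\begin{smallmatrix}0&A\\A^t&0\end{smallmatrix}\bigr)\;\Big|\;A\in\ASM(n)\Bigr\},\\
\OSASM^{\ast}(2n+1)=\OSASM^{\dagger}(2n+1)=\OSASM^{\ast\dagger}(2n+1)=\Bigl\{\bigl(\begin{smallmatrix}0&0&A\\0&1&0\\A^t&0&0\end{smallmatrix}\bigr)\;\Big|\;A\in\ASM(n)\Bigr\},
\end{gather*}
after which the count $|\ASM(n)|$ is immediate: the bijection is simply ``read off the top-right block'', with diagonal symmetry supplying the rest of the matrix. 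Far from requiring ``the most delicate case analysis of all four parts'', this is the one part that reduces to a short direct verification once the block structure is identified; your stated plan to pair up rows and columns, and your remark that the result ``cannot be matched by direct computation'', indicate that you have not yet found this structure, which is the essential content of \eqref{invinvariant4}.
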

\begin{proof}
For $A\in\DSASM(2n)$, it follows from~\eqref{Uast} that $U(A)+U(A^\ast)$ is odd, which implies that equality of~$A$ and~$A^\ast$ is impossible,
and hence that $\DSASM^\ast(2n)$ is empty.
It now follows, using~\eqref{astdag} and the fact that $\OSASM^\ast(n)$ and $\OSASM^\dagger(n)$ are subsets of $\DSASM^\ast(n)$ and $\DSASM^\dagger(n)$, respectively,
that $\DSASM^\dagger(2n)$, $\OSASM^\ast(2n)$ and $\OSASM^\dagger(2n)$ are also empty, thereby confirming~\eqref{invinvariant1}.

\psset{unit=7mm}
Proceeding to the proof of~\eqref{invinvariant2}, using~\eqref{astdag} gives $\DSASM^\ast(2n+1)=\DSASM^\dagger(2n+1)\subseteq\DSASM^{\ast\dagger}(2n+1)$.
It will now be shown that the containment here is an equality. Suppose there exists $A\in\DSASM^{\ast\dagger}(2n+1)$ with $A\notin
\DSASM^\ast(2n+1)=\DSASM^\dagger(2n+1)$.  Then $A^\ast=A^\dagger\ne A$. But this is impossible, since~\eqref{Uast} would give
$U(A)+U(A^\ast)=2n$, so that $U(A)+U(A^\ast)$ is even, whereas~\eqref{Udag} would give $|U(A)-U(A^\ast)|=1$, so that $U(A)+U(A^\ast)$ is odd.

Having confirmed the validity of the first two equalities in~\eqref{invinvariant2}, the validity of the third equality will now be confirmed using
a bijection from $\OSASM(2n)$ to $\DSASM^\ast(2n+1)$.
Let $A\in\OSASM(2n)$ and $C\in\SVC(2n)$ correspond under the bijection of~\eqref{bij}.  Then
each local configuration on the left boundary in~$C$ is either~\Lout\ or~\Lin\ (since
each diagonal entry in $A$ is zero), and there exists $\widehat{C}\in\SVC(2n+1)$ which is obtained from~$C$ by extending each~\Lout\ to~\Vi
and each \Lin\ to~\Vii, and adding a further up-oriented vertical edge at the top left
and left-oriented horizontal edge at the bottom right.
It follows that the $\widehat{A}\in\DSASM(2n+1)$ which corresponds to~$\widehat{C}$ under the bijection of~\eqref{bij} is
$\ast$-invariant (since the local configuration at each superdiagonal vertex in~$\widehat{C}$ is~\Vi\ or~\Vii),
and it can easily be seen that the mapping in which~$A$ is transformed to~$\widehat{A}$ is bijective from $\OSASM(2n)$ to $\DSASM^\ast(2n+1)$.
Note that $\widehat{A}$ can be constructed directly from $A$ by taking
the strictly upper and lower triangular parts of~$\widehat{A}$ to be the upper and lower triangular parts, respectively, of~$A$,
and then obtaining the main diagonal of~$\widehat{A}$ by requiring that the sum of entries in each row (or column) is~$1$.
Note also that~$\widehat{A}$ is the single DSASM which is obtained from~$A$ using the construction given
in the proof of Proposition~\ref{diagrefprop}. \psset{unit=4mm}
As an example, for $n=1$,
$C=\raisebox{-3.4mm}{\pspicture(0.6,0.9)(3.3,3.3)\multirput(1,3)(1,0){2}{$\ss\bullet$}\multirput(1,2)(1,0){3}{$\ss\bullet$}\multirput(2,1)(1,0){2}{$\ss\bullet$}
\psline[linewidth=0.5pt](1,3)(1,2)(3,2)\psline[linewidth=0.5pt](2,3)(2,1)(3,1)
\multirput(1,2)(1,0){2}{\psdots[dotstyle=triangle*,dotscale=1.1](0,0.5)}\multirput(2,1)(0,1){2}{\psdots[dotstyle=triangle*,dotscale=1.1,dotangle=90](0.5,0)}
\psdots[dotstyle=triangle*,dotscale=1.1,dotangle=180](2,1.5)\psdots[dotstyle=triangle*,dotscale=1.1,dotangle=-90](1.5,2)\endpspicture}$
and $A=\bigl(\begin{smallmatrix}0&1\\1&0\end{smallmatrix}\bigr)$
give
$\widehat{C}=\raisebox{-5.7mm}{\pspicture(0.6,0.6)(4.3,4.3)\multirput(1,4)(1,0){3}{$\ss\bullet$}\multirput(1,3)(1,0){4}{$\ss\bullet$}
\multirput(2,2)(1,0){3}{$\ss\bullet$}\multirput(3,1)(1,0){2}{$\ss\bullet$}
\psline[linewidth=0.5pt](1,4)(1,3)(4,3)\psline[linewidth=0.5pt](2,4)(2,2)(4,2)\psline[linewidth=0.5pt](3,4)(3,1)(4,1)
\multirput(1,3)(1,0){3}{\psdots[dotstyle=triangle*,dotscale=1.1](0,0.5)}
\multirput(3,1)(0,1){3}{\psdots[dotstyle=triangle*,dotscale=1.1,dotangle=90](0.5,0)}
\psdots[dotstyle=triangle*,dotscale=1.1](2,2.5)
\psdots[dotstyle=triangle*,dotscale=1.1,dotangle=180](3,2.5)(3,1.5)
\psdots[dotstyle=triangle*,dotscale=1.1,dotangle=90](2.5,2)
\psdots[dotstyle=triangle*,dotscale=1.1,dotangle=-90](1.5,3)(2.5,3)\endpspicture}$
and $\widehat{A}=\Bigl(\begin{smallmatrix}0&0&1\\0&1&0\\1&0&0\end{smallmatrix}\Bigr)$.

\psset{unit=7mm}
Proceeding to~\eqref{invinvariant3}, this will be proved using a process in which
two elements of $\DSASM^{\ast\dagger}(2n)$ are constructed from each element of $\OSASM(2n-1)$.
Let $A\in\OSASM(2n-1)$ and $C\in\SVC(2n-1)$ correspond under the bijection of~\eqref{bij}.  Then there exists a unique
left boundary vertex at which the local configuration in~$C$ is~\Lup\ or~\Ldown\ (since exactly one diagonal entry in~$A$ is nonzero),
while the local configurations at all other left boundary vertices are~\Lout\ or~\Lin\ (since all other diagonal entries in~$A$ are zero).
Now construct two distinct elements $\widehat{C}_1,\widehat{C}_2\in\SVC(2n)$ from~$C$ by extending~\Lup\ (which occurs if $A$ has a~$1$ on the diagonal)
to either~\Viii\ or~\Vv, extending~\Ldown\ (which occurs if, alternatively, $A$ has a~$-1$ on the diagonal) to either~\Vvi\ or~\Viv,
extending each~\Lout\ to~\Vi\ and each \Lin\ to~\Vii,
and adding a further up-oriented vertical edge at the top left and left-oriented horizontal edge at the bottom right.
It follows that $\widehat{C}_1$ and $\widehat{C}_2$ each contain a unique superdiagonal vertex at which the local configuration
is~\Viii,~\Vv,~\Vvi\ or~\Viv.  It can then be seen, using the definitions of~$\ast$ and~$\dagger$, that
the $\widehat{A}_1,\widehat{A}_2\in\DSASM(2n)$ which correspond to~$\widehat{C}_1,\widehat{C}_2$ under the bijection of~\eqref{bij} satisfy
$\widehat{A}_1^{\ast\dagger}=\widehat{A}_1\ne\widehat{A}_1^\ast=\widehat{A}_1^\dagger$ and $\widehat{A}_2^{\ast\dagger}=\widehat{A}_2\ne\widehat{A}_2^\ast=\widehat{A}_2^\dagger$.
Also observe that any $\widehat{A}\in\DSASM^{\ast\dagger}(2n)$ satisfies $\widehat{A}^{\ast\dagger}=\widehat{A}\ne\widehat{A}^\ast=\widehat{A}^\dagger$
since, due to~\eqref{invinvariant1}, $\DSASM^\ast(2n)=\DSASM^\dagger(2n)=\emptyset$.
It can now be checked that the previous process, when applied to all elements of $\OSASM(2n-1)$,
generates each element of $\DSASM^{\ast\dagger}(2n)$ exactly once, thereby confirming~\eqref{invinvariant3}.
Note that for the $\widehat{A}_1$ and~$\widehat{A}_2$ which are constructed from $A\in\OSASM(2n-1)$, one
of these two $\ast\dagger$-invariant $2n\times2n$ DSASMs (specifically, the one associated with an extension of~\Lup\ to~\Viii\ or~\Ldown\ to~\Viv)
has all zeros on the superdiagonal, while the other (specifically, the one associated with an extension of~\Lup\ to~\Vv\ or~\Ldown\ to~\Vvi)
has a single nonzero entry on the superdiagonal. Hence, this process partitions $\DSASM^{\ast\dagger}(2n)$ into two natural
subsets, each of size $|\OSASM(2n-1)|$.

Finally, for the proof of~\eqref{invinvariant4}, it can be checked straightforwardly that
\begin{equation*}\OSASM^{\ast\dagger}(2n)=\Bigl\{\Bigl({\scriptsize\begin{array}{@{\,}c@{\:}|@{\:}c@{\,}}0&A\\[-0.4mm]\hline\rule{0ex}{2.3ex}A^t&0\end{array}}\Bigr)\,
\Big|\:A\in\ASM(n)\Bigr\}\end{equation*}
and
\begin{equation*}\OSASM^\ast(2n+1)=\OSASM^\dagger(2n+1)=\OSASM^{\ast\dagger}(2n+1)=\left\{
\biggl({\scriptsize\begin{array}{@{\,}c@{\:}|@{\:}c@{\:}|@{\:}c@{\,}}0&0&A\\[-0.4mm]\hline0&1&0\\[-0.4mm]\hline\rule{0ex}{2.3ex}A^t&0&0\end{array}}\biggr)\,\bigg|
\:A\in\ASM(n)\right\},\end{equation*}
which immediatly give~\eqref{invinvariant4}.
\end{proof}

Some consequences of Proposition~\ref{invinvariantprop} are as follows.

First, it follows from~\eqref{invinvariant1} and~\eqref{invinvariant3} that the orbits of
the group $\{1,\ast,\dagger,\ast\dagger\}$ on $\DSASM(2n)$ consist of $|\OSASM(2n-1)|$ size-2 orbits of the form
$\{A=A^{\ast\dagger}\ne A^\ast=A^\dagger\}$ and $(|\DSASM(2n)|-2|\OSASM(2n-1)|)/4$ size-4 orbits,
while it follows from~\eqref{invinvariant2} that the orbits of $\{1,\ast,\dagger,\ast\dagger\}$ on $\DSASM(2n+1)$ consist of
$|\OSASM(2n)|$ size-1 orbits
and $(|\DSASM(2n+1)|-|\OSASM(2n)|)/4$ size-4 orbits.

Second, combining the $t=1$ case of~\eqref{Xmin2} with $|\DSASM^\ast(2n)|=0$ and
$|\DSASM^\ast(2n+1)|=|\OSASM(2n)|$ from~\eqref{invinvariant1} and~\eqref{invinvariant2} gives
\begin{equation}\label{Stem}|\DSASM^\ast(n)|=(-1)^{n(n-1)/2}\,X_n(-1,1,1),\end{equation}
which provides an example of the $-1$ phenomenon,
as first described for plane partitions by Stembridge~\cite{Ste94}.  This phenomenon is said to occur if
a finite set~$\mathcal{S}$, an involution~$\nu$ on $\mathcal{S}$ and an integer-valued statistic~$K$ on~$\mathcal{S}$
satisfy $|\{A\in\mathcal{S}\mid A^\nu=A\}|=|\{A\in\mathcal{S}\mid K(A)\text{ even}\}|-|\{A\in\nolinebreak\mathcal{S}\mid K(A)\text{ odd}\}|$,
i.e., the number of $\nu$-invariant elements of~$\mathcal{S}$ is $\sum_{A\in \mathcal{S}}(-1)^{K(A)}$.
For the case here, the set, involution and statistic are $\mathcal{S}=\DSASM(n)$, $\nu=\ast$ and $K(A)=n(n-1)/2+R(A)$, for each $A\in\DSASM(n)$.

Another, more direct, proof of~\eqref{Stem} is as follows.
For~$n$ even, both sides of~\eqref{Stem} are zero since, due to~\eqref{Rast},~$\ast$ provides a bijection between $\{A\in\DSASM(n)\mid R(A)\text{ even}\}$
and $\{A\in\DSASM(n)\mid R(A)\text{ odd}\}$.  Alternatively, due to~\eqref{Rdag},~$\dagger$ provides a further bijection between these sets.

For~$n$ odd,~\eqref{Stem} follows by observing that, due to~\eqref{Rast},~$R(A)\equiv(n-1)/2\pmod{2}$ for $A\in\DSASM^\ast(n)$,
and that, due to~\eqref{Rdag} and the equality in~\eqref{astdag},~$\dagger$ provides a bijection between $\{A\in\DSASM(n)\setminus\DSASM^\ast(n)\mid R(A)\text{ even}\}$ and
$\{A\in\DSASM(n)\setminus\DSASM^\ast(n)\mid R(A)\text{ odd}\}$.
Note that~$\ast$ does not provide another bijection between these sets since,
due to~\eqref{Rast}, $R(A)+R(A^\ast)$ is even for all $A\in\DSASM(n)$.

\section{Results for a generalized DSASM generating function}\label{GeneralGenFuncSect}
In this section, a generalization of the results of Section~\ref{mainresults} to include further natural statistics is outlined.
Specifically, the number of strictly upper triangular inversions in a DSASM is included, and the number of~$1$'s and number of~$-1$'s on the main diagonal of a DSASM are taken to be two separate statistics.
These statistics are introduced and studied in Section~\ref{FurthStatSect}, a generalized DSASM generating function involving these statistics is defined in Section~\ref{GeneralGenFuncSubSect},
and certain results for this generating function are stated (with the details of the proofs omitted) as Theorems~\ref{Xprstheorem} and~\ref{Xprsttheorem} in Section~\ref{GeneralGenFuncResSect}.

\subsection{Further DSASM statistics}\label{FurthStatSect}
For any $A\in\ASM(n)$, define the statistics
\begin{align}
\label{PA}P(A)&=\sum_{\substack{1\le i<i'<j\le n\\1\le j'\le j}}A_{ij}\,A_{i'j'},\\
\label{SApl}S_+(A)&=\text{number of $1$'s on the main diagonal of }A,\\
\label{SAmi}S_-(A)&=\text{number of $-1$'s on the main diagonal of }A.\end{align}
It follows trivially that the statistic $S$, as defined in~\eqref{SA}, is related to $S_+$ and $S_-$ by
\begin{equation}\label{SSS}S(A)=S_+(A)+S_-(A),\end{equation}
for any ASM $A$.

\psset{unit=7mm}
For $A\in\DSASM(n)$ and $C\in\SVC(n)$ which correspond under the bijection of~\eqref{bij},
the statistics~\eqref{PA}--\eqref{SAmi} can be expressed in terms of $C$ as
\begin{align}\notag P(A)&=\text{number of local configurations \Vi\ in }C\\
\label{PC}&=\text{number of local configurations \Vii\ in }C,\\
\label{SCpl}S_+(A)&=\text{number of local configurations \Lup\ in }C,\\
\label{SCmi}S_-(A)&=\text{number of local configurations \Ldown\ in }C.\end{align}
The bijection~\eqref{bij} immediately gives~\eqref{SCpl} and~\eqref{SCmi},
while~\eqref{PC} can be obtained straightforwardly using basic properties of DSASMs
and their corresponding six-vertex model configurations.  Note that the first equality of~\eqref{PC}
implies that $P(A)$ is a nonnegative integer (with the nonnegativity not being immediately obvious from~\eqref{PA}),
and that the second equality of~\eqref{PC} follows by summing over $k=1,\ldots,n-1$ in~\eqref{inoutk}.

The statistics~\eqref{PA}--\eqref{SAmi} can be used to access several other natural statistics.  For example, for any ASM~$A$, the trace of~$A$ is
\begin{equation}\label{trA}\mathop{\mathrm{tr}}A=S_+(A)-S_-(A),\end{equation}
and, for any $A\in\DSASM(n)$, the sum of all strictly upper triangular entries in~$A$, as used in Proposition~\ref{invprop}, is
\begin{equation}\label{sumuppA}\sum_{1\le i<j\le n}A_{ij}=\bigl(n-S_+(A)+S_-(A)\bigr)/2,\end{equation}
since the sum of all entries in~$A$ is $n=\mathop{\mathrm{tr}}A+2\sum_{1\le i<j\le n}A_{ij}$.

For any $A\in\ASM(n)$, the number of inversions in $A$ is defined as
\begin{equation}\label{IA}I(A)=\sum_{\substack{1\le i<i'\le n\\1\le j'\le j\le n}}\!A_{ij}\,A_{i'j'},\end{equation}
and it can be shown straightforwardly that for any $A\in\DSASM(n)$,
\begin{align}\notag I(A)&=2P(A)+\bigl(n-S_+(A)-S_-(A)\bigr)/2\\
&=2P(A)+\bigl(n-S(A)\bigr)/2.\end{align}
The statistic~$I$ was first defined by Robbins and Rumsey~\cite[Eq.~(18)]{RobRum86}.
A closely-related statistic, $I'(A)=\rule[-1.5ex]{0ex}{0ex}\sum_{1\le i<i'\le n;\;1\le j'<j\le n}A_{ij}\,A_{i'j'}=I(A)+M(A)$
for any $A\in\ASM(n)$, where $M(A)$ is given in~\eqref{MA}, was previously defined by Mills, Robbins and Rumsey~\cite[p.~344]{MilRobRum83},
and is sometimes instead referred to as the number of inversions.
The use of the name inversions is motivated by the fact that if~$A$ is a permutation matrix,
then $I(A)$ (which in this case equals~$I'(A)$, since $M(A)=0$) is the number of inversions in the permutation associated with~$A$.

It can be shown that if $C$ is the six-vertex model configuration with domain-wall boundary conditions on~$\mathcal{S}_n$
(as described in Section~\ref{sixvertexmodelconfig}) which corresponds to $A\in\ASM(n)$, then
$I(A)$ is the number of local configurations \Vi\ in $C$ (which is also equal to the number of local configurations \Vii\ in~$C$).
Accordingly, each occurrence of \Vi\ can be regarded as a single inversion,
and due to~\eqref{PC}, it is natural to refer to the statistic $P(A)$, for a DSASM~$A$, as the number
of strictly upper triangular inversions in~$A$.

\subsection{The generalized DSASM generating function}\label{GeneralGenFuncSubSect}
The generalized DSASM generating function associated with the statistics $P$,~$R$,~$S_+$,~$S_-$ and~$T$ is defined as
\begin{equation}\label{Xss}\X_n(p,r,s_+,s_-,t)=\sum_{A\in\DSASM(n)}p^{P(A)}\,r^{R(A)}\,s_+^{S_+(A)}\,s_-^{S_-(A)}\,t^{T(A)},\end{equation}
for indeterminates~$p$, $r$,~$s_+$,~$s_-$ and $t$.
This generalizes the DSASM generating function $X_n(r,s,t)$, as defined in~\eqref{X}, with~\eqref{SSS} implying that
\begin{equation}X_n(r,s,t)=\X_n(1,r,s,s,t).\end{equation}

As examples, the $n=1$, $2$ and $3$ cases of the generalized DSASM generating function~\eqref{Xss} are
\begin{gather}\notag\X_1(p,r,s_+,s_-,t)=s_+t,\quad\X_2(p,r,s_+,s_-,t)=s_+^2t+rt^2,\\
\X_3(p,r,s_+,s_-,t)=s_+^3t+rs_+t+rs_+t^2+prs_+t^3+r^2s_-t^2,\end{gather}
where the terms on each RHS are written in an order corresponding to that used for the DSASMs in~\eqref{DSASM123}.

In order to relate the generalized DSASM generating function~\eqref{Xss} to the DSASM partition function~\eqref{Z},
the constants $\alpha$, $\beta$, $\gamma$, $\delta$ and function~$\phi(u)$ in the left boundary weights in Table~\ref{weights} are set to
\begin{equation}\label{boundassigref}\alpha=\frac{s_+\,q\,w-s_-\,\q\,\w}{\sigma(q^2w^2)},\quad\beta=\frac{s_-\,q\,w-s_+\,\q\,\w}{\sigma(q^2w^2)},\quad
\gamma=\delta=\frac{1}{\sigma(q^2w^2)},\quad\phi(u)=1,\end{equation}
for an arbitrary constant~$w$, which gives
\begin{gather}\notag W(\WLup,u)=\frac{s_+\,\sigma(q^2wu)+s_-\,\sigma(w\u)}{\sigma(q^2w^2)},\qquad
W(\WLdown,u)=\frac{s_+\,\sigma(w\u)+s_-\,\sigma(q^2wu)}{\sigma(q^2w^2)},\\
\label{leftWref}W(\WLout,u)=W(\WLin,u)=\frac{\sigma(q^2u^2)}{\sigma(q^2w^2)}.\end{gather}

Note that while the left boundary weights~\eqref{leftW} at $u=1$ are
$W(\WLup,1)=W(\WLdown,1)=s$ and $W(\WLout,1)=W(\WLin,1)=1$,
the left boundary weights~\eqref{leftWref} at $u=w$ are $W(\WLup,w)=s_+$, $W(\WLdown,w)=\nolinebreak s_-$ and $W(\WLout,w)=W(\WLin,w)=1$.
Using this property, and a process analogous to that used in the proof of Lemma~\ref{ZXlemma}, it can be shown that
the DSASM partition function and generalized DSASM generating function~\eqref{Xss} are related by
\begin{multline}\label{ZXgen}\overline{\!Z}_n(z,\underbrace{w,\ldots,w}_{n-1})=\frac{\sigma(q^2\w^2)^{(n-1)(n-2)/2}\,\sigma(q^2\w\z)^{n-1}}{\sigma(q^4)^{n(n-1)/2}\,\sigma(q^2wz)}\\
\times\Biggl[\frac{\sigma(q^2w^2)\,\sigma(q^2z^2)\,\sigma(q^2\w\z)}{\sigma(q^2\w^2)\,\sigma(q^2wz)}\:
\X_n\biggl(\biggl(\frac{\sigma(q^2w^2)}{\sigma(q^2\w^2)}\biggr)^{\!2},\frac{\sigma(q^4)}{\sigma(q^2\w^2)},s_+,s_-,
\frac{\sigma(q^2wz)\,\sigma(q^2\w^2)}{\sigma(q^2\w\z)\,\sigma(q^2w^2)}\biggr)\qquad\quad\\
+\,\frac{\bigl(s_+\,\sigma(w\z)+s_-\,\sigma(q^2wz)\bigr)\,\sigma(w\z)}{\sigma(q^2w^2)}\:
\X_{n-1}\biggl(\biggl(\frac{\sigma(q^2w^2)}{\sigma(q^2\w^2)}\biggr)^{\!2},\frac{\sigma(q^4)}{\sigma(q^2\w^2)},s_+,s_-,1\biggr)\Biggr],\end{multline}
where $\,\overline{\!Z}_n(u_1,\ldots,u_n)$ denotes the DSASM partition function~\eqref{Z} with the assignments of~\eqref{boundassigref}.

\subsection{Pfaffian equations for the generalized DSASM generating function}\label{GeneralGenFuncResSect}
A process analogous to that used in Sections~\ref{proofXrs} and~\ref{proofXrsttheorem}
to obtain the Pfaffian equations~\eqref{Xrs} and~\eqref{Xrst} for~$X_n(r,s,1)$ and~$X_n(r,s,t)$
can be used, together with~\eqref{ZXgen}, to derive Pfaffian equations for the generalized DSASM generating function $\X_n(p,r,s_+,s_-,t)$.
In principle, it should be possible to obtain an equation which enables $\X_n(p,r,s_+,s_-,t)$ to be determined for arbitrary~$p$,~$r$,~$s_+$,~$s_-$ and~$t$.  However, this seems to be
reasonably complicated, and the results presented here enable
$\X_n(p,r,s_+,s_-,t)$ to be determined only for the special cases of $t=1$ (in Theorem~\ref{Xprstheorem}) or $s_+=s_-$ (in Theorem~\ref{Xprsttheorem}).
The details of the proofs will be omitted, but the eventual results are as follows.
\begin{theorem}\label{Xprstheorem}
For $n\ge2$, the generalized DSASM generating function $\X_n(p,r,s_+,s_-,t)$ at $t=1$ is given by
\begin{multline}\label{Xprs}
\X_n(p,r,s_+,s_-,1)=\Pf_{\odd(n)\le i<j\le n-1}\biggl([u^iv^j]\,(s_++s_-\,u)^{\odd(n)\,\delta_{i,1}}\,\frac{v-u}{1-uv}\\
\times\biggl((s_+-s_-\,u)(s_+-s_-\,v)+\frac{r(1-u^2)(1-v^2)}{(1-ru)(1-rv)-puv}\biggr)\biggr).\end{multline}
\end{theorem}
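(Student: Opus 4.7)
\textbf{Proof plan for Theorem~\ref{Xprstheorem}.}
The plan is to adapt the strategy used in Section~\ref{proofXrs} for the proof of~\eqref{Xrs}, now working with the generalized boundary assignments~\eqref{boundassigref} rather than the specialized ones~\eqref{boundassig}.  The three main ingredients are Theorem~\ref{ZPftheorem} (applied to the DSASM partition function $\overline{\!Z}_n$ obtained via~\eqref{boundassigref}), the relation~\eqref{ZXgen} between $\overline{\!Z}_n$ and $\X_n$, and Theorem~\ref{Pfaffianidtheorem} for evaluating multivariate Pfaffian expressions in a limit.

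First I would specialize~\eqref{ZXgen} to $z=w$.  Since $\sigma(w\bar z)$ vanishes at $z=w$, the whole $X_{n-1}$ term disappears, and after cancelling the remaining prefactors using $(n-1)(n-2)/2+(n-1)=n(n-1)/2$, one obtains a clean identity of the form
\begin{equation*}
\X_n(p,r,s_+,s_-,1)=\frac{\sigma(q^4)^{n(n-1)/2}}{\sigma(q^2\bar w^2)^{n(n-1)/2}}\,\overline{\!Z}_n(\underbrace{w,\ldots,w}_{n}),
\end{equation*}
where the parameters $p=(\sigma(q^2w^2)/\sigma(q^2\bar w^2))^2$ and $r=\sigma(q^4)/\sigma(q^2\bar w^2)$ are free (arbitrary $p,r$ being attained by a suitable choice of $q$ and $w$).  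Next, I would substitute into this the Pfaffian expression provided by Theorem~\ref{ZPftheorem}, with the boundary weights read off from~\eqref{boundassigref}; the resulting entries are explicit in $u_i,u_j,s_+,s_-,q,w$, with the odd-$n$ extra row coming from $Z_1(u_j)=W(\WLup,u_j)=(s_+\sigma(q^2wu_j)+s_-\sigma(w\bar u_j))/\sigma(q^2w^2)$.

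Then I would set $u_1=\ldots=u_n=w$ in the diverging prefactor/Pfaffian ratio by writing $u_i^2=w^2(x_i+1)$ and letting $x_1,\ldots,x_n\to0$.  To do this cleanly, one factors out the $w$-dependence from every entry of the array and identifies an appropriate power series $F(u,v)$ in $u=(u_i^2-w^2)/(\text{something})$ and $v$ analogously, chosen so that Theorem~\ref{Pfaffianidtheorem} (with a suitable $h(x)$ and $k(x)$, the latter absorbing the $Z_1$-row factor) converts the limit into a Pfaffian of power-series coefficients.  After this change of variables, the rational function playing the role of $F(u,v)$ should be precisely
\begin{equation*}
\frac{v-u}{1-uv}\,\biggl((s_+-s_-u)(s_+-s_-v)+\frac{r(1-u^2)(1-v^2)}{(1-ru)(1-rv)-puv}\biggr),
\end{equation*}
and the extra boundary row for odd $n$, after applying~\eqref{Pfredcol} (with~\eqref{Pfaffianidtheorem}) and re-indexing, collapses into the factor $(s_++s_-u)^{\odd(n)\delta_{i,1}}$ inside the coefficient extractor.

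The main obstacle I anticipate is the bookkeeping for the change of variables and cancellations: the $s_-$-dependence enters asymmetrically through $\alpha$ and $\beta$ in~\eqref{boundassigref}, so verifying that the final rational function factorizes as the displayed product of a linear form in $(s_+-s_-u),(s_+-s_-v)$ plus the usual $r$-term requires care, as does matching the odd-$n$ correction with the specific $\delta_{i,1}$ location.  Once this algebra is set up correctly, the structure of the proof is essentially identical to that of Section~\ref{proofXrs}: Theorem~\ref{Pfaffianidtheorem} handles the confluent limit, and~\eqref{Pfredcol} converts the $(n+1)\times(n+1)$ Pfaffian appearing for odd $n$ into the stated $n\times n$ Pfaffian with range $\odd(n)\le i<j\le n-1$.
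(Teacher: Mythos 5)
Your plan coincides with the paper's own (sketched) approach: the paper states that Theorem~\ref{Xprstheorem} follows by "a process analogous to that used in Sections~\ref{proofXrs} and~\ref{proofXrsttheorem}, together with~\eqref{ZXgen}", and explicitly omits the details, which is exactly the route you describe — specializing~\eqref{ZXgen} at $z=w$ to kill the $\X_{n-1}$ term and isolate $\X_n(p,r,s_+,s_-,1)=r^{n(n-1)/2}\,\overline{\!Z}_n(w,\ldots,w)$, then feeding Theorem~\ref{ZPftheorem} with the weights~\eqref{leftWref} into the confluent limit via Theorem~\ref{Pfaffianidtheorem}. The only place needing more care than you indicate is the odd-$n$ reduction: since $\overline{\!Z}_1(u)$ is now a nonconstant function of $u$, the extra column does not collapse to a single entry under~\eqref{Pfredcol} as in Section~\ref{proofXrs}, and one must instead clear it by row/column operations (via~\eqref{PfMAMt}), which is precisely what produces the $(s_++s_-u)^{\odd(n)\,\delta_{i,1}}$ factor confined to row $i=1$.
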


\begin{theorem}\label{Xprsttheorem}
For $n\ge2$, the generalized DSASM generating function $\X_n(p,r,s_+,s_-,t)$ at $s_+=s_-$
satisfies
\begin{multline}\label{Xprst}
\!\!\!(t-1+rt)\,(t-1)^{n-1}\,\X_n(p,r,s,s,t)=st\bigl((t-1)^n+\odd(n)\,r^n\,t^n\bigr)\,\X_{n-1}(p,r,s,s,1)\,+\,r^n\,s^{\odd(n)}\,t^{n+1}\\[1mm]
\times\!\Pf_{\odd(n)\le i<j\le n-1}\left(\begin{cases}\displaystyle[u^iv^j]\,\frac{v-u}{1-uv}\biggl(s^2+\frac{r(1+u)(1+v)}{(1-ru)(1-rv)-puv}\biggr),&j\le n-2\\[4mm]
\displaystyle[u^i]\,\frac{t-1-rtu}{rt-(t-1)u}\biggl(s^2+\frac{r(t-1+rt)(1+u)}{r-(pt-p+r^2)u}\biggr),&j=n-1\end{cases}\right).\end{multline}
\end{theorem}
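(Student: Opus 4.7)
The plan is to prove Theorem~\ref{Xprsttheorem} by following exactly the strategy used in Section~\ref{proofXrsttheorem} for Theorem~\ref{Xrsttheorem}, but replacing the boundary-weight specialization~\eqref{boundassig} by the more general one~\eqref{boundassigref}, and replacing Lemma~\ref{ZXlemma} by the relation~\eqref{ZXgen}. Since the case $s_+=s_-=s$ yields $\alpha=\beta=s\,\sigma(qw)/\sigma(q^2w^2)$ in~\eqref{boundassigref}, the boundary weight structure is qualitatively the same as in the specialized case treated before, so the overall algebraic shape of the Pfaffian entries will persist; the extra parameter $w$ is what produces the new variable $p$ in the final formula.

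First I would substitute the boundary assignments~\eqref{boundassigref} at $s_+=s_-=s$ into the Pfaffian formula of Theorem~\ref{ZPftheorem}, use the explicit forms of $\,\overline{\!Z}_1(u_j)$ and $\,\overline{\!Z}_2(u_i,u_j)$ computed from the weights~\eqref{leftWref} to rewrite the entries, and simplify to obtain an analog of~\eqref{ZPfK}. Then, using the symmetry of the partition function (Proposition~\ref{Zsymm}), I would set $u_1=z$ and $u_2=\cdots=u_n=w$, isolating the $u_1$-dependent factors and the column $j=n-1$ of the Pfaffian. To perform the coincidence limit $u_2,\ldots,u_{n-1}\to w$, I would substitute $u_i^2=w^2+x_i$ for $i=2,\ldots,n-1$ and apply Theorem~\ref{Pfaffianidtheorem} with appropriate choices of $f(x,y)$, $g_{n-1}(x)$, $h(x)$ and $k(x)$. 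The function $h(x)$ is to be chosen as a rational function of $x$ involving $w$ and $q$, tailored so that after applying the theorem the bulk entries of the resulting Pfaffian are coefficients of a rational function whose denominator contains the combination $(1-ru)(1-rv)-puv$ with $p=(\sigma(q^2w^2)/\sigma(q^2\bar w^2))^2$ and $r=\sigma(q^4)/\sigma(q^2\bar w^2)$.

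Next I would apply~\eqref{ZXgen} to rewrite the resulting expression for $\,\overline{\!Z}_n(z,w,\ldots,w)$ in terms of $\X_n(p,r,s,s,t)$ and $\X_{n-1}(p,r,s,s,1)$, where $t=\sigma(q^2wz)\sigma(q^2\bar w^2)/(\sigma(q^2\bar w\bar z)\sigma(q^2w^2))$. Solving for $\X_n(p,r,s,s,t)$ then yields the claimed formula~\eqref{Xprst}: the prefactor $(t-1+rt)(t-1)^{n-1}$ and the term proportional to $\X_{n-1}(p,r,s,s,1)$ arise in exactly the same way as in the proof of~\eqref{Xrst}, and the $\odd(n)$ contribution comes from expanding along the last column of the Pfaffian via~\eqref{Pfredcol} and identifying the resulting sub-Pfaffian with an instance of~\eqref{Xprs} at $t=1$ in dimension $n-1$.

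The main obstacle will be, as in the proof of Theorem~\ref{Xrsttheorem}, the precise choice of $h(x)$ and $k(x)$ in Theorem~\ref{Pfaffianidtheorem}, and then the verification that the $j=n-1$ entries in the final Pfaffian come out in the form displayed in~\eqref{Xprst}. Concretely, one must show that the trigonometric data governing the last column assemble into the rational expression $[u^i]\,\bigl((t-1-rtu)/(rt-(t-1)u)\bigr)\bigl(s^2+r(t-1+rt)(1+u)/(r-(pt-p+r^2)u)\bigr)$; in particular, the denominator $r-(pt-p+r^2)u$ must emerge from the specialization, generalizing the identity~\eqref{Xrstentriesfun} that governs the $p=1$ case. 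Once this substitution identity is established, the rest of the proof is a bookkeeping exercise following the $p=1$ template of Section~\ref{proofXrsttheorem}.
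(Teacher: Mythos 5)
Your proposal follows exactly the route the paper indicates for Theorem~\ref{Xprsttheorem}: the paper states (and omits the details of) a proof obtained by repeating the argument of Section~\ref{proofXrsttheorem} with the boundary specialization~\eqref{boundassigref} in place of~\eqref{boundassig} and with~\eqref{ZXgen} in place of Lemma~\ref{ZXlemma}, which is precisely your plan, including the correct identifications $p=(\sigma(q^2w^2)/\sigma(q^2\w^2))^2$, $r=\sigma(q^4)/\sigma(q^2\w^2)$ and $t=\sigma(q^2wz)\,\sigma(q^2\w^2)/(\sigma(q^2\w\z)\,\sigma(q^2w^2))$. The remaining work you flag (choosing $h$ and $k$ in Theorem~\ref{Pfaffianidtheorem} and verifying the $j=n-1$ entries via a $p$-generalization of~\eqref{Xrstentriesfun}) is exactly the bookkeeping the paper leaves to the reader, so the approach is sound and essentially identical.
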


Note that determinant expressions with a form similar to that of the Pfaffian expressions~\eqref{Xrs}, \eqref{Xrst},~\eqref{Xprs} or~\eqref{Xprst} for DSASM generating functions
are also known for ASM generating functions.
For example, a determinant expression for the ASM generating function associated with the statistics~$M$ and~$I$,
as defined in~\eqref{MA} and~\eqref{IA}, is
\begin{equation}\label{ASMX}\sum_{A\in\ASM(n)}r^{2M(A)}p^{I(A)}=\det_{0\le i<j\le n-1}\biggl([u^iv^j]\,\frac{1}{(1-uv)\,\bigl((1-ru)(1-rv)-puv\bigr)}\biggr).\end{equation}
This follows from a result of
Behrend, Di Francesco and Zinn-Justin~\cite[Prop.~1, Prop.~3, Eq.~(68)~\& Eq.~(70)]{BehDifZin12} that
$\sum_{A\in\ASM(n)}r^{2M(A)}p^{I(A)}=\det_{0\le i<j\le n-1}([u^iv^j]\,g(u,v))$,
where $g(u,v)=(1-u)/((1-v)(1-uv))+pu/((1-v)(1-r^2u-v-(p-r^2)uv))$. In this identity, $g(u,v)$
can be replaced by $f(u,v)=1/((1-uv)((1-ru)(1-rv)-puv))$ to give~\eqref{ASMX},
since these functions are related by $g(u,v)=(1+(p-r^2-1)u+r^2u^2)f(ru,\bar{r}v)$,
which implies that the matrices $A=([u^iv^j]g(u,v))_{0\le i,j\le n-1}$
and $B=([u^iv^j]f(u,v))_{0\le i,j\le n-1}$ are related by $A=Y_1BY_2$,
where $Y_1=(r^j(\delta_{i,j}+(p-r^2-1)\delta_{i,j+1}+r^2\delta_{i,j+2}))_{0\le i,j\le n-1}$
and $Y_2=(r^{-i}\delta_{i,j})_{0\le i,j\le n-1}$, so that
$\det Y_1=r^{n(n-1)/2}$, $\det Y_2=r^{-n(n-1)/2}$ and $\det A=\det B$.
(This replacement of $g(u,v)$ by $f(u,v)$ corresponds to an application of the general
identity $\det_{0\le i,j\le n-1}([u^iv^j]\,(k_1(u)k_2(v)f(h_1(u)u,h_2(v)v)))
=(k_1(0)k_2(0))^n(h_1(0)h_2(0))^{n(n-1)/2}\det_{0\le i,j\le n-1}([u^iv^j]\,f(u,v))$,
for any power series $f(u,v)$, $h_1(u)$, $h_2(u)$, $k_1(u)$ and $k_2(u)$,
which can be regarded as a determinant version of the $m=2n$ case of~\eqref{HomogPf2}.)
The case of~\eqref{ASMX} with $r=p=1$ is also given by Barry~\cite[pp.~3--4]{Bar21}.

\section{Asymptotics}\label{AsymptSect}
In this section, the large $n$ asymptotics of the number of $n\times n$ DSASMs is considered, and related information on the large~$n$ asymptotics of the number of $n\times n$ ASMs
in other classes is also presented.

In Section~\ref{ASMAsymptSect}, the asymptotics of the number of unrestricted ASMs is reviewed, and an asymptotic expansion for this case is obtained
using an approach involving the Barnes $G$-function.
In Section~\ref{OSASMAsymptSect}, the same approach is used to obtain an asymptotic expansion of the number of even-order OSASMs and
the conjectured number of odd-order OSASMs.
In Section~\ref{DSASMAsymptSect}, a conjecture for the asymptotic expansion of the number of DSASMs is given, as Conjecture~\ref{DSASMAsymptConj}.
In Section~\ref{ASMLeadAsymptSect}, a general result, Theorem~\ref{ASMLeadAsymptTh}, is obtained for the leading term in the asymptotics of the number of ASMs in any symmetry class.
In Section~\ref{ASMLeadAsymptFurthSect1}, the leading terms in the asymptotics of the number of OSASMs, and the number of DSASMs whose~$1$
in the first row is in a fixed position, are obtained in Propositions~\ref{OSASMLeadAsymptTh} and~\ref{DSASMRefLeadAsymptTh}.
In Section~\ref{ASMLeadAsymptFurthSect2}, further aspects of the results of Sections~\ref{ASMLeadAsymptSect} and~\ref{ASMLeadAsymptFurthSect1} are
discussed, and generalizations of these results are given in~\eqref{GenLeadAsympt1}--\eqref{GenLeadAsympt3}.

The only parts of Section~\ref{AsymptSect} that depend crucially on results or conjectures obtained elsewhere in this paper are the~$n$ odd case of~\eqref{OSASMAsympt} for the conjectured asymptotics
of $|\OSASM(n)|$ (which depends on the conjectured product formula~\eqref{numOSASModd}), and Conjecture~\ref{DSASMAsymptConj} for the conjectured
asymptotics of $|\DSASM(n)|$ (which depends on data obtained used the Pfaffian formula~\eqref{numDSASM2}).

\subsection{Asymptotics of the number of ASMs}\label{ASMAsymptSect}
The large $n$ asymptotics of the number of $n\times n$ ASMs, and the number of $n\times n$ ASMs in
symmetry classes for which a product formula exists,
has been studied by Ayyer, Cori and Gouyou-Beauchamps~\cite[App.~B]{AyyCorGou11},
Bleher and Fokin~\cite[App.~A]{BleFok06}, Bleher and Liechty~\cite{BleLie18}, Bogoliubov, Kitaev and Zvonarev~\cite[Sec.~V]{BogKitZvo02},
de Gier~\cite[Sec.~5.2]{Deg09}, Hone~\cite[Eq.~(2.6)]{Hon06}, Korepin and Zinn-Justin~\cite[Sec.~5.1]{KorZin00}, Mitra~\cite{Mit09},
Mitra and Nienhuis~\cite[App.]{MitNie04}, and Ribeiro and Korepin~\cite[Sec.~5]{RibKor15}.  Some of this work will now be reviewed.

The asymptotics of numbers given by a product formula of the type in~\eqref{numASM2}--\eqref{numDASASM} can be studied effectively using an approach which
involves the Barnes $G$-function $G(z)$.  For information regarding this function, see, for example, Adamchik~\cite{Ada01a,Ada01b,Ada04,Ada14}.

This approach will be illustrated for $|\ASM(n)|$.  In this case,
\begin{align}\label{ASMasymp1}
\notag|\ASM(n)|&=\prod_{i=0}^{n-1}\frac{(3i+1)!}{(n+i)!}\\
\notag&=\prod_{i=0}^{n-1}\frac{i!\,(3i+1)!}{(2i)!\,(2i+1)!}\\
\notag&=\prod_{i=0}^{n-1}\frac{\bigl(\prod_{j=1}^ij\bigr)\,\bigl(\prod_{j=1}^i(3j)\bigr)\,\bigl(\prod_{j=0}^i(3j+1)\bigr)\,\bigl(\prod_{j=0}^{i-1}(3j+2)\bigr)}
{\bigl(\prod_{j=1}^i(2j)\bigr)\,\bigl(\prod_{j=0}^{i-1}(2j+1)\bigr)\,\bigl(\prod_{j=1}^i(2j)\bigr)\,\bigl(\prod_{j=0}^i(2j+1)\bigr)}\\
\notag&=\frac{3^{n(3n-1)/2}}{2^{n(2n-1)}}\:\prod_{i=0}^{n-1}\frac{\bigl(\prod_{j=0}^i(j+\frac{1}{3})\bigr)\,\bigl(\prod_{j=0}^{i-1}(j+\frac{2}{3})\bigr)}
{\bigl(\prod_{j=0}^{i-1}(j+\frac{1}{2})\bigr)\,\bigl(\prod_{j=0}^i(j+\frac{1}{2})\bigr)}\\
\notag&=\frac{3^{n(3n-1)/2}}{2^{n(2n-1)}}\:\prod_{i=0}^{n-1}\frac{\Gamma\bigl(\frac{1}{2}\bigr)^2\,\Gamma\bigl(i+\frac{4}{3}\bigr)\,\Gamma\bigl(i+\frac{2}{3}\bigr)}
{\Gamma\bigl(\frac{1}{3}\bigr)\,\Gamma\bigl(\frac{2}{3}\bigr)\,\Gamma\bigl(i+\frac{1}{2}\bigr)\,\Gamma\bigl(i+\frac{3}{2}\bigr)}\\
\notag&=\frac{3^{n(3n-1)/2}\,\Gamma\bigl(\frac{1}{2}\bigr)^{2n}\,G\bigl(\frac{1}{2}\bigr)\,G\bigl(\frac{3}{2}\bigr)\,G\bigl(n+\frac{4}{3}\bigr)\,G\bigl(n+\frac{2}{3}\bigr)}
{2^{n(2n-1)}\,\Gamma\bigl(\frac{1}{3}\bigr)^n\,\Gamma\bigl(\frac{2}{3}\bigr)^n\,
G\bigl(\frac{4}{3}\bigr)\,G\bigl(\frac{2}{3}\bigr)\,G\bigl(n+\frac{1}{2}\bigr)\,G\bigl(n+\frac{3}{2}\bigr)}\\
&=\frac{G\bigl(\frac{1}{2}\bigr)\,G\bigl(\frac{3}{2}\bigr)}{G\bigl(\frac{2}{3}\bigr)\,G\bigl(\frac{4}{3}\bigr)}\:
\biggl(\frac{3\sqrt{3}}{4}\biggr)^{\!n^2}\,\frac{G\bigl(n+\frac{2}{3}\bigr)\,G\bigl(n+\frac{4}{3}\bigr)}{G\bigl(n+\frac{1}{2}\bigr)\,G\bigl(n+\frac{3}{2}\bigr)},
\end{align}
where the first equality uses~\eqref{numASM2},
the second, third and fourth equalities involve simple rearrangements, the
fifth equality uses the identity $\prod_{i=0}^{n-1}(i+z)=\Gamma(n+z)/\Gamma(z)$ (where $\Gamma(z)$ is the gamma function), the sixth equality uses the identity
$\prod_{i=0}^{n-1}\Gamma(i+z)=G(n+z)/G(z)$ (where~$G(z)$ is the Barnes $G$-function), and the last equality uses the properties $\Gamma(1/2)=\sqrt{\pi}$
and $\Gamma(1/3)\,\Gamma(2/3)=2\pi/\sqrt{3}$.

For Re$(z)>0$ and $|z|\to\infty$, the Barnes $G$-function has the asymptotic expansion
\begin{multline}\label{Gasmp}
\log\bigl(G(1+z)\bigr)\\
=\frac{z^2\log z}{2}-\frac{3z^2}{4}+\frac{\log(2\pi)\,z}{2}-\frac{\log z}{12}+\zeta'(-1)+\sum_{i=1}^{N}\frac{B_{2i+2}}{4i(i+1)\,z^{2i}}
+O\Bigl(\frac{1}{z^{2N+2}}\Bigr),\end{multline}
where $\zeta'(-1)$ is the derivative of the Riemann zeta function at $-1$ (which can be written as $\zeta'(-1)=1/12-\log A$,
where $A$ is the Glaisher--Kinkelin constant), and $B_i$ are the Bernoulli numbers. For information on this expansion, see, for example,
Adamchik~\cite[p.~11]{Ada01a},~\cite[p.~6]{Ada01b},~\cite[Eq.~(5.19)]{Ada14} (noting that a sign correction is needed for the sum over~$i$ in these references).

Applying~\eqref{Gasmp}, together with expansions of standard functions, to~\eqref{ASMasymp1} gives
the asymptotic expansion of $|\ASM(n)|$ as
\begin{multline}\label{ASMasymp2}|\ASM(n)|=\frac{G\bigl(\frac{1}{2}\bigr)\,G\bigl(\frac{3}{2}\bigr)}{G\bigl(\frac{2}{3}\bigr)\,G\bigl(\frac{4}{3}\bigr)}\:
\biggl(\frac{3\sqrt{3}}{4}\biggr)^{\!n^2}\,n^{-5/36}\\
\times\biggl(1-\frac{115}{15552n^2}+\frac{796873}{483729408n^4}-\frac{23733315595}{22568879259648n^6}+O\bigl(n^{-8}\bigr)\biggr),\end{multline}
where the algebraic manipulations and simplifications which lead to this expression can be done efficiently by a computer.
Using standard properties of $G(z)$ and $\Gamma(z)$, the overall constant in~\eqref{ASMasymp2} can be rewritten as
\begin{equation}\label{ASMconst}\frac{G\bigl(\frac{1}{2}\bigr)\,G\bigl(\frac{3}{2}\bigr)}{G\bigl(\frac{2}{3}\bigr)\,G\bigl(\frac{4}{3}\bigr)}
=\frac{2^{5/12}\,\pi^{1/3}\,e^{\zeta'(-1)/3}}{3^{7/36}\,\Gamma\bigl(\frac{1}{3}\bigr)^{2/3}},\end{equation}
whose value, to~$10$ decimal places, is $0.7746696632$.

\subsection{Asymptotics of the number of OSASMs}\label{OSASMAsymptSect}
The same approach as that which was used to obtain the asymptotic expansion~\eqref{ASMasymp2} of $|\ASM(n)|$ from the product formula~\eqref{numASM2}
can also be used to obtain an expansion of $|\OSASM(n)|$ with~$n$ even from the product formula~\eqref{numOSASMeven2},
and a conjectured expansion of $|\OSASM(n)|$ with~$n$ odd from the conjectured product formula~\eqref{numOSASModd}.
Specifically, this gives
\begin{align}
\notag&\hspace{-3mm}|\OSASM(n)|\\
\notag&\hspace{-1mm}=\begin{cases}\displaystyle
\frac{G\bigl(\frac{3}{4}\bigr)\,G\bigl(\frac{5}{4}\bigr)^2\,G\bigl(\frac{7}{4}\bigr)}{G\bigl(\frac{4}{3}\bigr)\,G\bigl(\frac{5}{3}\bigr)\,G\bigl(\frac{5}{6}\bigr)\,
G\bigl(\frac{7}{6}\bigr)}\,\biggl(\frac{3\sqrt{3}}{4}\biggr)^{\!n^2/2}\biggl(\frac{3}{4}\biggr)^{\!3n/4}\\[4mm]
\displaystyle\qquad\qquad\qquad\qquad\qquad\times\,
\frac{G\bigl(\frac{n}{2}+\frac{4}{3}\bigr)\,G\bigl(\frac{n}{2}+\frac{5}{3}\bigr)\,G\bigl(\frac{n}{2}+\frac{5}{6}\bigr)\,G\bigl(\frac{n}{2}+\frac{7}{6}\bigr)}
{G\bigl(\frac{n}{2}+\frac{3}{4}\bigr)\,G\bigl(\frac{n}{2}+\frac{5}{4}\bigr)^2\,G\bigl(\frac{n}{2}+\frac{7}{4}\bigr)},&n\text{ even}\\[7mm]
\displaystyle\frac{2^{5/2}\,G\bigl(\frac{5}{4}\bigr)\,G\bigl(\frac{7}{4}\bigr)^2\,G\bigl(\frac{9}{4}\bigr)}
{3^{3/2}\,G\bigl(\frac{7}{3}\bigr)\,G\bigl(\frac{8}{3}\bigr)\,G\bigl(\frac{5}{6}\bigr)\,
G\bigl(\frac{7}{6}\bigr)}\,\biggl(\frac{3\sqrt{3}}{4}\biggr)^{\!n^2/2}\biggl(\frac{3}{4}\biggr)^{\!3n/4}\\[4mm]
\displaystyle\qquad\qquad\qquad\qquad\qquad\times\,
\frac{G\bigl(\frac{n}{2}+\frac{1}{3}\bigr)\,G\bigl(\frac{n}{2}+\frac{2}{3}\bigr)\,G\bigl(\frac{n}{2}+\frac{11}{6}\bigr)\,G\bigl(\frac{n}{2}+\frac{13}{6}\bigr)}
{G\bigl(\frac{n}{2}+\frac{3}{4}\bigr)\,G\bigl(\frac{n}{2}+\frac{5}{4}\bigr)^2\,G\bigl(\frac{n}{2}+\frac{7}{4}\bigr)},&n\text{ odd}\end{cases}\\[4mm]
\label{OSASMAsympt}&\hspace{-1mm}=\begin{cases}\displaystyle\frac{3^{11/72}\,\pi^{1/6}\,e^{\zeta'(-1)/6}}{2^{1/24}\,\Gamma\bigl(\frac{1}{3}\bigr)^{1/3}}\,
\biggl(\frac{3\sqrt{3}}{4}\biggr)^{\!n^2/2}\biggl(\frac{3}{4}\biggr)^{\!3n/4}\,n^{-5/72}\\[5mm]
\displaystyle\quad\times\biggl(1-\frac{5}{144n}-\frac{385}{124416n^2}+\frac{57365}{5971968n^3}+\frac{15026011}{30958682112n^4}+O\bigl(n^{-5}\bigr)\biggr),&n\text{ even}\\[6mm]
\displaystyle\frac{3^{83/72}\,\pi^{1/6}\,e^{\zeta'(-1)/6}}{2^{49/24}\,\Gamma\bigl(\frac{1}{3}\bigr)^{1/3}}\,
\biggl(\frac{3\sqrt{3}}{4}\biggr)^{\!n^2/2}\biggl(\frac{3}{4}\biggr)^{\!3n/4}\,n^{67/72}\\[5mm]
\displaystyle\quad\times\biggl(1+\frac{67}{144n}+\frac{6095}{124416n^2}-\frac{173635}{5971968n^3}-\frac{417738629}{30958682112n^4}+O\bigl(n^{-5}\bigr)\biggr),&n\text{ odd}.\end{cases}
\end{align}
Although the~$n$ odd case of~\eqref{OSASMAsympt} depends on the conjectured product formula~\eqref{numOSASModd}, the validity of its leading term
$\bigl(3\sqrt{3}/4\bigr)^{n^2/2}$ can be proved, as will be seen in Section~\ref{ASMLeadAsymptFurthSect1}. Specifically, this follows from~\eqref{OSASMLeadAsympt}.

\subsection{Asymptotics of the number of DSASMs}\label{DSASMAsymptSect}
As indicated in Section~\ref{intro}, the formula for $|\DSASM(n)|$ given by Corollary~\ref{numDSASMcoroll} was used, together with a computer, to calculate
$|\DSASM(n)|$ for all~$n$ up to~$1000$, and the values are provided at a webpage accompanying this paper~\cite{BehFisKou23}.
Using this data, and motivated by the forms of asymptotic expansions such as~\eqref{ASMasymp2} and~\eqref{OSASMAsympt} of numbers of ASMs in other classes,
the following conjecture regarding the asymptotic expansion of~$|\DSASM(n)|$ was then obtained.
\begin{conjecture}\label{DSASMAsymptConj}
It is conjectured that
\begin{multline}\label{DSASMAsympt}|\DSASM(n)|\\
=C\,\biggl(\frac{3\sqrt{3}}{4}\biggr)^{\!n^2/2}\,3^{n/4}\,n^{-5/72}
\begin{cases}\displaystyle1-\frac{385}{31104n^2}+\frac{(-1)^{n/2}\,a}{n^{5/2}}+O\bigl(n^{-3}\bigr),&n\text{ even},\\[4.5mm]
\displaystyle1-\frac{385}{31104n^2}+\frac{(-1)^{(n+1)/2}\,b}{n^{5/2}}+O\bigl(n^{-3}\bigr),&n\text{ odd},\end{cases}\end{multline}
where $C$, $a$ and $b$ are constants, estimated as $C\approx0.72352852136732419487494728616080$,
$a\approx0.07834956265$ and $b\approx0.18915257676$.  It is also conjectured that $b=(1+\sqrt{2})a$.
\end{conjecture}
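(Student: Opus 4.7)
The plan is to adapt the orthogonal-polynomial / Riemann--Hilbert machinery used by Bleher--Fokin and Bleher--Liechty for the standard six-vertex partition function to the Pfaffian arising in Theorem~\ref{ZPftheorem}. Because no product formula is expected for $|\DSASM(n)|$ (see the discussion after~\eqref{numDSASM1}), the conjecture cannot be reduced to a product-asymptotic calculation in the style of Section~\ref{OSASMAsymptSect}; all corrections will have to be extracted directly from the Pfaffian representation.

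The first step is to work from Theorem~\ref{ZPftheorem} at the combinatorial point $u_1=\cdots=u_n=1$, $s=1$, $q^2+\bar q^2=1$, for which $\Z_n(1,\ldots,1)=|\DSASM(n)|$ (as noted after Lemma~\ref{ZXlemma}). At this point the $Z_1$ and $Z_2$ entries have simple trigonometric closed forms, so the $(n+\odd(n))$-order Pfaffian becomes a Pfaffian of a structured matrix whose entries are Laurent coefficients of an explicit rational function. Using the divided-difference reformulation~\eqref{XrstDiv}, together with the determinant/Pfaffian identity behind~\eqref{Xrsdet}, I would convert the Pfaffian into a determinant whose entries admit a contour-integral representation, bringing it into a form to which steepest-descent techniques can be applied.

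Next I would carry out a saddle-point analysis of this integral representation. The leading exponential $\bigl(3\sqrt{3}/4\bigr)^{n^2/2}$ is already unconditional from Theorem~\ref{ASMLeadAsymptTh} (combined with Proposition~\ref{OSASMLeadAsymptTh}), so the analysis must reproduce this and then peel off the subleading exponential $3^{n/4}$, the power-law factor $n^{-5/72}$, and the algebraic corrections. The coincidence of the exponent $-5/72$ and of the numerator $385$ with those appearing in the OSASM expansion~\eqref{OSASMAsympt} strongly suggests that, to this order, the dominant spectral curve governing DSASMs coincides with that of OSASMs and that the difference between the two classes is a boundary contribution coming from the diagonal entries; matching these contributions term by term should yield the $1-385/(31104\,n^2)+\cdots$ factor. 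The parity-dependent oscillatory terms $(-1)^{n/2}a/n^{5/2}$ and $(-1)^{(n+1)/2}b/n^{5/2}$ should originate from a subdominant pair of saddles sitting at the boundary of the region of convergence, in the same way that the parity-dependent border row in~\eqref{PfFirstRowToLastCol} already produces $\odd(n)$-dependent behaviour at finite~$n$; the conjectured relation $b=(1+\sqrt{2})a$ is suggestive of a ratio of residues at two saddles lying at roots of a quadratic whose discriminant produces the $\sqrt{2}$.

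The main obstacle will be the rigorous saddle-point analysis to the precision required. Obtaining the leading $\bigl(3\sqrt{3}/4\bigr)^{n^2/2}$ and the $n^{-5/72}$ power is essentially a calculation at the dominant saddle, but the $1/n^{5/2}$ oscillatory term sits well below the natural error of a single-saddle analysis and will require a uniform Riemann--Hilbert matching of the bulk saddle with a boundary parametrix; even the $1/n^2$ correction, which carries no oscillation, seems to demand control of next-to-leading behaviour at the band edges of the equilibrium measure. Establishing $b=(1+\sqrt{2})a$ algebraically rather than numerically is likely to be the most delicate step, and may require either an exact reformulation of the boundary contribution via theta functions, or the discovery of a new identity relating the DSASM Pfaffian at the combinatorial point to a hypergeometric quantity in which the $\sqrt{2}$ is visible from the outset.
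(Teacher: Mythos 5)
The statement you are addressing is a conjecture, and the paper does not prove it: Conjecture~\ref{DSASMAsymptConj} is obtained empirically, by computing $|\DSASM(n)|$ for all $n\le1000$ from the Pfaffian formula of Corollary~\ref{numDSASMcoroll}, fitting an ansatz modelled on the proven expansions~\eqref{ASMasymp2} and~\eqref{OSASMAsympt}, and estimating $C$, $a$ and $b$ numerically; the only part that is actually proved is the leading factor $\bigl(3\sqrt{3}/4\bigr)^{n^2/2}$, via the $G=\{\mathcal{I},\mathcal{D}\}$ case of Theorem~\ref{ASMLeadAsymptTh}. So there is no proof in the paper against which your argument can be matched, and your proposal has to be judged as a stand-alone attempt.

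As such, it is a research programme rather than a proof, and every step beyond the leading term is a genuine gap. Concretely: first, you assert but do not carry out the reduction of the Pfaffian of Theorem~\ref{ZPftheorem} at the combinatorial point to a determinant with a contour-integral kernel; the natural analogue of the Bleher--Fokin and Bleher--Liechty setup here would involve skew-orthogonal rather than orthogonal polynomials, and no tractable weight is exhibited. Note also that~\eqref{Xrsdet} produces $X_n(r,s,1)\,X_{n+1}(r,s,1)$, so asymptotics extracted from that determinant concern the product of two consecutive terms, which in particular scrambles the parity-dependent oscillatory corrections you are trying to isolate. Second, the claim that the DSASM and OSASM spectral curves agree to the relevant order is not consistent with the conjectured subexponential factors: the OSASM expansion carries $(3/4)^{3n/4}$ while the DSASM conjecture carries $3^{n/4}$, so the diagonal boundary contribution already alters the $e^{O(n)}$ term and must be computed, not matched. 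Third, the $1/n^{5/2}$ oscillatory terms and the relation $b=(1+\sqrt{2})a$ are precisely the quantities the paper could only estimate numerically (and for $C$ no closed form is even conjectured); your suggestion that they arise from a subdominant pair of saddles is plausible but entirely unexecuted, and nothing in your argument produces the values of $a$, $b$ or $C$. In short, the proposal identifies a sensible route but establishes none of the claimed terms beyond what Theorem~\ref{ASMLeadAsymptTh} already gives.
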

The validity of the leading term $\bigl(3\sqrt{3}/4\bigr)^{n^2/2}$ in~\eqref{DSASMAsympt} can be proved, as will be seen in Section~\ref{ASMLeadAsymptSect}. Specifically, this follows
from the $G=\{\mathcal{I},\mathcal{D}\}$ case of~\eqref{ASMLeadAsympt}.

Note that, in contrast to the asymptotic expansions~\eqref{ASMasymp2} and~\eqref{OSASMAsympt}, which arise from product formulae, it seems that in the expansion of $|\DSASM(n)|$,
the final sum contains fractional exponents and irrational coefficients.

Note also that it has not so far been possible to conjecture a closed expression for the overall constant~$C$ in~\eqref{DSASMAsympt}, whereas such expressions exist for the overall constants in expansions
such as~\eqref{ASMasymp2} and~\eqref{OSASMAsympt}. For example, the overall constant in~\eqref{ASMasymp2} is given by~\eqref{ASMconst}.

\subsection{Leading asymptotics of numbers of ASMs in any symmetry class}\label{ASMLeadAsymptSect}
A result which gives the leading term in the large $n$ asymptotics of the number of $n\times n$ ASMs in any symmetry class will now be obtained.  The necessary notation and background for ASM symmetry classes
are given in Section~\ref{ASMSymmClassSect}. Also, for each subgroup~$G$ of~$D_4$, let $\mathcal{N}(G)$ denote the set of positive integers~$n$ for which $\ASM^G(n)$ is nonempty. The sets $\mathcal{N}(G)$
and other relevant information are summarized in Table~\ref{symmclass}, in which~$\mathbb{N}$ denotes the set of positive integers and $\mathbb{N}_{\mathrm{odd}}$ denotes the set of odd positive integers.
\begin{table}[h]\centering
$\begin{array}{|c|c|c|c|}\hline\rule{0ex}{3.5ex}
\text{Subgroup }G\text{ of }D_4&|G|&\ASM^G(n)&\mathcal{N}(G)=\{n\in\mathbb{N}\mid\ASM^G(n)\ne\emptyset\}\\[2.2mm]
\hline\rule{0ex}{3ex}
\{\mathcal{I}\}&1&\ASM(n)&\mathbb{N}\\[1.9mm]
\{\mathcal{I},\mathcal{V}\}\ (\text{or }\{\mathcal{I},\mathcal{H}\})&2&\VSASM(n)\ (\text{or }\HSASM(n))&\mathbb{N}_{\mathrm{odd}}\\[1.9mm]
\{\mathcal{I},\mathcal{V},\mathcal{H},\mathcal{R}_{\pi}\}&4&\VHSASM(n)&\mathbb{N}_{\mathrm{odd}}\\[1.9mm]
\{\mathcal{I},\mathcal{R}_{\pi}\}&2&\HTSASM(n)&\mathbb{N}\\[1.9mm]
\{\mathcal{I},\mathcal{R}_{\pi/2},\mathcal{R}_{\pi},\mathcal{R}_{-\pi/2}\}&4&\QTSASM(n)&\{n\in\mathbb{N}\mid n\not\equiv2\pmod{4}\}\\[1.9mm]
\{\mathcal{I},\mathcal{D}\}\ (\text{or }\{\mathcal{I},\mathcal{A}\})&2&\DSASM(n)\ (\text{or }\ASASM(n))&\mathbb{N}\\[1.9mm]
\{\mathcal{I},\mathcal{D},\mathcal{A},\mathcal{R}_{\pi}\}&4&\DASASM(n)&\mathbb{N}\\[1.9mm]
D_4&8&\TSASM(n)&\mathbb{N}_{\mathrm{odd}}\\[2.1mm]\hline
\end{array}$\\[2.4mm]\caption{ASM symmetry classes}\label{symmclass}\end{table}

\begin{theorem}\label{ASMLeadAsymptTh}
For any subgroup $G$ of~$D_4$,
\begin{equation}\label{ASMLeadAsympt}\lim_{\substack{n\to\infty\\[0.7mm]n\in\mathcal{N}(G)}}|\ASM^G(n)|^{|G|/n^2}=\frac{3\sqrt{3}}{4}.\end{equation}
\end{theorem}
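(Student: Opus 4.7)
The plan is to treat each conjugacy class of subgroups of $D_4$ in turn, and the eight resulting symmetry classes split naturally into two groups. For the first group---unrestricted ASMs, VSASMs with $n$ odd, VHSASMs with $n$ odd, HTSASMs, QTSASMs with $n\not\equiv 2\pmod{4}$, and DASASMs with $n$ odd---the explicit product formulae~\eqref{numASM2}--\eqref{numDASASM} are available, and my plan is to reproduce for each of them the Barnes $G$-function calculation of~\eqref{ASMasymp1}--\eqref{ASMasymp2}: rewrite the product as a ratio of values $G(\alpha n+\beta)$ of the Barnes $G$-function, apply the asymptotic expansion~\eqref{Gasmp}, and isolate the coefficient of $n^2$ in $\log|\ASM^G(n)|$. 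In every case a routine check will show that this coefficient equals $\log(3\sqrt{3}/4)/|G|$, from which~\eqref{ASMLeadAsympt} follows immediately. These computations will be entirely analogous to those already carried out in~\eqref{ASMasymp2} and in the $n$ even case of~\eqref{OSASMAsympt}.

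The three remaining classes---DSASMs with arbitrary $n$, DASASMs with $n$ even, and TSASMs with $n$ odd---lack a product formula and require a separate argument. For the lower bound, I would use that the inclusion $\ASM^{G'}(n)\subseteq\ASM^{G}(n)$ whenever $G\subseteq G'$ yields, via the already-settled cases with a larger subgroup $G'$, a preliminary bound of the form $|\ASM^G(n)|\geq(3\sqrt{3}/4)^{n^2/|G'|+o(n^2)}$, which is too weak by the factor $|G|/|G'|$ in the exponent. I would then strengthen this by combining with doubling constructions such as $A\mapsto\bigl(\begin{smallmatrix}0&A\\A^t&0\end{smallmatrix}\bigr)$, which injects $\ASM(n)$ into $\DSASM(2n)$ and so gives $|\DSASM(2n)|\geq|\ASM(n)|$; analogous block constructions embed $\HTSASM(n)$ into $\DASASM(2n)$ and $\DASASM(n)$ into $\TSASM(2n)$, so that suitable combinations of inclusions and doublings should yield the correct lower exponent $n^2/|G|$ in each case.

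The principal obstacle will be the matching upper bound for these three remaining classes, since the naive inequality $|\ASM^G(n)|\leq|\ASM(n)|$ only gives exponent $|G|\log(3\sqrt{3}/4)$, which is too large by a factor of $|G|$. Here my plan is to exploit the bijection between DSASMs and six-vertex model configurations on the triangular grid $\G_n$ from Section~\ref{sixvertexmodelconfig}, together with analogous bijective reductions of DASASMs and TSASMs to six-vertex configurations on a fundamental domain for the $G$-action on the $n\times n$ square. Such a domain has $n^2/|G|+O(n)$ vertices, and the thermodynamic free energy per vertex of the six-vertex model at the ice point is known to equal $\log(3\sqrt{3}/4)$ independently of the shape of the domain, provided the boundary has lower dimension than the bulk. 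Making this shape-independence rigorous in the triangular and other reduced geometries---without invoking the Pfaffian formula~\eqref{numDSASM1} of this paper, which would trivialize the DSASM case but is deliberately unused in Theorem~\ref{ASMLeadAsymptTh}---will be the principal technical difficulty of the proof.
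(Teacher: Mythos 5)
Your treatment of the six classes with known product formulae is sound: the Barnes $G$-function expansion certainly yields the leading term, though the paper gets there more cheaply via the ratio criterion~\eqref{seq} (if $x_{n+2}x_n/x_{n+1}^2\to L$ then $x_n^{1/n^2}\to L^{1/2}$), which needs only the ratio of consecutive product-formula values and no asymptotic expansion; the paper explicitly notes your route as a valid but less efficient alternative.

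For the three classes without product formulae there are two genuine gaps. First, the lower bound: the doubling construction $A\mapsto\bigl(\begin{smallmatrix}0&A\\A^t&0\end{smallmatrix}\bigr)$ gives $|\DSASM(2n)|\ge|\ASM(n)|\approx(3\sqrt{3}/4)^{n^2}=(3\sqrt{3}/4)^{(2n)^2/4}$, which is exactly as weak as the inclusion bound you started from --- the target exponent is $(2n)^2/2$. Doubling the matrix size quadruples the number of free entries while the source supplies only $n^2$ of them, so no combination of inclusions and doublings of this kind can close the missing factor of $2$ (and note also that $\TSASM(2n)$ is empty, so the proposed embedding of $\DASASM(n)$ there cannot be used as stated). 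What is needed, and what the paper does, is an injection of $\ASM(n)$ into $\DSASM(n)^2$ --- send $A$ to the \emph{pair} of DSASMs obtained by symmetrizing its upper and its lower triangular parts separately --- giving $|\DSASM(n)|\ge|\ASM(n)|^{1/2}$ with no change of size. Second, and more seriously, the upper bound: the premise that the six-vertex free energy per vertex at the ice point is independent of the shape and boundary conditions is false. The six-vertex model is precisely a model whose bulk free energy depends on the boundary conditions (domain-wall-type conditions give $3\sqrt{3}/4$ per vertex while toroidal conditions give $8\sqrt{3}/9$, as recalled at the end of Section~\ref{ASMLeadAsymptFurthSect2}), so establishing the value $3\sqrt{3}/4$ on the triangular domain $\G_n$ is essentially the statement to be proved rather than a known input. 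The paper avoids all of this with a second symmetrizing injection $\DSASM(n)\to\DASASM(n)^2$, yielding $|\ASM(n)|\le|\DSASM(n)|^2\le|\DASASM(n)|^4$, combined with the monotonicity $|\DASASM(n)|\le|\DASASM(n+1)|$, which sandwiches the even-order DASASM count between odd-order ones governed by the product formula~\eqref{numDASASM}; TSASMs are handled analogously via $|\VHSASM(2n-1)|\le|\TSASM(2n-1)|^2\le|\DASASM(2n-1)|$. The squeeze theorem~\eqref{squeeze} then finishes every remaining case.
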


\noindent\emph{Proof}. The two main results for sequences which will be used are as follows.
\begin{list}{$\bullet$}{\setlength{\topsep}{0.8mm}\setlength{\labelwidth}{2mm}\setlength{\leftmargin}{4mm}}
\item For a real positive sequence $(x_n)_{n=1}^\infty$,
\begin{equation}\label{seq}\text{if }\lim_{n\to\infty}\frac{x_{n+2}\,x_n}{x_{n+1}^2}=L,\text{ then }\lim_{n\to\infty}x_n^{1/n^2}=L^{1/2},\end{equation}
where this can be verified straightforwardly, for example by using the Stolz--Ces\`{a}ro theorem for sequences.
\item For real sequences $(x_n)_{n=1}^\infty$, $(y_n)_{n=1}^\infty$ and $(z_n)_{n=1}^\infty$,
\begin{equation}\label{squeeze}\text{if }x_n\le y_n\le z_n\text{ for all $n$ and }\lim_{n\to\infty}x_n=\lim_{n\to\infty}z_n=L,\text{ then }\lim_{n\to\infty}y_n=L,\end{equation}
which is the standard squeeze theorem for sequences.
\end{list}
The result~\eqref{seq} will be used for cases in category~(ii) in Section~\ref{ASMSymmClassSect}, i.e., those in which $|\ASM^G(n)|$ is given by a product formula.
The typical approach in these cases is that $x_n$ is taken to be a certain subsequence of $|\ASM^G(n)|$, the associated product formula from~\eqref{numASM2}--\eqref{numDASASM}
is used to obtain a rational expression for $x_{n+2}\,x_n/x_{n+1}^2$ from which a value for $\lim_{n\to\infty}x_{n+2}\,x_n/x_{n+1}^2$ can immediately be extracted,
and~\eqref{seq} is then applied.  An alternative, but less efficient, approach for these cases would be to use the product formula to obtain a full asymptotic expansion of $|\ASM^G(n)|$,
and then to identify its leading term.
This expansion is given for $G=\{\mathcal{I}\}$ in~\eqref{ASMasymp2}, from which~\eqref{ASMLeadAsympt} for this case can be obtained.
However, for completeness, the approach involving~\eqref{seq} is still provided below for $G=\{\mathcal{I}\}$.

The squeeze theorem~\eqref{seq} will be used for cases in category~(iii) in Section~\ref{ASMSymmClassSect}, i.e., those in which a product formula for $|\ASM^G(n)|$ is not known.
The typical approach in these cases is that, for particular subgroups $H$ and $K$ of $D_4$, $x_n$ and $z_n$ are taken to be certain subsequences of $|\ASM^H(n)|^{|H|/n^2}$ and $|\ASM^K(n)|^{|K|/n^2}$, respectively,
such that $\lim_{n\to\infty}x_n=\lim_{n\to\infty}z_n=3\sqrt{3}/4$ has already been established.  Then $y_n$ is taken to be a certain subsequence of $|\ASM^G(n)|^{|G|/n^2}$,
and it is shown using simple combinatorial arguments that $x_n\le y_n\le z_n$ for all $n$, so that~\eqref{squeeze} can be applied.

A further simple result which will be used several times, often without being identified explicitly, is that, for a real positive sequence $(x_n)_{n=1}^\infty$ and real constants~$a$,~$b$ and~$c$ with $a$ nonzero,
\begin{equation}\label{lim}\lim_{n\to\infty}x_n^{1/n^2}=L\text{ \  if and only if \ }\lim_{n\to\infty}x_n^{a/(an^2+bn+c)}=L,\end{equation}
which follows from $a/(an^2+bn+c)=(1/n^2)\,(an^2/(an^2+bn+c))$ and $\lim_{n\to\infty}an^2/(an^2+bn+c)=1$.

It is only necessary to consider the eight inequivalent (up to conjugacy) cases of $G$ in~\eqref{ASMLeadAsympt}, and this will now be done for each case separately, as follows.
\begin{list}{$\bullet$}{\setlength{\topsep}{0.8mm}\setlength{\labelwidth}{2mm}\setlength{\leftmargin}{4mm}}
\item For $G=\{\mathcal{I}\}$,~\eqref{numASM2} gives
\begin{equation*}\frac{|\ASM(n+2)|\,|\ASM(n)|}{|\ASM(n+1)|^2}=\frac{3(3n+2)(3n+4)}{4(2n+1)\,(2n+3)},\end{equation*}
so that
\begin{equation*}\lim_{n\to\infty}\frac{|\ASM(n+2)|\,|\ASM(n)|}{|\ASM(n+1)|^2}=\frac{3^3}{2^4},\end{equation*}
Applying~\eqref{seq} with $x_n=|\ASM(n)|$ then gives $\lim_{n\to\infty}|\ASM(n)|^{1/n^2}=3\sqrt{3}/4$, as required.
\item For $G=\{\mathcal{I},\mathcal{V}\}$,~\eqref{numVSASM} gives
\begin{equation*}\frac{|\VSASM(2n+3)|\,|\VSASM(2n-1)|}{|\VSASM(2n+1)|^2}=\frac{9(3n+1)(3n+2)(6n-1)(6n+1)}{4(4n-1)(4n+1)^2(4n+3)},\end{equation*}
so that
\begin{equation*}\lim_{n\to\infty}\frac{|\VSASM(2n+3)|\,|\VSASM(2n-1)|}{|\VSASM(2n+1)|^2}=\frac{3^6}{2^8}.\end{equation*}
Applying~\eqref{seq} with $x_n=|\VSASM(2n-1)|$ gives $\lim_{n\to\infty}|\VSASM(2n-1)|^{1/n^2}=3^3/2^4$.
Applying~\eqref{lim} with $x_n=|\VSASM(2n-1)|$, $a=4$, $b=-4$ and $c=1$ then gives
$\lim_{n\to\infty}\allowbreak|\VSASM(2n-1)|^{4/(2n-1)^2}=3^3/2^4$, so that
$\lim_{n\to\infty}|\VSASM(2n-1)|^{2/(2n-1)^2}=(3^3/2^4)^{1/2}=3\sqrt{3}/4$, as required.
\item For $G=\{\mathcal{I},\mathcal{V},\mathcal{H},\mathcal{R}_{\pi}\}$,~\eqref{numVHSASM} gives
\begin{equation*}\frac{|\VHSASM(2n+3)|\,|\VHSASM(2n-1)|}{|\VHSASM(2n+1)|^2}
=\frac{3(3n+2(-1)^n)(3n-(-1)^n)}{4(2n-1)(2n+1)},\end{equation*}
so that
\begin{equation*}\lim_{n\to\infty}\frac{|\VHSASM(2n+3)|\,|\VHSASM(2n-1)|}{|\VHSASM(2n+1)|^2}=\frac{3^3}{2^4}.\end{equation*}
Applying~\eqref{seq} with $x_n=|\VHSASM(2n-1)|$ gives $\lim_{n\to\infty}|\VHSASM(2n-1)|^{1/n^2}=3\sqrt{3}/4$, from which
it follows (using~\eqref{lim}) that $\lim_{n\to\infty}|\VHSASM(2n-1)|^{4/(2n-1)^2}=3\sqrt{3}/4$, as required.
\item For $G=\{\mathcal{I},\mathcal{R}_{\pi}\}$,~\eqref{numHTSASM} gives
\begin{align*}\frac{|\HTSASM(2n+4)|\,|\HTSASM(2n)|}{|\HTSASM(2n+2)|^2}&=\frac{9(3n+1)(3n+2)(3n+4)(3n+5)}{16(2n+1)^2(2n+3)^2},\\
\intertext{and}
\frac{|\HTSASM(2n+3)|\,|\HTSASM(2n-1)|}{|\HTSASM(2n+1)|^2}&=\frac{9(3n+1)^2(3n+2)^2}{16(2n+1)^4},\end{align*}
so that
\begin{equation*}\lim_{n\to\infty}\frac{|\HTSASM(n+4)|\,|\HTSASM(n)|}{|\HTSASM(n+2)|^2}=\frac{3^6}{2^8}.\end{equation*}
Applying~\eqref{seq} separately with $x_n=|\HTSASM(2n)|$ and with $x_n=|\HTSASM(2n-1)|$ gives $\lim_{n\to\infty}|\HTSASM(2n)|^{1/n^2}=3^3/2^4$ and
$\lim_{n\to\infty}|\HTSASM(2n-1)|^{1/n^2}=3^3/2^4$, from which it follows that $\lim_{n\to\infty}|\HTSASM(n)|^{2/n^2}=3\sqrt{3}/4$,
as required.
\item For $G=\{\mathcal{I},\mathcal{R}_{\pi/2},\mathcal{R}_{\pi},\mathcal{R}_{-\pi/2}\}$, it follows straightforwardly from~\eqref{numQTSASM}, using the
$G=\{\mathcal{I}\}$ and $G=\{\mathcal{I},\mathcal{R}_{\pi}\}$ cases of~\eqref{ASMLeadAsympt}, that
$\lim_{n\to\infty}|\QTSASM(4n+k)|^{4/(4n+k)^2}=3\sqrt{3}/4$, for $k\in\{-1,0,1\}$, as required.
\item For $G=\{\mathcal{I},\mathcal{D},\mathcal{A},\mathcal{R}_{\pi}\}$,~\eqref{numDASASM} gives
$|\DASASM(2n-1)|^2=3^{n-1}\,|\HTSASM(2n-1)|$, so that $\lim_{n\to\infty}|\DASASM(2n-1)|^{4/(2n-1)^2}=\lim_{n\to\infty}3^{2(n-1)/(2n-1)^2}\,|\HTSASM(2n-1)|^{2/(2n-1)^2}
=\lim_{n\to\infty}|\HTSASM(2n-1)|^{2/(2n-1)^2}$.  The $G=\{\mathcal{I},\mathcal{R}_{\pi}\}$ case of~\eqref{ASMLeadAsympt} then gives
\begin{equation}\label{DASASModdlim}\lim_{n\to\infty}|\DASASM(2n-1)|^{4/(2n-1)^2}=\frac{3\sqrt{3}}{4}.\end{equation}
Now consider the inequality
\begin{equation}\label{DASASMInequ}|\DASASM(n)|\le|\DASASM(n+1)|,\end{equation}
which can be verified by considering a simple bijection from the subset $\DASASM(n+1)'=\{A\in\DASASM(n+1)\mid A_{i,\lfloor n/2\rfloor+1}=0,\;\text{for}\;i=1,\ldots,\lceil n/2\rceil\}$
of $\DASASM(n+1)$ to $\DASASM(n)$, as follows.
For $n$ even, the bijection is the mapping which deletes row and column $n/2+1$ (in which each
entry is~0, except for a $1$ in the centre) from each matrix in $\DASASM(n+1)'$.
For $n$ odd and~$A\in\DASASM(n+1)'$, the image $\tilde{A}$ of $A$ under the bijection has the following rows.
For $1\le i\le(n-1)/2$, row~$i$ of~$\tilde{A}$ is row~$i$ of~$A$ with its $(n+1)/2$-th entry (which is~$0$) deleted.
Row $(n+1)/2$ of~$\tilde{A}$ is $(A_{(n+3)/2,1},\ldots,A_{(n+3)/2,(n-1)/2},2A_{(n+1)/2,(n+3)/2}-1,A_{(n+1)/2,(n+5)/2},\ldots,A_{(n+1)/2,n+1})$.
For $(n+3)/2\le i\le n$, row $i$ of $A'$ is row $i+1$ of $A$ with its $(n+3)/2$-th entry (which is~$0$) deleted.

It follows from~\eqref{DASASModdlim} that $\lim_{n\to\infty}|\DASASM(2n-1)|^{1/n^2}=\lim_{n\to\infty}|\DASASM(2n+1)|^{1/n^2}=3\sqrt{3}/4$, and
from~\eqref{DASASMInequ} that $|\DASASM(2n-1)|^{1/n^2}\le|\DASASM(2n)|^{1/n^2}\le|\DASASM(2n+1)|^{1/n^2}$.
Therefore, applying the squeeze theorem~\eqref{squeeze} with $x_n=|\DASASM(2n-1)|^{1/n^2}$, $y_n=|\DASASM(2n)|^{1/n^2}$ and $z_n=|\DASASM(2n+1)|^{1/n^2}$ gives
$\lim_{n\to\infty}|\DASASM(2n)|^{1/n^2}=\lim_{n\to\infty}|\DASASM(2n)|^{4/(2n)^2}=3\sqrt{3}/4$.

Finally, combining this and~\eqref{DASASModdlim} gives $\lim_{n\to\infty}|\DASASM(n)|^{4/n^2}=3\sqrt{3}/4$, as required.
\item For $G=\{\mathcal{I},\mathcal{D}\}$, consider functions
$\NEtri\colon\ASM(n)\to\DSASM(n)$, $\SWtri\colon\ASM(n)\to\DSASM(n)$,
$\NWtri\colon\ASM(n)\to\ASASM(n)$ and $\SEtri\colon\ASM(n)\to\ASASM(n)$,
where, for each $A\in\ASM(n)$, $\NEtri(A)$ (respectively, $\SWtri(A)$) is the unique $n\times n$ DSASM
whose strictly upper (respectively, lower) triangular part is the same as that part of~$A$,
and $\NWtri(A)$ (respectively, $\SEtri(A)$) is the unique $n\times n$ ASASM whose part strictly above (respectively, below) the main antidiagonal
is the same as that part of~$A$. Explicitly, $\NEtri(A)$ is given by
\begin{equation*}
\NEtri(A)_{ij}=\begin{cases}A_{ij},&i<j,\\
1-\sum_{k=1}^{i-1}A_{ki}-\sum_{k=i+1}^nA_{ik},&i=j,\\
A_{ji},&i>j,\end{cases}\end{equation*}
for $1\le i,j\le n$, and the other three functions can be expressed in terms of $\NEtri$
as $\SWtri=\NEtri\mathcal{D}=\mathcal{R}_{\pi}\NEtri\mathcal{R}_{\pi}$,
$\NWtri=\mathcal{R}_{\pi/2}\NEtri\mathcal{R}_{-\pi/2}$ and $\SEtri=\mathcal{R}_{-\pi/2}\NEtri\mathcal{R}_{\pi/2}$.
It can easily be checked that these functions are well-defined.

Now consider functions
$E\colon\ASM(n)\to\DSASM(n)^2$ and $F\colon\DSASM(n)\to\DASASM(n)^2$ given by
\begin{align}\notag E(A)&=(\NEtri(A),\SWtri(A)),\text{ for each }A\in\ASM(n),\\
\label{FGdef}F(A)&=(\NWtri(A),\SEtri(A)),\text{ for each }A\in\DSASM(n).\end{align}
These are illustrated schematically in Figure~\ref{FGJKFig}.
It can be verified straightforwardly that~$E$ and~$F$ are each injective, which implies that
\begin{equation*}|\ASM(n)|\le|\DSASM(n)|^2\le|\DASASM(n)|^4.\end{equation*}
It now follows using the $G=\{\mathcal{I}\}$ and $G=\{\mathcal{I},\mathcal{D},\mathcal{A},\mathcal{R}_{\pi}\}$ cases of~\eqref{ASMLeadAsympt},
and by applying the squeeze theorem~\eqref{squeeze} with $x_n=|\ASM(n)|^{1/n^2}$, $y_n=|\DSASM(n)|^{2/n^2}$ and $z_n=|\DASASM(n)|^{4/n^2}$, that
$\lim_{n\to\infty}|\DSASM(n)|^{2/n^2}=3\sqrt{3}/4$, as required.
\psset{unit=2.8mm}
\begin{figure}[h]
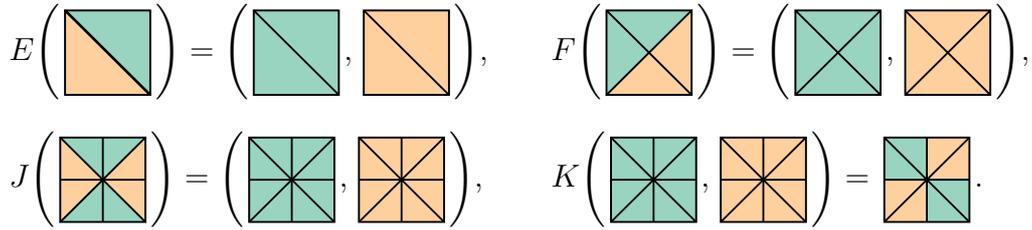
\centering
$\begin{array}{l@{\qquad}l}E\Biggl(\raisebox{-4.65mm}{\pspicture(-0.25,0)(4.25,4)
\pspolygon[linewidth=0,fillstyle=solid,fillcolor=orng](0,0)(4,0)(0,4)\pspolygon[linewidth=0,fillstyle=solid,fillcolor=grn](4,0)(4,4)(0,4)
\pspolygon[linewidth=0.7pt](0,0)(4,0)(4,4)(0,4)\psline[linewidth=1pt](0,4)(4,0)\endpspicture}\Biggr)
=\Biggl(\raisebox{-4.65mm}{\pspicture(-0.25,0)(4.25,4)\pspolygon[linewidth=0,fillstyle=solid,fillcolor=grn](0,0)(0,4)(4,4)(4,0)\pspolygon[linewidth=0.7pt](0,0)(4,0)(4,4)(0,4)\psline[linewidth=0.7pt](0,4)(4,0)
\endpspicture},
\raisebox{-4.65mm}{\pspicture(-0.25,0)(4.25,4)\pspolygon[linewidth=0,fillstyle=solid,fillcolor=orng](0,0)(0,4)(4,4)(4,0)\pspolygon[linewidth=0.7pt](0,0)(4,0)(4,4)(0,4)\psline[linewidth=0.7pt](0,4)(4,0)
\endpspicture}\Biggr),&
F\Biggl(\raisebox{-4.65mm}{\pspicture(-0.25,0)(4.25,4)\pspolygon[linewidth=0,fillstyle=solid,fillcolor=grn](0,0)(4,4)(0,4)\pspolygon[linewidth=0,fillstyle=solid,fillcolor=orng](0,0)(4,4)(4,0)
\pspolygon[linewidth=0.7pt](0,0)(4,0)(4,4)(0,4)\psline[linewidth=0.7pt](0,4)(4,0)\psline[linewidth=0.7pt](0,0)(4,4)\endpspicture}\Biggr)
=\Biggl(\raisebox{-4.65mm}{\pspicture(-0.25,0)(4.25,4)\pspolygon[linewidth=0,fillstyle=solid,fillcolor=grn](0,0)(0,4)(4,4)(4,0)
\pspolygon[linewidth=0.7pt](0,0)(4,0)(4,4)(0,4)\psline[linewidth=0.7pt](0,4)(4,0)\psline[linewidth=0.7pt](0,0)(4,4)\endpspicture},
\raisebox{-4.65mm}{\pspicture(-0.25,0)(4.25,4)\pspolygon[linewidth=0,fillstyle=solid,fillcolor=orng](0,0)(0,4)(4,4)(4,0)
\pspolygon[linewidth=0.7pt](0,0)(4,0)(4,4)(0,4)\psline[linewidth=0.7pt](0,4)(4,0)\psline[linewidth=0.7pt](0,0)(4,4)\endpspicture}
\Biggr),\\[8mm]
J\Biggl(\raisebox{-4.65mm}{\pspicture(-0.25,0)(4.25,4)
\pspolygon[linewidth=0,fillstyle=solid,fillcolor=grn](2,2)(0,4)(4,4)\pspolygon[linewidth=0,fillstyle=solid,fillcolor=orng](2,2)(4,4)(4,0)
\pspolygon[linewidth=0,fillstyle=solid,fillcolor=grn](2,2)(4,0)(0,0)\pspolygon[linewidth=0,fillstyle=solid,fillcolor=orng](2,2)(0,4)(0,0)
\pspolygon[linewidth=0.7pt](0,0)(4,0)(4,4)(0,4)\psline[linewidth=0.7pt](0,4)(4,0)\psline[linewidth=0.7pt](0,0)(4,4)\psline[linewidth=0.7pt](2,0)(2,4)\psline[linewidth=0.7pt](0,2)(4,2)\endpspicture}\Biggr)
=\Biggl(
\raisebox{-4.65mm}{\pspicture(-0.25,0)(4.25,4)\pspolygon[linewidth=0,fillstyle=solid,fillcolor=grn](0,0)(0,4)(4,4)(4,0)
\pspolygon[linewidth=0.7pt](0,0)(4,0)(4,4)(0,4)\psline[linewidth=0.7pt](0,4)(4,0)\psline[linewidth=0.7pt](0,0)(4,4)\psline[linewidth=0.7pt](2,0)(2,4)\psline[linewidth=0.7pt](0,2)(4,2)\endpspicture},
\raisebox{-4.65mm}{\pspicture(-0.25,0)(4.25,4)\pspolygon[linewidth=0,fillstyle=solid,fillcolor=orng](0,0)(0,4)(4,4)(4,0)
\pspolygon[linewidth=0.7pt](0,0)(4,0)(4,4)(0,4)\psline[linewidth=0.7pt](0,4)(4,0)\psline[linewidth=0.7pt](0,0)(4,4)\psline[linewidth=0.7pt](2,0)(2,4)\psline[linewidth=0.7pt](0,2)(4,2)\endpspicture}\Biggr),&
K\Biggl(\raisebox{-4.65mm}{\pspicture(-0.25,0)(4.25,4)\pspolygon[linewidth=0,fillstyle=solid,fillcolor=grn](0,0)(0,4)(4,4)(4,0)
\pspolygon[linewidth=0.7pt](0,0)(4,0)(4,4)(0,4)\psline[linewidth=0.7pt](0,4)(4,0)\psline[linewidth=0.7pt](0,0)(4,4)\psline[linewidth=0.7pt](2,0)(2,4)
\psline[linewidth=0.7pt](0,2)(4,2)\endpspicture},
\raisebox{-4.65mm}{\pspicture(-0.25,0)(4.25,4)\pspolygon[linewidth=0,fillstyle=solid,fillcolor=orng](0,0)(0,4)(4,4)(4,0)
\pspolygon[linewidth=0.7pt](0,0)(4,0)(4,4)(0,4)\psline[linewidth=0.7pt](0,4)(4,0)\psline[linewidth=0.7pt](0,0)(4,4)\psline[linewidth=0.7pt](2,0)(2,4)\psline[linewidth=0.7pt](0,2)(4,2)\endpspicture}\Biggr)=
\raisebox{-4.65mm}{\pspicture(-0.25,0)(4.25,4)\pspolygon[linewidth=0,fillstyle=solid,fillcolor=orng](0,0)(0,2)(2,2)(2,0)\pspolygon[linewidth=0,fillstyle=solid,fillcolor=grn](0,2)(0,4)(2,4)(2,2)
\pspolygon[linewidth=0,fillstyle=solid,fillcolor=orng](2,2)(2,4)(4,4)(4,2)\pspolygon[linewidth=0,fillstyle=solid,fillcolor=grn](2,0)(2,2)(4,2)(4,0)
\pspolygon[linewidth=0.7pt](0,0)(4,0)(4,4)(0,4)\psline[linewidth=0.7pt](0,4)(4,0)\psline[linewidth=0.7pt](0,0)(4,4)\psline[linewidth=0.7pt](2,0)(2,4)\psline[linewidth=0.7pt](0,2)(4,2)\endpspicture}.
\end{array}$
\caption{Schematic illustrations of the functions $E\colon\ASM(n)\to\DSASM(n)^2$,
$F\colon\DSASM(n)\to\DASASM(n)^2$, $J\colon\VHSASM(2n-1)\to\TSASM(2n-1)^2$ and
$K\colon\TSASM(2n-1)^2\to\DASASM(2n-1)$ in~\eqref{FGdef} and~\eqref{JKdef}. For each function,
all of the smallest triangular regions of the same colour contain the same
ASM entries, up to reflection in symmetry axes.}\label{FGJKFig}
\end{figure}
\item For $G=D_4$, consider a function $\Sq\colon\VSASM(2n-1)^2\to\ASM(2n-1)$, where for each $(A_1,A_2)\in\VSASM(2n-1)^2$, $\Sq(A_1,A_2)$ is the unique $(2n-1)\times(2n-1)$ ASM
whose part on or left (respectively, right) of the central column is the same as that part of $A_1$ (respectively,~$A_2$), i.e.,
\begin{equation*}\bigl(\Sq(A_1,A_2)\bigr)_{ij}=\begin{cases}(A_1)_{ij},&j\le n,\\
(A_2)_{ij},&j\ge n,\end{cases}\end{equation*}
for $1\le i,j\le2n-1$. It can easily be checked, by observing that the central column of any VSASM is $(1,-1,1,-1,\ldots,-1,1)$,
that $\Sq$ is well defined.

Now consider functions
$J\colon\VHSASM(2n-1)\to\TSASM(2n-1)^2$ and $K\colon\TSASM(2n-1)^2\allowbreak\to\DASASM(2n-1)$ given by
\begin{align}\notag J(A)&=\bigl((\NWtri\NEtri)(A),(\SEtri\NEtri)(A)\bigr),\text{ for each }A\in\VHSASM(2n-1),\\
\label{JKdef}K(A_1,A_2)&=(\NWtri\NEtri\Sq)(A_1,A_2),\text{ for each }(A_1,A_2)\in\TSASM(2n-1)^2.\end{align}
(Note that for $J$,
$(\NWtri\NEtri)(A)=(\NEtri\NWtri)(A)=(\SWtri\SEtri)(A)=(\SEtri\SWtri)(A)$ and
$(\SEtri\NEtri)(A)=(\NEtri\SEtri)(A)=(\SWtri\NWtri)(A)=(\NWtri\SWtri)(A)$,
and for $K$, $(\NWtri\NEtri\Sq)(A_1,A_2)=(\NEtri\NWtri\Sq)(A_1,A_2)$.)
These functions are illustrated schematically in Figure~\ref{FGJKFig}.
It can be verified straightforwardly that~$J$ and~$K$ are each injective, which implies that
\begin{equation*}|\VHSASM(2n-1)|\le|\TSASM(2n-1)|^2\le|\DASASM(2n-1)|.\end{equation*}
It now follows using the $G=\{\mathcal{I},\mathcal{V},\mathcal{H},\mathcal{R}_{\pi}\}$ and $G=\{\mathcal{I},\mathcal{D},\mathcal{A},\mathcal{R}_{\pi}\}$ cases of~\eqref{ASMLeadAsympt},
and the squeeze theorem~\eqref{squeeze}, that
$\lim_{n\to\infty}|\TSASM(2n-1)|^{8/(2n-1)^2}=3\sqrt{3}/4$, as required.\qed
\end{list}

\subsection{Leading asymptotics of $|\OSASM(n)|$ and $|\DSASM(n,k)|$}\label{ASMLeadAsymptFurthSect1}
Theorem~\ref{ASMLeadAsymptTh} for the leading asymptotics of the numbers of ASMs in the eight standard symmetry classes was proved in some cases by using known product formulae,
and in other cases by using combinatorial arguments to obtain certain inequalities
and then applying the squeeze theorem.  The same methods can be used to obtain the leading asymptotics for the numbers of ASMs
in many other well-studied ASM subclasses.  This will be illustrated in the following two propositions for the DSASM subclasses $\OSASM(n)$ and $\DSASM(n,k)$, and
some further ASM subclasses will then be considered in Section~\ref{ASMLeadAsymptFurthSect2}.

\begin{proposition}\label{OSASMLeadAsymptTh}The number of $n\times n$ OSASMs satisfies
\begin{equation}\label{OSASMLeadAsympt}\lim_{n\to\infty}|\OSASM(n)|^{2/n^2}=\frac{3\sqrt{3}}{4}.\end{equation}
\end{proposition}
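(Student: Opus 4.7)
The plan is to split the argument by the parity of~$n$. For $n$ even, I will appeal to the identity $|\OSASM(2m)| = |\HSASM(2m+1)|$ from~\eqref{numOSASMeven2} together with the $G = \{\mathcal{I},\mathcal{H}\}$ case of Theorem~\ref{ASMLeadAsymptTh}, which supplies $|\HSASM(2m+1)|^{2/(2m+1)^2} \to 3\sqrt{3}/4$ as $m \to \infty$. Since $(2m+1)^2/(2m)^2 \to 1$ and the base has a positive limit, raising to this slightly perturbed exponent preserves the limit (the same principle used to justify~\eqref{lim}), yielding $|\OSASM(2m)|^{2/(2m)^2} \to 3\sqrt{3}/4$ and settling the even case.

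For $n$ odd, writing $n = 2m+1$, I will apply the squeeze theorem~\eqref{squeeze} to the sandwich
\[
|\OSASM(2m)|^{2/(2m+1)^2} \;\le\; |\OSASM(2m+1)|^{2/(2m+1)^2} \;\le\; |\DSASM(2m+1)|^{2/(2m+1)^2}.
\]
The upper bound reflects the trivial containment $\OSASM(2m+1) \subseteq \DSASM(2m+1)$, after which the $G = \{\mathcal{I},\mathcal{D}\}$ case of Theorem~\ref{ASMLeadAsymptTh} drives the right-hand side to $3\sqrt{3}/4$. The lower bound rests on the explicit injection $\OSASM(2m) \hookrightarrow \OSASM(2m+1)$ sending $A$ to the block matrix
\[
\widehat{A} \,=\, \begin{pmatrix} 1 & 0 \\ 0 & A \end{pmatrix},
\]
where the prepended first row and first column are zero apart from the top-left entry. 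It is immediate that $\widehat{A}$ is an ASM (the new row and column each contain a single nonzero entry, namely $1$, and all other row/column sums and alternation conditions are inherited from~$A$), that $\widehat{A}$ is diagonally symmetric (the new row and column are symmetric to each other, and $A$ itself is symmetric), and that its diagonal consists of a single~$1$ followed by the diagonal of~$A$, which is all zero since $A$ is an even-order OSASM; hence $\widehat{A} \in \OSASM(2m+1)$. Combined with the even-case limit already established, the same exponent-adjustment argument (ratio of exponents tending to $1$) turns $|\OSASM(2m)|^{2/(2m)^2} \to 3\sqrt{3}/4$ into $|\OSASM(2m)|^{2/(2m+1)^2} \to 3\sqrt{3}/4$, closing the sandwich.

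I do not anticipate any genuine obstacle here: all of the substantive asymptotic content is already supplied by Theorem~\ref{ASMLeadAsymptTh} in conjunction with the classical identity~\eqref{numOSASMeven2}, and the only combinatorial input beyond those is the elementary block-matrix injection displayed above. The only mildly delicate point is the repeated need to move between exponents of the form $2/(2m)^2$ and $2/(2m+1)^2$, which is handled uniformly by the observation that their ratio tends to~$1$ and that raising a convergent positive sequence to a power tending to~$1$ does not alter its limit.
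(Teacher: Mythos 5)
Your proposal is correct and follows essentially the same route as the paper: the even case via $|\OSASM(2m)|=|\HSASM(2m+1)|$ and the $G=\{\mathcal{I},\mathcal{H}\}$ case of Theorem~\ref{ASMLeadAsymptTh}, and the odd case via the sandwich $|\OSASM(2m)|\le|\OSASM(2m+1)|\le|\DSASM(2m+1)|$ together with the squeeze theorem. Your block-matrix injection is precisely the inverse of the deletion bijection the paper uses to justify the first inequality, so the combinatorial content is identical.
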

\begin{proof}It follows from the first equality of~\eqref{numOSASMeven2} and the $G=\{\mathcal{I},\mathcal{H}\}$ case of~\eqref{ASMLeadAsympt} that
\begin{equation}\label{OSASMevenlim}\lim_{n\to\infty}|\OSASM(2n)|^{2/(2n)^2}=\frac{3\sqrt{3}}{4}.\end{equation}
(Alternatively,~\eqref{OSASMevenlim} follows from the $n$ even case of~\eqref{OSASMAsympt}.)
Now note that
\begin{equation}\label{OSASMInequ}|\OSASM(2n)|\le|\OSASM(2n+1)|\le|\DSASM(2n+1)|,\end{equation}
where the second inequality holds since $\OSASM(2n+1)$ is a subset of $\DSASM(2n+1)$,
and the first inequality follows from the $n$ odd and $r=1$ case of~\eqref{XOcoeff1}
(or by considering a simple bijection from $\{A\in\OSASM(2n+1)\mid A_{11}=1\}$ to
$\OSASM(2n)$, in which the first row and column are deleted from each matrix in the first set).
It follows from~\eqref{OSASMevenlim} and the $G=\{\mathcal{I},\mathcal{D}\}$ case of~\eqref{ASMLeadAsympt}
that $\lim_{n\to\infty}|\OSASM(2n)|^{2/(2n+1)^2}=\lim_{n\to\infty}|\DSASM(2n+1)|^{2/(2n+1)^2}=3\sqrt{3}/4$, and
from~\eqref{OSASMInequ} that $|\OSASM(2n)|^{2/(2n+1)^2}\le|\OSASM(2n+1)|^{2/(2n+1)^2}\le|\DSASM(2n+1)|^{2/(2n+1)^2}$.
Therefore, applying the squeeze theorem~\eqref{squeeze} gives
$\lim_{n\to\infty}|\OSASM(2n+1)|^{2/(2n+1)^2}=3\sqrt{3}/4$.

Combining this and~\eqref{OSASMevenlim} gives \eqref{OSASMLeadAsympt}, as required.\end{proof}

\begin{proposition}\label{DSASMRefLeadAsymptTh}For each $k\in\{1\ldots n\}$, the number of $n\times n$ DSASMs whose~$1$ in the first row is in column~$k$ satisfies
\begin{equation}\label{DSASMRefLeadAsympt}\lim_{n\to\infty}|\DSASM(n,k)|^{2/n^2}=\frac{3\sqrt{3}}{4}.\end{equation}
\end{proposition}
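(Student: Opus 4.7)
The plan is to apply the squeeze theorem, in the spirit of the non-product-formula cases in the proof of Theorem~\ref{ASMLeadAsymptTh}. I take $k$ to be fixed and let $n \to \infty$ with $n \geq k$.

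The upper bound is immediate from the inclusion $\DSASM(n,k) \subseteq \DSASM(n)$ together with the $G = \{\mathcal{I},\mathcal{D}\}$ case of Theorem~\ref{ASMLeadAsymptTh}, which gives
$$\limsup_{n\to\infty}|\DSASM(n,k)|^{2/n^2} \leq \lim_{n\to\infty}|\DSASM(n)|^{2/n^2} = \frac{3\sqrt{3}}{4}.$$

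For the matching lower bound, I will exhibit an injection $\DSASM(n-k) \hookrightarrow \DSASM(n,k)$ by block concatenation. Let $J_k$ denote the $k \times k$ matrix with $(J_k)_{ij} = \delta_{i+j,k+1}$, i.e.~the permutation matrix supported on the main antidiagonal; then $J_k$ is an ASM and, since $(J_k)_{ij} = (J_k)_{ji}$, is diagonally symmetric. Given $B \in \DSASM(n-k)$, define
$$A = \begin{pmatrix} J_k & 0 \\ 0 & B \end{pmatrix}.$$
Diagonal symmetry of $A$ is checked block-by-block, and each row and column of $A$ lies entirely within one of the two diagonal blocks, so the ASM conditions for $A$ reduce to those for $J_k$ and for $B$ separately. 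The first row of $A$ is the first row of $J_k$ padded by zeros, whose single $1$ sits in column $k$; hence $A \in \DSASM(n,k)$, and $B \mapsto A$ is manifestly injective. This yields $|\DSASM(n,k)| \geq |\DSASM(n-k)|$.

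Since $k$ is fixed, the elementary identity~\eqref{lim} used throughout Section~\ref{ASMLeadAsymptSect} gives $\lim_{n\to\infty}|\DSASM(n-k)|^{2/n^2} = \lim_{m\to\infty}|\DSASM(m)|^{2/m^2} = 3\sqrt{3}/4$, and the squeeze theorem~\eqref{squeeze} then yields~\eqref{DSASMRefLeadAsympt}. There is no real obstacle; the only substantive idea is that an \emph{antidiagonal} permutation matrix in the top-left $k \times k$ block (as opposed to a diagonal one, which would give $T(A) = 1$) places the first-row $1$ in the desired column. For $k \in \{1,2,3,n\}$ one could alternatively invoke the explicit identities of Proposition~\ref{refprop} directly, but the antidiagonal block construction has the merit of handling all $k$ uniformly.
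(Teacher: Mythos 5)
Your construction is correct as far as it goes, and for any \emph{fixed} $k$ it does prove the claim: the block matrix $\bigl(\begin{smallmatrix}J_k&0\\0&B\end{smallmatrix}\bigr)$ is indeed a DSASM with its first-row $1$ in column $k$, the map is injective, and the squeeze argument with $|\DSASM(n-k)|\le|\DSASM(n,k)|\le|\DSASM(n)|$ goes through. The gap is in the scope. The proposition's range is $k\in\{1,\ldots,n\}$, which explicitly includes $k=n$ (and implicitly any $k$ that varies with $n$, e.g.\ $k=\lceil n/2\rceil$); the paper's own proof treats $k=n$ as a separate case precisely because $k$ is not assumed fixed. Your lower bound $|\DSASM(n-k)|$ degrades badly in this regime: for $k=n$ it gives only $|\DSASM(n,n)|\ge1$, and for $k$ proportional to $n$ it gives $|\DSASM(n-k)|^{2/n^2}\to(3\sqrt{3}/4)^{(1-k/n)^2}$, which is strictly less than $3\sqrt{3}/4$. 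So the claim that the block construction "handles all $k$ uniformly" is not accurate, and your fallback remark about Proposition~\ref{refprop} covering $k\in\{1,2,3,n\}$ still leaves the intermediate growing-$k$ regime untreated.

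The paper closes this by using a lower bound that is uniform in $k$: for $3\le k\le n-1$ (and in fact also $k=2,n$) there is a bijection from $\{A\in\DSASM(n,k)\mid A_{ik}=0\text{ for }i=2,\ldots,n\}$ to $\DSASM(n-2)$, obtained by deleting rows and columns $1$ and $k$, which yields $|\DSASM(n-2)|\le|\DSASM(n,k)|$ independently of $k$; the cases $k=1,2,n$ are also covered by the exact identities of Proposition~\ref{refprop}. If you replace your block-diagonal injection by this insertion construction (take $B\in\DSASM(n-2)$, adjoin a new row and column in positions $1$ and $k$ carrying the single entries $A_{1k}=A_{k1}=1$ and zeros elsewhere), your argument becomes valid for the full range of $k$. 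As a minor point of comparison, your block construction does have the virtue of being the natural common generalization of cases (i) and (iii) of Proposition~\ref{refprop}, but it is exactly the wrong tool when $k$ is large relative to $n$.
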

\begin{proof}The $k=1$, $k=2$ and $k=n$ cases of~\eqref{DSASMRefLeadAsympt} follow immediately from~(i),~(ii) and~(iii), respectively, of Proposition~\ref{refprop},
and the $G=\{\mathcal{I},\mathcal{D}\}$ case of~\eqref{ASMLeadAsympt}.
For $3\le k\le n-1$, note that
\begin{equation}\label{DSASMRefInequ}|\DSASM(n-2)|\le|\DSASM(n,k)|\le|\DSASM(n)|,\end{equation}
where the second inequality holds since $\DSASM(n,k)$ is a subset of $\DSASM(n)$, and the first inequality
can be verified by considering a simple bijection from $\{A\in\DSASM(n,k)\mid A_{i,k}=0,\;\text{for}\;i=2,\ldots,n\}$ to
$\DSASM(n-2)$, in which rows and columns~$1$ and~$k$ are deleted from each matrix in the first set.
(In fact, this is also a well-defined bijection for $k=2$ and $k=n$.) It follows from the $G=\{\mathcal{I},\mathcal{D}\}$ case of~\eqref{ASMLeadAsympt} that
$\lim_{n\to\infty}|\DSASM(n-2)|^{2/n^2}=|\DSASM(n)|^{2/n^2}=3\sqrt{3}/4$.  Therefore,~\eqref{DSASMRefInequ} and the squeeze theorem~\eqref{squeeze} give~\eqref{DSASMRefLeadAsympt} for
$3\le k\le n-1$, as required.\end{proof}

\subsection{Discussion of the leading asymptotics of numbers of ASMs and other objects}\label{ASMLeadAsymptFurthSect2}
Some further aspects of the leading asymptotics of numbers of ASMs and related objects
(including six-vertex model configurations and rhombus tilings)
will now be discussed, and more general frameworks for the results of Sections~\ref{ASMLeadAsymptSect} and~\ref{ASMLeadAsymptFurthSect1}
will be outlined. The sets of positive integers and odd positive integers will again be denoted as~$\mathbb{N}$ and~$\mathbb{N}_{\mathrm{odd}}$, respectively.

Let $C$ denote an ASM subclass considered in Theorem~\ref{ASMLeadAsymptTh}, Proposition~\ref{OSASMLeadAsymptTh} or Proposition~\ref{DSASMRefLeadAsymptTh},
and $C(n)$ denote the associated subset of $n\times n$ ASMs, i.e., $C(n)$ is $\ASM^G(n)$, $\OSASM(n)$ or $\DSASM(n,k)$, for~$G$ a subgroup of~$D_4$ or $k\in\{1\ldots n\}$.
Also, let $N_C$ be $\mathcal{N}(G)$ if $C(n)$ is $\ASM^G(n)$, or be $\mathbb{N}$ if $C(n)$ is $\OSASM(n)$ or $\DSASM(n,k)$.

It can easily be checked, using the defining properties of each class~$C$, and~\eqref{ASMLeadAsympt},~\eqref{OSASMLeadAsympt} and~\eqref{DSASMRefLeadAsympt},
that there exist a (not necessarily unique) quadratic polynomial~$P_C(n)$ in~$n$, and a subset $S_C(n)$ of $\{1,\ldots,n\}^2$ for $n\in N_C$, with the following properties.
\begin{list}{$\bullet$}{\setlength{\labelwidth}{4mm}\setlength{\leftmargin}{8mm}\setlength{\labelsep}{3mm}\setlength{\topsep}{0.9mm}}
\item For $n\in N_C$, any $A\in C(n)$ is uniquely determined by its entries $A_{ij}$ with $(i,j)\in S_C(n)$.
\item For $n\in N_C$, $|S_C(n)|=P_C(n)$.
\item The leading asymptotics of $|C(n)|$ is given by
\begin{equation}\label{GenLeadAsympt1}
\lim_{\substack{n\to\infty\\[0.5mm]n\in N_C}}|C(n)|^{1/P_C(n)}=\frac{3\sqrt{3}}{4}.\end{equation}
\end{list}
The set $S_C(n)$ can be regarded as a fundamental domain for the ASMs in $C(n)$, and a natural interpretation of~\eqref{GenLeadAsympt1} is that,
for large $n\in N_C$ and any $(i,j)\in S_C(n)$, there are on average $3\sqrt{3}/4$ choices for the entry $A_{ij}$ of an ASM~$A$ in~$C(n)$.

Some examples are as follows. If $C(n)=\ASM(n)$, then natural choices for $S_C(n)$ are $\{1,\ldots,n\}^2$ or (using the property that the sum of entries in each row and column of any ASM is~$1$)
$\{1,\ldots,n-1\}^2$, which give $P_C(n)=n^2$ or $P_C(n)=(n-1)^2$, respectively.
If $C(n)=\DSASM(n)$, then natural choices for $S_C(n)$ are $\{(i,j)\mid 1\le i\le j\le n\}^2$ (since any DSASM is determined by its upper triangular part)
or $\{(i,j)\mid 1\le i<j\le n\}^2$ (using also the property that the sum of entries in each row and column of any DSASM is~$1$), which give $P_C(n)=n(n+1)/2$ or
$P_C(n)=n(n-1)/2$, respectively.

Note that for fixed $C$, the leading coefficient $\ell_C\!=\![n^2]P_C(n)$ of $P_C(n)$ is unique,
and~\eqref{ASMLeadAsympt},~\eqref{OSASMLeadAsympt} and~\eqref{DSASMRefLeadAsympt} state that
\begin{equation}\label{GenLeadAsympt2}
\lim_{\substack{n\to\infty\\[0.5mm]n\in N_C}}|C(n)|^{1/(\ell_C\,n^2)}=3\sqrt{3}/4,
\end{equation}
due to~\eqref{lim}.

There are also several further, well-studied ASM subclasses $C$ for which~\eqref{GenLeadAsympt1} holds, together with an associated set $N_C$ and quadratic polynomial $P_C(n)$. Some examples are as follows.
\begin{list}{(\roman{sc})}{\usecounter{sc}
\setlength{\labelwidth}{8.2mm}\setlength{\leftmargin}{8.2mm}\setlength{\labelsep}{1.5mm}\setlength{\topsep}{0.9mm}}
\item The set of $n\times n$ ASMs whose $1$ in the first row is in column $k$, for fixed $k\in\{1,\ldots,n\}$.  A product formula for the numbers of these ASMs
was conjectured by Mills, Robbins and Rumsey~\cite[Conj.~2]{MilRobRum82,MilRobRum83}, and first proved by Zeilberger~\cite[Main Thm.]{Zei96b}.
In this case, $N_C=\mathbb{N}$, and some natural choices for $S_C(n)$ are $\{2,\ldots,n\}\times\{1,\ldots,n\}$ or $\{2,\ldots,n-1\}\times\{1,\ldots,n-1\}$,
which give $P_C(n)=n(n-1)$ or $P_C(n)=(n-1)(n-2)$, respectively.
\item The set of $n\times n$ quasi-QTSASMs for $n\equiv2\pmod{4}$ (so that $N_C=\{n\in\mathbb{N}\mid n\equiv2\pmod{4}\}$).
Such ASMs are defined by Aval and Duchon~\cite{AvaDuc10}, who also obtained a product formula for their number~\cite[Thm.~3]{AvaDuc10}.
\item The set of $n\times n$ DASASMs with $n$ odd (so that $N_C=\mathbb{N}_{\mathrm{odd}}$) and maximally-many $k$'s on the main diagonal and main antidiagonal for
fixed $k\in\{-1,0,1\}$, or minimally-many $0$'s on the main diagonal and main antidiagonal.  Product formulae for the numbers of ASMs in these four cases were obtained
by Ayyer, Behrend and Fischer~\cite[Thms.~1.2,~1.3~\&~1.5, Cor.~1.4]{AyyBehFis20}.
\end{list}
The validity of~\eqref{GenLeadAsympt1} for these cases can be verified by using the associated product formulae, and applying the same methods as those used
in Sections~\ref{ASMLeadAsymptSect} and~\ref{ASMLeadAsymptFurthSect1} for cases with known product formulae. Alternatively, some of the verifications can be done
by using combinatorial arguments to obtain certain inequalities, and applying the same methods as those used
in Sections~\ref{ASMLeadAsymptSect} and~\ref{ASMLeadAsymptFurthSect1} for cases without known product formulae.

The result~\eqref{GenLeadAsympt1} (or~\eqref{GenLeadAsympt2}), which generalizes the results of Sections~\ref{ASMLeadAsymptSect} and~\ref{ASMLeadAsymptFurthSect1},
can also be interpreted in terms of the six-vertex model. In particular, each set~$C(n)$ is in simple bijection with the set of
six-vertex model configurations on a certain graph, with certain boundary conditions, for $n\in N_C$.  For example, if $C(n)=\DSASM(n)$, then the graph is~$\G_n$, as given in~\eqref{Tn}, and the
six-vertex model configurations on $\G_n$
are as defined in Section~\ref{sixvertexmodelconfig}.  Another example is that if $C(n)=\ASM(n)$, then the six-vertex model configurations are those with domain-wall boundary conditions on the graph~$\mathcal{S}_n$,
as also discussed in Section~\ref{sixvertexmodelconfig}.  When~\eqref{GenLeadAsympt1} is interpreted in terms of the six-vertex model, $C(n)$ is taken to be the relevant set of six-vertex model configurations,
and~$P_C(n)$ is the number of vertices in the associated graph.

Although~\eqref{GenLeadAsympt1} holds for the six-vertex model configurations associated with all cases already considered in this section, there are further well-studied cases for which it does not hold.  For example,
let~$C(n)$ be the set of six-vertex model configurations on an $n\times n$ grid graph with toroidal boundary conditions (which are simply Eulerian orientations of an $n\times n$ grid graph on a torus).
Then $N_C=\mathbb{N}$, and $p_C(n)=n^2$ (since the associated graph has~$n^2$ vertices), and it was shown by Lieb~\cite{Lie67a,Lie67b} that $\lim_{n\to\infty}|C(n)|^{1/n^2}=8\sqrt{3}/9$.  Accordingly,
this case is associated with a class of boundary conditions which differs from the single class associated with all cases previously considered in this section.  For further information on such classes of boundary
conditions for the six-vertex model (in the more general context of partition functions, i.e., certain weighted enumerations of numbers of configurations), see, for example,
Korepin and Zinn-Justin~\cite[Sec.~6]{KorZin00}, Ribeiro and Korepin~\cite[Secs.~5 \&~6]{RibKor15}, Tavares, Ribeiro and Korepin~\cite[Sec.~7]{TavRibKor15a},
Tavares, Ribeiro and Korepin~\cite{TavRibKor15b}, and Zinn-Justin~\cite{Zin02}.

Also observe that~\eqref{ASMLeadAsympt} and~\eqref{GenLeadAsympt1} can be regarded as ASM-related cases of more general properties which may be satisfied by further types of combinatorial sets.
More specifically, let $(Q(n))_{n=1}^\infty$ be a sequence of finite sets,
and suppose that, for a positive integer~$d$, $|Q(n)|^{1/n^d}$ converges to a finite positive limit as $n\to\infty$.
Furthermore, suppose that there exists a finite group $\mathcal{G}$ which has an action on each $Q(n)$.  Then, for any subgroup~$G$ of~$\mathcal{G}$, let $Q^G(n)$ denote the set of elements of~$Q(n)$
which are invariant under the action of all elements of~$G$, and let~$\mathcal{N}(G)$ denote the set of positive integers~$n$ for which~$Q^G(n)$ is nonempty (where this is assumed to be an infinite set).

A possible property of the asymptotics of $|Q^G(n)|$ is that, for any subgroup $G$ of $\mathcal{G}$,
\begin{equation}\label{GenLeadAsymptGroup}\lim_{\substack{n\to\infty\\[0.7mm]n\in\mathcal{N}(G)}}|Q^G(n)|^{|G|/n^d}=\lim_{n\to\infty}|Q(n)|^{1/n^d}.\end{equation}
This provides a generalized context for~\eqref{ASMLeadAsympt}, since it follows from~\eqref{ASMLeadAsympt} that~\eqref{GenLeadAsymptGroup} holds
for the case $Q(n)=\ASM(n)$, $\mathcal{G}=D_4$ and $d=2$.

Similarly, a generalized context for~\eqref{GenLeadAsympt1} is to consider a combinatorial class~$Q$, which is associated with an infinite subset $N_Q$ of $\mathbb{N}$,
a sequence $(Q(n))_{n\in N_Q}$ of finite sets, a polynomial~$P_Q(n)$ in~$n$, and a positive constant $L_Q$, such that
\begin{equation}\label{GenLeadAsympt3}
\lim_{\substack{n\to\infty\\[0.5mm]n\in N_Q}}|Q(n)|^{1/P_Q(n)}=L_Q.\end{equation}
Typically, $P_Q(n)$ gives the size of a fundamental domain on which each element of $Q(n)$ can be defined.
An important aspect of~\eqref{GenLeadAsympt3} is that there may be several related classes~$Q$ for which $L_Q$ is independent of~$Q$.
For example, if $Q(n)$ is taken to be $\ASM^G(n)$, $\OSASM(n)$, $\DSASM(n,k)$, or any of the cases in~(i)--(iii) of the list earlier in this section, then $L_Q=3\sqrt{3}/4$, as in~\eqref{GenLeadAsympt1}.

Note that each~$Q(n)$ in~\eqref{GenLeadAsympt3} may arise as the set of states of a discrete statistical mechanical model, defined
on $P_Q(n)$ sites. In this context, if each state has zero energy, then the partition function is the number $|Q(n)|$ of states,
and $k_{\mathrm{B}}\log L_Q$ is the entropy per site in the thermodynamic (i.e., large~$n$) limit,
where~$k_{\mathrm{B}}$ is the Boltzmann constant. See, for example, Baxter~\cite[Eqs.~(8.1.1)--(8.1.2)]{Bax82} for this perspective
in relation to the six-vertex model.

Finally, a brief review will be given of rhombus tilings, since these provide a further example of combinatorial objects for which~\eqref{GenLeadAsymptGroup} or~\eqref{GenLeadAsympt3} hold in many well-studied cases.
Specifically, the rhombus tilings under consideration are tilings of finite connected regions of the regular triangular lattice,
in which each rhombic tile is the union of two equilateral unit triangles which share an edge.
Such tilings are in simple bijection with perfect matchings of an associated region of the dual hexagonal lattice,
and certain cases are also in bijection with plane partitions.  It is known that rhombus tilings share many enumerative properties with alternating sign matrices,
with some natural finite sets of one object being equinumerous with those of the other, but the underlying combinatorial reasons for this remain poorly understood.
For further details regarding such tilings, perfect matchings and plane partitions, see, for example Krattenthaler~\cite{Kra16}, and references therein.

Let $\mathrm{RT}(n)$ denote the set of rhombus tilings of a regular hexagon of side length~$n$
on the regular triangular lattice.  Then the symmetry group~$D_6$ of the hexagon has a natural action on $\mathrm{RT}(n)$.
There are ten conjugacy classes of subgroups of~$D_6$,
which correspond to ten inequivalent symmetry classes~$\mathrm{RT}^G(n)$, for subgroups~$G$ of~$D_6$
(where $\mathrm{RT}^G(n)$ denotes the set of rhombus tilings in~$\mathrm{RT}(n)$ which are invariant under the action of all elements of~$G$).
For subgroups~$G$ which contain rotation by~$\pi$ or reflection through an axis which bisects two
opposite edges of the hexagon, $\mathrm{RT}^G(n)$ is nonempty if and only if $n$ is even, and so $\mathcal{N}(G)$ is taken to be the set of positive even integers.  For the remaining subgroups~$G$ (which can
contain only rotation by $\pm2\pi/3$ or reflection in an axis which passes through two opposite vertices of the hexagon), $\mathrm{RT}^G(n)$ is nonempty for all~$n$, and so $\mathcal{N}(G)$ is taken to be $\mathbb{N}$.
Product formulae for $|\mathrm{RT}^G(n)|$ are known for each of the ten symmetry classes, as discussed, for example, by Krattenthaler~\cite[Sec.~6]{Kra16}.  Using these formulae, and applying the same methods
as those used in Sections~\ref{ASMLeadAsymptSect} and~\ref{ASMLeadAsymptFurthSect1} for cases with known product formulae, it can be shown that, for any subgroup of $D_6$,
\begin{equation}\label{RTLeadAsmpt}\lim_{\substack{n\to\infty\\[0.7mm]n\in\mathcal{N}(G)}}|\mathrm{RT}^G(n)|^{|G|/n^2}=\biggl(\frac{3\sqrt{3}}{4}\biggr)^{\!3}.\end{equation}
(Alternatively, some cases of~\eqref{RTLeadAsmpt} can be obtained from others by applying the same methods as those used in Sections~\ref{ASMLeadAsymptSect} and~\ref{ASMLeadAsymptFurthSect1}
for cases without known product formulae.)  It follows from~\eqref{RTLeadAsmpt} that~\eqref{GenLeadAsymptGroup} holds for the case $Q(n)=\mathrm{RT}(n)$, $\mathcal{G}=D_6$ and $d=2$.

It can be seen that a tiling of a regular hexagon of side length~$n$ contains $3n^2$ unit rhombi,
and that, for a subgroup $G$ of $D_6$, any tiling in $\mathrm{RT}^G(n)$ is uniquely determined by that part of the tiling which lies on a
fundamental region containing $p_G(n)$ rhombi, where $p_G(n)$ is a quadratic polynomial in~$n$ with leading coefficient $3/|G|$.
Hence,~\eqref{RTLeadAsmpt} provides an example of~\eqref{GenLeadAsympt3}, with $Q(n)=\mathrm{RT}^G(n)$, $P_Q(n)=p_G(n)$ and $N_Q=\mathcal{N}(G)$,
and where $L_Q=3\sqrt{3}/4$ is independent of~$G$.  It can also be shown that there are many other well-studied classes~$Q$ of rhombus tilings
for which~\eqref{GenLeadAsympt3} holds with $L_Q=3\sqrt{3}/4$, and with suitable $N_Q(n)$ and $P_Q(n)$, where $P_Q(n)$ is a quadratic polynomial
in~$n$ which gives the number of rhombi in a fundamental domain.

\section*{Acknowledgements}
REB was supported by Austrian Science Fund (FWF) SFB grant F50, and Leverhulme Trust grant RPG-2019-083.
IF was supported by Austrian Science Fund (FWF) START grant Y463, SFB grant F50 and grant P34931.
CK was supported by Austrian Science Fund (FWF) SFB grant F50 and grant I6130-N.
We thank Mihai Ciucu for useful discussions regarding some of the content of Sections~\ref{ASMLeadAsymptSect}--\ref{ASMLeadAsymptFurthSect2}.

\let\oldurl\url
\makeatletter
\renewcommand*\url{%
        \begingroup
        \let\do\@makeother
        \dospecials
        \catcode`{1
        \catcode`}2
        \catcode`\ 10
        \url@aux
}
\newcommand*\url@aux[1]{%
        \setbox0\hbox{\oldurl{#1}}%
        \ifdim\wd0>\linewidth
                \strut
                \\
                \vbox{%
                        \hsize=\linewidth
                        \kern-\lineskip
                        \raggedright
                        \strut\oldurl{#1}%
                }%
        \else
                \hskip0pt plus\linewidth
                \penalty0
                \box0
        \fi
        \endgroup
}
\makeatother
\gdef\MRshorten#1 #2MRend{#1}
\gdef\MRfirsttwo#1#2{\if#1M
MR\else MR#1#2\fi}
\def\MRfix#1{\MRshorten\MRfirsttwo#1 MRend}
\renewcommand\MR[1]{\relax\ifhmode\unskip\spacefactor3000 \space\fi
  \MRhref{\MRfix{#1}}{{\tiny \MRfix{#1}}}}
\renewcommand{\MRhref}[2]{
 \href{http://www.ams.org/mathscinet-getitem?mr=#1}{#2}}

\bibliography{Bibliography}
\bibliographystyle{amsplainhyper}
\end{document}